\tikzset{
    >=stealth,
    every picture/.style={thick},
    graphs/every graph/.style={empty nodes},
}
\tikzstyle{vertex}=[
\tikzstyle{printersafe}=[decoration={snake,amplitude=0pt}]
\newcommand{\supp}{\operatorname{Supp}}
\newcommand{\Spec}{\operatorname{Spec}}
\newcommand{\pp}{\mathbb{P}}
\newcommand{\qq}{\mathbb{Q}}
\newcommand{\zz}{\mathbb{Z}}
\newcommand{\nn}{\mathbb{N}}
\newcommand{\rr}{\mathbb{R}}
\newcommand{\cc}{\mathbb{C}}
\newcommand{\kk}{\mathbb{K}}
\newcommand{\Af}{\mathbb{A}}
\def\O#1.{\mathcal {O}_{#1}}			
\def\pr #1.{\mathbb P^{#1}}				
\def\af #1.{\mathbb A^{#1}}			
\def\ses#1.#2.#3.{0\to #1\to #2\to #3 \to 0}	
\def\xrar#1.{\xrightarrow{#1}}			
\def\K#1.{K_{#1}}						
\def\bA#1.{\mathbf{A}_{#1}}			
\def\bM#1.{\mathbf{M}_{#1}}				
\def\bN#1.{\mathbf{N}_{#1}}
\def\bL#1.{\mathbf{L}_{#1}}				
\def\bB#1.{\mathbf{B}_{#1}}				
\def\bK#1.{\mathbf{K}_{#1}}		
\def\bP#1.{\mathbf{P}_{#1}}		\def\bT#1.{\mathbf{T}_{#1}}	
\def\subs#1.{_{#1}}					
\def\sups#1.{^{#1}}						
\DeclareMathOperator{\coeff}{coeff}	
\DeclareMathOperator{\Supp}{Supp}
\newcommand{\rar}{\rightarrow}	
\newcommand{\drar}{\dashrightarrow}	
\newtheorem{introcon}{Conjecture}
  \newtheorem{introthm}{Theorem}
  \newtheorem{theorem}{Theorem}[section]
  \newtheorem{lemma}[theorem]{Lemma}
  \newtheorem{proposition}[theorem]{Proposition}
  \newtheorem{corollary}[theorem]{Corollary}
  \newtheorem{notation}[theorem]{Notation}
  \newtheorem{definition}[theorem]{Definition}
\theoremstyle{remark}
\numberwithin{equation}{section}
\begin{document}

\title[Complements and coregularity of Fano varieties]{Complements and coregularity of Fano varieties}

\author[F.~Figueroa]{Fernando Figueroa}
\address{Department of Mathematics, Princeton University, Fine Hall, Washington Road, Princeton, NJ 08544-1000, USA}
\email{fzamora@princeton.edu}

\author[S.~Filipazzi]{Stefano Filipazzi}
\address{
EPFL SB MATH CAG
MA C3 625 (B\^atiment MA)
Station 8
CH-1015 Lausanne, Switzerland
}
\email{stefano.filipazzi@epfl.ch}

\author[J.~Moraga]{Joaqu\'in Moraga}
\address{UCLA Mathematics Department, Box 951555, Los Angeles, CA 90095-1555, USA
}
\email{jmoraga@math.ucla.edu}

\author[J.~Peng]{Junyao Peng}
\address{Department of Mathematics, Princeton University, Fine Hall, Washington Road, Princeton, NJ 08544-1000, USA
}
\email{junyaop@princeton.edu}

\subjclass[2020]{Primary 14E30, 14B05;
Secondary: 14M25 }
\thanks{
SF was partially supported by ERC starting grant \#804334. FF received partial financial support by the NSF under J\'anos Koll\'ar's grant number DMS-1901855.
}

\maketitle

\begin{abstract}
We study the relation 
between the coregularity, 
the index of log Calabi--Yau pairs, 
and the complements of Fano varieties.
We show that the index of a log Calabi--Yau pair $(X,B)$ of coregularity $1$ is at most 
$120\lambda^2$, where $\lambda$ is the Weil index of $K_X+B$.
This extends a recent result due to Filipazzi, Mauri, and Moraga.
We prove that a Fano variety
of absolute coregularity $0$ admits either
a $1$-complement or a $2$-complement. 
In the case of Fano varieties
of absolute coregularity $1$, we show that they
admit an $N$-complement with $N$ at most 6.
Applying the previous results, we prove that a klt singularity
of absolute coregularity $0$ admits
either a $1$-complement or $2$-complement.
Furthermore, a klt singularity of absolute coregularity $1$ admits an $N$-complement with $N$ at most 6.
This extends the classic classification
of $A,D,E$-type klt surface singularities
to arbitrary dimensions.
Similar results are proved in the case of coregularity $2$.
In the course of the proof, we prove 
a novel canonical bundle formula for 
pairs with bounded relative coregularity.
In the case of coregularity at least $3$,
we establish analogous statements under the assumption of the index conjecture
and the boundedness of B-representations.
\end{abstract}

\setcounter{tocdepth}{1} 
\tableofcontents

\section{Introduction}

Fano varieties and 
Calabi--Yau varieties are two of the three
building blocks of algebraic varieties.
In the former case,
the canonical divisor is anti-ample,
while in the latter case it is numerically trivial.
In this article, we study the coregularity
of Fano and Calabi--Yau varieties.
This invariant measures the dimension of the dual complexes corresponding to log Calabi--Yau structures on the variety.
We show that if the coregularity is at most two, we can control the index of a Calabi--Yau variety
and the complements of a Fano variety.
In~\cite{Mor24}, the third named author relates various problems about Fano varieties
with the concept of coregularity.

\subsection{Log Calabi--Yau pairs}
A log Calabi--Yau pair
$(X,B)$ is a projective log canonical pair
for which $K_X+B\equiv 0$.
By the abundance conjecture, which is known in this special case~\cite{Gon13}, it is also known that $K_X+B\sim_\qq 0$.
The index of $(X,B)$ is the smallest positive integer $I$ for which $I(K_X+B)\sim 0$.
It is conjectured that the index $I$ 
of $(X,B)$
admits an upper bound depending on the dimension of $X$
and the set of coefficients of $B$.
This is known as the index conjecture.
For instance, if $(X,B)$ is two dimensional
and the coefficients of $B$ are standard (i.e., of the form $1-\frac{1}{m}$ for some $m\in \zz_{>0}$), then $I(K_X+B)\sim 0$ for some $I\leq 66$ (see, e.g., \cite[Theorem 4.11]{Ish00}).
The bound $66$ is optimal and it can be obtained by considering non-symplectic finite actions on K3 surfaces~\cite{MO98}.
In~\cite{ETW22}, the authors exhibit a sequence of klt Calabi--Yau varieties $X_d$ with index $i_d$ that grows doubly exponentially with the dimension $d$.
The {\em coregularity} of a log Calabi--Yau pair
$(X,B)$ is defined to be $\dim X - \dim \mathcal{D}(X,B)-1$.
Here, $\mathcal{D}(X,B)$ is the {\em dual complex} of $(X,B)$. This is a pseudo-manifold that encodes the combinatorial data of log canonical centers of a dlt modification of $(X,B)$. The dimension $\dim \mathcal{D}(X,B)$ is independent of the chosen dlt modification, so it is an intrinsic invariant of $(X,B)$.
In the following subsection, we present theorems regarding the index of log Calabi--Yau pairs of coregularity $0$ and $1$.

\subsection{Index and coregularity of log Calabi--Yau pairs} Firstly, we study 
log Calabi--Yau pairs of coregularity 0 and 1.
We use the language of generalized pairs
as in~\cite{Bir19,Fil20}. 
This gives a larger scope for the theorems
and also facilitates inductive arguments. 
In the case of generalized log Calabi--Yau pairs of coregularity $0$, the following theorem is proved in~\cite{FMM22}.

\begin{introthm}\label{introthm:index-coreg-0}
Let $(X,B,\bM.)$ be a projective generalized log Calabi--Yau pair\footnote{A generalized log Calabi--Yau pair is a generalized lc pair $(X,B,\bM.)$ with $K_X+B+\bM X.\sim_\qq 0$.} of coregularity $0$ and let be $\lambda$ a positive integer.
Assume that $\lambda(K_X+B+\bM X.)$ is Weil.
Then, we have that $2\lambda(K_X+B+\bM X.)\sim 0$.
\end{introthm}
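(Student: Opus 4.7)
The plan is to pass to a dlt modification and then reduce the desired index bound to a topological fact about the dual complex. Throughout, write $n=\dim X$ and $D=\lambda(K_X+B+\bM X.)$, so that $D$ is a Weil divisor with $D\equiv 0$; the goal is to show $2D\sim 0$.

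First, I take a $\qq$-factorial dlt modification $\pi\colon (Y,B_Y,\bM Y.)\to (X,B,\bM X.)$. Since $\pi$ is crepant and $\pi_*\oo_Y=\oo_X$, pullback identifies $H^0(X,mD)$ with $H^0(Y,m\pi^{*}D)$ for every positive integer $m$, so it suffices to prove $2\lambda(K_Y+B_Y+\bM Y.)\sim 0$ on the dlt model, which still has coregularity $0$. Equivalently, $\mathcal{D}(Y,B_Y)$ is a closed pseudo-manifold of dimension $n-1$ and $(Y,B_Y)$ carries a zero-dimensional log canonical center.

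Second, I form the index-one cover of $\lambda(K_Y+B_Y+\bM Y.)$. Let $I$ denote its Cartier index, i.e., the least positive integer with $I\lambda(K_Y+B_Y+\bM Y.)\sim 0$, and let $p\colon \tilde Y\to Y$ be the associated cyclic cover of degree $I$, étale in codimension one. Pulling back crepantly produces a generalized log Calabi--Yau pair $(\tilde Y,\tilde B,\tilde\bM.)$ of coregularity $0$ on which $\lambda(K_{\tilde Y}+\tilde B+\tilde\bM.)\sim 0$. Since $p$ is étale over the generic point of every log canonical stratum of $(Y,B_Y)$, it induces a finite unramified cover of dual complexes $\mathcal{D}(\tilde Y,\tilde B)\to \mathcal{D}(Y,B_Y)$ of degree $I$, on which the Galois action of $\zz/I\zz$ is by deck transformations.

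Third, I would invoke the structural theorems of Koll\'ar and collaborators for dual complexes of coregularity-$0$ log Calabi--Yau pairs: any such dual complex is a closed pseudo-manifold of dimension $n-1$ whose fundamental group has order at most $2$ — trivial in the orientable case, and $\zz/2\zz$ in the non-orientable case, where the double cover corresponds to the orientation cover. Connected finite topological covers of such a complex therefore have degree dividing $2$, which forces $I\leq 2$ and yields $2\lambda(K_Y+B_Y+\bM Y.)\sim 0$, as desired.

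The main obstacle is this last step: one must verify that the fundamental-group / orientability dichotomy for coregularity-$0$ dual complexes continues to hold in the generalized pair setting, and that the dual-complex map $\mathcal{D}(\tilde Y,\tilde B)\to \mathcal{D}(Y,B_Y)$ induced by the index-one cover really is a topological covering with deck group $\zz/I\zz$. Both points require carefully tracking the moduli part $\bM.$ through divisorial adjunction to strata and through the dlt modification, so that the crepant/orientation structure on each face of $\mathcal{D}$ glues consistently and the index-one cover is compatible with restriction to every log canonical center.
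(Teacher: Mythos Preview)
Your third step has a genuine gap: the assertion that $\pi_1(\mathcal{D}(Y,B_Y))$ has order at most $2$ is simply false. Already for $(\pp^2,L_1+L_2+L_3)$ with three general lines the dual complex is a cycle of three edges, homeomorphic to $S^1$, with $\pi_1=\zz$. More generally, Koll\'ar--Xu show that in the coregularity-$0$ case $\mathcal{D}$ is a closed pseudo-manifold with the rational cohomology of a sphere, but nothing in the literature forces $|\pi_1|\leq 2$. Hence even granting that the index-one cover induces a genuine covering map on $\mathcal{D}$ (itself not fully justified over strata of dimension $\geq 2$, where the torsion line bundle need not restrict trivially), a $\pi_1$-bound is not available to force $I\leq 2$. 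The argument you are reaching for does not bound \emph{all} covers of $\mathcal{D}$; it must identify the \emph{specific} cover produced by the index-one construction.

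The argument of \cite{FMM22} does this through orientability rather than $\pi_1$. On the top two layers of $\mathcal{D}$ the line bundle $\lambda(K_Y+B_Y+\bM Y.)$ is canonically trivialised (the $0$-dimensional strata are points; the $1$-dimensional strata are copies of $(\pp^1,\{0\}+\{\infty\})$); an orientation of the pseudo-manifold is precisely a coherent system of signs for the Poincar\'e residues, and it globalises to a nowhere-vanishing section, giving index $\lambda$. In the non-orientable case one passes to the orientation double cover, which yields $2\lambda$. This singles out the orientation class in $H^1(\mathcal{D};\zz/2)$, not all of $\pi_1$. The present paper takes a different route altogether: it recovers Theorem~\ref{introthm:index-coreg-0} through its inductive machinery (Proposition~\ref{prop:cbf-implies-index} at $c=0$, via Koll\'ar--Xu models, Mori fiber spaces, and the lifting results Theorems~\ref{thm:lifting-fibration-gen-pairs}, \ref{thm:lifting-fibration-pairs}, \ref{thm:lifting-fibration-pairs-2}), and in the final assembly also cites \cite[Theorem~1]{FMM22} directly for the base case.
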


Note that in order to have a linear equivalence
$D\sim 0$, the divisor $D$ must be Weil.
Hence, multiplying $K_X+B+\bM X.$ by $\lambda$ is
indeed needed to compute its index.
In~\cite[Example 7.4]{FMM22}, the authors give an example for which 
$(X,B)$ is log Calabi--Yau of coregularity $0$,
$B$ is a Weil divisor, 
$2(K_X+B)\sim 0$,
and $K_X+B$ is not linearly equivalent to 0.
Hence, the factor $2\lambda$ in the previous theorem is optimal. 
Indeed, the factor $2$ is often related to the orientability of the pseudo-manifold $\mathcal{D}(X,B)$ (see~\cite[\S~5]{FMM22}).
In~\cite{FMM22}, the authors use topological methods
and birational geometry to prove the previous theorem.
In this article, we recover this statement using birational geometry and the theory of complements.

Our next theorem deals with the index
of log Calabi--Yau pairs of coregularity $1$.
This is a generalization of the previous statement
to the case of coregularity $1$.

\begin{introthm}\label{introthm:index-coreg-1}
Let $(X,B,\bM.)$ be a projective generalized log Calabi--Yau pair of coregularity $1$ and $\lambda$ be a positive integer.
Assume the two following conditions hold:
\begin{itemize}
    \item the generalized pair $(X,B,\bM.)$ has Weil index $\lambda$; and 
    \item the variety $X$ is rationally connected or $\bM.=0$.
\end{itemize}
Then, we have that
$I(K_X+B+\bM X.)\sim 0$
for $I=m\lambda$ with $m\leq 120\lambda$.
\end{introthm}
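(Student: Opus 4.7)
The plan is to reduce to a one-dimensional log canonical center of coregularity zero via generalized adjunction, apply Theorem 1 there, and then lift the resulting linear equivalence back to $X$ using the theory of complements and a bound on the relevant B-representation.

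First I would pass to a $\mathbb{Q}$-factorial dlt modification $\pi\colon(Y,B_Y,\mathbf{M})\to(X,B,\mathbf{M})$. Since $\pi^*(K_X+B+\mathbf{M}_X)=K_Y+B_Y+\mathbf{M}_Y$ and the coregularity and the Weil index are preserved, it suffices to prove the statement on $Y$. Because the coregularity is $1$, the minimal dlt strata are curves; fix such a stratum $S\subset Y$. Divisorial generalized adjunction produces a generalized log Calabi--Yau pair $(S,B_S,\mathbf{M}_S)$ on $S$. Minimality of $S$ as a dlt center forces every coefficient of $B_S$ to be strictly less than $1$ (otherwise one could intersect with another coefficient-$1$ component to obtain a smaller dlt stratum inside $S$), so either $S\cong\mathbb{P}^1$ and $(S,B_S,\mathbf{M}_S)$ has coregularity $0$, or $S$ is an elliptic curve with $B_S=0$.

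Next I would bound the index on $S$. In the rational alternative, Theorem 1 applied in dimension one yields $2\lambda_S(K_S+B_S+\mathbf{M}_S)\sim 0$, where $\lambda_S$ is the Weil index on $S$; the divisorial adjunction formula then controls $\lambda_S$ in terms of $\lambda$ and the standard denominators that appear in $B_S$, contributing a bounded multiplicative factor. In the elliptic alternative, either $\mathbf{M}_S=0$ and $K_S\sim 0$ outright, or the hypothesis that $X$ is rationally connected is used to force the moduli part $\mathbf{M}_S\in\mathrm{Pic}^{0}(S)$ to have torsion order bounded explicitly in terms of $\lambda$ (this is where rational connectedness kills the possible continuous variation).

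The crucial step is then to lift the vanishing from $S$ back to $Y$. Given a bound $m_S$ with $m_S\lambda(K_Y+B_Y+\mathbf{M}_Y)|_S\sim 0$, the goal is to produce an analogous bound on $Y$. The standard approach combines Kawamata--Viehweg vanishing on ideal sheaves of unions of dlt strata, which gives surjectivity of restriction at the level of global sections of sufficiently divisible multiples, with a bound on the B-representation governing how sections on the various dlt strata are glued by the birational self-maps. Under the assumption that $X$ is rationally connected or $\mathbf{M}=0$, this B-representation has finite image of explicitly bounded order. The novel canonical bundle formula for pairs of bounded relative coregularity mentioned in the abstract likely enters precisely here, organizing the lifting through a fibration whose generic fiber is the one-dimensional stratum and whose base carries a coregularity-$0$ structure to which Theorem 1 applies.

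The main obstacle will be this lifting step together with the bookkeeping that yields the explicit factor $120\lambda$ (so that the overall index is at most $120\lambda^{2}$). The factor $2$ from Theorem 1 on $\mathbb{P}^1$, a factor up to $12$ from Kodaira-type elliptic considerations, a possible factor from non-symplectic automorphisms of the minimal center, the twist by $\lambda$ from the Weil index, and the B-representation contribution must all combine without slack. Coregularity $1$ is exactly the borderline where these finite invariants can still be corralled without invoking the index conjecture, which is why the argument is expected to work unconditionally here, in contrast to the higher-coregularity statements made conditionally in the abstract.
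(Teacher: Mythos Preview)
Your high-level instinct is right: the proof ultimately reduces to an explicit index bound on a one-dimensional pair, and the constant $120\lambda$ comes from a direct case analysis on $\mathbb{P}^1$. But the architecture you propose has two genuine gaps.

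First, you cannot lift the linear equivalence from a one-dimensional minimal stratum $S$ directly back to the ambient $Y$ in one step. The paper's argument is an induction on $\dim X$ (with $c=1$ fixed), restricting at each stage to a \emph{codimension-one} component of $\lfloor B\rfloor$, not to a curve. After passing to a Koll\'ar--Xu model $X\to Z$ (so $\dim Z\leq 1$), one runs a suitable MMP to land on a Mori fiber space $X'\to W$ in which some divisorial component $S\subset\lfloor B'\rfloor$ is horizontal and ample over $W$; the short exact sequence $0\to\mathcal{O}_{X'}(I(K_{X'}+B'+\mathbf{M}_{X'})-S)\to\mathcal{O}_{X'}(I(K_{X'}+B'+\mathbf{M}_{X'}))\to\mathcal{O}_S(\cdots)\to 0$ together with relative Kawamata--Viehweg (or Koll\'ar's torsion-freeness) gives the lift \emph{from $S$ to $X'$}, and one then repeats in dimension $\dim X-1$. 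Your ``surjectivity of restriction to unions of dlt strata'' is too coarse to produce this; the Mori-fiber-space geometry is what makes the relevant $R^1$ vanish. When $S$ is not normal (this can happen when $\mathbf{M}=0$ and one runs the $(K_X+B-S)$-MMP), the slc machinery and the B-representation bound enter to control the index of the semi-log canonical pair on $S$ before lifting.

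Second, your description of the canonical bundle formula is inverted. It is not used to organize a fibration whose \emph{fiber} is the one-dimensional stratum; rather, in the case where $\{B\}+\mathbf{M}_X$ is trivial on the general fiber of the Koll\'ar--Xu fibration $X\to Z$, one writes $q(K_X+B+\mathbf{M}_X)\sim q\pi^*(K_{\mathbb{P}^1}+B_{\mathbb{P}^1}+\mathbf{N}_{\mathbb{P}^1})$ with $q=2\lambda$ (this value of $q$ comes from the coregularity-$0$ complement bound of Theorem~\ref{introthm:index-coreg-0} applied to the relative setting). The base $Z$ is the curve, and the explicit $m\leq 120\lambda$ is then read off from the possible configurations of $(\mathbb{P}^1,B_{\mathbb{P}^1},\mathbf{N}_{\mathbb{P}^1})$ with $B_{\mathbb{P}^1}$ having coefficients in $D_\lambda$ and $2\lambda\mathbf{N}_{\mathbb{P}^1}$ integral: one quotients by $\mathrm{Aut}(\mathbb{P}^1,B_{\mathbb{P}^1})$ and checks that the denominators appearing are bounded by $120\lambda$. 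Your ``Kodaira-type elliptic considerations'' and the factor $12$ do not enter; there is no elliptic base in this case because the Koll\'ar--Xu base is dominated by a Fano-type morphism.
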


We emphasize that the previous theorem does not hold if $X$ is not rationally connected
and $\bM.$ is non-trivial.
For instance, we can let $X$ be an elliptic curve
and $\bM.$ be an $I$-torsion point in $\mathrm{Pic}^0(X)$.
Then, we have that 
$I(K_X+\bM X.)\sim I\bM X. \sim 0$ is minimal
and $(X,\bM .)$ is a generalized log Calabi--Yau pair of coregularity $1$.
Note that this is not an issue if $X$ is rationally connected.
In this case,
the torsion of components of the b-nef divisor is controlled by their Weil index.
Theorem~\ref{introthm:index-coreg-0}
and Theorem~\ref{introthm:index-coreg-1}
are still valid if the coefficients of $B$ belong to a set of rational numbers satisfying the DCC condition.
This follows from the global ascending chain condition for generalized log Calabi--Yau pairs
with bounded coregularity (see~\cite[Theorem 2]{FMP22}).
Finally, in the case of coregularity $1$, we obtain the following statement.

\begin{introthm}\label{introthm:index-coreg-1-standard}
Let $(X,B,\bM.)$ be a projective generalized log Calabi--Yau pair of coregularity $1$.
Assume that the following conditions hold:
\begin{itemize}
    \item the coefficients of $B$ are standard;
    \item the divisor $2\bM .$ is b-Cartier; and
    \item the variety $X$ is rationally connected
    or $\bM.=0$.
\end{itemize}
Then, we have that $I(K_X+B+\bM X.)\sim 0$ for some $I\in \{1,2,3,4,6\}$.
\end{introthm}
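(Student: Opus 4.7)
The strategy is to reduce to log Calabi--Yau structures on curves via adjunction to a minimal log canonical center, where the classical enumeration of solutions to the Calabi--Yau equation in standard coefficients forces the index to lie in $\{1,2,3,4,6\}$, and then to lift the torsion back to $X$.

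First, I would pass to a $\qq$-factorial dlt modification $(Y,B_Y,\bM.)\to(X,B,\bM.)$, whose dual complex agrees with $\D(X,B)$ and has dimension $\dim X-2$. A minimal log canonical center $W$ of this dlt pair therefore has dimension $1$, i.e.\ is a smooth curve. Applying generalized adjunction---or equivalently the canonical bundle formula for pairs of bounded relative coregularity constructed earlier in the paper---I obtain a generalized log Calabi--Yau pair $(W,B_W,\bM W.)$ of coregularity $0$. By Koll\'ar's theorem on the different in the standard-coefficient setting, $B_W$ again has standard coefficients, and since $2\bM.$ is b-Cartier, so is $2\bM W.$. Moreover, if $X$ is rationally connected then so is $W$ (minimal lc centers of rationally connected varieties are rationally connected), so $W=\pp^1$; otherwise we are in the case $\bM.=0$, and $W$ is allowed to be an elliptic curve.

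Second, I would classify such log Calabi--Yau curves. When $W=\pp^1$ the log Calabi--Yau relation reads
$$\sum_{i}\left(1-\frac{1}{n_i}\right)+\deg \bM W.=2$$
with $n_i\in\zz_{>0}$ and $2\deg\bM W.\in\zz_{\geq 0}$. A direct enumeration yields the classical spherical triples $(2,3,6)$, $(2,4,4)$, $(3,3,3)$, the quadruple $(2,2,2,2)$, and their degenerations with some coefficient equal to $1$, together with their enrichments by a half-integral contribution from $\bM W.$; in every case the order of the class $K_W+B_W+\bM W.$ lies in $\{1,2,3,4,6\}$. When $W$ is an elliptic curve the hypothesis forces $B_W=\bM W.=0$, and the index is $1$.

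Finally, I would lift the torsion order from $W$ back to $X$ using the same inputs that underlie Theorem~\ref{introthm:index-coreg-1}: either the rational connectedness of $X$ bounds the torsion of the b-nef part in terms of its Weil index, or the hypothesis $\bM.=0$ permits a direct torsion-lifting argument on the boundary, so the torsion order of $(W,B_W,\bM W.)$ is inherited by $(X,B,\bM.)$. The main obstacle is this last step, namely producing the \emph{exact} integer $I\in\{1,2,3,4,6\}$ on $X$ rather than merely the coarse bound $120\lambda^2$ of Theorem~\ref{introthm:index-coreg-1}; the combinatorial enumeration on $\pp^1$ is then essentially the classical orbifold classification, and the reduction to the curve is the content of the canonical bundle formula for pairs of bounded relative coregularity announced in the abstract.
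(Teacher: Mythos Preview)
Your outline has a genuine gap in the final lifting step, and the route you take differs from the paper's. The paper's proof (via Proposition~\ref{prop:cbf-implies-index} together with Lemma~\ref{lem:admissible:pp1}) proceeds by induction on the dimension: one passes to a Koll\'ar--Xu model and then either (Case 1.1) applies the canonical bundle formula to the \emph{fibration} $X\to Z$ with $\dim Z=1$ and pulls back, or (Cases 1.2 and 2) runs an MMP to a Mori fiber space and lifts sections from a \emph{prime divisor} $S\subset\lfloor B\rfloor$ using Kawamata--Viehweg vanishing or Koll\'ar's torsion-free theorem (Theorems~\ref{thm:lifting-fibration-gen-pairs}, \ref{thm:lifting-fibration-pairs-2}, \ref{thm:lifting-fibration-pairs}). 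In each case the passage from the lower-dimensional space back to $X$ comes with an explicit cohomological mechanism, and the classification on $\pp^1$ enters only at the base of the induction.

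By contrast, you restrict directly to a minimal log canonical center $W$, a curve sitting in codimension $\dim X-1$, and then assert that the torsion order ``is inherited by $(X,B,\bM.)$''. There is no tool in the paper that does this: Kawamata--Viehweg--type lifting only works from divisors, and the canonical bundle formula you invoke is an operation on a fibration $X\to Z$, not equivalent to divisorial adjunction to a center (your identification of these is incorrect). The admissible-section machinery of \S\ref{sec:slc} does link the lc strata, but Theorem~\ref{dlt-admissible} already \emph{assumes} $I(K_X+B)\sim 0$ on $X$ and only produces a section compatible with the strata; it does not establish the linear equivalence from knowledge on $W$. So your last step is exactly the missing idea. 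A minor correction: by Lemma~\ref{lem:coreg-adjunction} the adjoint pair $(W,B_W,\bN.)$ on a minimal center has coregularity $1$ (equal to $\dim W$), i.e.\ it is klt, not coregularity $0$.
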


\subsection{Fano varieties} 
Given a klt Fano variety $X$,
the anti-canonical divisor $-K_X$ is ample.
Hence, the linear system $|-mK_X|$ is basepoint free for $m$ sufficiently large and divisible. 
In particular, we can find 
an effective divisor $B \in |-mK_X|$ such that the pair $(X,B/m)$ is klt and log Calabi--Yau. 
This means that every Fano variety
admits a log Calabi--Yau structure. 
In~\cite{Bir19}, Birkar showed that an $n$-dimensional Fano variety $X$ admits an $N(n)$-complement, i.e., a boundary $B$ for which
$(X,B)$ is log canonical and $N(n)(K_X+B)\sim 0$.
This can be thought of as an effective log Calabi--Yau structure on $X$.
In~\cite{FMX19}, Filipazzi, Moraga, and Xu proved that a 3-fold that admits a $\qq$-complement\footnote{A $\qq$-complement is a boundary $B$ for which $(X,B)$ is lc and $K_X+B\sim_\qq 0$.}
also admits a $N_3$-complement.

Let $X$ be an $n$-dimensional Fano variety.
We can define the {\em absolute coregularity} to be:
\[ 
\hat{\rm coreg}(X) \coloneqq \min \{ {\rm coreg}(X,B) \mid \text{$(X,B)$ is log Calabi--Yau}
\}.
\] 
By definition, the absolute coregularity of $X$ is at most $n$.
It is expected that a Fano variety of absolute coregularity $c$ admits an $N(c)$-complement (see, e.g.,~\cite[Conjecture 4.1]{Mor24}).
Indeed, following the philosophy of Kawamata's X-method, one expects to lift complements from minimal log canonical centers.
In the case of a Fano variety of absolute coregularity $c$, we can produce minimal log canonical centers having dimension $c$ (on a suitable dlt modification). 
In the following subsection, we present some theorems regarding complements of Fano type varieties
of absolute coregularity $0$ and $1$.

\subsection{Complements and coregularity
of Fano type varieties}

Our main theorem in this direction
states that a Fano type variety of absolute coregularity 0 admits a $1$-complement or a $2$-complement.

\begin{introthm}\label{introthm:Fano-coreg-0}
Let $(X,B,\bM .)$ be a projective generalized Fano type pair 
of absolute coregularity $0$.
Assume that the following conditions hold: 
\begin{itemize}
    \item the coefficients of $B$ are standard; and 
    \item the b-nef divisor $2\bM.$ is b-Cartier.
\end{itemize}
Then, there exists a boundary $B^+ \geq B$ satisfying the following conditions:
\begin{itemize}
    \item the generalized pair $(X,B^+,\bM.)$ is generalized log canonical;
    \item we have that $2(K_X+B^{+}+\bM X.)\sim 0$; and 
    \item the equality ${\rm coreg}(X,B^+,\bM.)=0$ holds.
\end{itemize}
\end{introthm}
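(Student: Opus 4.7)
The plan is to construct $B^+$ by first producing an auxiliary $\qq$-complement $(X,\Gamma,\bM.)$ of coregularity $0$ and then promoting it to a genuine $2$-complement by combining Theorem~\ref{introthm:index-coreg-0} with a lifting-of-complements argument on a dlt modification.

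Since $\hat{\rm coreg}(X,B,\bM.)=0$, I would first choose an effective $\qq$-divisor $\Gamma\geq B$ such that $(X,\Gamma,\bM.)$ is a generalized log Calabi--Yau pair of coregularity $0$. Pass to a $\qq$-factorial generalized dlt modification $\pi\colon Y\to X$ with $K_Y+\Gamma_Y+\bM Y.=\pi^*(K_X+\Gamma+\bM X.)$. The coregularity hypothesis provides components $E_1,\dots,E_k$ of $\lfloor\Gamma_Y\rfloor$ whose intersection contains a zero-dimensional generalized lc center $Z$, and iterated generalized adjunction along the chain $E_1\supset E_1\cap E_2\supset\dots\supset E_1\cap\dots\cap E_k\supseteq Z$ restricts the log Calabi--Yau structure on $Y$ to the trivial structure on the point $Z$.

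Next, Theorem~\ref{introthm:index-coreg-0} applied to $(X,\Gamma,\bM.)$ yields $2\lambda(K_X+\Gamma+\bM X.)\sim 0$, where $\lambda$ is the Weil index of the pair. The key remaining task is to upgrade this to $2(K_X+B^++\bM X.)\sim 0$ for some $B^+\geq B$ with the same coregularity, i.e., to arrange $\lambda=1$. The most robust route is a lifting-of-complements argument on $Y$: on the point $Z$ a trivial $2$-complement (even a $1$-complement) exists tautologically, and I would lift it up the tower $E_1\cap\dots\cap E_j$ for $j=k,k-1,\dots,1$ by repeated application of Kawamata--Viehweg vanishing on the sheaves $\oo_Y(2(K_Y+\Gamma_Y+\bM Y.))$ restricted to these strata. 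The b-Cartierness of $2\bM.$ ensures the moduli input is integral, the standard coefficients of $B$ keep the multiplier ideals under control, and the Fano-type hypothesis supplies the bigness and nefness needed for the vanishing. Pushing forward by $\pi$ gives $B^+$ satisfying the three required properties: $B^+\geq B$, $(X,B^+,\bM.)$ is generalized log canonical, $2(K_X+B^++\bM X.)\sim 0$, and the $E_i$ survive with coefficient $1$ in $B^+$ so ${\rm coreg}(X,B^+,\bM.)=0$.

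The main obstacle will be guaranteeing that the lift at each stage of the tower is a genuine integral $2$-complement rather than merely a $\qq$-complement. This requires Theorem~\ref{introthm:index-coreg-0} and the standard-coefficient and b-Cartierness assumptions to interact correctly with the generalized adjunction formula, so that the residual structure on each intermediate stratum has coefficients compatible with the integral sheaf $\oo_Y(2(K_Y+\Gamma_Y+\bM Y.))$; verifying this is presumably where the novel canonical bundle formula for pairs of bounded relative coregularity announced in the abstract enters in a crucial way.
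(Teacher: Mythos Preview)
Your proposal has a genuine structural gap. The paper does not lift a $2$-complement ``up the tower'' from a $0$-dimensional stratum through successive intersections $E_1\cap\dots\cap E_j$; instead it argues by induction on $\dim X$, lifting at each step from a single \emph{divisor} $S\subset\lfloor B\rfloor$ via the standard Kawamata--Viehweg mechanism (Proposition~\ref{prop:lifting-from-divisor} and Theorem~\ref{thm:lifting-from-divisor}). Your tower-lifting idea would require each intermediate stratum to be of Fano type with the right coefficient control, and the surjectivity $H^0(E_1\cap\dots\cap E_{j-1})\to H^0(E_1\cap\dots\cap E_j)$ is not what Kawamata--Viehweg gives you directly on these higher-codimension loci; the technique is genuinely codimension-one.

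More seriously, you are missing the step that actually pins the complement index down to $2$. Invoking Theorem~\ref{introthm:index-coreg-0} on $(X,\Gamma,\bM.)$ gives $2\lambda(K_X+\Gamma+\bM X.)\sim 0$ with $\lambda$ the Weil index of $\Gamma$, but $\Gamma$ is an arbitrary $\qq$-complement and its $\lambda$ is uncontrolled; nothing in your argument forces $\lambda=1$. The paper's route is different: it first runs the coefficient-reduction machinery (Theorem~\ref{thm:reduction-of-coefficients}) and uses \cite[Corollary~3]{FMM22} together with Corollary~\ref{cor:pseff-treshold-coreg-0} to show that, for standard coefficients and coregularity $0$, the relevant finite set is exactly $\mathcal{R}_0=\{\tfrac{1}{2},1\}$. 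With this in hand, the inductive lifting-from-a-divisor argument of Theorem~\ref{thm:coreg-0-comp} gives $N(\mathcal{S},d,0,2)\leq N(\mathcal{R}_0,d-1,0,2)$, and the base case $N(\mathcal{S},1,0,2)=2$ finishes it. Finally, your guess that the canonical bundle formula (Theorem~\ref{introthm:cbf-and-coreg}) is the crucial missing ingredient is off: in coregularity $0$ that statement is vacuous, as the paper notes explicitly in the strategy section.
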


In the case that $B=\bM.=0$, the previous theorem says that, for a Fano variety $X$ of absolute coregularity $0$, the linear system $|-2K_X|$ contains an element with nice singularities. 
In particular, the linear system $|-2K_X|$ is non-empty.
In~\cite[\S~8]{Tot24}, Totaro investigates Fano varieties 
with large {\em bottom weight}, which is
the smallest positive integer $m$ for which $H^0(X,-mK_X)\neq 0$.
In particular,~\cite[Theorem 8.1]{Tot24} 
implies the existence of a Fano $4$-fold that does not admit an $m$-complement for $m\leq 1799233$.
This shows that the constant $N(4)$ obtained by Birkar in~\cite{Bir19} is at least $1799233$.
More generally,~\cite[Theorem 8.1]{Tot24}
shows that $N(d)$ grows at least doubly exponentially with $d$.
In contrast to this, our statement shows that a Fano variety of absolute coregularity $0$ either admits a $1$-complement
or a $2$-complement.
In~\cite[Example 3.15]{Mor24}, the third author gives an example of a Fano surface of absolute coregularity $0$ for which there is no $1$-complement.
Thus, the previous theorem is sharp.
In the case of absolute coregularity $1$, we obtain a similar result.

\begin{introthm}\label{introthm:Fano-coreg-1}
Let $(X,B,\bM.)$ be a projective generalized Fano type pair of absolute coregularity $1$.
Assume that the following conditions hold:
\begin{itemize}
    \item the coefficients of $B$ are standard; and 
    \item the b-nef divisor $2\bM.$ is b-Cartier.
\end{itemize}
Then, there exists a boundary $B^+\geq B$ satisfying the following conditions:
\begin{itemize}
    \item the generalized pair $(X,B^+,\bM .)$ is generalized log canonical;
    \item we have that $N(K_X+B^{+}+\bM.)\sim 0$, where $N\in \{1,2,3,4,6\}$; and 
    \item the equality ${\rm coreg}(X,B^+,\bM.)=1$ holds.
\end{itemize}
\end{introthm}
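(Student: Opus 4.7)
The plan is to reduce to Theorem~\ref{introthm:index-coreg-1-standard} by lifting a complement from a one-dimensional minimal lc center, along the lines of Kawamata's X-method and Birkar's theory of complements. Since the absolute coregularity of $(X,B,\bM.)$ equals $1$, there exists an effective $\Gamma\geq B$ with $(X,\Gamma,\bM.)$ generalized log Calabi--Yau of coregularity exactly $1$. A $\qq$-factorial dlt modification $f\colon (Y,\Gamma_Y,\bM.)\to (X,\Gamma,\bM.)$ then admits a minimal generalized lc center $S\subset Y$ with $\dim S=1$, and it is on $S$ that I aim to produce the $N$-complement and then lift.

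The next step is to apply the canonical bundle formula for pairs with bounded relative coregularity announced in the abstract of this paper, along the adjunction sequence landing on $S$. The output is an induced generalized log Calabi--Yau pair $(S,B_S,\bN.)$ of coregularity $1$ on the curve $S$, whose coefficients remain standard and whose moduli part satisfies that $2\bN.$ is b-Cartier. Crucially, an $N$-complement of $(S,B_S,\bN.)$ should lift to an $N$-complement of $(Y,\Gamma_Y,\bM.)$ and hence descend to an $N$-complement of $(X,B,\bM.)$ via Koll\'ar--Shokurov connectedness together with Kawamata--Viehweg vanishing for generalized pairs.

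Now Theorem~\ref{introthm:index-coreg-1-standard} applies to $(S,B_S,\bN.)$: if $S\simeq \pp^1$ the rational connectedness hypothesis holds trivially; if $S$ is an elliptic curve, then coregularity $1$ on a curve forces the pair to be klt with $B_S=0$, and the refined canonical bundle formula (combined with the rational connectedness of the Fano type ambient variety $X$) forces $\bN.=0$ as well. Either way, there exists $N\in\{1,2,3,4,6\}$ with $N(K_S+B_S+\bN S.)\sim 0$. Lifting and pushing forward produces the desired $B^+\geq B$, and the coregularity equality $\mathrm{coreg}(X,B^+,\bM.)=1$ is automatic: it cannot drop below $1$ since the absolute coregularity of $(X,B,\bM.)$ is $1$, and it cannot exceed $1$ since $B^+$ inherits the boundary structure responsible for the dimension-$1$ lc center $S$.

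The main obstacle is the second step: establishing a version of the canonical bundle formula that simultaneously preserves standardness of coefficients and the $2$-b-Cartier property of the moduli part under adjunction to a minimal lc center of coregularity $1$, while tracking the torsion index finely enough to recover precisely the list $\{1,2,3,4,6\}$ rather than a larger integer. This is where the novel canonical bundle formula advertised in the abstract must do its work. The lifting step in the final paragraph is standard in spirit but still relies on that bundle formula to control the boundary and moduli parts on $Y$ coherently enough for Kawamata--Viehweg vanishing to apply and for the output to remain in the desired class of complements.
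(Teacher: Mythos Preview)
Your outline has a real gap in the lifting step, and it is different in kind from what the paper actually does.

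First, the object you restrict to $S$ is the \emph{log Calabi--Yau} structure $(Y,\Gamma_Y,\bM.)$, not the Fano type datum $(X,B,\bM.)$. Adjunction of $(Y,\Gamma_Y,\bM.)$ to the one-dimensional center $S$ produces a generalized log Calabi--Yau pair $(S,B_S,\bN.)$, and Theorem~\ref{introthm:index-coreg-1-standard} bounds the index of \emph{that} pair. But this does not produce an $N$-complement of $(X,B,\bM.)$: you have not identified a Fano type structure on $S$ coming from $B$ (rather than $\Gamma$) whose complements lift to complements of $(X,B,\bM.)$. The divisor $\Gamma$ need not have standard coefficients, so you cannot simply apply Theorem~\ref{introthm:index-coreg-1-standard} to $(X,\Gamma,\bM.)$ directly either.

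Second, even granting a suitable structure on $S$, the assertion that an $N$-complement on the curve $S$ ``should lift'' to $Y$ by a single application of Kawamata--Viehweg vanishing and connectedness is not justified. The lifting theorem available here (Theorem~\ref{thm:lifting-from-divisor}, via Proposition~\ref{prop:lifting-from-divisor}) lifts complements from a \emph{divisor} to the ambient variety, one codimension step at a time. Bridging codimension $d-1$ in one shot is exactly what the inductive scheme of Proposition~\ref{prop:index-implies-complements} is designed to avoid: one reduces the dimension by one, and at each stage one must re-establish the Fano type property and track the coefficient set. Your reference to the ``canonical bundle formula'' here is also misplaced: Theorem~\ref{introthm:cbf-and-coreg} concerns fibrations $X\to Z$ and is used in the paper to descend to a low-dimensional \emph{base}, not to perform adjunction to a minimal lc center.

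By contrast, the paper's proof first uses the explicit threshold computation of Lemma~\ref{lem:coreg=1-increase-coeff} (any coregularity-$1$ lct or pseudo-effective threshold with standard coefficients and $2\bM.$ b-Cartier that exceeds $5/6$ must equal $1$) together with Theorem~\ref{thm:reduction-of-coefficients} to reduce to the finite coefficient set $\mathcal{R}_1=\{\tfrac{1}{2},\tfrac{2}{3},\tfrac{3}{4},\tfrac{4}{5},\tfrac{5}{6}\}$. Only then does the inductive machinery of Proposition~\ref{prop:index-implies-complements} run, lifting from a divisor at each step and using $q=2$ from Theorem~\ref{introthm:Fano-coreg-0} in the canonical bundle formula case, to obtain $N\mid\mathrm{lcm}(2,N(\mathcal{R}_1,1,1,2))$ and hence $N\in\{1,2,3,4,6\}$. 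Your proposal omits this coefficient reduction entirely, which is what makes the precise list $\{1,2,3,4,6\}$ come out rather than merely some bounded $N$.
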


In Table~\ref{table:dcc}, we summarize the 
unconditional theorems regarding complements of Fano varieties.
The entry $(d,c)$ in the table corresponds to the minimum integer $N_{d,c}$ for which
every $d$-dimensional Fano variety of coregularity $c$ admits at most a $N_{d,c}$-complement. In the blank spots, there set of such Fano varieties is empty.
Due to the work of Liu~\cite{Liu23}, we know that $N_2\leq {10^{10}}^{11}$.
However, it is expected that we can take $N_2=66$.
By the work of Totaro~\cite{Tot24}, we know that $N_d$ grows at least doubly exponentially with $d$.
\begin{table}[ht]
\begin{tabular}{
>{\columncolor[HTML]{FFFFFF}}c 
>{\columncolor[HTML]{FFFFFF}}l 
>{\columncolor[HTML]{FFFFFF}}l lllll
>{\columncolor[HTML]{FFFFFF}}l }
\multicolumn{1}{l}{\cellcolor[HTML]{FFFFFF}Coregularity} & \cellcolor[HTML]{FFFFFF}  & \cellcolor[HTML]{FFFFFF}      & \cellcolor[HTML]{FFFFFF}      & \cellcolor[HTML]{FFFFFF}                             & \cellcolor[HTML]{FFFFFF}      & \cellcolor[HTML]{FFFFFF}      & \cellcolor[HTML]{FFFFFF}{\color[HTML]{FFFFFF} } &                         \\
7                                                        &                           &                               & \cellcolor[HTML]{FFFFFF}      & \cellcolor[HTML]{FFFFFF}                             & \cellcolor[HTML]{FFFFFF}      & \cellcolor[HTML]{FFFFFF}      & \cellcolor[HTML]{C0C0C0}$N_7$                   &                         \\
6                                                        &                           &                               & \cellcolor[HTML]{FFFFFF}      & \cellcolor[HTML]{FFFFFF}                             & \cellcolor[HTML]{FFFFFF}      & \cellcolor[HTML]{C0C0C0}$N_6$ & \cellcolor[HTML]{9B9B9B}$N_7$                   &                         \\
5                                                        &                           &                               & \cellcolor[HTML]{FFFFFF}      & \cellcolor[HTML]{FFFFFF}                             & \cellcolor[HTML]{C0C0C0}$N_5$ & \cellcolor[HTML]{9B9B9B}$N_6$ & \cellcolor[HTML]{C0C0C0}$N_7$                   & {\color[HTML]{FFFFFF} } \\
4                                                        &                           &                               & \cellcolor[HTML]{FFFFFF}      & \cellcolor[HTML]{C0C0C0}$N_4$                        & \cellcolor[HTML]{9B9B9B}$N_5$ & \cellcolor[HTML]{C0C0C0}$N_6$ & \cellcolor[HTML]{9B9B9B}$N_7$                   & {\color[HTML]{FFFFFF} } \\
3                                                        &                           &                               & \cellcolor[HTML]{C0C0C0}$N_3$ & \cellcolor[HTML]{9B9B9B}$N_4$                        & \cellcolor[HTML]{C0C0C0}$N_5$ & \cellcolor[HTML]{9B9B9B}$N_6$ & \cellcolor[HTML]{C0C0C0}$N_7$                   &                         \\
2                                                        &                           & \cellcolor[HTML]{C0C0C0}$N_2$ & \cellcolor[HTML]{9B9B9B}$N_2$ & \cellcolor[HTML]{C0C0C0}{\color[HTML]{000000} $N_2$} & \cellcolor[HTML]{9B9B9B}$N_2$ & \cellcolor[HTML]{C0C0C0}$N_2$ & \cellcolor[HTML]{9B9B9B}$N_2$                   &                         \\
1                                                        &                           & \cellcolor[HTML]{9B9B9B}6     & \cellcolor[HTML]{C0C0C0}6     & \cellcolor[HTML]{9B9B9B}6                            & \cellcolor[HTML]{C0C0C0}6     & \cellcolor[HTML]{9B9B9B}6     & \cellcolor[HTML]{C0C0C0}6                       &                         \\
0                                                        & \cellcolor[HTML]{9B9B9B}1 & \cellcolor[HTML]{C0C0C0}2     & \cellcolor[HTML]{9B9B9B}2     & \cellcolor[HTML]{C0C0C0}2                            & \cellcolor[HTML]{9B9B9B}2     & \cellcolor[HTML]{C0C0C0}2     & \cellcolor[HTML]{9B9B9B}2                       &                         \\
\multicolumn{1}{r}{\cellcolor[HTML]{FFFFFF}}             & \cellcolor[HTML]{FFFFFF}1 & \cellcolor[HTML]{FFFFFF}2     & \cellcolor[HTML]{FFFFFF}3     & \cellcolor[HTML]{FFFFFF}4                            & \cellcolor[HTML]{FFFFFF}5     & \cellcolor[HTML]{FFFFFF}6     & \cellcolor[HTML]{FFFFFF}7                       & Dimension 
\end{tabular}
\caption{Dimension, coregularity, and complements.}
\label{table:dcc}
\end{table}

\subsection{Calabi--Yau pairs of higher coregularity} 
In the case of higher coregularity,
we need to deal with klt Calabi--Yau varieties
of higher dimensions.
We show that
controlling the index of log Calabi--Yau
pairs of coregularity $c$
can be reduced to a problem
about $c$-dimensional klt Calabi--Yau pairs. 
In order to state our next theorem, we need
to introduce two conjectures 
about Calabi--Yau pairs.
The first one is the boundedness of the index
for klt Calabi--Yau pairs.

\begin{introcon}\label{conj:index}
Let $d$ be a positive integer and 
let $\Lambda$ be a set of rational numbers
satisfying the descending chain condition.
There exists a constant $I \coloneqq I(\Lambda,d)$,
satisfying the following property.
For every projective $d$-dimensional klt log Calabi--Yau pair $(X,B)$
such that $B$ has coefficients in $\Lambda$,
we have that 
\[
I(\Lambda,d)(K_X+B)\sim 0.
\]
\end{introcon}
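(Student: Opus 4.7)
The plan is to prove Conjecture \ref{conj:index} by induction on the dimension $d$, using the canonical bundle formula to pass from $(X,B)$ to a lower-dimensional generalized log Calabi--Yau pair, and by invoking Birkar's boundedness of complements \cite{Bir19} whenever the geometry allows. As a preliminary step, I would apply the global ACC for log canonical thresholds (and its generalized-pair analogue) to reduce to the situation in which $\Lambda$ is a finite set of rational numbers; thus the desired bound $I(\Lambda,d)$ need only depend on $d$ and this finite set. The base cases $d=1$ (elementary) and $d=2$ (due to Prokhorov--Shokurov, Alexeev) are already known and provide explicit uniform bounds.

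For the inductive step I would run the MMP and replace $(X,B)$ by a $\qq$-factorial dlt minimal model, which exists by abundance for klt log Calabi--Yau pairs \cite{Gon13}. I would then split into two cases according to whether $X$ is rationally connected. If $X$ is rationally connected, applying \cite{Bir19} produces an $N$-complement $(X,B^+)$ with $N=N(d)$ and $B^+\geq B$; because $B$ is already klt and $K_X+B\sim_\qq 0$, one can compare the two boundaries and, using the finiteness of $\Lambda$, bound the index of $K_X+B$ in terms of $N(d)$ and the denominators appearing in $\Lambda$. Otherwise, consider the MRC fibration $f:X\drar Y$; after birational modification we may assume $f$ is an equidimensional morphism with $Y$ non-uniruled of dimension strictly less than $d$. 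Applying the Ambro--Koll\'ar--Fujino canonical bundle formula, one writes
\[
K_X+B \sim_\qq f^{*}(K_Y+B_Y+\bM Y.),
\]
producing a generalized klt log Calabi--Yau pair $(Y,B_Y,\bM .)$ of dimension $<d$ whose coefficients lie in a DCC set depending only on $\Lambda$ and $d$. The inductive hypothesis (in its generalized-pair formulation, as needed elsewhere in the paper) bounds the index of $K_Y+B_Y+\bM Y.$ by some $I'=I'(\Lambda,d-1)$.

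The hard part will be controlling the discrepancy between the index of $K_X+B$ upstairs and that of $K_Y+B_Y+\bM Y.$ downstairs. This gap is governed precisely by the so-called B-representations, which encode how torsion classes on $Y$ lift, or fail to lift, to torsion classes on $X$; uniformly bounding the orders of these representations is itself a deep open problem, essentially equivalent to controlling the monodromy of the variation of Hodge structure on the smooth locus of $f$. Unconditional bounds are available only in special cases, e.g.\ when the general fibers of $f$ are abelian varieties, K3 surfaces, or more generally Calabi--Yau fiber types with classical finite monodromy. In full generality, the proposal therefore reduces Conjecture \ref{conj:index} in dimension $d$ to the boundedness of B-representations in dimensions up to $d-1$, which is precisely the conditional package that the authors invoke when they treat coregularity $\geq 3$. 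Running the induction in parallel on the index conjecture and on boundedness of B-representations, with the canonical bundle formula as the bridge between the two, is the natural strategy; it is the second input that I expect to be the decisive obstacle.
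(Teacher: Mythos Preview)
The statement you are attempting to prove is labeled \texttt{introcon} in the paper: it is Conjecture~\ref{conj:index}, not a theorem. The paper does \emph{not} prove it. On the contrary, the paper explicitly assumes Conjecture~\ref{conj:index} (together with Conjecture~\ref{conj:b-rep}) in dimension $c$ as a standing hypothesis for its main conditional results (Theorems~\ref{introthm:index-higher-coreg} and~\ref{introthm:Fano-coreg-c}), and remarks that it is only known unconditionally in dimension $\leq 3$. There is therefore no ``paper's own proof'' to compare your proposal against.

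Your proposal is consistent with this: you yourself conclude that the argument only \emph{reduces} Conjecture~\ref{conj:index}$(d)$ to boundedness of B-representations in lower dimension, which is exactly Conjecture~\ref{conj:b-rep} --- another open problem the paper assumes rather than proves. So your write-up is an outline of a conditional reduction, not a proof; you should present it as such.

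There is also a genuine gap in the rationally connected branch. You invoke \cite{Bir19} to produce an $N(d)$-complement of $(X,B)$, but Birkar's theorem applies to \emph{Fano type} varieties, and a rationally connected klt log Calabi--Yau pair with $K_X+B\sim_\qq 0$ need not be of Fano type. (If it were, the conjecture in that case would indeed follow immediately: $B^+-B\geq 0$ and $B^+-B\sim_\qq 0$ force $B^+=B$, hence $N(K_X+B)\sim 0$.) The paper's Lemma~\ref{lem:index-gen-klt} handles something in this vicinity, but it runs a $K_X$-MMP to reach a Mori fiber space and then either uses boundedness of $\epsilon$-lc Fanos (when the base is a point) or the canonical bundle formula (when the base is positive-dimensional) --- and crucially, that lemma \emph{assumes} Conjecture~\ref{conj:index}$(d)$ rather than proving it. Your dichotomy ``rationally connected vs.\ not'' does not by itself give access to Birkar's complement machinery.
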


The previous conjecture is stated in~\cite[Conjecture 2.33]{Bir23}.
The second conjecture is known as the boundedness of B-representations.
It predicts that the birational automorphism group of a log Calabi--Yau pair
acts on the sections of $I(K_X+B)\sim 0$
with bounded order (see, e.g.,~\cite[Conjecture 3.2]{Fuj01}). 

\begin{introcon}\label{conj:b-rep}
Let $d$ and $I$ be two positive integers.
There is a constant $b\coloneqq b(d,I)$
satisfying the following property.
For every projective $d$-dimensional klt log Calabi--Yau pair $(X,B)$
with $I(K_X+B)\sim 0$,
the image of ${\rm Bir}(X,B)$
in $GL(H^0(I(K_X+B)))\simeq \kk^*$ 
is finite and has order at most $b$.
\end{introcon}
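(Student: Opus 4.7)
The plan is to reduce the conjecture to a boundedness statement for the underlying Calabi--Yau variety, combined with a rigidity argument for the action on the one-dimensional space of top forms. First, after passing to a $\qq$-factorial dlt model $(X',B')$ that is crepant birational to $(X,B)$, one has ${\rm Bir}(X,B) = {\rm Bir}(X',B')$, and every element acts by pullback on $H^0(X',\mathcal{O}_{X'}(I(K_{X'}+B')))$. It thus suffices to bound the order of the character $\chi\colon {\rm Bir}(X',B') \to \kk^*$ induced by this action.

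Next, form the cyclic index-one cover $\pi\colon Y \to X'$ associated to the $I$-torsion element $K_{X'}+B'$. The variety $Y$ is klt with $K_Y \sim 0$, and the Galois group $\zz/I\zz$ of $\pi$ acts on $H^0(Y,K_Y)\simeq \kk$ via $\chi$. The action of ${\rm Bir}(X',B')$ lifts to an action on $Y$ of an overgroup $G$ fitting in an exact sequence $1 \to \zz/I\zz \to G \to {\rm Bir}(X',B') \to 1$, so bounding $\chi$ reduces to bounding the analogous character ${\rm Bir}(Y) \to \kk^*$ on the one-dimensional space $H^0(Y,K_Y)$.

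The final ingredient is boundedness. Granting Conjecture~\ref{conj:index} and the expected BAB-type input, the varieties $Y$ arising in this way form a bounded family. In such a family, the automorphism scheme ${\rm Aut}(Y)$ is of finite type with a uniformly bounded number of connected components; the identity component ${\rm Aut}^0(Y)$ acts trivially on $H^0(Y,K_Y)$, since on the abelian variety part a generator of $H^0(Y,K_Y)$ is translation invariant, while a connected affine algebraic group cannot act nontrivially by a character of finite order on a line. Hence $\chi$ factors through the finite component group $\pi_0({\rm Aut}(Y))$, whose order is uniformly bounded in the family, yielding the desired constant $b(d,I)$.

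The principal obstacle is the boundedness input. Without a uniform bound on the Cartier index of $K_Y$, which is exactly Conjecture~\ref{conj:index}, one cannot place all relevant $Y$'s into a single bounded family, and the component count of ${\rm Aut}(Y)$ cannot be controlled uniformly; this explains why the two conjectures are naturally packaged together in the applications later in the paper. A secondary subtlety is that birational, non-biregular automorphisms of $(X,B)$ need not extend biregularly to a fixed dlt model; this is handled by replacing $(X',B')$ within its crepant birational equivalence class by a good minimal model, on which every element of ${\rm Bir}(X,B)$ acts biregularly by standard minimal model program arguments.
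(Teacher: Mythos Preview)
The statement you are attempting to prove is labeled \texttt{introcon} in the paper, i.e., it is a \emph{Conjecture}, and the paper does not prove it. It is stated as a hypothesis that is assumed in dimension $c$ in Theorems~\ref{introthm:index-higher-coreg} and~\ref{introthm:Fano-coreg-c}, and the paper explicitly remarks that Conjecture~\ref{conj:b-rep} is only known up to dimension $2$. There is therefore no proof in the paper to compare your proposal against.

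Since the paper offers no argument, let me comment directly on your sketch. Two steps do not go through as written. First, the index-one cover $\pi\colon Y\to X'$ associated to the torsion line bundle $\mathcal{O}_{X'}(I(K_{X'}+B'))$ satisfies $K_Y+\pi^*B'\sim 0$, not $K_Y\sim 0$; you only get a Calabi--Yau variety with trivial canonical class when $B'=0$. With $B'\neq 0$ you land in the same class of pairs you started with, so nothing has been gained. Second, the claim that after passing to a good minimal model ``every element of $\mathrm{Bir}(X,B)$ acts biregularly'' is false in general: for a klt Calabi--Yau pair there is no reason for $B$-birational maps to become isomorphisms on any fixed model, and indeed the whole difficulty of B-representations is that one must control genuinely birational self-maps. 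Your reduction to bounding $\pi_0(\mathrm{Aut}(Y))$ therefore does not capture $\mathrm{Bir}$. Finally, even granting Conjecture~\ref{conj:index}, boundedness of klt Calabi--Yau varieties in a fixed dimension is itself an open problem, so the ``BAB-type input'' you invoke is not available; you correctly flag this, but it means the argument is conditional on strictly more than what the paper assumes.
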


Now, we can state our main theorem
about the index of log Calabi--Yau pairs.
It shows that the boundedness of the index 
of generalized log Calabi--Yau pairs of 
coregularity $c$
can be reduced to the previous two conjectures in dimension $c$.

\begin{introthm}\label{introthm:index-higher-coreg}
Let $c$ and $p$ be positive integers
and $\Lambda\subset \qq$ be a set satisfying
the descending chain condition.
Assume that Conjecture~\ref{conj:index}
and Conjecture~\ref{conj:b-rep} 
hold in dimension $c$.
There is a constant $I \coloneqq I(\Lambda,c,p)$ satisfying the following property.
Let $(X,B,\bM.)$ be a projective generalized log Calabi--Yau pair of coregularity $c$ for which:
\begin{itemize}
    \item either $X$ is rationally connected or $\bM.=0$;
    \item the coefficients of $B$ are contained in $\Lambda$; and
    \item the b-nef divisor $p\bM.$ is b-Cartier.
\end{itemize}
Then, we have that
$I(K_X+B+\bM X.)\sim 0$.
\end{introthm}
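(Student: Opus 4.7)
The plan is to reduce the global index bound on $X$ to the $c$-dimensional klt case, where Conjecture~\ref{conj:index} and Conjecture~\ref{conj:b-rep} apply directly. First, I would pass to a dlt modification $\pi\colon (Y, B_Y, \bM.) \to X$. Since $\mathrm{coreg}(X, B, \bM.) = c$, there exists a log canonical center $W \subset Y$ of dimension exactly $c$; by minimality, generalized adjunction produces a klt log Calabi--Yau structure $(W, B_W, \bN.)$. A standard adjunction argument, combined with the b-Cartier index $p$ of $\bM.$, shows that the coefficients of $B_W$ lie in a DCC set $\Lambda' = \Lambda'(\Lambda, p)$ and that $p'\bN.$ is b-Cartier for some bounded $p' = p'(p)$.

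After absorbing the moduli contribution on $W$ into the boundary (enlarging $\Lambda'$ in a controlled way), Conjecture~\ref{conj:index} in dimension $c$ supplies a constant $I_1 = I_1(\Lambda', c)$ with $I_1(K_W + B_W + \bN W.) \sim 0$. To propagate this linear equivalence to the neighbouring log canonical strata of $Y$, I would use that the dual complex $\mathcal{D}(Y, B_Y, \bM.)$ organizes the minimal lc centers and their intersections, and that gluing the pluricanonical section across this complex is governed by the birational automorphism action on $H^0(W, I_1(K_W + B_W + \bN W.))$; Conjecture~\ref{conj:b-rep} controls this B-representation up to a bounded exponent $b = b(c, I_1)$. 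This produces a globally compatible section of $bI_1(K_Y + B_Y + \bM Y.)$ on the entire union of lc centers of $Y$. The novel canonical bundle formula for pairs with bounded relative coregularity (developed elsewhere in the paper) then transfers this section to the whole of $Y$; the rational connectedness hypothesis on $X$ (or the vanishing $\bM. = 0$) is exactly what ensures that the moduli b-divisor appearing in this canonical bundle formula has bounded torsion, without which, as the elliptic curve example following Theorem~\ref{introthm:index-coreg-1} demonstrates, the index cannot be controlled. Pushing forward by $\pi$ concludes.

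The main obstacle is the middle step: producing a pluricanonical section on the union of all log canonical centers by gluing across the dual complex. Matching sections on different minimal lc centers requires carefully tracking how the B-representations on each center interact with generalized adjunction at their intersections, and the bound obtained must be uniform in $\Lambda$, $c$, and $p$. The remaining ingredients, namely the dlt modification, generalized adjunction to the minimal center, and the final pushforward, are relatively standard, but this central lifting step is where the full strength of both conjectures and of the new canonical bundle formula must be simultaneously deployed.
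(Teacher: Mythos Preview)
Your proposal has a genuine gap at the lifting step, and it misidentifies the role of the canonical bundle formula. Theorem~\ref{introthm:cbf-and-coreg} takes as input a Fano type fibration $X\to Z$ with all log canonical centers horizontal and produces a controlled generalized pair on the \emph{base} $Z$; it is a tool for descending, not for lifting a section from the union of log canonical centers back to the ambient variety. To lift from $\lfloor B_Y\rfloor$ to $Y$ you would need a vanishing of the type $H^1(Y,\mathcal{O}_Y(I(K_Y+B_Y+\bM Y.)-\lfloor B_Y\rfloor))=0$, and there is no reason for this to hold on an arbitrary dlt modification: $\lfloor B_Y\rfloor$ is not ample in general. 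The paper's actual argument (Proposition~\ref{prop:cbf-implies-index}) never attempts to lift from the full non-klt locus. Instead it passes to a Koll\'ar--Xu model so that $\lfloor B\rfloor$ is big over a base $Z$ of dimension at most $c$, and then inducts on $\dim X$: either $\{B\}+\bM X.$ is trivial on the general fiber, in which case the canonical bundle formula reduces the index computation to the $c$-dimensional base (where Lemma~\ref{lem:index-gen-klt} applies), or one runs an MMP to a Mori fiber space and lifts the section from a \emph{single} divisorial component $S$ that is ample over the Mori fiber base, where Kawamata--Viehweg or Koll\'ar's torsion-free theorem give the required vanishing (Theorems~\ref{thm:lifting-fibration-gen-pairs}, \ref{thm:lifting-fibration-pairs-2}, \ref{thm:lifting-fibration-pairs}).

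A second issue: the dual-complex gluing you describe is essentially the admissible-section machinery of \S\ref{sec:slc}, but in the paper that machinery \emph{assumes} the index bound (Theorem~\ref{dlt-admissible} takes $I(K_X+B)\sim 0$ as a hypothesis) and is only invoked for pairs with $\bM.=0$, to handle the possibly non-normal divisor $S$ arising in Case~2. Conjecture~\ref{conj:b-rep} is stated for klt pairs, not generalized pairs, so your plan to ``absorb the moduli contribution on $W$ into the boundary'' before applying it is not innocent; the paper avoids this by separating the $\bM.\neq 0$ case (where $X$ is rationally connected and one uses the canonical bundle formula or induction via a normal $S$) from the $\bM.=0$ case (where the slc machinery is available).
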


\subsection{Fano varieties of higher coregularity.}
In the case of Fano varieties of higher absolute coregularity, we get the boundedness of complements 
of Fano type varieties with bounded absolute coregularity
subject to the previous conjectures.

\begin{introthm}\label{introthm:Fano-coreg-c}
Let $c$ and $p$ be positive integers
and $\Lambda\subset \qq$ be a closed set
satisfying the descending chain condition.
Assume that Conjecture~\ref{conj:index}
and Conjecture~\ref{conj:b-rep} hold in dimension $c$. 
There is a constant $N \coloneqq N(\Lambda,c,p)$ satisfying the following.
Let $(X,B,\bM.)$ be a projective generalized Fano type pair 
of absolute coregularity $c$ for which:
\begin{itemize}
    \item the divisor $B$ has coefficients in $\Lambda$; and
    \item the b-nef divisor $p\bM.$ is Cartier where it descends. 
\end{itemize}
Then, there exists a boundary 
$B^+\geq B$ satisfying the following conditions:
\begin{itemize}
    \item the generalized pair $(X,B^+,\bM.)$ is generalized log canonical;
    \item we have that $N(K_X+B^{+}+\bM X.)\sim 0$; and 
    \item the equality
    ${\rm coreg}(X,B^+,\bM.)=c$ holds.
\end{itemize}
\end{introthm}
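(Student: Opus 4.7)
The strategy parallels the unconditional coregularity $0$ and $1$ cases (Theorems~\ref{introthm:Fano-coreg-0} and~\ref{introthm:Fano-coreg-1}), with Theorem~\ref{introthm:index-higher-coreg} playing the role of the bounded-index input. The plan is to first produce a $\qq$-complement of $(X,B,\bM.)$ whose coefficients are controlled and whose coregularity realizes the absolute coregularity $c$, and then apply the conditional index theorem to bound the order of this complement.

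For the first step, since $(X,B,\bM.)$ is generalized Fano type, we may run a $(K_X+B+\bM X.)$-MMP to put ourselves in a situation where standard Shokurov--Birkar complement arguments produce a $\qq$-complement $B'\geq B$ with $K_X+B'+\bM X.\sim_\qq 0$ and $(X,B',\bM.)$ generalized log canonical. Among such $\qq$-complements we then choose $B^+\geq B$ minimizing the coregularity; by the definition of absolute coregularity, $\mathrm{coreg}(X,B^+,\bM.)=c$. The more delicate point is to control the coefficients of $B^+$. Starting from $B$ with coefficients in the closed DCC set $\Lambda$, one uses the global ACC for generalized log Calabi--Yau pairs of bounded coregularity (\cite[Theorem~2]{FMP22}) together with a Birkar-type lifting of complements from a minimal log canonical center produced on a dlt modification of $(X,B^+,\bM.)$ --- this is where the canonical bundle formula for pairs with bounded relative coregularity announced in the abstract enters --- to guarantee that $B^+$ can be taken with coefficients in a DCC set $\Lambda'=\Lambda'(\Lambda,c,p)$.

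For the second step, we apply Theorem~\ref{introthm:index-higher-coreg} to $(X,B^+,\bM.)$. The hypotheses are met: $X$ is rationally connected since it is of Fano type; $B^+$ has coefficients in the DCC set $\Lambda'$; and $p\bM.$ is b-Cartier on any sufficiently high model where the original descent holds. Therefore there exists an integer $N=N(\Lambda',c,p)=N(\Lambda,c,p)$ such that $N(K_X+B^++\bM X.)\sim 0$. Combined with the generalized log canonical property and the coregularity equality from the first step, this yields the desired complement.

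The main obstacle is the construction in the first step: simultaneously producing a $\qq$-complement that (i) lies in a controlled DCC set of coefficients, (ii) realizes the absolute coregularity $c$, and (iii) is amenable to the index theorem. Coefficient control requires combining the theory of complements with the global ACC in a way compatible with the choice of minimal lc center, while realizing the coregularity depends on a careful dlt modification and the new canonical bundle formula of the paper. Once this construction is secured, Theorem~\ref{introthm:index-higher-coreg} does the rest, and the conditional nature of the statement (dependence on Conjectures~\ref{conj:index} and~\ref{conj:b-rep} in dimension $c$) is inherited directly from that theorem.
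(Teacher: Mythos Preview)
Your proposal has a genuine gap in Step 1, and it misrepresents the logical structure of the argument.

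The core problem is that you propose to first pick an arbitrary $\qq$-complement $B^+$ realizing coregularity $c$ and then ``control its coefficients'' so as to feed it into Theorem~\ref{introthm:index-higher-coreg}. But controlling the coefficients of $B^+$ \emph{is} the theorem: there is no mechanism to take an arbitrary $\qq$-complement and massage its coefficients into a fixed DCC set. The global ACC of \cite{FMP22} says that a log Calabi--Yau pair of bounded coregularity whose coefficients already lie in a DCC set in fact has coefficients in a finite subset; it does not produce such a complement from one with uncontrolled coefficients. Your description of ``Birkar-type lifting from a minimal lc center'' would output a \emph{new} complement, not the $B^+$ you already selected, and carrying that out rigorously is precisely the inductive machinery the paper builds.

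The paper's argument (Proposition~\ref{prop:index-implies-complements}) does \emph{not} try to directly manufacture a $\qq$-complement with controlled coefficients. After reducing $B$ to finite coefficients (Theorem~\ref{thm:reduction-of-coefficients}), one takes a dlt modification of an arbitrary $\qq$-complement of coregularity $c$, discards the fractional part of the new boundary, and runs a $-(K+B+\bM.)$-MMP. The point is that the only divisors adjoined have coefficient $1$, so the coefficient set is preserved, and one lands on a model where $-(K_X+B+\bM X.)$ is semi-ample with ${\rm coreg}(X,B,\bM.)=c$. One then splits on the dimension of the ample model $W$: the index theorem is invoked only when $\dim W=0$; when $\dim W=\dim X$ one lifts complements from a divisor via Theorem~\ref{thm:lifting-from-divisor} and induction on dimension; in the intermediate case one uses the canonical bundle formula (Theorem~\ref{introthm:cbf-and-coreg}) to descend to an exceptional Fano base of dimension at most $c$. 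Your sketch collapses all of this into ``apply Theorem~\ref{introthm:index-higher-coreg}'', which only handles the first case.

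Finally, you treat Theorem~\ref{introthm:index-higher-coreg} as an independent input, but in the paper Theorems~\ref{introthm:index-higher-coreg}, \ref{introthm:Fano-coreg-c}, and~\ref{introthm:cbf-and-coreg} are established by a joint induction on $c$ (Propositions~\ref{prop:complements-imply-cbf}, \ref{prop:cbf-implies-index}, \ref{prop:index-implies-complements}); this mutual dependence is essential and is absent from your outline.
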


We stress that Conjecture~\ref{conj:index} is known up to dimension $3$.
On the other hand, Conjecture~\ref{conj:b-rep} is known up to dimension $2$.
In particular, both
Theorem~\ref{introthm:index-higher-coreg} and Theorem~\ref{introthm:Fano-coreg-c}
hold unconditionally in the case of coregularity 2.

\subsection{Coregularity and the canonical bundle formula}
The canonical bundle formula plays a fundamental role in the theory of complements. 
In many cases, we need to lift complements from the base of a log Calabi--Yau fibration.
We will prove the following statement
that relates the canonical bundle formula with the coregularity.

\begin{introthm}\label{introthm:cbf-and-coreg}
Let $c$ and $p$ be nonnegative integers
and $\Lambda \subset \qq$ be a set 
satisfying the descending chain condition.
Assume that Conjecture~\ref{conj:index} and Conjecture~\ref{conj:b-rep}
hold in dimension at most $c-1$.
There exists a set $\Omega \coloneqq \Omega(\Lambda,c,p) \subset \qq$ 
satisfying the descending chain condition 
and a positive integer $q \coloneqq q(\Lambda,c,p)$,
satisfying the following property.
Let $\pi \colon X\rightarrow Z$ be a Fano type morphism between projective varieties.
Let $(X,B,\bM.)$ be a projective generalized pair of coregularity $c$ for which:
\begin{itemize}
    \item the generalized pair $(X,B,\bM.)$ is log Calabi--Yau over $Z$;
    \item the coefficients of $B$ belong to $\Lambda$;
    \item the b-nef divisor $p\bM.$ is Cartier where it descends;
    \item the b-nef divisor $\bM.$ is $\qq$-trivial on the general fiber of $X\rightarrow Z$;
    \item every generalized log canonical center of $(X,B,\bM.)$ is a log canonical center of $(X,B)$; and
    \item every log canonical center of $(X,B)$ dominates $Z$.
\end{itemize}
Then, we can write
\[
q(K_X+B+\bM X.) \sim q\pi^*(K_Z+B_Z+\bN Z.),
\]
where the following conditions hold:
\begin{itemize}
    \item $B_Z$ is the discriminant part of the adjunction for $(X,B,\bM.)$ over $Z$;
    \item the coefficients of $B_Z$ belong to $\Omega$; and
    \item the b-nef divisor $q\bN.$ is b-Cartier.
\end{itemize}
\end{introthm}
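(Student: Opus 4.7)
The plan is to prove this by applying a canonical bundle formula of Ambro--Koll\'ar--Filipazzi type to the fibration $\pi \colon X \to Z$, using the coregularity hypothesis to control both the discriminant and the moduli part by induction. If $\dim Z = 0$ the statement is vacuous, so assume $\dim Z \geq 1$. The key structural observation is that, since every generalized log canonical center of $(X,B,\bM.)$ is an lc center of $(X,B)$ and every such center dominates $Z$, the minimal generalized lc centers of $(X,B,\bM.)$, which have dimension equal to the coregularity $c$, all dominate $Z$. Hence $\dim Z \leq c$, and a general fiber $(F,B_F,\bM F.)$ is a generalized log Calabi--Yau pair of coregularity $c - \dim Z \leq c - 1$ with $\bM F. \sim_\qq 0$.

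For the discriminant part, I would write $B_Z = \sum_D (1 - \lct_D(X, B+ \bM X.; \pi^*D))\, D$ in the standard way. Using Koll\'ar's divisorial adjunction together with the ACC for (generalized) log canonical thresholds for coefficients in a DCC set, the coefficients of $B_Z$ belong to a DCC set $\Omega = \Omega(\Lambda, c, p) \subset \qq$. This part does not use the conjectures and proceeds as in the usual theory of generalized adjunction.

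For the moduli part, the goal is to produce a uniform $q$ with $q \bN.$ b-Cartier. I would first pass to a $\qq$-factorial dlt modification of $(X,B,\bM.)$ and choose a minimal generalized lc center $S$, which has dimension $c$ and dominates $Z$. By coregularity adjunction, $(S, B_S, \bM S.)$ carries a klt log Calabi--Yau structure over $Z$, and restricting to a general fiber $S_z$ of the induced morphism $S \to Z$ gives a klt log Calabi--Yau pair of dimension $c - \dim Z \leq c - 1$, with boundary coefficients in a DCC set determined by $\Lambda$ and $p$ via Koll\'ar's formulas. Conjecture~\ref{conj:index} in dimension $\leq c - 1$ then yields a uniform index $I$ with $I(K_{S_z} + B_{S_z}) \sim 0$, and Conjecture~\ref{conj:b-rep} bounds the image of $\mathrm{Bir}(S_z, B_{S_z})$ acting on $H^0(I(K_{S_z} + B_{S_z}))$. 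Feeding these two bounds into Prokhorov--Shokurov's analysis of the moduli divisor (in the generalized, refined form of Filipazzi--Mauri--Moraga) upgrades the a priori $\qq$-linear equivalence coming from the canonical bundle formula to the honest linear equivalence $q(K_X + B + \bM X.) \sim q\pi^*(K_Z + B_Z + \bN Z.)$ with $q\bN.$ b-Cartier.

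The main obstacle is the descent step from $(X,B,\bM.)$ to a klt log Calabi--Yau pair of dimension at most $c-1$ to which Conjectures~\ref{conj:index} and~\ref{conj:b-rep} apply: one must simultaneously (i) keep coefficients in a DCC set depending only on $\Lambda$ and $p$, (ii) preserve the log Calabi--Yau property on the fiber $S_z$, and (iii) ensure the structure there is a genuine klt pair and not only a generalized one. The first is handled by ACC/DCC for adjunction, while the latter two rely crucially on the hypotheses that every generalized lc center is already an lc center of $(X,B)$, ensuring $\bM.$ does not create new strata, and that $\bM.$ is $\qq$-trivial on the general fiber, so that it disappears after adjunction to $S_z$.
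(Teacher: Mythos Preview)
Your approach diverges substantially from the paper's, and there is a genuine gap in the step where you pass from bounds on the minimal log canonical center $S$ back to the moduli b-divisor $\bN.$ of the original fibration.

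The paper does \emph{not} apply Conjectures~\ref{conj:index} and~\ref{conj:b-rep} directly to any fiber. Instead, those conjectures feed into Theorem~\ref{introthm:Fano-coreg-c}$(c-1)$ (bounded complements in coregularity $c-1$), and the canonical bundle formula is proved from that. Concretely: over every closed point $z\in Z$, one manufactures a vertical lc center by adding the pull-back of a divisor through $z$, which drops the coregularity to at most $c-1$; Theorem~\ref{introthm:Fano-coreg-c}$(c-1)$ then produces a relative $q$-complement $(X,B^+)$ over $z$ for a fixed $q=q(\Lambda,c,p)$. Since $K_X+B\sim_\qq 0/Z$, one has $B^+=B+t\pi^*z$ near the fiber, so $1-t$ is the discriminant coefficient and the set $\Omega$ is read off from the arithmetic of $q$-complements (Lemma on the ACC set $\Sigma_q$). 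The integrality of $q\bN Z.$ is then forced by $q(K_X+B^+)\sim 0$ and the fact that $q B^+$ is integral; repeating the argument on a resolution of $Z$ yields that $q\bN.$ is b-Cartier. The moduli part of $\bM.$ is handled separately by writing it as the pull-back of a b-nef divisor on $Z$ using that general fibers are rationally connected.

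Your proposal has two concrete problems. First, the fiber $(S_z,B_{S_z},\bN S_z.)$ is a \emph{generalized} pair: the hypothesis that $\bM.$ is $\qq$-trivial on the general fiber of $X\to Z$ gives only $\bN S_z.\sim_\qq 0$, not $\bN S_z.=0$, so Conjecture~\ref{conj:index} (stated for pairs) does not apply directly; you would need rational connectedness of $S_z$ and an argument like Lemma~\ref{lem:index-gen-klt}, neither of which is free. Second, and more seriously, even granting index and B-representation bounds on $S_z$, the Prokhorov--Shokurov machinery would at best control the moduli b-divisor of the fibration $(S,B_S,\bN.)\to Z$, not $\bN.$ for $(X,B,\bM.)\to Z$. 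Identifying the two requires the comparison theorem of Floris--Filipazzi--Gongyo type, which holds only after base change by the Stein factorization of $S\to Z$, and you give no mechanism to descend back. The paper's complement-based argument sidesteps this entirely by never leaving the fibration $X\to Z$.
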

 
\subsection{Kawamata log terminal singularities} 
Finally, we show some applications
of the previous theorems of this article
to the study of klt singularities. 
We obtain the following result
about klt singularities
of absolute coregularity $0$.

\begin{introthm}
\label{introthm:klt-0}
Let $(X;x)$ be a klt singularity of
absolute coregularity $0$.
Then, there exists a boundary 
$\Gamma$ through $x$
satisfying the following conditions:
\begin{itemize}
\item we have that $2(K_X+\Gamma)\sim 0$
on a neighborhood of $x$; and
\item the coregularity of $(X,\Gamma)$
at $x$ is equal to $0$.
\end{itemize}
In particular, the pair $(X,\Gamma;x)$ is strictly log canonical at $x$.
\end{introthm}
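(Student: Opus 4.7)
The plan is to reduce the problem to the already-proved Fano statement, Theorem~\ref{introthm:Fano-coreg-0}, via a Koll\'ar-component extraction at $x$. Concretely, I would first produce a plt blow-up $\pi \colon Y \to X$ extracting a single Koll\'ar component $E$ over $x$, so that $(Y,E)$ is plt, $-E$ is $\pi$-ample, and $-(K_Y+E)$ is $\pi$-ample. Writing $(K_Y+E)|_E = K_E + B_E$ by adjunction, $(E,B_E)$ is a klt log Fano pair, and since $\pi$ is a birational contraction of a $\qq$-factorial pair, $E$ is of Fano type.

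The next step is to match the absolute coregularity across the extraction. Using the hypothesis that $\hat{\rm coreg}(X;x)=0$, pick a boundary $B$ defined near $x$ realizing this, i.e.\ $(X,B)$ is log Calabi--Yau around $x$ of coregularity $0$; by passing to a suitable dlt modification, we may arrange that $\pi$ extracts a Koll\'ar component $E$ that is itself an lc place of $(X,B)$ and is compatible with the dlt structure witnessing the coregularity. By the usual adjunction of coregularity along an lc place, the absolute coregularity of the Fano pair $(E,B_E)$ is then $0$. At this point I would invoke Theorem~\ref{introthm:Fano-coreg-0} (with the standard-coefficient hypothesis, which is satisfied by the boundary $B_E$ produced by adjunction from a Koll\'ar component) to obtain a divisor $B_E^+ \geq B_E$ with $(E,B_E^+)$ generalized log canonical, $2(K_E+B_E^+) \sim 0$, and ${\rm coreg}(E,B_E^+)=0$.

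The final step is to lift this $2$-complement from $E$ to $Y$ near $E$, and then push it down to $X$. Since $(Y,E)$ is plt and $-E$ is $\pi$-ample, the standard lifting-of-complements machinery of Birkar (combined with Kawamata--Viehweg vanishing on $Y$ applied to $-E - \lceil 2(K_Y+E) - (\text{Cartier part})\rceil$) produces a boundary $\Delta$ on $Y$ such that $\Delta|_E = B_E^+ - B_E$, $(Y, E+\Delta)$ is lc in a neighborhood of $E$, and $2(K_Y+E+\Delta) \sim 0$ near $E$. Setting $\Gamma := \pi_*\Delta$ and pushing the linear equivalence forward gives $2(K_X+\Gamma) \sim 0$ in a neighborhood of $x$, and the coregularity is preserved because the unique $0$-dimensional lc stratum of $(E,B_E^+)$ corresponds, after contracting $E$, to the lc center $x$ of $(X,\Gamma)$; this also yields the strict log canonicity at $x$.

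The main obstacle I anticipate is the second step: ensuring that the Koll\'ar component extracted at $x$ can be chosen so that the Fano pair $(E,B_E)$ actually inherits absolute coregularity $0$, rather than jumping up. This hinges on choosing the extraction compatibly with a dlt modification of some log Calabi--Yau structure realizing $\hat{\rm coreg}(X;x)=0$, and on the comparison of dual complexes under adjunction to a divisor containing the minimal lc center. The lifting step in the third paragraph is delicate but standard in the coregularity-zero regime; the coregularity-bookkeeping in the extraction is the place where the most care is needed.
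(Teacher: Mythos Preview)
Your proposal is correct and follows essentially the same route as the paper: extract a Koll\'ar component $E$ via a plt blow-up chosen so that $E$ is a log canonical place of a pair $(X,\Gamma_0)$ witnessing absolute coregularity $0$, apply Theorem~\ref{introthm:Fano-coreg-0} to the Fano pair $(E,\mathrm{Diff}_E(0))$, and lift the resulting $2$-complement via the standard Kawamata--Viehweg argument (the paper cites steps 4--9 of Proposition~\ref{prop:lifting-from-divisor}). The concern you flagged in your final paragraph---that the extracted $E$ must be an lc place of the coregularity-$0$ structure---is exactly the point the paper handles by invoking \cite[Lemma~1]{Xu14}, which guarantees such a compatible plt blow-up exists.
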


Analogously, we obtain a similar result in the context of klt singularities
of absolute coregularity $1$.

\begin{introthm}
\label{introthm:klt-1}
Let $(X;x)$ be a klt singularity
of absolute coregularity $1$. 
Then, there exists a boundary
$\Gamma$ through $x$ 
satisfying the following conditions:
\begin{itemize}
\item we have that $N(K_X+\Gamma)\sim 0$
for some $N\in \{1,2,3,4,6\}$; and 
\item the coregularity of $(X,\Gamma)$ at $x$ is equal to $1$.
\end{itemize} 
In particular, the pair $(X,\Gamma;x)$ is strictly log canonical at $x$.
\end{introthm}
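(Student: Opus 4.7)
The plan is to reduce this local statement to the global Fano complement result Theorem~\ref{introthm:Fano-coreg-1} via a Koll\'ar-type plt blow-up, transfer complements by adjunction, lift them back to $X$, and match coregularities. To begin, I would extract a Koll\'ar component over $x$: a projective birational morphism $\pi\colon Y\to X$ whose exceptional locus is a single prime divisor $E$ mapping to $x$, with $(Y,E)$ plt and $-(K_Y+E)$ $\pi$-ample. By adjunction $(K_Y+E)|_E=K_E+B_E$, the pair $(E,B_E)$ is a projective Fano type pair whose boundary coefficients are standard, since the different of a plt pair has standard coefficients.

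Second, I would argue that $(E,B_E)$ has absolute coregularity at most $1$. Starting from a log Calabi--Yau structure $(X,B)$ realizing $\hat{\rm coreg}(X;x)=1$ and passing to a suitable dlt modification, one chooses $E$ to correspond to a log canonical place through $x$ sitting on a minimal lc stratum; adjunction to $E$ then produces a log Calabi--Yau structure on $(E,B_E)$ of coregularity at most $1$. Applying Theorem~\ref{introthm:Fano-coreg-0} or Theorem~\ref{introthm:Fano-coreg-1} to $(E,B_E)$ now yields $B_E^+\geq B_E$ with $(E,B_E^+)$ log canonical, $N(K_E+B_E^+)\sim 0$ for some $N\in\{1,2,3,4,6\}$, and ${\rm coreg}(E,B_E^+)\leq 1$.

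Third, I would lift this $N$-complement from $E$ to a neighborhood of $\pi^{-1}(x)$ in $Y$. Since $-(K_Y+E)$ is $\pi$-ample and $(Y,E)$ is plt, a standard lifting of complements through plt centers (as in Birkar's arguments) produces a boundary $\Gamma_Y\geq 0$ on $Y$ with no $E$-component such that $\Gamma_Y|_E=B_E^+-B_E$ and $N(K_Y+E+\Gamma_Y)\sim 0$ in a neighborhood of $E$. Setting $\Gamma\coloneqq\pi_*(E+\Gamma_Y)$ gives a boundary on $X$ passing through $x$ with $N(K_X+\Gamma)\sim 0$ near $x$, and $E$ is an lc place of $(X,\Gamma)$ over $x$.

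Finally, because $E$ is an lc place of $(X,\Gamma)$ above $x$, the strata of the dual complex at $x$ coincide with the strata of a dlt modification of $(E,B_E^+)$, so ${\rm coreg}(X,\Gamma;x)={\rm coreg}(E,B_E^+)\leq 1$; the reverse inequality is forced by $\hat{\rm coreg}(X;x)=1$, yielding equality. The main obstacle will be the second step: choosing the Koll\'ar component $E$ so that adjunction genuinely preserves the absolute coregularity at $x$ rather than dropping it to $0$. This delicate matching will rely on the canonical bundle formula with coregularity control provided by Theorem~\ref{introthm:cbf-and-coreg}, together with an analysis of lc places on a dlt model of a minimizing log Calabi--Yau structure through $x$.
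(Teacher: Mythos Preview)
Your approach is correct and essentially identical to the paper's: extract a plt blow-up whose exceptional divisor $E$ is a log canonical place of a coregularity-$1$ structure $(X,\Gamma_0;x)$, apply Theorem~\ref{introthm:Fano-coreg-1} to the Fano pair $(E,\mathrm{Diff}_E(0))$, and lift the resulting $N$-complement via the standard plt lifting (steps 4--9 of Proposition~\ref{prop:lifting-from-divisor}). Your final worry is misplaced and the invocation of Theorem~\ref{introthm:cbf-and-coreg} is unnecessary: once $E$ is chosen as a log canonical place of $(X,\Gamma_0;x)$ (which is exactly what Xu's lemma guarantees), adjunction of the pullback of $\Gamma_0$ already exhibits a coregularity-$1$ log Calabi--Yau structure on $(E,\mathrm{Diff}_E(0))$, so its absolute coregularity is at most $1$; and it cannot be $0$, since lifting a coregularity-$0$ complement from $E$ would produce one on $(X;x)$, contradicting $\hat{\rm coreg}(X;x)=1$.
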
 

The two previous theorems generalize the $A,D,E$-type classification 
of klt surface singularities to higher dimensional klt singularities.
The $A$-type klt surface singularities are the toric surface singularities.
In the Gorenstein case, these are the $A_n$-singularities.
The $A$-type singularities are the klt surface singularities
of absolute coregularity $0$ 
that admit a $1$-complement.
The $D$-type klt surface singularities
are quotients of toric singularities via an involution.
In the Gorenstein case, these are $D_n$-singularities.
The $D$-type singularities are 
the surface singularities of absolute coregularity $0$ that admit a $2$-complement but no $1$-complement.
The $E$-type klt surface singularities
are the {\em exceptional surface singularities}.
In the Gorenstein case, these are exactly
the $E_6$, $E_7,$ and $E_8$ singularities.
These are the klt surface singularities
that have absolute coregularity $1$.
They admit a $3$, $4$ or $6$-complement but not a $1$-complement or $2$-complement.
The aforementioned results about complements and coregularity of $2$-dimensional klt singularities are proved in~\cite[Section 3.2]{Mor24}.

\subsection{On the techniques of the article}
The theory of complements was introduced by Shokurov in the early 2000s (see, e.g.,~\cite{Sho00}),
although these objects already appeared in the work of Keel and M\textsuperscript{c}Kernan on quasi-projective surfaces~\cite{KM98}.
In this work, complements were called {\em tigers}.
Since then, it has been understood that vanishing theorems, the canonical bundle formula, and the minimal model program
are indispensable tools to produce complements on a variety (see, e.g.,~\cite{PS09,PS11,KM98}).
Using the aforementioned techniques,
the language of generalized pairs, 
and the boundedness of exceptional Fano varieties,
Birkar proved the boundedness of complements
for Fano varieties~\cite{Bir19}.
Since then, the theory of complements has been expanded to Fano pairs with more general coefficients~\cite{FM20,HLS19,Sho20}, 
to log canonical Fano varieties~\cite{Xu20}, and to
log Calabi--Yau $3$-folds~\cite{FMX19}.
In this article, we study the theory of complements through the lens of the coregularity.
The techniques are similar to the ones in the aforementioned papers.
However, in order to obtain novel results,
we need to re-prove several parts of this theory
keeping track of this new invariant.
The fact that our results are independent of dimension imposes an extra difficulty.
At the same time, we will need to use some recent results regarding the coregularity and its connections to singularities~\cite{FMP22}
and Calabi--Yau pairs~\cite{FMM22}.
\color{black}

\subsection*{Acknowledgements}
This project was initiated in the \href{https://web.math.princeton.edu/~jmoraga/Learning-Seminar-MMP}{Minimal Model Program Learning Seminar}.
The authors would like to thank Mirko Mauri for many discussions that led to some of the ideas of this article.

\section*{Strategy of the proof}

In this section, we give a sketch of the proof of the main theorems of this article, namely
Theorem~\ref{introthm:index-higher-coreg}, 
Theorem~\ref{introthm:Fano-coreg-c}, and
Theorem~\ref{introthm:cbf-and-coreg}. 
The other theorems will be obtained using the same strategy and an analysis of the coefficients throughout the proof. 
We write Theorem $X(c)$ for Theorem $X$ in coregularity at most $c$.
Theorem~\ref{introthm:index-higher-coreg}$(0)$ follows from~\cite[Theorem 1]{FMM22},
while Theorem~\ref{introthm:cbf-and-coreg}$(0)$ is trivial.
We will prove the following four statements:

\begin{itemize} 
\item[(i)] Theorem~\ref{introthm:Fano-coreg-c}$(0)$ holds;
\item[(ii)] Theorem~\ref{introthm:Fano-coreg-c}$(c-1)$ implies 
Theorem~\ref{introthm:cbf-and-coreg}$(c)$;
\item[(iii)] Theorem~\ref{introthm:cbf-and-coreg}$(c)$ implies
Theorem~\ref{introthm:index-higher-coreg}$(c)$; and
\item[(iv)] Theorem~\ref{introthm:index-higher-coreg}$(c)$ and
Theorem~\ref{introthm:cbf-and-coreg}$(c)$ imply Theorem~\ref{introthm:Fano-coreg-c}$(c)$.
\end{itemize} 

We write Theorem $X(d,c)$ for Theorem $X$ in dimension at most $d$ and coregularity at most $c$. 
For instance, Theorem~\ref{introthm:Fano-coreg-c}$(d,c)$ is known by~\cite[Theorem 1.7]{Bir19}. 
Thus, we may write $N(\Lambda,d,c,p)$ for the positive integer provided by Theorem~\ref{introthm:Fano-coreg-c}$(d,c)$. We may suppress $\Lambda$ and $p$ from the notation whenever they are clear from the context.
Our aim is to show that, once we fix $c$,
there is an upper bound $N(c)$ for all $N(d,c)$.
Similarly, Theorem~\ref{introthm:cbf-and-coreg}$(d,c)$ is known due to~\cite[Proposition 6.3]{Bir19}. 
We write $q(d,c)$ for the constant provided by Theorem~\ref{introthm:cbf-and-coreg}$(d,c)$ and we show that $q(d,c)$ is bounded above by a constant only depending on $c$.
Theorem~\ref{introthm:index-higher-coreg}$(d,c)$ is not known even if we fix the dimension.
In this case, the aim is twofold: to prove the existence of an upper bound $I(d,c)$ for fixed dimension $d$, and to show that all the $I(d,c)$ are bounded above in terms of $c$.
The proof of implication (i) is similar to that of (iv). 
In the following three subsections, we sketch the proofs of (ii), (iii), and (iv).

\subsection*{A canonical bundle formula}
Let $(X,B,\bM.)\rightarrow Z$ be as in the setting of Theorem~\ref{introthm:cbf-and-coreg}$(d,c)$.

First, we show that for every $z\in Z$ closed, 
we may find a relative $N(c-1)$-complement for $(X,B,\bM.)$ over $z$.
We pick an effective Cartier divisor $E$ on $Z$ through $z$.
We let $t$ be the largest positive number for which $(X,B+t\pi^*E,\bM.)$
has generalized log canonical singularities around $z$.
By the connectedness theorem, the coregularity of $(X,B+t\pi^*E,\bM.)$ is at most $c-1$.
Indeed, since all the generalized log canonical centers of $(X,B,\bM.)$ are horizontal over $Z$, introducing a vertical generalized log canonical center will strictly decrease the coregularity. 
Taking a dlt modification of $(X,B+t\pi^*E,\bM.)$, we can produce a new generalized pair
$(X',B',\bM.)$ such that $z$ is contained in the image of a component $S$ of $\lfloor B'\rfloor$.
By perturbing the coefficients, we may assume that the coefficients of $B'$ belong to $\Lambda$.
We replace $(X,B,\bM.)$ by $(X',B',\bM.)$ and assume there is a vertical divisorial log canonical center $S$.
Notice that this replacement changes the crepant birational class of the original generalized pair $(X,B,\bM.)$ in order to create a new log canonical center.
Running a suitable MMP over $Z$, we reduce to the case in
which $S\rightarrow \pi(S)\ni z$ is a Fano type morphism.
The generalized pair $(S,B_S,\bM S.)$ obtained by adjunction
of $(X,B,\bM.)$ to $S$ has dimension at most $d-1$
and coregularity at most $c-1$.
If $q(S)=z$, then we may apply Theorem~\ref{introthm:Fano-coreg-c}$(d-1,c-1)$
to conclude that $(S,B_S,\bM S.)$ admits an $N(d-1,c-1)$-complement.
Since we are assuming Theorem~\ref{introthm:Fano-coreg-c}$(c-1)$, this is also an $N(c-1)$-complement.
If $\dim \pi(S)\geq 1$, then we construct an $N(c-1)$-complement by induction on the dimension.
In any case, we obtain an $N(c-1)$-complement for $(S,B_S,\bM S.)$ around $z$.
Using Kawamata--Viehweg vanishing, we lift such complement
to an $N(c-1)$-complement for $(X,B,\bM.)$ around the fiber of $z\in Z$.
The details of this proof can be found in \S~\ref{sec:rel}, where we discuss relative complements.
In \S~\ref{subsec:lifting}, we explain how to lift complements from divisors.

Now, we can assume the existence of bounded relative $N(c-1)$-complements for $(X,B,\bM.)\rightarrow Z$.
The existence of bounded relative complements 
allows us to find $q$ in the statement of
Theorem~\ref{introthm:cbf-and-coreg}$(d,c)$.
Indeed, we can take $q(d,c)=N(c-1)$.
The main difficulty is to control 
the coefficients of $\bN Z.$ in the model where it descends.
In order to do so, we will cut the base with hypersurfaces to reduce to the case in which the base is a curve.
Once the base is a curve $C$, we will study the coefficients of a relative complement over a closed point $c\in C$.
Analyzing the coefficients of this relative complement will
show that $q\bN Z.$ is integral. 
A similar argument on a suitable resolution 
$Z'\rightarrow Z$ proves that 
$q\bN Z'.$ is integral, where $Z'$ is a model on which $\bN Z.$ descends.
This finishes the proof of Theorem~\ref{introthm:cbf-and-coreg}$(c)$ using
Theorem~\ref{introthm:Fano-coreg-c}$(c-1)$.
The details of this proof are given in \S~\ref{sec:cbf}.

\subsection*{Index of log Calabi--Yau pairs} Let $(X,B,\bM.)$ be a generalized log Calabi--Yau pair as in Theorem~\ref{introthm:index-higher-coreg}$(d,c)$. 
By~\cite[Theorem 2]{FMP22}, 
we may assume that the set $\Lambda$ in the statement of the theorem is finite.
By~\cite[Theorem 4.2]{FS23}, we can replace $(X,B,\bM.)$ by a Koll\'ar--Xu model (see \S~\ref{subsec:kollar-xu-models}).
We have a Fano type contraction
$q\colon X\rightarrow Z$
such that all the generalized log canonical centers of $(X,B,\bM.)$ dominate the base $Z$.
Both the index and Weil index of $K_X+B+\bM X.$ are preserved by the Koll\'ar--Xu model.
We will proceed with the proof in three different cases, 
depending on the dimension of the base of the Koll\'ar--Xu model
and the coefficient sets of $B$ and $\bM.$.
We argue by induction on the dimension $d$ of $X$.
The base of the induction is the klt case which follows by Conjecture~\ref{conj:index} (see, e.g., Lemma~\ref{lem:index-gen-klt}).\\

\noindent\textit{Case 1}: The moduli part $\bM.=0$.

In this case, we know that $K_X+B\sim_\qq 0$. 
We do not assume that $X$ is rationally connected.
We choose a component $S$ of $\lfloor B\rfloor$ 
and run a $(K_X+B-\epsilon S)$-MMP.
This minimal model program terminates with a Mori fiber
space on which $S$ is ample over the base.
Observe that the variety $S$ may not be normal.
However, the pair obtained by adjunction
$(S,B_S)$ is semi-log canonical.
In~\S~\ref{sec:slc}, we show that the statement of
Theorem~\ref{introthm:index-higher-coreg}$(d-1,c)$ holds for semi-log canonical pairs provided
it holds for log canonical pairs.
To do so, we use Conjecture~\ref{conj:index} and Conjecture~\ref{conj:b-rep}
in dimension $c$.
Here, it is crucial that we work with pairs instead of generalized pairs.
Indeed, Conjecture~\ref{conj:b-rep} is not known for generalized pairs, even in dimension $2$.
Hence, we conclude that $I(\Lambda,d-1,c,0)(K_S+B_S)\sim 0$.

Thus, in this case, we conclude that the index of $(X,B)$ is at most $I(\Lambda,d-1,c,0)$.\\

\noindent\textit{Case 2:} The base $Z$ of the Koll\'ar--Xu model
is positive dimensional,
the divisor $\{B\}+\bM X.$ is trivial on the general fiber of $X\to Z$, 
and the b-nef divisor $\bM.$ is non-trivial.

In this case, we apply Theorem~\ref{introthm:cbf-and-coreg}$(c)$.
We can write 
\begin{equation}\label{eq:cbf-intro}
q(K_X+B+\bM X.) \sim q \pi^*(K_Z+B_Z+\bN Z.).
\end{equation} 
The variety $Z$ has dimension at most $c$.
The integer $q$ only depends on $\Lambda,c$ and $p$.
The coefficients of $B_Z$ belong to a DCC set that only depends on $\Lambda,c$ and $p$. 
The b-nef divisor $q\bN.$ is b-Cartier.
The variety $X$ is rationally connected, as 
we are assuming that the b-nef divisor $\bM.$ is non-trivial.
Hence, $Z$ is also rationally connected.
Let $Z'\rightarrow Z$ be the model where $\bN Z.$ descends.
In particular, $Z'$ is rationally connected.
Note that in general, $\bN Z'.$ may have torsion components.
However, since $Z'$ is rationally connected,
the $q$-th multiple of such torsion components
are linearly equivalent to zero (see~\cite[Corollary 3.9]{FMM22}).
Using Conjecture~\ref{conj:index}, we will show that
the index of $K_Z+B_Z+\bN Z.$ only depends on $\Lambda,c$ and $p$.
Thus, by the linear equivalence~\eqref{eq:cbf-intro},
we conclude that the index of $(X,B,\bM.)$ is bounded above
by a constant $I_0(\Lambda,c,p)$.\\

\noindent\textit{Case 3:} The divisor
$\{B\}+\bM X.$ is non-trivial on the general fiber of $X\rightarrow Z$ and the b-nef divisor $\bM.$ is non-trivial.

We run a $(K_X+\lfloor B\rfloor)$-MMP over $Z$.
Since $K_X+\lfloor B\rfloor$ is not pseudo-effective over $Z$,
this minimal model program terminates with a Mori fiber space
$p\colon X'\rightarrow W$ over $Z$.
We denote by $B'$ the push-forward of $B$ on $X'$.
the divisor $K_{X'}+\lfloor B'\rfloor$ is anti-ample over $W$.
Since $\lfloor B\rfloor$ is big over $Z$, the divisor $\lfloor B'\rfloor$ has a component $S$ that dominates $W$.
By construction, the general fibers of $S\rightarrow W$ are of Fano type.
In this case, $X$ and $X'$ are rationally connected,
as we are assuming that the b-nef divisor $\bM.$ is non-trivial.
Hence, the image $W$ of $X'$ is rationally connected.
Since a general fiber of $S\rightarrow W$ is of Fano type, they are rationally connected.
Thus, $S$ is rationally connected, being the base
and general fibers of $S\rightarrow W$ rationally connected.
In particular, if $(S,B_S+\bM S.)$ is the generalized pair obtained by adjunction, then we know that $I(\Lambda,d-1,c,p)(K_S+B_S+\bM S.)\sim 0$.
Here, we argued by induction on the dimension and used Theorem~\ref{introthm:index-higher-coreg}$(d-1,c)$.
Depending on the dimension of $W$, we either use
Kawamata--Viehweg vanishing or Koll\'ar's torsion-free theorem to conclude that
$I(\Lambda,d-1,c,p)(K_{X'}+B'+\bM X'.)\sim 0$.
Hence, the index of $(X,B,\bM.)$ is at most $I(\Lambda,d-1,c,p)$.
These lifting arguments are explained in 
\S~\ref{subsec:lifting}.\\

In summary, a generalized log Calabi--Yau pair $(X,B,\bM.)$ as in Theorem~\ref{introthm:index-higher-coreg}$(d,c)$
has index at most
\[
\max\{I_0(\Lambda,c,p),I(\Lambda,d-1,c,p),I(\Lambda,d-1,c,0)\}.
\]
Thus, we have that
\[
I(\Lambda,d,c,p) 
\leq
\max\{I_0(\Lambda,c,p),I(\Lambda,d-1,c,p),I(\Lambda,d-1,c,0)\}.
\]
Hence, there is an upper bound for $I(\Lambda,d,c,p)$ which only depends on $\Lambda,c$, and $p$.
This finishes the sketch of the proof 
of Theorem~\ref{introthm:index-higher-coreg}$(c)$ using
Theorem~\ref{introthm:cbf-and-coreg}$(c)$.

\subsection*{Complements on Fano varieties}
Let $(X,B,\bM.)$ be a Fano type pair as in Theorem~\ref{introthm:Fano-coreg-c}$(d,c)$.
In \S~\ref{sec:reduct-finite-coeff}, we reduce to the case in which $\Lambda$ finite.
This is crucial for lifting complements from divisors (see \S~\ref{subsec:lifting}).
By the assumption on the absolute coregularity of $(X,B,\bM.)$, we may find a generalized log Calabi--Yau structure
$(X,B+\Gamma,\bM.)$ of coregularity $c$.
By dimensional reasons and the assumption on the absolute coregularity of $(X,B,\bM.)$, $(X,B+\Gamma,\bM.)$ may be generalized klt only if $d = c$ and $(X,B,\bM.)$ is exceptional;
this case is settled by \cite[Theorem 1.7]{Bir19} in dimension $c$.
Therefore, in the rest of this sketch, we may assume that $(X,B+\Gamma,\bM.)$ is not generalized klt.
Let $(Y,B_Y+\Gamma_Y+E,\bM.)$ be a dlt modification of $(X,B+\Gamma,\bM.)$.
Here, $B_Y$ (resp. $\Gamma_Y$) is the strict transform 
of the fractional part of $B$ (resp. $\Gamma$),
while we set $E=\lfloor B_Y+\Gamma_Y+E \rfloor$.
Since $(X,B+\Gamma,\bM.)$ is not generalized klt, we have $E \neq 0$.
Since $X$ is of Fano type, it easily follows that so is $Y$.
In particular, $Y$ is a Mori dream space.
We run a $-(K_Y+B_Y+E+\bM Y.)$-MMP. 
Note that $-(K_Y+B_Y+E+\bM Y.)$ is a pseudo-effective divisor.
Hence, this minimal model program
must terminate with a good minimal model $Z$.
We let $B_Z$ and $E_Z$ be the push-forwards to $Z$ of $B_Y$ and $E$, respectively.
In order to produce a complement 
for $(X,B)$,
it suffices to produce a complement for $(Z,B_Z+E_Z,\bM Z.)$.
Replacing $(X,B,\bM.)$ by $(Z,B_Z+E_Z,\bM Z.)$, 
we may assume that $-(K_X+B+\bM X.)$ is semi-ample
and ${\rm coreg}(X,B,\bM.)=c$.
Notice that this reduction does not alter the coefficients set for the boundary part of $(X,B,\bM.)$, since the only divisors that may have been introduced in the boundary have coefficient 1.
Furthermore, by the choice of the MMP run, it follows that $E$ cannot be contracted.
In particular, after this reduction, we may assume that $\lfloor B \rfloor \neq 0$.
We will proceed in three different cases depending on the dimension of the ample model $W$ of the divisor $-(K_X+B+\bM X.)$.\\

\noindent\textit{Case 1}: The dimension of $W$ is $0$.

In this case, we have that 
$K_X+B+\bM X.\sim_\qq 0$. 
Hence, producing a complement for $(X,B,\bM.)$
is the same as controlling the index of the generalized pair. 
Thus, the statement follows from 
Theorem~\ref{introthm:index-higher-coreg}$(c)$.\\

\noindent\textit{Case 2}: The dimension of $W$ is $d$.

In this case, we have that $-(K_X+B+\bM X.)$ is semi-ample and big. 
Furthermore, the round-down $\lfloor B\rfloor$
is non-trivial.
We pass to a suitable birational model of $(X,B,\bM.)$ where a component $S$ of $\lfloor B\rfloor$ is of Fano type.
Performing adjunction to $S$, we obtain a log Fano pair of dimension $d-1$ and coregularity $c$.
Using Theorem~\ref{introthm:Fano-coreg-c}$(d-1,c)$, we produce an $N(\Lambda,d-1,c,p)$-complement on $S$ that can be lifted to an $N(\Lambda,d-1,c,p)$-complement of $(X,B)$.\\

\noindent\textit{Case 3}: The dimension of $W$ is positive and strictly less than $d$.

The fibration $\pi\colon (X,B,\bM.)\rightarrow W$ is a log Calabi--Yau fibration for $(X,B,\bM.)$. 
If $\{B\}+\bM.$ is big over $W$, then by perturbing the coefficients of $B$ we reduce to Case 2.
Otherwise, we may replace $W$ with the ample model
of $\{B\}+\bM.$ over $W$.
Doing so, we may assume $\{B\}+\bM.$ is trivial on the general fiber of $X\rightarrow W$.
If all the generalized log canonical centers of $(X,B)$ dominate $W$, then we are in the situation of Theorem~\ref{introthm:cbf-and-coreg}$(c)$.
The generalized pair $(W,B_W,\bN W.)$  induced on the base
is of Fano type and exceptional. 
By~\cite[Theorem 1.7]{Bir19} in dimension $c$ or less, we can find an $N(\Omega,c)$-complement for $(W,B_W,\bN W.)$. 
Here, $\Omega$ only depends on $\Lambda,c$, and $p$.
Then, we can pull the complement back via $\pi$ to obtain an $N(\Omega,c)$-complement for $(X,B,\bM.)$.
Finally, we may assume that $\{B\}+\bM.$ is trivial on the general fiber of $X\rightarrow W$ and there is some component $S\subset \lfloor B\rfloor$ that is vertical over $W$.
In this case, $B_{\rm hor}$ is big over $W$.
Here $B_{\rm hor}$ stands for the sum of the components of $B$ which are horizontal over $W$.
Again, we can perturb the coefficients of $B$ to reduce to Case 2.\\

In summary, a generalized pair $(X,B,\bM.)$ as in Theorem~\ref{introthm:Fano-coreg-c}$(d,c)$ admits an $N$-complement, 
where $N\leq \max\{N(\Omega,c),N(\Lambda,d-1,c,p)\}$.
Thus, we have 
\[
N(\Lambda,d,c,p)\leq \max\{N(\Omega,c),N(\Lambda,d-1,c,p)\}. 
\]
Hence, there is an upper bound for 
$N(\Lambda,d,c,p)$ which only depends on $\Lambda,c$, and $p$.
This finishes the proof of Theorem~\ref{introthm:Fano-coreg-c}$(c)$ using
Theorem~\ref{introthm:index-higher-coreg}$(c)$
and Theorem~\ref{introthm:cbf-and-coreg}$(c)$.

\section{Preliminaries}

We work over
an algebraically closed field $\kk$ of characteristic zero.
Our varieties are connected and quasi-projective
unless otherwise stated.
In this section, we introduce some preliminaries regarding
singularities,
Fano varieties,
Calabi--Yau pairs,
and coregularity.

\subsection{Divisors, b-divisors, and generalized pairs}
In this subsection, we recall some basics about b-divisors and generalized pairs.

\begin{definition}
{\rm Let $X$ be a normal variety. A {\em b-divisor} $\bM.$ on $X$ is a function which associates any birational map $X'\dashrightarrow X$ with an $\rr$-divisor $\bM X'.$ on $X'$. The set of divisors $\{\bM X'. \colon  X'\dashrightarrow X\}$ satisfies the following compatibility condition: if $g \colon  X_1\to X_2$ is a birational morphism over $X$, then $g_*\bM X_1. = \bM X_2.$.
We say that a b-divisor $\bM.$ on $X$ {\em descends} on some birational model $X'$ of $X$ if $\bM X'.$ is $\rr$-Cartier and $\bM.$ is equivalent to $( X'\to X, \bM X'.)$. In other words, for any birational map $h \colon Y\to X'$ over $X$, we have $h^*\bM X'. = \bM Y.$.
In the previous case, we say that $\bM.$ is a {\em b-$\rr$-Cartier divisor}.

Let $X\to Z$ be a projective morphism. The b-divisor $\bM.$ is said to be {\em b-Cartier} (resp. {\em b-nef, b-nef/$Z$}) if $\bM X'.$ is Cartier (resp. nef, relatively nef over $Z$) on some birational model $X'$ over $X$ where $\bM.$ descends.

The {\em b-Cartier closure} of an $\mathbb{R}$-Cartier divisor $M$ is a b-divisor $\bM .$ whose trace on every birational model $f:Y\rightarrow X$ is $f^*M$.}
\end{definition}

\begin{definition}
{\rm Let $X$ be a normal variety and $\pi \colon  X\to Z$ be a projective morphism. A {\em generalized pair on $X$ over $Z$} is a triple $(X,B,\bM.)$ where 
\begin{itemize}
    \item $B$ is an effective $\rr$-divisor on $X$;
    \item $\bM.$ is a b-nef/$Z$ b-$\rr$-Cartier on $X$; and
    \item $K_X+B+\bM X.$ is $\rr$-Cartier.
\end{itemize}
When $Z$ is a point, we simply call $(X,B,\bM.)$ a {\em generalized pair}. }
\end{definition}

\subsection{Singularities of generalized pairs}

In this subsection, we define the notions of singularities for generalized pairs.

\begin{definition}
{\rm Let $X$ be a normal variety and $(X,B,\bM.)$ be a generalized pair on $X$. Let $D$ be a divisor over $X$. Pick a log resolution $f \colon X'\to X$ of $(X,B)$ such that $D$ is a divisor on $X'$ and $\bM.$ descends on $X'$. We can write
\[
K_{X'} + B' + \bM X'. = f^*(K_X+B+\bM X.)
\]
for some uniquely determined $B'$. Define the {\em generalized log discrepancy} $a_D(X,B,\bM.)$ to be $1-\coeff_D (B')$. 

We say that $(X,B,\bM.)$ is {\em generalized log canonical} (resp. {\em generalized klt}) if $a_D(X,B,\bM.)$ is nonnegative (resp. positive) for any divisor $D$ over $X$. A {\em generalized non-klt place} (resp. {\em generalized log canonical place}) of  $(X,B,\bM.)$ is a prime divisor $D$ over $X$ with $a_D(X,B,\bM.)\leq 0$ (resp. $a_D(X,B,\bM.)=0$).
A {\em generalized non-klt center} of $(X,B,\bM.)$ is the image of a generalized non-klt place.
We denote the set of generalized non-klt centers of $(X,B,\bM.)$ by ${\rm Nklt}(X,B,\bM.)$. 
A {\em generalized log canonical center} of $(X,B,\bM.)$ is the image $Z$ of a generalized non-klt place such that every generalized non-klt place whose image on $X$ contains $Z$ is a generalized log canonical place.

We say that $(X,B,\bM.)$ is {\em generalized dlt} if it is generalized log canonical and satisfies the following condition: for any generalized log canonical center $V$ of $(X,B,\bM.)$, the pair $(X,B)$ is log smooth around the generic point of $V$ and $\bM.$ descends on $X$ in a neighborhood of the generic point of $V$. We say that $(X,B,\bM.)$ is {\em generalized plt} if it is generalized dlt and every connected component of $\lfloor B\rfloor$ is irreducible.

Let $(X,B,\bM.)$ be a generalized log canonical pair over a base $Z$.
Let $f \colon Y\to X$ be a birational morphism and write
\[
K_{Y} + B_Y + \bM Y. = f^*(K_X+B+\bM X.).
\]We say that $(Y,B_Y,\bM.)$ is a {\em $\qq$-factorial generalized dlt modification of $(X,B,\bM.)$} if the variety $Y$ is $\qq$-factorial, $(Y,B_Y,\bM.)$ is generalized dlt, and every $f$-exceptional divisor appears in $B_Y$ with coefficient 1. }
\end{definition}

\begin{lemma}[{\cite[Theorem 2.9]{FS23}}]
Every generalized log canonical pair over a base $Z$ has a $\qq$-factorial generalized dlt modification. 
\end{lemma}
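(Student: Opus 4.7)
The plan is to mimic the classical construction of a dlt modification, adapting each step to accommodate the b-nef part $\bM.$. First I would take a log resolution $f \colon Y \to X$ of $(X, B)$ on which the b-$\rr$-Cartier divisor $\bM.$ descends; such a model exists by combining Hironaka's resolution with the definition of a b-$\rr$-Cartier divisor, and we can realize it so that $f$ is a projective morphism over $Z$. I would then write the crepant identity
\[
K_Y + B_Y + \bM Y. = f^*(K_X + B + \bM X.),
\]
which uniquely determines $B_Y$. Since $(X,B,\bM.)$ is generalized log canonical, every coefficient of $B_Y$ lies in $(-\infty, 1]$.

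Next I would introduce a new boundary $\Gamma_Y \coloneqq f^{-1}_* B + E$, where $E$ is the reduced sum of all $f$-exceptional prime divisors. By construction $\Gamma_Y \geq B_Y$, and the difference
\[
\Delta \coloneqq \Gamma_Y - B_Y = \sum_D (1 - a_D(X,B,\bM.)) D,
\]
where the sum runs over $f$-exceptional prime divisors, is effective and $f$-exceptional. The core step is to run a $(K_Y + \Gamma_Y + \bM Y.)$-MMP over $X$ (hence over $Z$). Since
\[
K_Y + \Gamma_Y + \bM Y. \equiv_f \Delta
\]
is $f$-exceptional and effective, the negativity lemma for generalized pairs implies that this MMP is in fact a sequence of steps that contract precisely the components of $\Delta$, i.e., the exceptional divisors with strictly positive generalized log discrepancy. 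The existence of the necessary flips and termination with scaling in this generalized lc setting follows from the MMP for generalized pairs developed by Birkar--Zhang and Hacon--Liu.

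Let $f' \colon Y' \to X$ denote the output of the MMP. By the negativity lemma the crepant identity $K_{Y'} + \Gamma_{Y'} + \bM{Y'}. = f'^*(K_X + B + \bM X.)$ still holds, the divisors surviving on $Y'$ are exactly the generalized log canonical places of $(X,B,\bM.)$ (all appearing with coefficient $1$), and $Y'$ is $\qq$-factorial as a consequence of the MMP. To verify that $(Y', \Gamma_{Y'}, \bM.)$ is generalized dlt, I would note that near the generic point of any generalized log canonical center $V$, only log canonical places of $(X,B,\bM.)$ can lie over $V$, so the steps of the MMP are isomorphisms in a neighborhood of the generic point of $V$; log smoothness of $(Y, \Gamma_Y)$ and the descent of $\bM.$ on $Y$ therefore persist on $Y'$ around $V$.

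The main obstacle is ensuring termination of the MMP in the generalized setting, since classical termination results do not apply directly; this is where one must invoke the more recent technology for generalized pairs. A secondary technical point is the careful use of the negativity lemma to certify that the MMP contracts exactly the non-log-canonical-place exceptional divisors and nothing else, which is what pins down the generalized dlt structure on $Y'$.
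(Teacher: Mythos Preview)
The paper does not supply its own proof of this lemma; it simply cites \cite[Theorem 2.9]{FS23} as a black box. Your sketch is the standard construction and is essentially the argument carried out in that reference (and earlier, in various forms, by Birkar--Zhang and Hacon), so there is nothing to compare at the level of strategy.

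One small correction: your displayed formula for $\Delta$ is off. For an $f$-exceptional prime divisor $D$ one has $\coeff_D(\Gamma_Y)=1$ and $\coeff_D(B_Y)=1-a_D(X,B,\bM.)$, so
\[
\Delta = \Gamma_Y - B_Y = \sum_D a_D(X,B,\bM.)\, D,
\]
not $\sum_D (1-a_D)\,D$. This does not affect the rest of your argument: effectivity follows from $a_D \geq 0$ (generalized log canonical), and the MMP contracts exactly the exceptional $D$ with $a_D>0$, as you state. The remaining steps (termination via the cited MMP technology for generalized pairs, verification of the generalized dlt condition on the output by noting the MMP is an isomorphism near the generic point of each generalized log canonical center) are correct.
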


The following lemma states that the singularities of the pair $(X,B)$ are milder
than the singularities of $(X,B,\bM.)$.

\begin{lemma}[{\cite[Remark 4.2.(3)]{BZ16}}]\label{lem:glc-implies-lc}
Let $(X,B,\bM.)$ be a generalized log canonical pair over $Z$.
Suppose $K_X+B$ is $\rr$-Cartier. Then for any divisor $D$ over $X$, the log discrepancies satisfy
\[a_D(X,B,\bM.) \leq a_D(X,B).\]
In particular, the pair $(X,B)$ is log canonical.
\end{lemma}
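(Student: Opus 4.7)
The plan is to compare the two discrepancy formulas on a common log resolution and invoke the negativity lemma on the b-nef part.

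First, I would fix a log resolution $f \colon X' \to X$ of $(X,B)$ on which $\bM.$ descends. By definition, we may write
\[
f^*(K_X+B+\bM X.) = K_{X'} + B' + \bM X'.
\]
and, using the extra hypothesis that $K_X+B$ is $\mathbb{R}$-Cartier,
\[
f^*(K_X+B) = K_{X'} + B''.
\]
Subtracting the two identities gives $B'' - B' = \bM X'. - f^*\bM X.$, so comparing the generalized log discrepancy with the ordinary one along any prime divisor $D$ on $X'$ reduces to showing $f^*\bM X. \geq \bM X'.$, i.e. that the difference $D_0 \coloneqq f^*\bM X. - \bM X'.$ is effective.

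For this I would apply the negativity lemma to $D_0$. The b-divisor axiom $f_*\bM X'. = \bM X.$ gives $f_* D_0 = 0$, so in particular $f_* D_0 \geq 0$. On the other hand, $\bM X'.$ is nef (hence $f$-nef), and $f^*\bM X.$ is $f$-numerically trivial, so $-D_0 = \bM X'. - f^*\bM X.$ is $f$-nef. The negativity lemma then forces $D_0 \geq 0$, i.e. $B' \geq B''$. Consequently, for any prime divisor $D$ on $X'$,
\[
a_D(X,B,\bM.) = 1 - \coeff_D(B') \leq 1 - \coeff_D(B'') = a_D(X,B).
\]
For a general divisor $D$ over $X$, one passes to any log resolution factoring through one on which both $D$ appears and $\bM.$ descends, and the same computation applies; this is where I would be slightly careful, but it is a standard step.

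The ``in particular'' statement is then immediate: if $(X,B,\bM.)$ is generalized log canonical, then $a_D(X,B) \geq a_D(X,B,\bM.) \geq 0$ for every $D$ over $X$, hence $(X,B)$ is log canonical. The only subtle point in the entire argument is the application of the negativity lemma, and even that is quite routine once the sign conventions for $f$-nefness of $-D_0$ are checked; no serious obstacle is expected.
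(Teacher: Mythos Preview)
Your argument is correct. The paper does not supply its own proof of this lemma; it merely cites \cite[Remark 4.2.(3)]{BZ16}, and your negativity-lemma computation is precisely the standard argument behind that reference (note that $\bM X.$ is $\mathbb{R}$-Cartier since both $K_X+B+\bM X.$ and $K_X+B$ are, so $f^*\bM X.$ is well defined).
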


\subsection{Crepant birational maps}

In this subsection, we recall the notion of crepant birational map and group of crepant birational automorphisms.

\begin{definition}{\rm
    Let $(X_1,B_1,\bM.)$ and $(X_2,B_2,\bM.)$ be generalized pairs over $Z$.
    We say that they are \emph{crepant} if there exists a common resolution $\alpha_1 \colon X' \rar X_1$ and $\alpha_2 \colon X' \rar X_2$, where each $\alpha_i$ is proper, such that
    $$
    \K X'. + B'_1 + \bM X'. = \K X'. + B'_2 + \bM X'.,
    $$
    holds,
    where we have $\K X'. + B'_i + \bM X'.= \alpha_i^*(\K X_i. + B_i + \bM X_i.)$ for $i=1,2$. 
}
\end{definition}

In the case of pairs, we recall the notion of $B$-birational map, originally due to Fujino \cite[Definition 1.5]{Fuj00}.
Observe that, for our purposes in later sections, it is important to deal with possibly reducible varieties.

\begin{definition}\label{def_b-bir}
    {\rm
    Let $(X,\Delta) = \sqcup (X_i,\Delta_i)$ and $(X',\Delta') = \sqcup (X'_i,\Delta'_i)$ be possibly reducible normal pairs.
    We say that $f \colon X \dashrightarrow X'$ is a {\it $B$-birational map} if $(X,\Delta)$ and $(X',\Delta')$ are crepant.
    That is, $X$ and $X'$ have the same number of irreducible components, and there exists a permutation $\sigma$ of the index set of the irreducible component such that, for every $i$, the restriction $f_i \colon X_i \dashrightarrow X'_{\sigma(i)}$ is birational and $(X_i,\Delta_i)$ is crepant to $(X'_{\sigma(i)},\Delta'_{\sigma(i)})$.
    }
\end{definition}

\begin{definition}\label{def_bir}
{\rm
    Given a pair $(X,\Delta)=\sqcup (X_i,\Delta_i)$ as in Definition~\ref{def_b-bir}, we define
    $$
    \mathrm{Bir}(X,\Delta) \coloneqq \{f|f \colon (X,\Delta) \dashrightarrow (X,\Delta) \;  \text{is $B$-birational} \}.
    $$
    The set $\mathrm{Bir}(X,\Delta)$ forms a group under composition.
}   
\end{definition}
    
We observe that Definition~\ref{def_b-bir} and Definition~\ref{def_bir} naturally extend to the case of generalized pairs.

\subsection{Complements}
In this subsection, we introduce the notion of relative complements.

\begin{definition}
{\rm A {\em contraction} is a projective morphism of quasi-projective varieties $f \colon X \rar Z$ such that $f_* \O X.=\O Z.$.
Notice that, if $X$ is normal, then so is $Z$.
A {\em fibration} is a contraction $X \rar Z$ such that $\dim Z < \dim X$.}
\end{definition}

\begin{definition}
{\rm Let $(X,B)$ be a pair and $X\to Z$ a contraction. We say that a pair $(X,B)$ is {\em log Fano} (resp. {\em weak log Fano} or {\em log Calabi--Yau}) {\em over $Z$} if it is log canonical and $-(K_X+B)$ is ample over $Z$ (resp. $-(K_X+B)$ is nef and big over $Z$ or $K_X+B$ is $\rr$-trivial over $Z$). 

We say that $(X,B)$ is of {\em Fano type} (resp. {\em log Calabi--Yau type}) {\em over $Z$} if $(X,B+\Delta)$ is klt and weak log Fano (resp. log Calabi--Yau) for some choice of $\Delta\geq 0$. 

If $(X,0)$ is of Fano type (resp. log Calabi--Yau type) over $Z$, we say that $X\to Z$ is a {\em Fano type morphism} (resp. {\em log Calabi--Yau type morphism}). If $(X,B)$ is log Fano (resp. log Calabi--Yau, Fano type, Calabi--Yau type) over a point, we simply say that $(X,B)$ is {\em log Fano} (resp. {\em log Calabi--Yau, Fano type, Calabi--Yau type}). }
\end{definition}

\begin{definition}
{\rm Let $X\to Z$ be a contraction and $(X,B,\bM.)$ be a generalized pair over $Z$.
Let $N$ be a positive integer. An {\em $N$-complement of $K_X+B+\bM X.$ over a point $z\in Z$} is a divisor $K_X+B^++\bM X.$ such that over some neighborhood of $z$, we have:
\begin{itemize}
    \item $(X,B^+,\bM.)$ is generalized log canonical;
    \item $N(K_X+B^++\bM X.)\sim_Z 0$;
    \item $N\bM.$ is b-Cartier; and
    \item $B^+\geq B$.
\end{itemize}
If the above conditions hold for $K_X+B^++\bM X.$ over every $z\in Z$, we say that $K_X+B^++\bM X.$ is an {\em $N$-complement of $K_X+B+\bM X.$ over $Z$}. We say that $K_X+B^+ + \bM X.$ is a {\em $\qq$-complement of $K_X+B+\bM X.$ over $z\in Z$} (resp. {\em $\qq$-complement of $K_X+B+\bM X.$ over $Z$}) it is a $q$-complement for some $q\in \zz_{>0}$. }
\end{definition}

The following lemma states that complements can be pulled back via $K_X$-positive 
birational contractions (see~\cite[6.1.(3)]{Bir19}).

\begin{lemma}
\label{lem:complements-and-K-positive}
Let $(X,B,\bM.)$ be a generalized log canonical pair over a base $Z$. Suppose $f \colon X\dashrightarrow X'$ is a $(K_X+B+\bM X.)$-non-negative birational contraction over $Z$. Let $B'=f_*B$ and $N$ be a positive integer. If  $K_{X'}+B'+\bM X'.$ has an $N$-complement over $z\in Z$, then $K_X+B+\bM X.$ also has an $N$-complement over $z\in Z$.
\end{lemma}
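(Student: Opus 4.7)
The plan is to construct $B^+$ on $X$ by crepant pullback of $B'^+$ through a common resolution of $f$, and then verify the three conditions defining an $N$-complement over $z \in Z$. To set things up, I would choose a common log resolution $p \colon W \to X$ and $q \colon W \to X'$ of $f$ on which $\bM.$ descends, and define $B_W^+$ on $W$ by
\[
K_W + B_W^+ + \bM W. = q^*(K_{X'} + B'^+ + \bM X'.).
\]
I then set $B^+ \coloneqq p_* B_W^+$.

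The relation $N(K_X + B^+ + \bM X.) \sim_Z 0$ over $z$ would follow by picking, on a neighborhood of $z$ in $Z$, a rational function $\phi$ on $X'$ trivializing $N(K_{X'} + B'^+ + \bM X'.)$: pulling $\phi$ back via $q$ and pushing it forward via $p$ gives $\dv_X(\phi) = N(K_X + B^+ + \bM X.)$. The same computation delivers $p^*(K_X + B^+ + \bM X.) = q^*(K_{X'} + B'^+ + \bM X'.)$, which shows that $(X, B^+, \bM.)$ and $(X', B'^+, \bM.)$ are crepant; hence the generalized log discrepancies agree at every divisor over the common birational class, and the glc property transfers from $X'$ to $X$. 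The b-Cartierness of $N\bM.$ is inherited from the original complement.

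To check $B^+ \geq B$, I would argue component by component. For a prime divisor $D$ on $X$ not contracted by $f$, the strict transform $D_W$ is non-$q$-exceptional, so
\[
\coeff_D B^+ = \coeff_{f_* D} B'^+ \geq \coeff_{f_* D} B' = \coeff_D B,
\]
using $B' = f_* B$ and $B'^+ \geq B'$. For a prime divisor $D$ on $X$ contracted by $f$, the $(K_X + B + \bM X.)$-non-negativity of $f$ forces
\[
\coeff_D B \leq 1 - a_{D_W}(X', B', \bM.),
\]
while $B'^+ \geq B'$ yields $a_{D_W}(X', B'^+, \bM.) \leq a_{D_W}(X', B', \bM.)$. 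Combining these inequalities gives $\coeff_D B^+ = 1 - a_{D_W}(X', B'^+, \bM.) \geq \coeff_D B$.

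The main subtlety lies in this last case of $f$-contracted divisors: no direct information on their coefficients in $B^+$ is visible from $B'^+$ alone, and the non-negativity hypothesis must be invoked in exactly the right direction. Keeping straight the sign convention in the definition of ``$(K_X + B + \bM X.)$-non-negative birational contraction'' on a common resolution is the only place where care is required; once this is done correctly, both the crepant transfer of singularities and the inequality $B^+ \geq B$ fall out of the construction.
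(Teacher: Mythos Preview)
Your proof is correct and is the standard argument. The paper itself does not give a proof of this lemma; it simply cites \cite[6.1.(3)]{Bir19}, where essentially the same crepant-pullback argument is carried out. Your construction of $B^+$ via a common resolution, the verification of $N(K_X+B^++\bM X.)\sim_Z 0$ by transporting the trivializing rational function, the transfer of the generalized log canonical property by crepancy, and the componentwise check of $B^+\geq B$ (splitting into non-contracted and $f$-contracted prime divisors, and invoking the non-negativity hypothesis precisely in the second case) are exactly the steps one expects.
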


The following lemma says that extracting divisors with small log discrepancy from a Fano type variety
preserves the Fano type property (see~\cite[6.13.(7)]{Bir19}).

\begin{lemma}
\label{lem:complements-and-dlt-mod}
Let $X\rightarrow Z$ be a contraction.
Let $X$ be a Fano type variety over $Z$
and $(X,B)$ a log Calabi--Yau pair over $Z$.
Let $f\colon Y\rightarrow X$ be a birational morphism. Suppose that every $f$-exceptional divisor $E$ satisfies $a_E(X,B)<1$. Then $Y$ is of Fano type over $Z$.

Furthermore, let $K_Y+B_Y$ be the log pull-back of $K_X+B$. If $K_Y+B_Y$ has an $N$-complement over $z\in Z$, then $(X,B)$ also has an $N$-complement over $z\in Z$.
\end{lemma}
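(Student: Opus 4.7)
The plan is to prove the two assertions in turn. For the Fano type statement, I would combine the log Calabi--Yau structure $(X,B)$ with a klt Fano type structure on $X$ to manufacture a klt pair on $Y$ whose anti-log-canonical divisor is nef and big over $Z$. For the transfer of complements, I would push the complement forward from $Y$ to $X$ and then invoke the negativity lemma to recover log canonicity.

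To show $Y$ is of Fano type over $Z$, I would first pick $\Delta\geq 0$ with $(X,\Delta)$ klt and $-(K_X+\Delta)$ nef and big over $Z$. For a small $\varepsilon>0$, set $\Gamma\coloneqq (1-\varepsilon)B+\varepsilon\Delta$ and pull back via $f$ to obtain $K_Y+\Gamma_Y=f^*(K_X+\Gamma)$. On each $f$-exceptional prime $E$, the coefficient of $\Gamma_Y$ equals $(1-\varepsilon)(1-a_E(X,B))+\varepsilon(1-a_E(X,\Delta))$. The hypothesis forces $1-a_E(X,B)>0$, and this quantity has a positive lower bound among the finitely many $f$-exceptional primes, while the second term is bounded above. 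Hence for $\varepsilon>0$ sufficiently small, $\Gamma_Y$ will be effective. Since $(X,\Gamma)$ is klt and the pairs are crepant, $(Y,\Gamma_Y)$ is klt as well. Using $K_X+B\sim_{\qq,Z}0$, one has $-(K_Y+\Gamma_Y)\sim_{\qq,Z}-\varepsilon f^*(K_X+\Delta)$, which is nef and big over $Z$ since $f$ is birational.

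For the complement transfer, suppose $(Y,B_Y^+)$ is an $N$-complement of $K_Y+B_Y$ over $z$, and set $B^+\coloneqq f_*B_Y^+$. Then $B^+\geq f_*B_Y=B$, and the linear equivalence $N(K_Y+B_Y^+)\sim 0$ over $z$ pushes forward under the birational map $f$ to yield $N(K_X+B^+)\sim 0$ over $z$. To verify the log canonicity of $(X,B^+)$, I would introduce the $f$-exceptional divisor $E\coloneqq K_Y+B_Y^+-f^*(K_X+B^+)$. The pushforward $f_*E=0$, and $NE\sim_Z 0$ locally at $z$, since both $N(K_Y+B_Y^+)$ and $Nf^*(K_X+B^+)$ are linearly trivial over a neighborhood of $z$. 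Thus $E$ is an $f$-exceptional divisor that is numerically $f$-trivial, and applying the negativity lemma to both $E$ and $-E$ forces $E=0$. The pairs $(Y,B_Y^+)$ and $(X,B^+)$ are then crepant, and the log canonicity of the former transfers to the latter.

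The main obstacle is this last step, namely establishing the log canonicity of $(X,B^+)$. A direct coefficient check on $f$-exceptional divisors does not follow from the log canonicity of $(Y,B_Y^+)$, because passing from $B_Y^+$ to the log pullback of its image $f_*B_Y^+$ can alter discrepancies in uncontrolled ways. Routing the argument through the negativity lemma on the auxiliary divisor $E$ is the essential insight, and it relies crucially on the genuine linear equivalence supplied by the $N$-complement hypothesis, not merely a $\qq$-linear equivalence.
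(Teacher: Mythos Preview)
The paper does not supply its own proof of this lemma; it simply refers the reader to \cite[6.13.(7)]{Bir19}. Your direct argument is correct and is essentially the standard one. For the Fano type claim, interpolating between the log Calabi--Yau structure $(X,B)$ and a klt weak log Fano structure $(X,\Delta)$ is exactly how one produces the required klt boundary on $Y$; the finiteness of the $f$-exceptional primes is what makes the choice of small $\varepsilon$ uniform. For the complement transfer, your use of the negativity lemma on $E=K_Y+B_Y^+-f^*(K_X+B^+)$ is the right mechanism: $E$ is $f$-exceptional with $f_*E=0$, and the linear triviality $N(K_Y+B_Y^+)\sim 0$ together with $N(K_X+B^+)\sim 0$ over a neighborhood of $z$ forces $E$ to be $f$-numerically trivial there, whence $E=0$ and the pairs are crepant. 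One minor phrasing point: in the Fano type step you wrote that the second term is ``bounded above'', but what you need (and use) is that $1-a_E(X,\Delta)$ is bounded \emph{below} over the finitely many exceptional $E$, so that the contribution $\varepsilon(1-a_E(X,\Delta))$ can be made small in absolute value.
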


\subsection{Coefficients under adjunction}
\label{subsec:coreg-under-adjunction}

In this subsection, we study the coefficients
of a pair under adjunction.

\begin{definition}\label{def:index-of-a-set}
{\rm Let $\mathcal{R}$ be a set of rational numbers. We define $I_\mathcal{R}$ to be the minimal integer $I$ such that for any $r\in R$ and $n \in \nn$, we have that 
$$\lfloor  nI r \rfloor \geq n(I-1) r.$$
If there does not exist such an integer, we define $I_\mathcal{R}$ to be $0$.

Note that when the set $\mathcal{R}$ is finite $I_\mathcal{R}$ exists as the least common multiple of the denominators will satisfy the previous inequality. When $\mathcal{R}$ is the set of standard coefficients $I_\mathcal{R}$=1. If $\mathcal{R}$ is finite, then $I_\mathcal{R}$ is bounded above by the least common multiple of the rational numbers in $\mathcal{R}$ that are not standard.}
\end{definition}

\begin{definition}
{\rm 
Let $\Lambda$ be a set of real numbers in $[0,1]$. Define the {\em derived set of $\Lambda$} as
\[
D(\Lambda) \coloneqq  \left\{a\in [0,1] \mid a = \frac{m-1+\lambda_1+\cdots+\lambda_n}{m},
\text{ where } n\in \zz_{\geq 0}, m\in \zz_{>0}
\text{ and }\lambda_1,\ldots,\lambda_n\in \Lambda\cup\{0,1\} \right\}.
\]
We also define $D_{\lambda_0}(\Lambda)\subset D(\Lambda)$ to be the subset in which, in the definition of $a$, at least one $\lambda_i$ is equal to $\lambda_0$.
The set $\Lambda$ is said to be {\em derived} if $\Lambda=D(\Lambda)$.
If $\lambda$ is a positive integer, then we set
$D_\lambda \coloneqq D\left(\zz\left
[\frac{1}{\lambda}\right]\cap [0,1]\right)$.
}
\end{definition}

For instance, the set of standard coefficients $\mathcal{S} \coloneqq \{1-1/m\mid m\in \zz_{>0}\}\cup \{1\}$ is derived.
The following lemmata describe some properties of derived sets.

\begin{lemma}[{\cite[Proposition 3.4.1]{HMX14}}]\label{lem:derived-closure}
Let $\Lambda$ be a set of real numbers in $[0,1]$. Then $D(\Lambda) = D(D(\Lambda))$, i.e., $D(\Lambda)$ is a derived set.
\end{lemma}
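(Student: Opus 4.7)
The plan is to establish both inclusions $D(\Lambda) \subseteq D(D(\Lambda))$ and $D(D(\Lambda)) \subseteq D(\Lambda)$. The forward inclusion is immediate: any $\lambda \in \Lambda$ can be written as $\frac{1-1+\lambda}{1}$ (taking $m=1$, $n=1$), so $\Lambda \subseteq D(\Lambda)$. Since the definition of the derived set is monotonic in its input set, this yields $D(\Lambda) \subseteq D(D(\Lambda))$.

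The substance of the lemma lies in the reverse inclusion. Given $a \in D(D(\Lambda))$, I will write $a = \frac{m-1+\sum_{i=1}^n \mu_i}{m}$ with $\mu_i \in D(\Lambda) \cup \{0,1\}$, and then express each $\mu_i = \frac{m_i - 1 + \sum_{j=1}^{n_i} \lambda_{i,j}}{m_i}$ with $\lambda_{i,j} \in \Lambda \cup \{0,1\}$ (the cases $\mu_i \in \{0,1\}$ fit this template by taking $m_i = 1$, $n_i=1$, $\lambda_{i,1}=\mu_i$). Setting $M = m \prod_{i=1}^n m_i$ and clearing denominators, a direct expansion yields
\[
aM = M - 1 + N + \sum_{i,j} \left(\prod_{k \neq i} m_k\right) \lambda_{i,j},
\]
where $N \coloneqq (n-1)\prod_i m_i + 1 - \sum_i \prod_{k \neq i} m_k$. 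Interpreting $\bigl(\prod_{k \neq i} m_k\bigr)\lambda_{i,j}$ as the sum of $\prod_{k\neq i} m_k$ copies of $\lambda_{i,j} \in \Lambda \cup \{0,1\}$, and bundling $N$ additional copies of $1 \in \{0,1\}$ into the sum, this realizes $a$ as a genuine element of $D(\Lambda)$, provided $N$ is a nonnegative integer.

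The crux of the argument is therefore verifying $N \geq 0$. I plan to do this by induction on $n$. For $n=0$, one has $N = 0$ directly. For the inductive step, set $P = \prod_{i<n} m_i$ and let $N_{n-1}$ denote the analogous expression built from the first $n-1$ factors; a careful expansion produces the identity
\[
N_n \;=\; m_n\, N_{n-1} + (m_n - 1)(P - 1).
\]
Since $m_n, P \geq 1$ and $N_{n-1} \geq 0$ by the inductive hypothesis, both summands are nonnegative integers, so $N_n \geq 0$. This closes the argument. The only genuinely delicate point is the bookkeeping in the expansion of $aM$ and in the derivation of the recursive identity for $N_n$; the rest is purely formal manipulation of the definition of the derived set.
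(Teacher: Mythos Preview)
Your proof is correct. The paper does not supply its own argument for this lemma; it simply cites \cite[Proposition 3.4.1]{HMX14}, so there is nothing to compare your approach against. Your computation of $N$ and the inductive verification that $N_n = m_n N_{n-1} + (m_n-1)(P-1) \geq 0$ are both accurate, and the interpretation of $\bigl(\prod_{k\neq i} m_k\bigr)\lambda_{i,j}$ as repeated copies of $\lambda_{i,j}$ together with $N$ copies of $1$ is exactly what the definition of $D(\Lambda)$ permits.
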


The following lemma allows us to control the coefficients of the generalized pairs
obtained by divisorial adjunction.
The lemma is a special case of \cite[Lemma 3.3]{Bir19}; we refer to the proof of \cite[Lemma 3.8]{FMM22} for the details of this adaptation.

\begin{lemma}[{\cite[Lemma 3.3]{Bir19}}]\label{lemma-coeff-adj}
Let $(X,B,\bM.)$ be a generalized log canonical pair over $Z$ and $\Lambda$ be a set of rational numbers in $[0,1]$. Suppose the coefficients of $B$ and $\bM X'.$ belong to $\Lambda$ for some model $X'$ where $\bM.$ descends. Let $S$ be the normalization of a component of $\lfloor B\rfloor$. Write
\[(K_X+B+\bM X.)|_S\sim K_S+B_S+\bN S.\] for the generalized adjunction on $S$, where $B_S$ is the boundary part and $\bN.$ the moduli part. Then the coefficients of $B_S$ and $\bN S'.$ belong to the derived set $D(\Lambda)$ for some model $S'$ where $\bN.$ descends.
\end{lemma}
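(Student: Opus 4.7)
The plan is to reduce the claim to the classical divisorial adjunction formula of \cite[Lemma 3.3]{Bir19}, whose adaptation to the generalized setting is essentially identical to the one carried out in \cite[Lemma 3.8]{FMM22}. I sketch the structure, emphasizing how the coefficient bookkeeping forces everything into $D(\Lambda)$.

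First I would take a log resolution $f \colon Y \to X$ of $(X, B+S)$ on which $\bM.$ descends, and write
\[
K_Y + B_Y + \bM Y. = f^*(K_X + B + \bM X.),
\]
with $S_Y$ the strict transform of $S$. Since $(X, B, \bM.)$ is generalized log canonical, each coefficient of $B_Y$ lies in $\Lambda \cup \{0, 1\}$: strict transforms of components of $B$ retain their coefficients in $\Lambda$, and $f$-exceptional divisors may be taken to have coefficient $0$ or $1$ after a small perturbation. Performing the ordinary divisorial adjunction on the log smooth pair at the level of $Y$ gives
\[
(K_Y + B_Y + \bM Y.)|_{S_Y} = K_{S_Y} + B_{S_Y} + \bN S_Y.,
\]
with $B_{S_Y} = (B_Y - S_Y)|_{S_Y}$ and $\bN S_Y. := \bM Y.|_{S_Y}$. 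In particular the coefficients of $\bN S_Y.$ lie in $\Lambda$.

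Then I would descend to $S$ via $g \colon S_Y \to S$. For every prime divisor $P \subset S$, the coefficient of $P$ in $B_S$ equals $1 - t_P$, where $t_P$ is the largest real number such that $(S_Y, B_{S_Y} + t_P\, g^*P, \bN S_Y.)$ is generalized log canonical at the generic point of the strict transform of $P$. Writing $g^*P = \sum m_i P_i$ and computing generalized log discrepancies, one finds that the coefficient equals $(m - 1 + \sum_j \lambda_j)/m$ for some positive integer $m$ (encoding the ramification index along the relevant component) and $\lambda_j \in \Lambda \cup \{0, 1\}$, hence it lies in $D(\Lambda)$. For the moduli part, one chooses a sufficiently high model $S'$ dominated by $S_Y$ so that $\bN.$ descends to $S'$; the coefficients of $\bN S'.$ are push-forwards from $\bN S_Y.$, so they also lie in $\Lambda \subset D(\Lambda)$.

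The main obstacle is the careful coordination of birational models: one needs a single $Y$ that simultaneously is a log resolution of $(X, B+S)$, carries the descent of $\bM.$, and dominates the chosen model $S'$ over which $\bN.$ descends. Once this setup is achieved, the coefficient computation is the classical one in \cite[Lemma 3.3]{Bir19}, and the generalized version follows exactly as in \cite[Lemma 3.8]{FMM22}; we would therefore limit the written proof to setting up this shared model and referring to those sources for the remaining arithmetic of coefficients.
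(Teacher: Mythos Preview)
The paper does not give a self-contained proof of this lemma; it simply records that it is a special case of \cite[Lemma 3.3]{Bir19} and points to \cite[Lemma 3.8]{FMM22} for the adaptation. Your conclusion---set up a common model and defer to those references---therefore matches the paper's treatment exactly.

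That said, the sketch you wrote contains two genuine errors that would cause trouble if you tried to expand it. First, the claim that ``each coefficient of $B_Y$ lies in $\Lambda\cup\{0,1\}$'' is false: $B_Y$ is the log pull-back, so exceptional divisors carry coefficients $1-a_E(X,B,\bM.)$, which can be any rational number at most $1$. There is no ``small perturbation'' that fixes this without changing the pair. Second, $B_S$ is \emph{not} computed as $1-t_P$ for a log canonical threshold on $S_Y$; that is the discriminant in the canonical bundle formula for fibrations, not divisorial adjunction. In divisorial adjunction $B_S$ is simply the push-forward $g_*B_{S_Y}$, and the actual argument in \cite[Lemma 3.3]{Bir19} is a local computation at a codimension-two point of $X$: there $X$ is a cyclic quotient surface singularity of index $m$, and one reads off $\coeff_P(B_S)=1-\tfrac{1}{m}+\sum_i \tfrac{l_i\lambda_i}{m}$ directly from the toric picture, with the $\lambda_i$ coming from the coefficients of $B$ and of $\bM X'.$ along divisors through that point. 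The moduli claim $\bN S'.=\bM X'.|_{S'}$ you stated is correct and gives coefficients in $\Lambda\subset D(\Lambda)$ immediately.
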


The following lemma is used in the proof of Theorem \ref{thm:cbf-and-coreg-induction} to control the coefficients of the discriminant part of a log Calabi--Yau fibration over a curve.

\begin{lemma}\label{lem:coeff-dcc-and-rational-acc-points}
Let $q$ be a positive integer. Let $\Lambda$ be a set of nonnegative rational numbers. Suppose $\Lambda$ satisfies the DCC and has rational accumulation points. Then the set
\[
\Sigma_q  \coloneqq \left\{\frac{b^+-b}{m}\geq 0\mid qb^+\in \zz_{>0}, b^+\leq 1 ,m\in \zz_{>0}, b\in \Lambda\right\} \subseteq \qq
\]
satisfies the ascending chain condition and has rational accumulation points.
\end{lemma}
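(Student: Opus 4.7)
The plan is to exploit the fact that the numerator $b^+$ in the expression $(b^+-b)/m$ takes only finitely many values. Indeed, the constraints $qb^+ \in \zz_{>0}$ and $0 < b^+ \leq 1$ force $b^+ \in \{1/q,2/q,\ldots,q/q\}$. So for any countable sequence $x_i = (b_i^+-b_i)/m_i$ in $\Sigma_q$, I would first pass to a subsequence on which $b_i^+$ stabilizes to a common value $b^+$. After this reduction, the remaining analysis splits according to whether the integers $m_i$ stay bounded.

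For the ACC statement, I would argue by contradiction: assume $x_1 < x_2 < \cdots$ is a strictly increasing sequence in $\Sigma_q$, and normalize so that $b_i^+ = b^+$ is constant. If $\{m_i\}$ is bounded, I pass to a further subsequence on which $m_i = m$ is constant; then $b_i = b^+ - m x_i$ is a strictly decreasing sequence in $\Lambda$, contradicting the DCC hypothesis on $\Lambda$. If $\{m_i\}$ is unbounded, I extract a subsequence with $m_i \to \infty$; since $\Lambda \subset \qq_{\geq 0}$ gives $0 \leq x_i \leq b^+/m_i \leq 1/m_i$, this subsequence converges to $0$, and a strictly increasing sequence of nonnegative reals with limit $0$ must be identically $0$, a contradiction.

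The accumulation-point statement follows the same dichotomy. Let $x$ be an accumulation point and choose distinct $x_i \in \Sigma_q$ with $x_i \to x$, reducing to constant $b_i^+ = b^+$. If $\{m_i\}$ is unbounded, the estimate $x_i \leq 1/m_i$ forces $x = 0 \in \qq$. Otherwise, passing to $m_i = m$ constant yields $b_i = b^+ - m x_i \to b^+ - m x$, and distinctness of the $x_i$ propagates to distinctness of the $b_i$, so $b^+ - m x$ is an accumulation point of $\Lambda$. By hypothesis it is rational, and combining this with the rationality of $b^+$ shows $x \in \qq$.

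The argument is essentially a sequence-extraction exercise, and the main point of care is to ensure that subsequence passes preserve both strict monotonicity (in the ACC step) and distinctness of values (in the accumulation-point step). Neither issue poses any genuine difficulty, since both properties descend to subsequences automatically, so no further obstacle arises beyond the two-case split above.
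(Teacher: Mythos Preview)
Your proof is correct and follows essentially the same strategy as the paper: reduce to a constant value of $b^+$ using finiteness, then reduce to a constant value of $m$, and finally invoke the DCC (respectively, the rational-accumulation-point hypothesis) on $\Lambda$. The only cosmetic difference is that where you split into the two cases $\{m_i\}$ bounded versus unbounded, the paper observes directly that the unbounded case cannot occur: from $x_k > x_1 > 0$ one gets $m_k < m_1(b^+-b_k)/(b^+-b_1) \leq m_1/(b^+-b_1)$, and similarly for a nonzero accumulation point $a$ one has $m_k \leq 1/a$, so $\{m_k\}$ is automatically bounded.
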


\begin{proof}
We first show that $\Sigma_q$ satisfies the ascending chain condition. Suppose in $\Sigma_q$ we can find an increasing sequence 
\[
\frac{b_1^+-b_1}{m_1} < \frac{b_2^+-b_2}{m_2} < \cdots < \frac{b_k^+-b_k}{m_k} < \cdots
\]
Since $b_k^+ \in \{i/q \colon  0\leq i\leq q\}$ has only finitely many choices, we may assume, by passing to a subsequence, that all $b_k^+$ are the same and equal to the number $b^+$. Furthermore, note that
\[
m_k < \frac{m_1(b_k^+-b_k)}{b_1^+-b_1} \leq \frac{m_1}{b^+-b_1}.
\]
The second inequality holds as $1\geq b_k^+ \geq b_k^+-b_k$.
Hence, the sequence $m_k$ is bounded above. Thus, by passing to a subsequence we may assume that $m_k = m$ for all $k$. Now, we obtain a decreasing sequence
\[
b_1 > b_2 > \cdots > b_k > \cdots,
\]
which violates the descending chain condition of $\Lambda$. Thus, $\Sigma_q$ satisfies the ascending chain condition.

Let $a\neq 0$ be an accumulation point of $\Sigma_q$. Since $\Sigma_q$ satisfies the ascending chain condition, we may find a sequence
\[
\frac{b_1^+-b_1}{m_1} \geq \frac{b_2^+-b_2}{m_2} \geq \cdots \geq \frac{b_k^+-b_k}{m_k} \geq \cdots 
\]
whose limit is $a$. We may assume that $b_k^+ = b^+$ for all $k$. Then
\[m_k \leq \frac{b_k^+-b_k}{a} \leq \frac{1}{a}.
\]
The second inequality holds as $1\geq b_k^+ \geq b_k^+-b_k$.
Hence, the sequence $m_k$ is bounded above. By passing to a subsequence we may assume that $m_k = m$ for all $k$. Since $\Lambda$ has rational accumulation points,
\[
a = \lim_{k\to \infty} \frac{b^+-b_k}{m} = \frac{b^+}{m} - \frac{1}{m} \lim_{k\to\infty} b_k \in \qq,
\]
as desired.
\end{proof}

\subsection{Coregularity of pairs}
In this subsection, we define the coregularity of a generalized pair
and prove some of its properties.

\begin{definition}\label{def:dual-complex}
{\em 
Let $(X,B,\bM.)$ be a generalized log canonical pair.
Let $f \colon  Y \to X$ be a generalized dlt modification and write
\[
K_Y + B_Y + \bM Y. = f^*(K_X+B+\bM X.).
\]
Let \[\lfloor B_Y\rfloor = E_1 + E_2 + \cdots + E_r\] be a simple normal crossing divisor on $Y$. 

The {\it dual complex} $\mathcal{D}(Y,B_Y +\bM Y.)$ is a simplicial complex constructed as follows:
\begin{itemize}
    \item For every $1\leq i\leq r$, there is a vertex $v_i$ in $\mathcal{D}(Y,B_Y+\bM Y.)$ corresponding to the divisor $E_i$. For every subset $I\subseteq \{1,2,\ldots,r\}$ and every irreducible component $Z$ of $\bigcap_{i\in I} E_i$, there is a simplex $v_Z$ of dimension $\# I - 1$ corresponding to $Z$;
    \item For every $I\subseteq \{1,2,\ldots,r\}$ and $j\in I$, there is a gluing map constructed as follows. Let $Z\subseteq \bigcap_{i\in I} E_i$ be any irreducible component. Let $W$ be the unique component of $\bigcap_{i\in I\setminus\{j\}}E_i$ containing $Z$.
    Them, the gluing map is the inclusion of $v_W$ into $v_Z$ as the face of $v_Z$ that does not contain the vertex $v_i$.
\end{itemize}
Define the dimension of $\mathcal{D}(Y,B_Y+\bM Y.)$ to be the smallest dimension
of the maximal simplex, with respect to the inclusion,
of $\mathcal{D}(Y,B_Y+\bM Y.)$. When $\mathcal{D}(Y,B_Y+\bM Y.) = \varnothing$, set its dimension to be $-1$.

The dual complex $\mathcal{D}(Y,B_Y,\bM Y.)$ depends on the dlt modification $Y$. However, its PL-homeomorphism type is independent of the dlt modification (see, e.g.,~\cite[Theorem 1.6]{FS23}).

Define the \textit{dual complex} $\mathcal{D}(X,B,\bM.)$ associated to the generalized pair $(X,B,\bM.)$ as the homeomorphism type of the complex $\mathcal{D}(Y,B_Y+\bM Y.)$. Thus, for any dlt modification we have:

\[ \dim \mathcal{D}(X,B,\bM .)= \dim \mathcal{D}(Y,B_Y,\bM Y.)\]

When $\bM. = 0$, we write $\mathcal{D}(X,B)$ instead of $\mathcal{D}(X,B,0)$ for simplicity.
}
\end{definition}

\begin{definition}
{\rm Let $(X,B,\bM.)$ be a generalized log canonical pair over $Z$.
We define its {\em coregularity} to be
\[
{\rm coreg}(X,B,\bM.)  \coloneqq  \dim X - 1 - \dim \mathcal{D}(X,B,\bM.).
\]}
\end{definition}

\begin{definition}
{\rm Let $(X,B,\bM.)$ be a generalized log canonical pair over $Z$.
We define the {\em absolute coregularity over $Z$} of $(X,B,\bM.)$, denoted by $\hat{\rm coreg}(X/Z,B,\bM.)$, as follows:
\begin{itemize}
    \item if $(X,B^+,\bM.)$ is not a generalized log Calabi--Yau pair over $Z$ for every divisor $B^+\geq B$, we set $\hat{\rm coreg}(X/Z,B,\bM.)$ to be $\infty$;
    \item otherwise, we set $\hat{\rm coreg}(X/Z,B,\bM.)$ to be the smallest value of ${\rm coreg}(X,B^+,\bM.)$, over all divisors $B^+\geq B$ for which $(X,B^+,\bM.)$ is generalized log Calabi--Yau over $Z$.
\end{itemize}
Let $z\in Z$ be a point.
We define the {\em absolute coregularity}
of $(X,B,\bM.)$ over $z\in Z$, 
denoted by $\hat{\rm coreg}_z(X,B,\bM.)$ 
to be the minimum
of $\hat{\rm coreg}(\pi^{-1}(U)/U,B,\bM.)$
where $U$ runs over all neighborhoods of $z\in Z$.

By definition, we have that
\[
\hat{\rm coreg}_z(X,B,\bM.) 
\in \{0,\dots, \dim X, \infty\}.
\]
If $X\rightarrow Z$ is the structure morphism of $X$
and ${\rm coreg}(X,B,\bM.)=\dim X$, 
then we say that $(X,B,\bM.)$ is an
{\em exceptional} generalized pair.
}
\end{definition}

By the negativity lemma, 
the coregularity is preserved
under certain MMP.

\begin{lemma}\label{lem:coreg-nonnegative-contraction}
Let $(X,B,\bM.)$ be a generalized log canonical pair over $Z$.
Let $z\in Z$ be a point.
Suppose $f \colon  X\dashrightarrow Y$ is a $(K_X+B+\bM X.)$-nonnegative birational contraction over $Z$.
Write $B_Y = f_*B$.
Then
\[
\hat{\rm coreg}_z(Y,B_Y,\bM.) = \hat{\rm coreg}_z(X,B,\bM.).
\]
\end{lemma}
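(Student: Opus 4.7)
The plan is to establish the two inequalities $\hat{\rm coreg}_z(Y,B_Y,\bM.)\leq \hat{\rm coreg}_z(X,B,\bM.)$ and the reverse via a crepant correspondence between boundary extensions on $X$ and on $Y$. The underlying principle is that if $B^+\geq B$ makes $(X,B^+,\bM.)$ generalized log Calabi--Yau over a neighborhood of $z$, then $(X,B^+,\bM.)$ and $(Y,f_*B^+,\bM.)$ are crepant; conversely, every boundary extension on $Y$ arises this way via strict transform. Combined with the PL-homeomorphism invariance of the dual complex under crepant equivalence (cf.\ \cite[Theorem 1.6]{FS23}), this forces equality of coregularities.

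For the first inequality, fix $B^+\geq B$ (on some neighborhood $U$ of $z$) achieving the minimum, so $(X,B^+,\bM.)$ is generalized log canonical with $K_X+B^++\bM X.\sim_{\qq,U} 0$ and of coregularity $c$. Since $B^+-B\geq 0$, the map $f$ remains $(K_X+B^++\bM X.)$-nonnegative, so on a common log resolution $p\colon W\to X$, $q\colon W\to Y$, we may write
\[
p^*(K_X+B^++\bM X.) = q^*(K_Y+f_*B^++\bM Y.) + E,
\]
with $E\geq 0$ and $q$-exceptional. For any $q$-contracted curve $C\subset W$, its image in $Z$ via $W\to Y\to Z$ is a point; as $f$ is a map over $Z$, the image $p_*C$ in $X$ is contracted to the same point. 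Hence $(K_X+B^++\bM X.)\cdot p_*C = 0$, and therefore $E\cdot C = 0$. By the negativity lemma, $E=0$, so the two generalized pairs are crepant and share the coregularity $c$. Since $f_*B^+\geq f_*B = B_Y$, this witnesses $\hat{\rm coreg}_z(Y,B_Y,\bM.)\leq c$.

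For the reverse inequality, let $B_Y^+\geq B_Y$ realize the minimum on $Y$, and set $B^+ \coloneqq B + f^{-1}_*(B_Y^+-B_Y)$, where $f^{-1}_*$ denotes the strict transform (well-defined since $f^{-1}$ contracts no divisors). Then $B^+\geq B$ and $f_*B^+ = B_Y^+$; the coefficients of $B^+$ remain in $[0,1]$ by a componentwise check (on non $f$-exceptional components they match those of $B_Y^+$, while on $f$-exceptional components they match those of $B$). An analogous computation on the common resolution produces an effective $q$-exceptional divisor $E'$ with $p^*(K_X+B^++\bM X.) = q^*(K_Y+B_Y^++\bM Y.) + E'$, numerically trivial on $q$-fibers by the same fiber-chasing argument; the negativity lemma again forces $E'=0$. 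Hence $(X,B^+,\bM.)$ is generalized log canonical, crepant to $(Y,B_Y^+,\bM.)$, and of coregularity $c$. The main obstacle throughout is the verification that $E$ and $E'$ are numerically trivial on $q$-fibers—that is, the passage from $\equiv_Z 0$ to $\equiv_Y 0$—so that the negativity lemma applies; once this is settled, the crepant invariance of the dual complex delivers the conclusion.
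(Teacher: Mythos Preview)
Your first direction is correct in substance: once you show that $E$ is $q$-exceptional and $E\equiv_Y 0$, the negativity lemma applied to both $E$ and $-E$ forces $E=0$, so the two generalized pairs are crepant. Note, however, that your intermediate claim ``since $B^+-B\geq 0$, the map $f$ remains $(K_X+B^++\bM X.)$-nonnegative'' is false in general (adding an effective divisor to the boundary can flip the sign of a contraction); fortunately it is not needed, since the numerical-triviality argument alone already yields $E=0$.

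The reverse direction, however, has a genuine gap. Your choice $B^+ \coloneqq B + f^{-1}_*(B_Y^+-B_Y)$ need not produce a generalized log Calabi--Yau pair over $Z$, and your ``same fiber-chasing argument'' for $E'\equiv_Y 0$ presupposes $K_X+B^++\bM X.\equiv_Z 0$, which is precisely what is in question. Concretely, take $f\colon X=\mathrm{Bl}_p\pp^2\to Y=\pp^2$ over $Z=\Spec\kk$, with $B=L_1+L_2$ the strict transforms of two lines through $p$ (so $(K_X+B)\cdot F=1>0$ and $f$ is $(K_X+B)$-nonnegative); taking $B_Y^+=l_1+l_2+l_3$ with $l_3$ general, your recipe gives $B^+=L_1+L_2+L_3$ and $K_X+B^+\sim -F\not\sim_\qq 0$, so $(X,B^+)$ is not log Calabi--Yau and is not crepant to $(Y,B_Y^+)$. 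The paper avoids this by \emph{defining} the extension on $X$ as the crepant pullback, i.e., setting $K_X+B+D+\bM X.\coloneqq p_*q^*(K_Y+B_Y+D_Y+\bM Y.)$; this guarantees $\qq$-Cartierness and crepancy automatically, and then the hypothesis that $f$ is $(K_X+B+\bM X.)$-nonnegative is invoked (via the negativity lemma) to show $D\geq 0$. This is the one place where the nonnegativity assumption on $f$ is actually used.
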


\begin{proof}
Up to shrinking $Z$ around $z$, 
we can find a generalized log Calabi--Yau pair
$(X,B+\Gamma,\bM.)$ over $Z$ that
computes the absolute coregularity
of $(X,B,\bM.)$ over $z$. 
Let $\Gamma_Y=f_*\Gamma$.
Since $(X,B+\Gamma,\bM.)$ is generalized log Calabi--Yau over $Z$, we conclude that
$(Y,B_Y+\Gamma_Y,\bM.)$ is generalized log canonical.
As $(X,B+\Gamma,\bM.)$ and $(Y,B_Y+\Gamma_Y,\bM.)$ are crepant equivalent,
we conclude that they have the same coregularity.
Hence, we deduce that 
\[
\hat{\rm coreg}_z(Y,B_Y,\bM.) \leq \hat{\rm coreg}_z(X,B,\bM.).
\]
On the other hand, 
up to shrinking $Z$ around $z$, 
we can find an effective divisor $D_Y$ on $Y$
that computes the absolute
coregularity of $(Y,B_Y,\bM.)$ over $z$. 
Let $p\colon W \rightarrow X$ and $q\colon W\rightarrow Y$ be a common resolution.
Write
\[
K_X+B+D+\bM X. = q_*p^*(K_Y+B_Y+D_Y+\bM Y.).
\]
By the negativity lemma, we know that $D$ is an effective divisor.
Hence, $(X,B+D,\bM.)$ is a generalized log Calabi--Yau pair over $Z$.
As above, we conclude that the absolute coregularity of
$(X,B,\bM.)$ over $z\in Z$ is at most the absolute coregularity of $(Y,B_Y,\bM.)$ over $z\in Z$.
This finishes the proof.
\end{proof}

The following lemma states the coregularity behaves well under adjunction
for generalized log Calabi--Yau pairs. 

\begin{lemma}\label{lem:coreg-adjunction}
Let $(X,B,\bM.)$ be a generalized log Calabi--Yau pair over $Z$.
Let $z\in Z$ be a point.
Let $S$ be the normalization of a component
of $\lfloor B\rfloor$ whose image on $Z$ contains $z$.
Let $B_S$ 
and $\bN.$ be the boundary and moduli parts defined by generalized adjunction, so that
$(K_X+B+\bM X.)|_S\sim K_S+B_S+\bN S..$
Then, we have that 
\[{\rm coreg}(S,B_S, \bN.)={\rm coreg}(X,B, \bM.).\]
holds after possibly shrinking around $z\in Z$.
\end{lemma}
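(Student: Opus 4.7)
The plan is to pass to a $\qq$-factorial generalized dlt modification on which $S$ is realized as a divisorial log canonical place, to compare the dual complexes of $(X,B,\bM.)$ and $(S,B_S,\bN.)$ via generalized divisorial adjunction, and then to invoke the pseudo-manifold structure of the dual complex of a log Calabi--Yau pair to relate their dimensions.

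First, I would choose a $\qq$-factorial generalized dlt modification $f\colon Y\to X$ of $(X,B,\bM.)$ such that the strict transform $S_Y$ of $S$ appears as a component of $\lfloor B_Y\rfloor$, where $K_Y+B_Y+\bM Y.=f^*(K_X+B+\bM X.)$. Since the PL-homeomorphism type of the dual complex is independent of the chosen dlt modification, $\mathcal{D}(X,B,\bM.)$ is PL-homeomorphic to $\mathcal{D}(Y,B_Y+\bM Y.)$. Applying generalized divisorial adjunction to $S_Y\subset Y$ yields a generalized dlt pair $(S_Y,B_{S_Y},\bM {S_Y}.)$ that is crepant to a generalized dlt modification of $(S,B_S,\bN.)$, so $\mathcal{D}(S,B_S,\bN.)$ is PL-homeomorphic to $\mathcal{D}(S_Y,B_{S_Y}+\bM {S_Y}.)$. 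By Koll\'ar-type divisorial adjunction in the dlt setting, the components of $\lfloor B_{S_Y}\rfloor$ are precisely the irreducible components of $S_Y\cap E$ as $E$ ranges over components of $\lfloor B_Y\rfloor-S_Y$; more generally, a $k$-simplex of $\mathcal{D}(S_Y,B_{S_Y}+\bM {S_Y}.)$ corresponds to a $(k+1)$-simplex of $\mathcal{D}(Y,B_Y+\bM Y.)$ containing the vertex $v_{S_Y}$ attached to $S_Y$. In other words, $\mathcal{D}(S_Y,B_{S_Y}+\bM {S_Y}.)$ realizes the link of $v_{S_Y}$ inside $\mathcal{D}(Y,B_Y+\bM Y.)$.

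After shrinking $Z$ around $z$ so that only log canonical centers meeting the fiber over $z$ remain visible, the hypothesis that $(X,B,\bM.)$ is generalized log Calabi--Yau implies that $\mathcal{D}(X,B,\bM.)$ is a pseudo-manifold. In particular, every maximal simplex has the same dimension $d=\dim \mathcal{D}(X,B,\bM.)$ and every vertex lies on such a simplex. Taking links therefore drops the dimension by exactly one, so $\dim \mathcal{D}(S,B_S,\bN.)=d-1$, and the definition of coregularity gives
\[
{\rm coreg}(S,B_S,\bN.)=\dim S-1-(d-1)=\dim X-1-d={\rm coreg}(X,B,\bM.).
\]
The main obstacle I anticipate is precisely this pseudo-manifold input at the vertex $v_{S_Y}$: after shrinking around $z$, one must verify that $v_{S_Y}$ lies on a maximal simplex of top dimension $d$. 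This uses the relative log Calabi--Yau hypothesis together with the structural results on dual complexes of log Calabi--Yau pairs (in the spirit of~\cite{FS23} and the Koll\'ar--Xu theory), and it is what makes the equality of coregularities genuinely depend on the log Calabi--Yau assumption rather than being a purely combinatorial statement about links in simplicial complexes.
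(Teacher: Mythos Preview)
Your proposal is correct and essentially follows the paper's approach: both pass to a generalized dlt modification and invoke \cite[Theorem~1.4]{FS23}, with you phrasing the argument via the link of $v_{S_Y}$ in the dual complex and the equidimensionality of $\mathcal{D}(X,B,\bM.)$, while the paper works directly with minimal generalized log canonical centers and the $\mathbb{P}^1$-linking between them. The ``pseudo-manifold'' input you anticipate needing at $v_{S_Y}$ is exactly the statement that every minimal center of $(X,B,\bM.)$ is $\mathbb{P}^1$-linked to one contained in $S$, so the two arguments coincide in substance.
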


\begin{proof}
By passing to a generalized dlt modification, we may assume that both generalized
pairs $(X,B,\bM.)$ and $(S,B_S,\bN.)$
are generalized dlt.
Since any minimal generalized log canonical center of $(S,B_S,\bN.)$ is a minimal generalized log canonical center of $(X,B,\bM.)$, we have that ${\rm coreg}(S,B_S,\bN.)\geq {\rm coreg}(X,B,\bM.)$.
On the other hand, let $W$ be a minimal generalized log canonical center
of $(X,B,\bM.)$ whose image on $Z$ contains $z$. 
By~\cite[Theorem 1.4]{FS23},  $(S,B_S,\bN.)$ admits
a generalized log canonical center $W_S$ that is $\mathbb{P}^1$-linked to $W$, thus  $\dim W_S =\dim W$.
This implies that ${\rm coreg}(X,B,\bM.)\geq {\rm coreg}(S,B_S,\bN.)$.
\end{proof}

See~\cite[\S~3]{Bir19} for the construction of generalized adjunction. By altering the pairs, we can get the same result for pairs with nef anticanonical class.

\begin{lemma}\label{lem:fano-coreg-adjunction}
Let $(X,B,\bM.)$ be a generalized log canonical pair over $Z$. 
Assume that $-(K_X+B+\bM X.)$ nef over $Z$.
Let $S$ be the normalization of a component
of $\lfloor B\rfloor$.
Let $B_S$ and $\bN.$ be the boundary and moduli parts defined by generalized adjunction, so that
$(K_X+B+\bM X.)|_S\sim K_S+B_S+\bN S.$.
Then, we have that 
\[{\rm coreg}(S,B_S, \bN.)={\rm coreg}(X,B, \bM.).\]
\end{lemma}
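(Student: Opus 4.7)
The plan is to adapt the proof of Lemma~\ref{lem:coreg-adjunction} to the weaker setting where $-(K_X+B+\bM X.)$ is only assumed nef over $Z$ rather than $\qq$-trivial. The reduction strategy is to enlarge $B$ by an auxiliary boundary so as to recover a log Calabi--Yau situation and then invoke Lemma~\ref{lem:coreg-adjunction}.

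First I would pass to a $\qq$-factorial generalized dlt modification, which does not alter the coregularities of $(X,B,\bM.)$ or $(S,B_S,\bN.)$, and work there. The inequality ${\rm coreg}(S,B_S,\bN.)\geq {\rm coreg}(X,B,\bM.)$ I would establish exactly as in Lemma~\ref{lem:coreg-adjunction}: any minimal generalized log canonical center $V$ of $(S,B_S,\bN.)$ arises as $V=S\cap T_1\cap\ldots\cap T_k$ for components $T_i$ of $\lfloor B\rfloor$ meeting $S$ at $V$, and hence is a minimal generalized log canonical center of $(X,B,\bM.)$ as well; so the collection of minimal centers of $(S,B_S,\bN.)$ embeds into that of $(X,B,\bM.)$, and the minimum dimension can only increase under passage to the smaller set.

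For the reverse inequality, I would aim to construct an effective $\qq$-divisor $\Gamma\geq 0$, whose support contains no generalized log canonical center of $(X,B,\bM.)$, such that $K_X+B+\Gamma+\bM X.\sim_{\qq,Z} 0$. Then $(X,B+\Gamma,\bM.)$ would be a generalized log Calabi--Yau pair over $Z$ with the same minimal generalized log canonical centers as $(X,B,\bM.)$, and divisorial adjunction to $S$ would produce a log Calabi--Yau pair $(S,B_S+\Gamma_S,\bN.)$ sharing its minimal generalized log canonical centers with $(S,B_S,\bN.)$. Applying Lemma~\ref{lem:coreg-adjunction} to the enlarged pair would then yield
\[
{\rm coreg}(S,B_S+\Gamma_S,\bN.)={\rm coreg}(X,B+\Gamma,\bM.),
\]
and both sides would agree with the coregularities of the original pairs, giving the desired equality.

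The main obstacle is producing $\Gamma$, since nef-ness of $-(K_X+B+\bM X.)$ over $Z$ does not by itself provide an effective $\qq$-representative. The likely remedies are either to run a relative MMP on $(X,B,\bM.)$ over $Z$ to obtain a good minimal model on which the anti-log canonical class is semi-ample and then cut $\Gamma$ out as a very general member of an appropriate linear system chosen to avoid the generalized log canonical centers, or to sidestep the construction of $\Gamma$ altogether by extending the $\mathbb{P}^1$-linking statement of~\cite[Theorem 1.4]{FS23} to generalized dlt pairs with nef anti-log canonical class over $Z$; this latter route would transport a minimum-dimensional minimal generalized log canonical center of $(X,B,\bM.)$ directly to one on $S$ of the same dimension and finish the proof without the log Calabi--Yau reduction.
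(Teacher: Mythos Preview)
Your overall strategy—reduce to the log Calabi--Yau case of Lemma~\ref{lem:coreg-adjunction} by enlarging the pair—is exactly right, but you are enlarging the wrong part of the generalized pair, and this is why you hit an obstacle. You propose adding an effective boundary $\Gamma$ with $K_X+B+\Gamma+\bM X.\sim_{\qq,Z}0$, which forces you to confront the effectivity of $-(K_X+B+\bM X.)$; your two proposed remedies (running an MMP to reach a semi-ample model, or extending the $\mathbb{P}^1$-linking of~\cite[Theorem 1.4]{FS23}) are left unexecuted, and the first in particular requires Fano-type hypotheses over $Z$ that the lemma does not assume.

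The paper instead enlarges the \emph{moduli} part: set $\mathbf{P}$ to be the b-Cartier closure of $P\coloneqq -(K_X+B+\bM X.)$, which is b-nef over $Z$ by hypothesis, and apply Lemma~\ref{lem:coreg-adjunction} directly to the generalized log Calabi--Yau pair $(X,B,\bM.+\mathbf{P})$. Because $\mathbf{P}$ already descends on $X$, the log discrepancies of $(X,B,\bM.+\mathbf{P})$ coincide with those of $(X,B,\bM.)$, so the two pairs have identical generalized log canonical centers and hence equal coregularity; the same holds on $S$ since $\mathbf{P}|_S$ descends on $S$. This bypasses the effectivity issue entirely and reduces the lemma to a one-liner, with no need to treat the two inequalities separately.
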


\begin{proof}
Define $P \coloneqq -(K_X+B+\bM X.)$ and let $\mathbf{P}$ denote its b-Cartier closure.
Then, $\mathbf{P}$ is a b-nef $\qq$-Cartier divisor. We can apply Lemma~\ref{lem:coreg-adjunction} to the
generalized log Calabi--Yau pair $(X,B,\bM .+ \mathbf{P})$. 
Therefore ${\rm coreg} (S,B_S, \bN .+ \mathbf{P}|_{S} )={\rm coreg} (X,B, \bM . + \mathbf{P})$.
Since $\mathbf{P}$ descends on $S$, we conclude that 
${\rm coreg}(S,B_S,\bN.+\mathbf{P}|_{S})={\rm coreg}(S,B_S,\bN.)$.
Hence ${\rm coreg}(S,B_S, \bN.)={\rm coreg}(X,B, \bM.)$.
\end{proof}

The following lemma will be used to cut down the dimension of the base $Z$ in a fibration $X\to Z$.
\begin{lemma}\label{lem:hyperplane-section-inductive-step}
Let $(X,B)$ be a log canonical pair over $Z$ and $\pi \colon X\to Z$ be a fibration with $\dim Z\geq 2$. Suppose 
\begin{itemize}
    \item the pair $(X,B)$ is log Calabi--Yau over $Z$;
    \item $\phi$ is of Fano type over an open set $U$ of $Z$;
    \item every log canonical center of $(X,B)$ dominates $Z$; and
    \item the coregularity of $(X,B)$ is at most $c$.
\end{itemize}
Let $H$ be a general hyperplane section of $Z$ and $G$ be the pull-back of $H$ to $X$. Write 
\[
(K_X+B+G)|_G = K_G+B_G.
\]
Then we have
\begin{itemize}
    \item the pair $(G,B_G)$ is log canonical;
    \item the pair $(G,B_G)$ is log Calabi--Yau over $H$;
    \item the induced map $G\to H$ is of Fano type over $U\cap H$;
    \item every log canonical center of $(G,B_G)$ dominates $H$; and
    \item the coregularity of $(G,B_G)$ is at most $c$.
\end{itemize}
Furthermore, let $B_Z$ and $B_H$ denote the discriminant parts of the adjunction for $(X,B)$ over $Z$ and $(G,B_G)$ over $H$, respectively. Let $D$ be a prime divisor on $Z$ and $C$ a component of $D\cap H$. Then
\[
\coeff_D(B_Z) = \coeff_C(B_H).
\]
\end{lemma}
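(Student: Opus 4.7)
The plan is to prove all six items by choosing $H$ a sufficiently general member of a very ample linear system on $Z$ and running Bertini-type arguments. For such $H$, the preimage $G=\pi^{*}H$ is reduced and irreducible (using that $\pi$ is a fibration, hence its fibres are geometrically connected), meets every lc centre of $(X,B)$ properly (thanks to the assumption that all lc centres dominate $Z$), intersects every prime divisor on $Z$ properly, and avoids the generic points of the components of the discriminant divisor $B_Z$.

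I would first verify the log canonical and log Calabi--Yau properties. By Bertini, for general $H$ the pair $(X,B+G)$ is log canonical with $G$ as a divisorial lc place, so divisorial adjunction produces a log canonical pair $(G,B_G)$. The log Calabi--Yau property over $H$ follows from the computation
\[
K_G+B_G=(K_X+B+G)|_G\sim_{\qq}\pi_G^{*}\bigl((D_Z+H)|_H\bigr),
\]
where $K_X+B\sim_{\qq}\pi^{*}D_Z$ comes from $(X,B)$ being log Calabi--Yau over $Z$. For the Fano type conclusion, I would apply Bertini to a boundary $\Delta$ witnessing that $\pi$ is of Fano type over $U$: for general $H$ the pair $(G,\Delta|_G)$ is klt, and since $G|_G$ is pulled back from $H|_H$, the divisor $-(K_G+\Delta|_G)$ is $\qq$-linearly equivalent over $H$ to $-(K_X+\Delta)|_G$, which is ample over $U\cap H$.

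The statements on lc centres and coregularity go together. Any lc centre of $(G,B_G)$ is a component of $V\cap G$ for some lc centre $V\neq G$ of $(X,B+G)$, hence an lc centre of $(X,B)$; since $V$ dominates $Z$ and $H$ is general, a dimension count gives that every such component dominates $H$. In particular, the minimal lc centres of $(G,B_G)$ have dimension exactly one less than those of $(X,B)$, so ${\rm coreg}(G,B_G)={\rm coreg}(X,B)-1\leq c$ (with the klt case, where the coregularity likewise drops by one, treated separately).

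For the equality of discriminant coefficients, which I expect to be the main technical obstacle, I would argue as follows. By definition $\coeff_D(B_Z)=1-t$ where $t$ is the lc threshold of $\pi^{*}D$ with respect to $(X,B)$ at the generic point of $D$, and analogously $\coeff_C(B_H)=1-t'$. For general $H$, near the generic point of $C$ we have $\pi^{*}D|_G=\pi_G^{*}C$, and by Bertini the addition of $G$ to $(X,B+t\pi^{*}D)$ preserves log canonicity in a neighbourhood of the generic point of $C$. Applying divisorial adjunction to $G$ and invoking inversion of adjunction in the reverse direction shows that the two lc thresholds coincide, hence $\coeff_D(B_Z)=\coeff_C(B_H)$. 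The delicate point is ensuring that $H$ is generic enough for the non-lc locus of the perturbed pair to restrict compatibly to $G$; this is precisely where the hypothesis that all lc centres of $(X,B)$ dominate $Z$ enters.
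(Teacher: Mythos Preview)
Your proposal is correct and follows essentially the same line as the paper's proof: Bertini for log canonicity and the log Calabi--Yau property, adjunction plus a dimension count for the lc-centre and coregularity claims, and inversion of adjunction (the paper cites Kawakita) for the equality of discriminant coefficients.

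Two minor remarks. First, for the coregularity bound the paper simply invokes its coregularity-under-adjunction lemma applied to the pair $(X,B+G)$ and the divisor $G$, whereas you argue directly that minimal lc centres drop in dimension by one upon intersecting with $G$; your computation is correct and in fact yields the sharper bound ${\rm coreg}(G,B_G)\leq c-1$, which is more than enough. Second, your closing comment that the hypothesis ``all lc centres of $(X,B)$ dominate $Z$'' enters \emph{precisely} in the discriminant step is a mischaracterization: you already used it, correctly, to show that lc centres of $(G,B_G)$ dominate $H$, and the discriminant argument itself does not rely on it, since the relevant non-klt centre $W$ of $(X,B+t\pi^*D)$ automatically dominates $D$ and hence meets a general $G$ properly.
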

\begin{proof}
We follow the proof of \cite[Lemma 3.2]{Bir16}.

Since $G$ is the pull-back of a general hyperplane section on $Z$, $(X,B+G)$ is log canonical. Thus, by adjunction, $(G,B_G)$ is log canonical and log Calabi--Yau over $H$. Moreover, every log canonical center of $(G,B_G)$ is a component of the intersection of a log canonical center of $(X,B+G)$ and $G$, and hence must dominate $H$. By \ref{lem:coreg-adjunction}, we have an equality
\[
\text{coreg}(G,B_G) = \text{coreg}(X,B) \leq c.
\]
Denote the map $G\to H$ by $\psi$. Let $t$ be the log canonical threshold of $\pi^*D$ with respect to $(X,B)$ over the generic point of $D$. Then there is a non-klt center $W$ of $(X,B+t\pi^*D)$ which dominates $D$ and the pair $(X,B+t\pi^*D)$ is lc over the generic point of $D$. Since $G$ is a general pull-back, $(X,B+G+t\pi^*D)$ is also lc over the generic point of $D$.
By inversion of adjunction \cite{Ka07}, there exists a component of $G\cap W$ which is a non-klt center of $(G,B_G+t\pi^*C)$ and $(G,B_G+t\pi^*C)$ is lc near the generic point of $C$. Thus, $t$ is the log canonical threshold of $\psi^*C$ with respect to $(G,B_G)$. In particular, we have
\[
\coeff_D(B_Z) = 1-t = \coeff_C(B_H).
\]
\end{proof}

\begin{definition}\label{def:coreg-lct}
{\em
We say that a log canonical threshold
$t={\rm lct}((X,B);\Gamma)$ has coregularity $c$
if ${\rm coreg}(X,B+t\Gamma)=c$ and
the support of $\Gamma$ contains the image on $X$ of a $c$-dimensional log canonical center on a dlt modification.
}
\end{definition}

\subsection{Koll\'ar--Xu models for log Calabi--Yau pairs}
\label{subsec:kollar-xu-models}

In this subsection, we introduce the concept of Koll\'ar--Xu models.
Using a theorem due to Filipazzi and Svaldi, we conclude that 
every generalized log Calabi--Yau pair
admits a Koll\'ar--Xu model.

\begin{definition}\label{def:kx-model}
{\em 
Let $(X,B,\bM.)$ be a projective generalized log Calabi--Yau pair.
We say that $(X,B,\bM.)$ 
is a {\em Koll\'ar--Xu generalized pair}
if there exists a projective contraction
$\pi\colon X \rightarrow Z$
for which the following conditions are satisfied:
\begin{itemize} 
    \item[(1)] the generalized pair
    $(X,B,\bM.)$ is generalized dlt;
    \item[(2)] every generalized log canonical center of $(X,B,\bM.)$ dominates $Z$; and
    \item[(3)] the divisor $\lfloor B\rfloor$
    fully supports a $\pi$-big and $\pi$-semi-ample divisor.
\end{itemize} 
In particular, the morphism 
$\pi\colon Y\rightarrow Z$ 
is of Fano type.

Let $(X,B,\bM.)$ be a generalized log Calabi--Yau pair. 
Let $\pi\colon Y\dashrightarrow X$ be a birational map.
Assume that $\pi$ only extracts log canonical places of $(X,B,\bM.)$
and is an isomorphism over
$X\setminus \Supp \lfloor B\rfloor$.
If $(Y,B_Y,\bM.)$ is a Koll\'ar--Xu generalized pair, then
we say that $Y\dashrightarrow X$
is a {\em Koll\'ar--Xu model}
for $(X,B,\bM.)$.
We may also say that $(Y,B_Y,\bM.)$,
together with $\pi$, defines a Koll\'ar--Xu model for $(X,B,\bM.)$.
}
\end{definition}

The following theorem is a generalization of \cite[Theorem 49]{KX16}
to the context of generalized pairs.
We refer the reader to~\cite[Theorem 4.2]{FS23}.
It gives a first approximation
for the existence of Koll\'ar--Xu models in the following theorem.

\begin{theorem}
\label{thm_fs23}
Let $(X,B,\mathbf{M})$ be a projective $\qq$-factorial generalized dlt log Calabi--Yau pair.
Then, there exists a crepant birational map $\phi \colon X \drar \overline{X}$, a generalized pair $(\overline{X}, \overline{B}, \mathbf M)$, and a morphism $\pi \colon \overline{X} \rar Z$ such that:
\begin{itemize}
    \item[(1)] $\lfloor \overline{B} \rfloor$ fully supports a $\pi$-ample divisor;
    \item[(2)] every generalized log canonical center of $(\overline{X},\overline{B},\mathbf{M})$ dominates $Z$;
    \item[(3)] $\overline{E} \subset \Supp\lfloor\overline{B}\rfloor$ for every $\phi \sups -1.$-exceptional divisor $\overline{E} \subset \overline{X}$; and
    \item[(4)] $\phi \sups -1.$ is an isomorphism over $\overline{X} \setminus \Supp \lfloor \overline{B}\rfloor$.
\end{itemize}
\end{theorem}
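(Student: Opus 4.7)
The strategy is to use the log Calabi--Yau condition $K_X+B+\bM X.\sim_\qq 0$ to run an MMP that forces $\lfloor B\rfloor$ to become relatively ample over a base $Z$.

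Set $A:=\lfloor B\rfloor$ and pick $0<\epsilon\ll 1$. The generalized pair $(X,B-\epsilon A,\bM.)$ is generalized klt, and the Calabi--Yau condition gives
\[
K_X+(B-\epsilon A)+\bM X.\equiv -\epsilon A.
\]
The plan is to run a $(K_X+(B-\epsilon A)+\bM X.)$-MMP, which, up to the factor $\epsilon$, is an $A$-positive MMP. Whenever $A\neq 0$, the divisor $-A$ is not pseudo-effective, so the MMP must terminate with a Mori fiber space $\pi\colon\overline{X}\to Z$; termination in this generalized klt setting is provided by generalized analogues of the BCHM package. Let $\phi\colon X\drar\overline{X}$ denote the composition of the MMP steps; it is crepant since each step is $(K_X+B+\bM X.)$-trivial by the Calabi--Yau condition.

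Conditions (1), (3), and (4) follow quickly from the nature of the MMP. On $\overline{X}$ the strict transform $\overline{A}$ is $\pi$-ample, because $-(K_{\overline{X}}+\overline{B}-\epsilon\overline{A}+\mathbf{M}_{\overline{X}})\equiv \epsilon\overline{A}$ is $\pi$-ample by the Mori fiber space property. An $A$-positive step of the MMP cannot contract a component of $A$: a contracted extremal ray $R$ has $A\cdot R>0$, so $R$ cannot lie entirely inside a single component of $A$. Hence $\lfloor\overline{B}\rfloor=\overline{A}$, giving condition (1). Condition (4) follows because an $A$-positive MMP is an isomorphism on $X\setminus\Supp A$, and condition (3) is vacuous since a standard MMP introduces no new divisors, so $\phi^{-1}$ has no exceptional divisors on $\overline{X}$.

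The principal obstacle is condition (2): every generalized log canonical center of $(\overline{X},\overline{B},\bM.)$ must dominate $Z$. The divisorial lc centers, namely the components of $\overline{A}$, do dominate $Z$ since $\overline{A}$ is $\pi$-ample; but a deeper lc center, arising as a component of an intersection of several components of $\overline{A}$, can a priori be contained in a fiber of $\pi$. To enforce (2) one argues by induction on the dimension: if a minimal lc center $V$ fails to dominate $Z$, perform generalized adjunction (cf.\ Lemma~\ref{lemma-coeff-adj}) to a component of $\overline{A}$ containing $V$ and iterate the construction on the resulting lower-dimensional generalized log Calabi--Yau pair. The $\qq$-factorial generalized dlt hypothesis together with the preservation of lc centers under $A$-positive MMP ensure that the procedure is well-defined and terminates, producing a final model on which (2) holds together with the previously established conditions. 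This inductive step is the technical heart of the argument.
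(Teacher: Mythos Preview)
The paper does not prove this statement; it is quoted from \cite[Theorem~4.2]{FS23}, extending \cite[Theorem~49]{KX16} to generalized pairs. Your opening move---running a $(K_X+B-\epsilon\lfloor B\rfloor+\bM X.)$-MMP to a Mori fiber space---is indeed the first step there, and (3) is vacuous since an MMP extracts no divisors. Two minor points: your claim that an $A$-positive contraction cannot contract a component $S$ of $A$ is false (a ray $R$ of curves lying in $S$ with $A\cdot R>0$ occurs whenever the remaining components of $A$ meet $R$ more positively than $S\cdot R<0$), though (1) survives since $\lfloor\overline{B}\rfloor=\phi_*A$ is $\pi$-ample regardless; and your justification of (4) is too quick, since the exceptional locus of an $A$-positive step need only \emph{meet} $\Supp A$, not lie in it---the correct argument checks that on the target of each step the indeterminacy locus of the inverse lies in the pushforward of $\Supp A$ (for flips because $A^+\cdot C^+<0$ forces $C^+\subset\Supp A^+$; for divisorial contractions via a pull-back computation showing $f^*A_Y=A+|a|E$ with $a<0$).

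The substantive gap is condition (2). Adjunction to a component $S\subset\lfloor\overline{B}\rfloor$ containing a vertical generalized log canonical center produces a lower-dimensional pair $(S,B_S,\bN.)$, and induction gives a model $\overline{S}\to Z_S$ of \emph{that} pair---but you supply no mechanism for converting this into a new birational model of $\overline{X}$ or a new fibration on it. The birational geometry of $S$ does not by itself modify the ambient variety, so the induction does not close. In \cite{KX16} and \cite{FS23} condition (2) is achieved on the ambient side, by iterating the construction over successively smaller bases rather than by restricting to a boundary divisor. Your sketch correctly isolates the difficulty but proposes the wrong tool to resolve it.
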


We observe that the model $\overline{X}$ in Theorem~\ref{thm_fs23} is not necessarily $\qq$-factorial.
However, using $\qq$-factorial dlt modifications, we construct a Koll\'ar--Xu model.

\begin{theorem}\label{thm:fully-support-big}
Let $(X,B,\bM.)$ be a projective generalized log Calabi--Yau pair.
Then, it admits a Koll\'ar--Xu model
$Y\dashrightarrow X$.
Furthermore, if $(X,B,\bM.)$ has coregularity $c$, then so does
$(Y,B_Y,\bM.)$.
\end{theorem}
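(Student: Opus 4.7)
The plan is to reduce to the $\qq$-factorial generalized dlt case, apply Theorem~\ref{thm_fs23}, and then restore the dlt condition with a further $\qq$-factorial generalized dlt modification. Coregularity will be preserved throughout because every step in this chain is crepant birational.

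First, I would use the lemma immediately preceding Theorem~\ref{thm_fs23} to produce a $\qq$-factorial generalized dlt modification $h_1\colon (X_1,B_1,\bM.)\to (X,B,\bM.)$. The pair $(X_1,B_1,\bM.)$ is crepant to $(X,B,\bM.)$, so by the invariance of the dual complex under crepant birational maps it again has coregularity $c$. Every $h_1$-exceptional divisor is a generalized log canonical place of $(X,B,\bM.)$. I would then apply Theorem~\ref{thm_fs23} to $(X_1,B_1,\bM.)$ to obtain a crepant birational map $\phi\colon X_1\dashrightarrow\overline{X}$, a generalized pair $(\overline{X},\overline{B},\bM.)$, and a contraction $\pi\colon\overline{X}\to Z$ satisfying the four listed properties; in particular $\lfloor\overline{B}\rfloor$ fully supports a $\pi$-ample divisor and every generalized log canonical center of $(\overline{X},\overline{B},\bM.)$ dominates $Z$.

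Since $\overline{X}$ is not guaranteed to be either $\qq$-factorial or generalized dlt, I would take a $\qq$-factorial generalized dlt modification $h_2\colon Y\to\overline{X}$, define $B_Y$ by crepancy, and set $\psi\coloneqq h_1\circ\phi^{-1}\circ h_2\colon Y\dashrightarrow X$; the contraction $\pi\circ h_2\colon Y\to Z$ will witness that $(Y,B_Y,\bM.)$ is a Koll\'ar--Xu generalized pair. To verify Definition~\ref{def:kx-model}: condition (1) holds by construction of $h_2$; for condition (2), every $h_2$-exceptional divisor is a generalized log canonical place of $(\overline{X},\overline{B},\bM.)$, hence its image on $\overline{X}$ is a generalized log canonical center and dominates $Z$ by property (2) of Theorem~\ref{thm_fs23}, which together with the same property on $\overline{X}$ shows that every generalized log canonical center of $(Y,B_Y,\bM.)$ dominates $Z$; for condition (3), the pullback to $Y$ of the $\pi$-ample divisor supported on $\lfloor\overline{B}\rfloor$ is $(\pi\circ h_2)$-big and $(\pi\circ h_2)$-semi-ample and is supported on $\lfloor B_Y\rfloor$. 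The conditions on $\psi$—that it only extracts generalized log canonical places of $(X,B,\bM.)$ and is an isomorphism over $X\setminus\Supp\lfloor B\rfloor$—follow by composing the analogous properties of $h_1$, $\phi^{-1}$, and $h_2$, using crepancy to match log canonical places across the three models. Coregularity preservation follows because $h_1$, $\phi$, and $h_2$ are all crepant birational and the dual complex is a crepant birational invariant by~\cite[Theorem 1.6]{FS23}.

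The main technical obstacle is controlling the bookkeeping for the isomorphism condition in the last clause of Definition~\ref{def:kx-model}: the dlt modifications $h_1$ and $h_2$ can in principle extract divisors whose image on $X$ lies outside $\Supp\lfloor B\rfloor$, at points where the original pair is generalized log canonical but not generalized dlt. Establishing that the composite $\psi$ is still an isomorphism over $X\setminus\Supp\lfloor B\rfloor$ requires tracking which exceptional divisors are genuinely necessary, and in particular using the fact that any generalized log canonical place of $(X,B,\bM.)$ that is extracted must, after passing through $\phi$, land inside $\Supp\lfloor\overline{B}\rfloor$ by properties (3) and (4) of Theorem~\ref{thm_fs23}, and therefore could not have had center in $X\setminus\Supp\lfloor B\rfloor$ to begin with. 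All other steps are essentially a bookkeeping of known constructions.
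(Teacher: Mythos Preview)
Your proposal is correct and follows essentially the same route as the paper: take a $\qq$-factorial generalized dlt modification, apply Theorem~\ref{thm_fs23}, then take a further $\qq$-factorial generalized dlt modification and verify Definition~\ref{def:kx-model}. One point the paper spells out more carefully than you do is condition~(3): to see that $h_2^*\overline{H}$ is supported \emph{exactly} on $\lfloor B_Y\rfloor$ (not merely contained in it), one uses that $\mathrm{Nklt}(\overline{X},\overline{B},\bM.)=\Supp\lfloor\overline{B}\rfloor$ by \cite[Remark~4.3]{FS23}, so every $h_2$-exceptional divisor (being a generalized log canonical place) has center inside $\Supp\overline{H}$ and hence appears with positive coefficient in the pullback. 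Conversely, you are more explicit than the paper about the isomorphism-over-$X\setminus\Supp\lfloor B\rfloor$ clause, which the paper's proof does not address in detail.
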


In the context of Theorem \ref{thm:fully-support-big}, if $c=0$, then $Z=\Spec (\mathbb{K})$ and $Y$ is a variety of Fano type.

\begin{proof}
Let $(X,B,\bM.)$ be a generalized pair as in the statement.
Then, we first consider a $\qq$-factorial generalized dlt modification and then apply Theorem~\ref{thm_fs23} to such modification.
Call $(\overline{X},\overline{B},\bM.)$ the resulting model and $\pi \colon \overline{X} \rar Z$ the morphism claimed in Theorem~\ref{thm_fs23}.
Since $(X,B,\bM.)$ has coregularity $c$, then so does $(\overline{X},\overline{B},\bM.)$.
In particular, $(\overline{X},\overline{B},\bM.)$ has a $c$-dimensional generalized log canonical center.
Then, by (2) in Theorem~\ref{thm_fs23}, it follows that $Z$ has dimension at most $c$.
By (1) in Theorem~\ref{thm_fs23}, $\overline{B}$ fully supports a $\pi$-ample divisor, which we will denote by $\overline{H}$.
Now, let $(Y,B_Y,\bM.)$ be a $\qq$-factorial generalized dlt modification of $(\overline{X},\overline{B},\bM.)$, with morphism $p \colon Y \rar \overline{X}$.
We denote by $\pi_Y$ the induced morphism $\pi_Y \colon Y \rar Z$.
Note that every generalized log canonical center of $(Y,B_Y,\bM.)$ dominates $Z$.
Then, (2) in Definition~\ref{def:kx-model} holds.
By \cite[Remark 4.3]{FS23}, we have $\mathrm{Nklt}(\overline{X},\overline{B},\bM.)=\Supp\lfloor \overline{B}\rfloor$.
Since $p$ only extracts generalized log canonical places, it then follows that every $p$-exceptional divisor has positive coefficients in $H_Y \coloneqq \pi^*\overline{H}$ and that $\Supp H_Y= \Supp \lfloor B_Y \rfloor$.
Then, it follows that $\lfloor B_Y \rfloor$ fully supports a $\pi_Y$-big and $\pi_Y$-semi-ample divisor.
Thus, (3) in Definition~\ref{def:kx-model} holds.
The statements (1) and (3) in Definition~\ref{def:kx-model} hold by construction.
We conclude that $(Y,B_Y,\bM.)$, together with $\pi_Y$, are a Koll\'ar--Xu model of $(X,B,\bM.)$.
Lastly, $(Y,B_Y,\bM.)$ has coregularity $c$, since it is crepant to a generalized pair of coregularity $c$.
\end{proof}

\subsection{Index of generalized klt pairs} 

In this subsection, we reduce the index conjecture for generalized klt pairs to the standard index conjecture. 

\begin{lemma}\label{lem:index-gen-klt}
Let $d$ and $p$ be two positive integers.
Let $\Lambda$ be a set of rational numbers satisfying the descending chain condition.
Assume Conjecture~\ref{conj:index}$(d)$ holds.
Then, there exists a positive integer
$I \coloneqq I(\Lambda,d,p)$,
satisfying the following.
Let $(X,B,\bM.)$ be a generalized klt 
Calabi--Yau pair for which:
\begin{itemize}
    \item the variety $X$ has dimension $d$;
    \item the coefficients of $B$ belong to $\Lambda$; 
    \item the divisor $p\bM.$ is Cartier where it descends. 
\end{itemize}
Then, we have that $I(K_X+B+\bM X.)\sim 0$.
\end{lemma}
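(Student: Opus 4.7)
The plan is to reduce the index bound for the generalized klt log Calabi--Yau pair $(X,B,\bM.)$ to the standard (non-generalized) index conjecture, Conjecture~\ref{conj:index}$(d)$, by replacing the b-nef moduli part with an effective $\qq$-divisor representative having controlled coefficients.

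First, I would pass to a birational model $f\colon X'\to X$ on which $\bM.$ descends, so that $p\bM X'.$ is Cartier and nef. Writing
$$K_{X'}+B'+\bM X'.=f^*(K_X+B+\bM X.),$$
the crepant pair $(X',B',\bM.)$ remains generalized klt and log Calabi--Yau, and the Calabi--Yau condition gives the linear equivalence $\bM X'.\sim_\qq -(K_{X'}+B')$.

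The key step is to construct an effective $\qq$-divisor $M'\sim_\qq \bM X'.$ with coefficients in $\tfrac{1}{p}\zz_{\geq 0}$ such that $(X',B'+M')$ is klt. This can be accomplished by a standard Bertini-type argument: working with a sufficiently divisible basepoint-free multiple of $p\bM X'.$ (possibly after adding and then subtracting a small ample perturbation to gain global generation, and using the generalized klt assumption on $(X',B',\bM.)$ to ensure the perturbed pair remains klt). Pushing forward, $M\coloneqq f_*M'$ is an effective $\qq$-divisor with $M\sim_\qq \bM X.$, and $(X,B+M)$ is a klt log Calabi--Yau pair whose coefficients lie in a DCC set $\Lambda'$ depending only on $\Lambda$ and $p$.

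Applying Conjecture~\ref{conj:index}$(d)$ to $(X,B+M)$ produces an integer $I_0=I_0(\Lambda',d)$ with $I_0(K_X+B+M)\sim 0$. Since $M-\bM X.\sim_\qq 0$ and both $pM$ and $p\bM X.$ are $\zz$-Weil divisors (by the choice of $M'$ and the b-Cartier assumption on $p\bM.$), the torsion order of $I_0(M-\bM X.)$ in the Picard group is bounded in terms of $p$ and $I_0$. Taking $I$ to be a suitable multiple then yields $I(K_X+B+\bM X.)\sim 0$ for some $I=I(\Lambda,d,p)$, as required.

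The main obstacle is the construction of the effective representative $M'$ with controlled coefficients: producing an effective $\qq$-linearly equivalent divisor to the nef Cartier divisor $p\bM X'.$ while maintaining the klt property is the core technical step. It relies crucially on the log Calabi--Yau assumption to supply the linear equivalence $p\bM X'.\sim_\qq -p(K_{X'}+B')$, and on the generalized klt hypothesis to absorb the new effective divisor into the boundary without destroying klt singularities.
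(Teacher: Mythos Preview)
Your approach has two genuine gaps.

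First, the ``Bertini-type'' construction of an effective $M'\sim_\qq \bM X'.$ with coefficients in $\tfrac{1}{p}\zz$ requires $p\bM X'.$ to be semi-ample, but the hypothesis only gives that it is nef and Cartier. Nef Cartier divisors are not semi-ample in general, and the log Calabi--Yau equivalence $\bM X'.\sim_\qq -(K_{X'}+B')$ does not help: on the resolution $B'$ is only a sub-boundary, and there is no reason $-(K_{X'}+B')$ should have an effective representative, let alone one with coefficients in $\tfrac{1}{p}\zz$. Your suggested fix of adding a small ample $A$ and then subtracting it does not work either: a general member of $|m(p\bM X'.+A)|$ gives an effective divisor $\qq$-linearly equivalent to $\bM X'.+\tfrac{1}{p}A$, and subtracting $\tfrac{1}{p}A$ does not yield an effective divisor.

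Second, even granting the first step, your claim that the torsion order of $I_0(M-\bM X.)$ is bounded in terms of $p$ and $I_0$ is unjustified. A $\qq$-Cartier divisor $D$ with $D\sim_\qq 0$ and $pD$ integral can have arbitrarily large torsion order in the Picard group (already on an elliptic curve). Some additional hypothesis such as rational connectedness of $X$ would be needed here, and this is precisely the kind of subtlety the paper flags elsewhere (cf.\ the discussion after Theorem~\ref{introthm:index-coreg-1}).

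For comparison, the paper's argument is entirely different and avoids both issues: after reducing to $\Lambda$ finite and $X$ $\qq$-factorial $\epsilon$-lc, it runs a $K_X$-MMP to a Mori fiber space $X'\to Z$. When $\dim Z=0$, boundedness of $\epsilon$-lc Fanos \cite{Bir21} and bounded complements \cite{FM20} give the index bound directly. When $\dim Z>0$, the canonical bundle formula \cite[Lemma~5.4]{FM20} produces a generalized pair on $Z$ with controlled data, and induction on dimension finishes the proof.
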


\begin{proof}
By the global ACC \cite[Theorem 1.6]{BZ16}, we may assume that $\Lambda$ is a finite set of rational numbers.
The statement is clear in dimension $1$.
We proceed by induction on the dimension.
If $\bM.=0$, then the statement follows from the conjecture.
Since $(X,B,\bM.)$ is generalized klt, it admits a small $\qq$-factorialization.
Therefore, we may assume that $X$ is $\qq$-factorial.
By the ACC for generalized log canonical thresholds, we may assume $(X,B,\bM.)$ is $\epsilon$-log canonical for some $\epsilon$ that only depends on $d$, $p$, and $\Lambda$. 
Then, it follows that $X$ is itself $\epsilon$-log canonical.
We run a $K_X$-MMP which terminates with a Mori fiber space $X\dashrightarrow X'\rightarrow Z$.
If $Z$ is zero-dimensional, then $X'$ belongs to a bounded family by~\cite{Bir21}.
By~\cite[Theorem 1.2]{FM20} $(K_{X'}+B'+\bM X'.)$ admits an $I$-complement for some $I$ that only depends on $\Lambda,d$ and $p$. Since $K_{X'}+B'+\bM X'.\sim_\mathbb{Q} 0$, we conclude that $I(K_{X'}+B'+\bM X'.)\sim 0$, so the statement follows for $X$ as well.
Now, assume that $Z$ is positive-dimensional.
We write $\pi'\colon X'\rightarrow Z$ for the corresponding contraction.
By~\cite[Lemma 5.4]{FM20}, we can write
\[
q(K_{X'}+B'+\bM X'.) \sim q
\pi^*(K_Z+B_Z+\bN Z.),
\] 
where the coefficients of $B_Z$ belong to $\Omega$ which satisfies the DCC
and only depends on $\Lambda,d$ and $p$. 
Furthermore, $q$ only depends on $\Lambda,d,$ and $p$
and $q\bN.$ is Cartier where it descends.
The generalized pair $(Z,B_Z, \bN .)$ is generalized log canonical, since it comes from the generalized canonical bundle formula \cite[Theorem 1.4]{Fil20}.
By induction on the dimension, we know that
$I_0(K_Z+B_Z+\bN Z.)\sim 0$ for some $I_0$ that only depends on $\Lambda,d$ and $p$.
Thus, we conclude that $I(K_X+B+\bM X.)\sim 0$ where $I={\rm lcm}(q,I_0)$. 
\end{proof}

\subsection{Lifting sections using fibrations}
In this subsection, we give some lemmata regarding the lifting of sections using fibrations.

\begin{theorem}\label{thm:lifting-fibration-gen-pairs}
Let $\lambda,d$, and $c$ be nonnegative integers.
Assume that Theorem~\ref{introthm:index-higher-coreg}$(d-1,c)$ holds
and let $I \coloneqq I(D_\lambda,d-1,c,\lambda)$ be the integer provided by this theorem.
Let $(X,B,\bM.)$ be a $d$-dimensional rationally connected generalized log Calabi--Yau pair.
Assume that the following conditions hold:
\begin{itemize}
    \item $X$ is $\qq$-factorial and $(X,B,\bM.)$ is generalized dlt;
    \item there is a fibration $X\rightarrow W$, which is a $(K_X + \lfloor B \rfloor)$-Mori fiber space;
    \item a component $S\subset \Supp\lfloor B\rfloor$ is rationally connected and ample over $W$;
    \item the coefficients of $B$ belong to $D_\lambda$;
    \item we have that $\lambda\bM.$ is b-Cartier; and
    \item the generalized pair $(X,B,\bM.)$ has coregularity $c$.
\end{itemize}
Then, we have that $I(K_X+B+\bM X.)\sim 0$.
\end{theorem}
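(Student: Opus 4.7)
The plan is to perform generalized divisorial adjunction along $S$, apply the inductive hypothesis Theorem~\ref{introthm:index-higher-coreg}$(d-1,c)$ on $S$, and then lift the resulting linear equivalence back to $X$ using the positivity of $S$ over $W$.

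First, we would perform adjunction. Since $(X,B,\bM.)$ is generalized dlt, the component $S \subset \lfloor B\rfloor$ is normal, and divisorial adjunction gives
\[
(K_X+B+\bM X.)|_S \sim K_S + B_S + \bN S.,
\]
producing a $(d-1)$-dimensional generalized pair $(S,B_S,\bN.)$. This pair is generalized log Calabi--Yau, and its coregularity equals $c$ by Lemma~\ref{lem:coreg-adjunction}. By Lemma~\ref{lemma-coeff-adj}, the coefficients of $B_S$ and of $\bN.$ (on a model where it descends) lie in the derived set $D(D_\lambda)$, which coincides with $D_\lambda$ by Lemma~\ref{lem:derived-closure} combined with the fact that $D_\lambda$ is itself derived. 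The divisor $\lambda\bN.$ remains b-Cartier, as this property is preserved under divisorial adjunction after a suitable common resolution. Together with the rational connectedness of $S$ (which is hypothesized), these verifications allow us to invoke Theorem~\ref{introthm:index-higher-coreg}$(d-1,c)$ to conclude
\[
I(K_S+B_S+\bN S.)\sim 0, \qquad \text{equivalently} \qquad I(K_X+B+\bM X.)|_S\sim 0.
\]

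Next, we would lift this linear equivalence to $X$. Setting $D \coloneqq I(K_X+B+\bM X.)$, the divisor $D$ is Cartier and numerically trivial. The short exact sequence
\[
0 \to \oo_X(D-S) \to \oo_X(D) \to \oo_S \to 0
\]
reduces the problem to the vanishing $H^1(X, \oo_X(D-S))=0$: once this is established, the trivial section on $S$ lifts to a section $s$ of $\oo_X(D)$, whose divisor is effective and numerically trivial, hence zero, so $D\sim 0$. To obtain this vanishing we would exploit the identity
\[
D-S-K_X \sim_\qq B^\flat + \{B\} + \bM X., \qquad B^\flat \coloneqq \lfloor B\rfloor - S,
\]
together with the Mori fiber space structure, which yields $\{B\}+\bM X. \sim_\qq -(K_X+\lfloor B\rfloor)$ ample over $W$. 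A case split on $\dim W$ then concludes: when $W$ is a point this ampleness is global, so after a small perturbation absorbing $B^\flat$ into the boundary, Kawamata--Viehweg vanishing for generalized pairs applies; when $\dim W>0$, Koll\'ar's torsion-freeness theorem applied to the fibration $X\to W$, using that $S$ is relatively ample, delivers the required vanishing.

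The main obstacle will be precisely this lifting step, in which the moduli b-divisor $\bM.$ has to be handled carefully: one typically passes to a log resolution on which $\bM.$ descends, arranges a generalized-klt perturbation whose residual discrepancy is big-and-nef in the appropriate (relative) sense, applies the vanishing there, and then pushes the conclusion back to $X$. Ensuring that both cases $\dim W = 0$ and $\dim W > 0$ produce the same integer $I$ as supplied by the inductive step, without introducing any additional arithmetic constraints, is where the technical subtleties concentrate.
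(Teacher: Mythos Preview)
Your high-level strategy matches the paper's: perform generalized adjunction to $S$, invoke the inductive hypothesis to get $I(K_S+B_S+\bN S.)\sim 0$, then lift via a short exact sequence and vanishing. However, there is a genuine gap and a difference in the lifting step.

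\textbf{The gap: Cartier-ness of $D$.} You assert that $D=I(K_X+B+\bM X.)$ is Cartier, but this is precisely the delicate point. The variety $X$ is only $\qq$-factorial, and the coefficients of $B$ lie in $D_\lambda$, which contains rationals of arbitrarily large denominator; there is no reason $I$ clears them all, let alone the Cartier indices of $X$. Without this, the restriction map $\O_X(D)\to\O_S(D|_S)$ need not be surjective, and the short exact sequence fails. The paper closes this gap by a local argument at each codimension-$2$ point of $X$ lying on $S$: there $X$ has a cyclic quotient singularity with local class group $\zz/m$ generated by some divisor $S'$, so $I(K_X+B+\bM X.)\sim\mu S'$ locally. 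Restricting to $S$, where one already knows $I(K_S+B_S+\bN S.)\sim 0$ (and hence is integral Cartier), forces $m\mid\mu$, so $\mu S'$ is Cartier. This shows $D$ is Cartier along an open set containing every codimension-$1$ point of $S$, which is exactly what is needed to invoke \cite[2.41, Lemma~2.42]{Bir19} and obtain the sequence
\[
0\to\O_X(D-S)\to\O_X(D)\to\O_S(I(K_S+B_S+\bN S.))\to 0.
\]

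\textbf{The vanishing step.} Your proposed case split on $\dim W$ (global Kawamata--Viehweg when $W$ is a point, Koll\'ar torsion-freeness otherwise) is more complicated than necessary, and the torsion-freeness part is too vague to yield $H^1=0$ directly. The paper argues uniformly: since $f\colon X\to W$ is a Mori fiber space with $\rho(X/W)=1$ and $K_X+B+\bM X.\sim_\qq 0$, one has $D-S\sim_{\qq,f}K_X+(B-S+\bM X.)$ with $B-S+\bM X.=\{B\}+\bM X.+(\lfloor B\rfloor-S)$ $f$-ample (the first summand is $f$-ample as $-(K_X+\lfloor B\rfloor)$, the second is $f$-effective, and $\rho(X/W)=1$). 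Relative Kawamata--Viehweg then gives $R^1f_*\O_X(D-S)=0$, and $f_*\O_X(D-S)=0$ since $-S$ is $f$-anti-ample with $\dim W<\dim X$. Pushing forward and taking global sections concludes.
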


\begin{proof}
Let $(X,B,\bM.)$, $S$, $f \colon X \rar W$, and $I \coloneqq I(D_\lambda,d-1,c,\lambda)$ be as in the statement.
First, we show that we can apply the inductive hypothesis to $S$.

Since $(X,\lfloor B \rfloor)$ is dlt, $S$ is normal.
Furthermore, we have $\mathrm{Nklt}(X,S)=S$.
Since $-(\K X. + S)$ is $f$-ample, by the connectedness principle \cite{FS23}, it follows that $S \rar W$ has connected fibers.
Now, let $(S,B_S,\bN.)$ be the generalized pair induced on $S$ by generalized divisorial adjunction.
By Lemma~\ref{lem:coreg-adjunction}, $(S,B_S,\bN.)$ has coregularity $c$.
Furthermore, by Lemma~\ref{lemma-coeff-adj} and Lemma~\ref{lem:derived-closure}, it satisfies the assumptions of Theorem~\ref{introthm:index-higher-coreg}$(d-1,c)$ with constant $I$.
Thus, we have
\begin{equation} \label{eq_dim_one_less}
I (\K S. + B_S + \bN S.) \sim 0.
\end{equation}

By \cite[Theorem 3.1]{FMP22}, the coefficients of $B_S$ belong to a finite set only depending on $\lambda$ and $c$.
In particular, they are divisible by $I$, as so are the coefficients of $\bN S.$.
Then, these coefficients control the coefficients of $\mathrm{Diff}_S(0)$, as we explain in what follows.
By \cite[3.35]{K13}, along the codimension 2 points of $X$ contained in $S$, $X$ has cyclic singularities.

Then, given a prime divisor $P$ in $S$, an \'{e}tale local neighbourhood of a general point $p \in P$ is isomorphic to
\[
(p \in (X, B, \bM.))\simeq (0 \in (\Af^2=(x,y), (x=0) + c(y=0)))/(\zz/m\zz)\times \Af^{\dim X -2},
\]
where $Z \simeq (\Af^2=(x,y))/(\zz/m\zz)$, $S=(x=0)$ and $S'=(y=0)$. Since the class group of $Z$ is generated by $S'$, there exists an integer $\mu$ such that
\begin{equation}\label{eq:linear-equiv-toric}
I(K_X+B+\bM X.) \sim \mu S'.
\end{equation}
By adjunction, $S'|_S \sim_{\mathbb{Q}} \frac{1}{m}\{0\}$.
We also have that $I(K_X+B+\bM X.)|_S\sim 0$, as the denominators of the coefficients of $B_S$ and $\bN S.$ divide $I$ and hence it is a Cartier divisor on a smooth germ.

Then, we can write 
\[
0 \sim I(K_X+B+M)|_S \sim 
\mu S'|_S \sim \frac{\mu}{m}\{0\}. 
\] 
We conclude that $m$ divides $\mu$.
In particular, we have that $\mu S$ is a Cartier divisor, as the Cartier index of any Weil divisor of $X$ through $\{0\}$ divides $m$.
By the linear equivalence~\eqref{eq:linear-equiv-toric}, we conclude that the divisor
$I(K_X+B+M)$ is Cartier at the generic point of $P$. 
Note that this argument is independent of $P$, so
we conclude that $I(K_X+B+M)$ is Cartier at the generic point of every divisor on $S$. 
Thus, $I(K_X+B+M)$ is Cartier along a set $U$ that contains the generic point of every divisor of $S$.

Then, by \cite[2.41 and Lemma 2.42]{Bir19}, we have the following short exact sequence
\begin{equation}\label{ses_fiber_space2}
0 \rar \O X.(I(K_{X}+B + \bM X.)-S) \rar \O X.(I(K_{X}+B+ \bM X.)) \rar \O S.(I(K_{S}+B_{S}+ \bN S.)) \rar 0.
\end{equation}
Since $I(K_{X}+B + \bM X.)-S\sim_{\qq,f} -S$, the divisor $-S$ is $f$-ample, and $\dim W < \dim X$,
we have
\[
f_*\O X.(I(K_{X}+B + \bM X.)-S)=0.
\]
Similarly, we write 
\[
I(K_{X}+B + \bM X.)-S\sim_{\qq,f} -S \sim_{\qq,f} K_X +(B-S+\bM X.).
\]
Note that $X$ is klt and $B-S+\bM X. = B^{<1}+\bM X. + (B^{=1} - S)$ is $f$-ample, since $f$ is a Mori fiber space and the divisor $B^{<1}+\bM X.$ is $f$-ample.
Thus, by the relative version of Kawamata--Viehweg vanishing, we have
\[
R^1f_*\O X.(I(K_{X}+B + \bM X.)-S)=0.
\]
Therefore, by pushing forward \eqref{ses_fiber_space2} via $f$, we obtain 
\[
f_* \O X.(I(K_{X}+B+ \bM X.)) \simeq f_* \O S.(I(K_{S}+B_{S}+ \bN S.)).
\]
Now, taking global sections, we have
\begin{equation} \label{eq:sections}
H^0(X, \O X.(I(K_{X}+B+ \bM X.))) = H^0(S, \O S.(I(K_{S}+B_{S}+ \bN S.)))=H^0(S, \O S.) \neq 0.
\end{equation}
By Lemma \cite[Lemma 3.1]{FMM22}, \eqref{eq:sections} implies that $I(K_{X}+B+ \bM X.) \sim 0$.
\end{proof}

\begin{theorem}\label{thm:lifting-fibration-pairs-2}
Let $\lambda,d$, and $c$ be nonnegative integers.
Assume that Theorem~\ref{introthm:index-higher-coreg}$(d-1,c)$ holds
and let $I \coloneqq I(D_\lambda,d-1,c,0)$ be the integer provided by this theorem.
Assume that $I$ is divisible by $2\lambda$.
Let $(X,B)$ be a $d$-dimensional log Calabi--Yau pair.
Assume that the following conditions hold:
\begin{itemize}
    \item $X$ is $\qq$-factorial and klt;
    \item there is a fibration $X\rightarrow W$;
    \item $S$ is a prime component of $\lfloor B \rfloor$ that is ample over $W$;
    \item $(X,B-S)$ is dlt;
    \item the morphism $S \rar W$ does not have connected fibers;
    \item the coefficients of $B$ belong to $D_\lambda$; and 
    \item the pair $(X,B)$ has coregularity $c$.
\end{itemize}
Then, we have that $I(K_X+B)\sim 0$.    
\end{theorem}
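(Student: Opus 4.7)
The plan is to adapt the proof of Theorem~\ref{thm:lifting-fibration-gen-pairs} to the present setting, where the moduli part is trivial and the morphism $S\to W$ fails to have connected fibers. The three main steps are: (i) perform divisorial adjunction along $S$ and apply the inductive hypothesis to $(S,B_S)$; (ii) lift the resulting linear equivalence from $S$ to $X$ via a short exact sequence together with relative Kawamata--Viehweg vanishing; (iii) convert the existence of a nonzero section into the desired linear equivalence, using that $I(K_X+B)$ is torsion in $\Pic(X)$.

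For step (i), divisorial adjunction yields $(K_X+B)|_S\sim K_S+B_S$ with $(S,B_S)$ dlt, since $(X,B-S)$ is dlt and $S$ is a component of $\lfloor B\rfloor$ with coefficient $1$. By Lemma~\ref{lem:coreg-adjunction} the pair $(S,B_S)$ has coregularity $c$, and by Lemma~\ref{lemma-coeff-adj} together with Lemma~\ref{lem:derived-closure} its coefficients lie in $D(D_\lambda)=D_\lambda$. Since there is no moduli part on $X$, the moduli part induced on $S$ is also trivial. Applying Theorem~\ref{introthm:index-higher-coreg}$(d-1,c)$ with $p=0$ gives $I(K_S+B_S)\sim 0$.

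For step (ii), as in the proof of Theorem~\ref{thm:lifting-fibration-gen-pairs}, the cyclic quotient description at codimension $2$ points of $X$ contained in $S$ (see~\cite[3.35]{K13}) combined with the divisibility $\lambda\mid I$ ensures that $I(K_X+B)$ is Cartier in a neighborhood of the generic point of every prime divisor of $S$. One then forms the short exact sequence
\[
0\to\O X.(I(K_X+B)-S)\to\O X.(I(K_X+B))\to\O S.(I(K_S+B_S))\to 0,
\]
and deduces $f_*\O X.(I(K_X+B)-S)=0$ from the $f$-antiampleness of $-S$ together with $\dim W<\dim X$, and $R^1f_*\O X.(I(K_X+B)-S)=0$ by relative Kawamata--Viehweg vanishing, applied after slightly perturbing the dlt decomposition of $B-S$ into a klt boundary plus a small multiple of the $f$-ample divisor $S$. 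This gives
\[
H^0(X,\O X.(I(K_X+B)))\cong H^0(S,\O S.(I(K_S+B_S)))=H^0(S,\O S.)\ne 0.
\]

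The main obstacle is step (iii), which is where the divisibility $2\lambda\mid I$ becomes essential. Because $S\to W$ has disconnected fibers, the Stein factorization $S\to W'\to W$ is a nontrivial finite cover, and the associated Galois action on the connected components of a general fiber of $S\to W$ induces a $B$-birational automorphism of $(S,B_S)$ in the sense of Definition~\ref{def_b-bir}. The trivializing section of $\O S.(I(K_S+B_S))$ produced in step (ii) is only canonically determined up to this action, and matching it with the restriction of a trivialization of $\O X.(I(K_X+B))$ requires passing to the invariant part. The factor $\lambda$ guarantees Cartier integrality at codimension $2$ points of $S$, while the factor $2$ absorbs the contribution of the involution coming from the Stein factorization, so that the lifted section really trivializes $I(K_X+B)$ rather than a $2$-torsion twist. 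Once this compatibility is ensured, the nonzero section combined with the torsion of $I(K_X+B)$ in $\Pic(X)$ forces $I(K_X+B)\sim 0$, as desired.
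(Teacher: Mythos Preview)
Your proposal follows the lifting scheme of Theorem~\ref{thm:lifting-fibration-gen-pairs}, but the paper's proof is entirely different and your argument has a genuine gap at step~(ii). The paper first observes that an $f$-ample divisor whose restriction to a general fiber is disconnected forces that fiber to be a curve, hence $\pp^1$ by the log Calabi--Yau condition, with $S$ restricting to two points and $B_{\rm hor}=S$. After birationally arranging $(X,S)$ to be plt and $f$ a Mori fiber space, the paper applies the canonical bundle formula $K_X+B\sim f^*(K_W+B_W+\bN W.)$ and runs the induction on the \emph{base} $(W,B_W)$, not on $S$. The hypothesis $2\mid I$ is used because the moduli part satisfies $2\bN W.\sim 0$: it trivializes after pull-back along the generically $2{:}1$ map $S\to W$, and a Galois-averaging argument descends this to $W$.

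Your step~(ii) fails. Once one knows the general fiber $F$ is $\pp^1$ with $S|_F$ two points, the divisor $I(K_X+B)-S\sim_\qq -S$ restricts to $\mathcal{O}_{\pp^1}(-2)$, so $R^1f_*\O X.(I(K_X+B)-S)$ has generic rank one and cannot vanish; no Kawamata--Viehweg perturbation repairs this, and the Mori fiber space hypothesis that drives the vanishing in Theorems~\ref{thm:lifting-fibration-gen-pairs} and~\ref{thm:lifting-fibration-pairs} is absent here. Your step~(i) is also unjustified: $(X,B-S)$ dlt does not make $(X,B)$ dlt along $S$, so $S$ need not be normal and $(S,B_S)$ need not be dlt. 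Finally, your step~(iii) is confused: a nonzero section of $\O X.(I(K_X+B))$ on the irreducible variety $X$ would immediately give $I(K_X+B)\sim 0$ by \cite[Lemma~3.1]{FMM22}, with no Galois matching required; the divisibility by $2$ plays no visible role in your own argument, whereas in the paper it is exactly the torsion order of $\bN W.$.
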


\begin{proof}
Let $(X,B)$, $S$, $f \colon X \rar W$, and $I \coloneqq I(D_\lambda,d-1,c,\lambda)$ be as in the statement.
We will proceed in several steps.\\

\noindent{\it Step 1:} In this step, we observe that $\dim X - \dim W=1$, $f$ is generically a $\pr 1.$-fibration, and $B_{\rm hor}=S$.\\

Since $(X,B)$ is log canonical and $X$ is $\qq$-factorial, then $(X,S)$ is log canonical.
By considering a general fiber of $f$, the restriction of $S$ induces a disconnected ample divisor.
Therefore, by \cite[Exercise III.11.3]{Har77}, it follows that the general fiber of $f$ is a curve.
By the log Calabi--Yau condition and the fact that $0 \neq B_{\rm hor} \geq S$, it follows that $f$ is generically a $\pr 1.$-fibration.
Since $S \rar W$ does not have connected fibers and $\deg K_{\pr 1.}=-2$, it follows that $S$ is the only component of $\Supp B$ that dominates $W$, i.e., we have $B_{\rm hor} =S$.
In particular, we may find a non-empty open subset $U \subseteq W$ such that $K_X+S \sim_\qq 0 /U$.\\

\noindent{\it Step 2:} In this step, we show that $\dim W > 0$.\\

By Step 1, $X \rar W$ is generically a $\pr 1.$-fibration, and $S$ determines two distinct points on the geometric generic fiber of $f$.
Thus, if $\dim W =0$, it would follow that $(X,B) \simeq (\pr 1.,\{ 0 \} + \{\infty \})$, with $S$ identified with $\{ 0 \} + \{\infty \}$.
Since we assumed that $S$ is a prime divisor, this leads to the sought contradiction.\\

\noindent{\it Step 3:} In this step, we show that we may replace $X$ birationally so that $(X,S)$ is plt, $K_X +S \sim_\qq 0/W$, and $f$ is a Mori fiber space.\\

Since $(X,B)$ is log canonical and $X$ is $\qq$-factorial, then $(X,S)$ is log canonical.
Let $X'$ be a $\qq$-factorial dlt modification of $(X,S)$, and let $S'$ denote the strict transform of $S$.
In particular, $(X',S')$ is plt.
By \cite[Theorem 1.1]{HX13}, $(X',S')$ has a relatively good minimal model over $W$, which we denote by $(\tilde X, \tilde S)$.
Since $S'$ dominates $W$ and $\dim X - \dim W =1$, $S'$ is relatively big over $W$.
Therefore, $S'$ cannot be contracted on $\tilde X$, and therefore $\tilde S$ is a divisor.
Since $(X',S')$ is plt, then so is $(\tilde X,\tilde S)$.
We denote by $\tilde W$ the relatively ample model, which is birational to $W$.
Now, we run a $\K \tilde X.$-MMP with scaling over $\tilde W$, which terminates with a Mori fiber space $\hat f \colon \hat X \rar \hat W$ over $\tilde W$.
Since $\dim X -\dim W =1$, it follows that $\hat W \rar \tilde W$ is birational.
As before, since $\tilde S$ dominates $\tilde W$ and is relatively big over $\tilde W$ as $\dim X - \dim W =1$, it follows that $\tilde S$ cannot be contracted on $\hat X$.
Let $\hat S$ be its strict transform on $\hat X$.
Lastly, as $\K \tilde X. + \tilde S \sim_\qq 0 /\tilde W$ and $\tilde S$ is the only log canonical place of $(\tilde X,\tilde S)$, $\hat S$ is the only log canonical place of $(\hat X,\hat S)$.
But then, since $\hat S$ is a divisor on $\hat X$, it follows that $(\hat X,\hat S)$ is plt.
In particular, $\hat S$ is normal.
Thus, by \cite[Corollary 3.3]{FMM22}, up to replacing $X$, $S$, and $Z$ with $\hat X$, $\hat S$, and $\hat Z$, respectively, in the following of the proof, we may further assume that $(X,S)$ is plt, $K_X +S \sim_\qq 0/W$, and $f$ is a Mori fiber space.\\

\noindent{\it Step 4:} In this step, we introduce a suitable pair structure on the base $W$.\\

Let $(S,B_S)$ denote the pair induced by adjunction from $(X,B)$ to $S$.
By Lemma~\ref{lem:coreg-adjunction}, $(S,B_S)$ has coregularity $c$.
Then, by Lemma \ref{lemma-coeff-adj}, the inductive hypothesis applies to $(S,B_{S})$ for the same value of $I$.
We also let $(S,\mathrm{Diff}_S(0))$ be the pair structure induced from $(X,S)$ to $S$.
By Step 1, $f_S \colon S \rar W$ is generically 2:1 and hence Galois.
By~\cite[Proposition 4.37.(3)]{K13}, $(S,\mathrm{Diff}_S(0))$ is invariant under the rational Galois involution.
Then, since $\K X. + S \sim_\qq 0 /W$ and $f$ is a Mori fiber space, it follows that $B-S$ is the pull-back of a $\qq$-divisor on $W$.
Then, it follows that also $(S,B_S)$ is Galois invariant.
Then, by considering the Stein factorization of $S \rar W$ and descending the pair structure thanks to the fact that $\K S. +B_S \sim_\qq 0$, it follows that we can induce a pair structure $(W,B_W)$ such that $f_S^*(\K W. + B_W)=\K S. + B_S$.
Since $(S,B_S)$ has coregularity $c$, by \cite[Proposition 3.11]{FMM22}, then so does $(W,B_W)$.
Furthermore, since $(S,B_S)$ has coefficients in $D_\lambda$, then so does $(W,B_W)$.
Indeed, at the codimension 1 points of $W$ where $S \rar W$ is \'etale, we will have the same coefficients on $B_W$ and $B_S$ for the corresponding divisors.
Then, we can consider a prime divisor $Q \subset W$ such that $S \rar W$ ramifies of order 2 at the generic point of $Q$.
Then, over the generic point of $Q$, by the Riemann--Hurwitz formula, we have
\[
\K S.+ cP=f_S^*\left(K_W+\frac{1}{2}Q+\frac{c}{2}Q \right),
\]
where $P$ is the unique prime divisor in $S$ dominating $Q$, in other words $\coeff_P(B_S)= c \in D_\lambda$ and $\coeff_Q(B_W)=\frac{1+c}{2}$.
By the definition of $D_\lambda$, we must have $c=\frac{m-1+a\lambda^{-1}}{m}$ for some $m \in \zz_{>0}$ and $a \in \zz_{\geq 0}$, and it follows that
\[
\frac{1}{2}+\frac{m-1+a\lambda^{-1}}{m}=\frac{2m-1+a\lambda^{-1}}{2m} \in D_\lambda.
\]
Thus, by the inductive hypothesis, we have 
\begin{equation}\label{eq_base_p1_link}
I(K_W+B_W)\sim 0.
\end{equation}

\noindent{\it Step 5:} In this step, we introduce a suitable generalized pair structure on $W$, and we compare it with $(W,B_W)$.\\

By the canonical bundle formula, the lc-trivial fibration $f \colon (X,B) \rar W$ induces a generalized pair structure $(W,\Delta_W,\bN.)$ on $W$.
By \cite[\S~7.5, (7.5.5)]{PS09} and the fact that the generic fiber of $f \colon (X,B) \rar W$ is a conic with two points, we have
\begin{equation} \label{eq_cbf}
\K X. + B \sim f^*(\K W. + \Delta_W+ \bN W.).
\end{equation}
Furthermore, the representatives of the b-divisor $\bN.$ are determined up to $\zz$-linear equivalence.

By~\cite[proof of Theorem 1.1]{FG14b}, in an lc-trivial fibration $(Y,\Gamma) \rar C$, the total space of the fibration and a minimal (with respect to inclusion) log canonical center $\Xi$ dominating the base $C$ induce the same generalized pair on the base space.
More precisely, they induce the same boundary divisor as $\qq$-Weil divisor and the same moduli divisor as $\qq$-b-divisor class.
Yet, this comparison is possible after the base change induced by the Stein factorization of $\Xi \rar C$, as $\Xi \rar C$ may not have connected fibers.
We observe that the identification between the boundary divisors can also be obtained by inversion of adjunction together with the connectedness principle.

In our situation, this implies that $(X,B)$ and $(S,B_S)$ induce the same generalized pair on $W$, up to pull-back to the Stein factorization on $S \rar W$.
In this case, $S$ is generically a 2:1 cover of $W$.
Thus, it follows that $(W,\Delta_W,\bN.)$ and $(W,B_W)$ agree once pulled back to $S$.
By construction, we have $f_S'^*(\K W. + B_W)=\K S. + B_S$, and the moduli b-divisor is trivial.
In turn, this implies that $\Delta_W=B_W$ and $f_S^* \bN W. \sim 0$.
As for the moduli b-divisor, we only claim $\zz$-linear equivalence, as a representative of the b-divisorial class can be replaced in the $\zz$-linear equivalence class.

Let $S'$ denote the Stein factorization of $S \to W$, with induced morphism $f_{S'} \colon S' \rar W$.
Then, $f_{S'}$ is a finite Galois morphism of degree 2, $f_{S'}^* \bN W. \sim 0$.
By construction, $f_{S'}^* \bN W.$ is Galois invariant, since it is the $\qq$-Cartier pull-back of a $\qq$-divisor on $W$ via the finite morphism $f_{S'}$.
We observe that this implies that $2 \bN W.$ is a $\zz$-divisor, and that $2f_{S'}^* \bN W.$ is the integral pull-back of the integral divisor $2 \bN W.$.

Now, let $s$ be a trivializing section of $f_{S'}^* \bN W.$, and let $\tau$ be the non-trivial element in the Galois group of $S' \rar W$.
By the invariance of $f_{S'}^* \bN W.$, we have that $f_{S'}^* \bN W. + \tau^*f_{S'}^* \bN W.=2f_{S'}^* \bN W.$.
But then, $s \otimes \tau^*s$ is a Galois invariant trivializing section of $2f_{S'}^* \bN W.$.
Then, this section descends to $W$, thus implying that
\begin{equation}\label{2N_eq_0}
    2\bN W. \sim 0.
\end{equation}

\noindent{\it Step 6:} In this step, we conclude the proof.

Combining the previous steps and using the fact that $2|I$, we have
\begin{equation*}
\begin{split}
I(\K X. + B) &\sim If^*(\K W. + B_W + \bN W.)\\
&\sim If^*(\K W.+B_W) + If^* \bN W.\\
&\sim f^*(I(\K W. + B_W)) + f^* I\bN W.\\
&\sim f^*0 + f^*0 \sim 0,
\end{split}
\end{equation*}
where the first linear equivalence follows from \eqref{eq_cbf}, the second one follows from the fact that $K_W+B_W$ is $\qq$-Cartier, the third one follows from the definition of pull-back of $\qq$-divisors, while the last line follows from \eqref{eq_base_p1_link} and \eqref{2N_eq_0}.
This concludes the proof.
\end{proof}

\section{Finite coefficients}
\label{sec:reduct-finite-coeff}

In this section, we explain how to reduce the problem of boundedness of complements from pairs with DCC coefficients
to pairs with finite coefficients. 
First, we prove two lemmata that will be used
in the proof of the main proposition of this section.

\begin{lemma}\label{lem:exchange-fiber}
Let $\phi\colon X\rightarrow Z$ be a contraction from a projective $\qq$-factorial variety $X$.
Let $(X,B,\bM.)$ be a generalized dlt pair over $Z$.
Assume $(X,B,\bM.)$ is generalized log Calabi--Yau over $Z$.
Assume there is a component $S\subseteq \lfloor B\rfloor$ that is vertical over $Z$.
Then, there exists a birational contraction
$X\dashrightarrow X'$ over $Z$
\begin{align*}
\xymatrix{
(X,B,\bM.)\ar@{-->}[rr] \ar[dr]_{\phi}& & (X',B',\bM.) \ar[dl]^{\phi'} \\
& Z &
}
\end{align*} 
satisfying the following conditions:
\begin{itemize}
    \item[(i)] the generalized pair
    $(X',B',\bM.)$ is generalized log canonical;
    \item[(ii)] the strict transform $S'$ of $S$ in $X'$ is a divisorial generalized log canonical center of $(X',B',\bM.)$;
    \item[(iii)] we have that  ${\phi'}^{-1}(\phi(S))=S'$ holds set-theoretically; and 
    \item[(iv)] the generalized pair obtained by adjunction of $(X',B',\bM.)$ to $S'$ is generalized semi-log canonical.
\end{itemize}
\end{lemma}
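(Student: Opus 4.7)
The plan is to run an appropriate MMP over $Z$ that contracts all vertical components of $\phi^{-1}(T)$ other than the strict transform of $S$, where $T \coloneqq \phi(S) \subsetneq Z$. Since $(X,B,\bM.)$ is generalized log Calabi--Yau over $Z$, we have $K_X+B+\bM X. \sim_{\qq,Z} 0$, so for any rational $0<\epsilon<1$,
\[
K_X + (B-\epsilon S) + \bM X. \sim_{\qq,Z} -\epsilon S.
\]
Because $S \subseteq \lfloor B \rfloor$, the generalized pair $(X, B-\epsilon S, \bM.)$ is still generalized dlt, and we would run the $(K_X+(B-\epsilon S)+\bM X.)$-MMP over $Z$ with scaling of a $\phi$-ample divisor. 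This is equivalent to a $(-S)$-MMP over $Z$. Existence and termination of this MMP should follow from the general theory of MMP with scaling for generalized dlt pairs over a base, using the fact that all extremal rays involved are contained in fibers over $T$, since $-S$ is numerically trivial on $\phi^{-1}(Z \setminus T)$.

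Denote by $\psi \colon X \drar X'$ the resulting birational contraction, by $\phi' \colon X' \to Z$ the induced morphism, by $S'$ the strict transform of $S$, and set $B' \coloneqq \psi_* B$. Then $\psi$ is an isomorphism over $Z\setminus T$, and $(X',B',\bM.)$ remains generalized log canonical with $S'$ a component of $\lfloor B'\rfloor$ of coefficient one; this gives conditions (i) and (ii). At the end of the MMP, the divisor $-S'$ is nef over $Z$. To deduce condition (iii) we fix a closed point $z\in T$ and, by cutting $Z$ with general hyperplane sections through $z$ and restricting $X'$ accordingly, reduce to the case where $Z$ is a smooth curve; then the Zariski lemma applied to the fiber $F' \coloneqq {\phi'}^{-1}(z)$ shows that, since $S'$ is effective, vertical, and $-S'|_{F'}$ is nef, the class of $S'$ must be proportional to the class of $F'$ on $F'$. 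This forces $\Supp F' = \Supp S'$ in a neighborhood of $z$, yielding condition (iii).

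For condition (iv), we apply generalized divisorial adjunction of $(X',B',\bM.)$ to $S'$. Because the MMP may have forced distinct components of $\lfloor B \rfloor$ meeting $S$ to be identified along codimension-one loci of $S'$, the variety $S'$ need not be normal, but its singularities are at worst nodal in codimension one thanks to the generalized dlt-ness of $(X',B',\bM.)$ along $S'$ together with inversion of adjunction; the induced generalized pair on $S'$ is therefore generalized semi-log canonical, giving (iv). The hard part of the proof will be to verify termination of the $(-S)$-MMP over $Z$ under the given hypotheses and to carry out the Zariski-lemma argument cleanly after hyperplane cutting, ensuring that the MMP-output $(X',B',\bM.)$ restricts compatibly to the general hyperplane sections so that the curve-base reduction actually produces the desired set-theoretic fiber equality.
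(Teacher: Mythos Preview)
Your overall strategy—running a $(-S)$-MMP over $Z$—is the same as the paper's, but there are two genuine gaps.

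\textbf{Termination.} You invoke ``general theory of MMP with scaling for generalized dlt pairs over a base,'' yet termination is not known in this generality, and you yourself flag it as ``the hard part'' without supplying an argument. The paper does not claim the full MMP terminates. It runs the $(K_X+B-S+\bM X.)$-MMP over $Z$ with scaling of an ample divisor $A$, sets $\lambda_\infty=\lim_i\lambda_i$, and splits into cases. If $\lambda_\infty>0$, the same sequence of maps is an MMP for a klt pair with big boundary over $Z$, so it terminates by \cite{BCHM10}; then $-S'$ is $\phi'$-nef and (iii) follows directly. If $\lambda_\infty=0$, the paper does not wait for termination: it shows that every irreducible component $W_j$ of $\phi^{-1}(\phi(S))$ other than $S$ lies in ${\rm Bs}_-(K_X+B-S+\bM X./Z)$, hence in ${\rm Bs}(K_X+B-S+\lambda_1 A+\bM X./Z)$ for some small $\lambda_1>0$; since $\lambda_\infty=0$, after finitely many steps one has a minimal model for the $\lambda_1$-perturbed divisor, so each $W_j$ lies in the exceptional locus of $X\dashrightarrow X_i$ for some $i$. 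One then stops at such an intermediate model $X'$, where (iii) already holds. Your proposal lacks this mechanism entirely.

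\textbf{Condition (iv).} You appeal to ``generalized dlt-ness of $(X',B',\bM.)$ along $S'$,'' but this need not hold. The MMP you run is for $(X,B-\epsilon S,\bM.)$, so it is $(X',B'-\epsilon S',\bM.)$ (equivalently $(X',B'-S',\bM.)$) that stays generalized dlt, not $(X',B',\bM.)$; restoring the coefficient of $S'$ to $1$ is exactly what can destroy dlt-ness along $S'$, which is why $S'$ may fail to be normal. The paper argues differently: since $(X',B'-S',\bM.)$ is $\qq$-factorial generalized dlt, the pair $(X',B'-\epsilon\lfloor B'\rfloor)$ is klt for $0<\epsilon\ll 1$; then \cite[Example~2.6]{FG14} gives that adjunction of $(X',B'-\epsilon\lfloor B'\rfloor+\epsilon S')$ to $S'$ is semi-log canonical, and letting $\epsilon\to 0$ yields the claim for $(X',B')$ and hence for $(X',B',\bM.)$.

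As a minor remark, your Zariski-lemma-via-hyperplane-cutting argument for (iii) is more elaborate than needed. Once $-S'$ is $\phi'$-nef, any curve $C$ in a fiber of $\phi'$ with $C\cap S'\neq\varnothing$ but $C\not\subset S'$ would give $S'\cdot C>0$, contradicting $-S'\cdot C\geq 0$; connectedness of the fibers then forces ${\phi'}^{-1}(\phi'(S'))=S'$ set-theoretically, with no reduction to a curve base required.
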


\begin{proof}
By~\cite[Lemma 3.5]{MS21}, we may run an MMP
for $(X,B-S,\bM.)$ over $Z$
with scaling of an ample divisor $A$.
By the negativity lemma, the divisor $S$ is not contracted by this MMP.
Furthermore, this MMP is $(K_X+B+\bM X.)$-trivial.
Hence, conditions (i) and (ii) hold for any model in this minimal model program.

We argue that after finitely many steps condition (iii) holds.
Let $X_i\dashrightarrow X_{i+1}$ be the $i$-th step of this MMP
and $\phi_i\colon X_i\rightarrow Z$ be the induced projective morphism.
We let $\lambda_i$ the positive real number for which the birational map 
$X_i\dashrightarrow X_{i+1}$ is
$(K_{X_i}+B_i-S_i+\bM X_i.+\lambda_i A_i)$-trivial. Let $\lambda_\infty=\lim_i \lambda_i$. 
If $\lambda_\infty >0$, then the previous MMP is also an MMP for $(X,B-S+\lambda_\infty A,\bM.)$. 
By~\cite[Lemma 3.7]{MS21}, this is also an MMP for a klt pair with big boundary over $Z$ which must terminate by~\cite{BCHM10}.
Let $X'$ be the model where this MMP terminates.
In $X'$, we have that $-S'$ is nef over $Z$. So $S'$ must be the set-theoretic pre-image of $\phi'(S')$.

From now on, we assume that $\lambda_\infty=0$.
Let $W_1,\dots,W_k$ be the irreducible components
of $\phi^{-1}(\phi(S))$ different than $S$.
Note that every step of the MMP is $S$-positive.
Thus, if the strict transform of any component $W_j$ is contained in the exceptional locus of $X_i\dashrightarrow X_{i+1}$, then the number of components of $\phi_i^{-1}(\phi(S))$ drops.
Henceforth, it suffices to show that each such component is eventually contained in the exceptional locus of a step of the MMP.
Assume $\phi(W_1)\subseteq \phi(S)$ is maximal among the sets $\phi(W_j)$'s with respect to the inclusion.
Let $z\in \phi(W_1)$ be a general point.
Up to re-ordering the $W_j$'s, since $X \rar Z$ has connected fibers, we may assume that 
$\phi^{-1}(z)\cap W_1\cap S$ is non-empty.
Hence, for a general point 
$w \in \phi^{-1}(z)\cap W_1$, we can find a curve $C$
such that $w \in C$, $C \nsubseteq S$,
and $C$ intersects $S$ non-trivially.
In particular, we have that 
$C\subseteq {\rm Bs}_{-}(K_X+B-S+\bM X./Z)$.
In particular, since we have $w \in C$, it follows that $w \in {\rm Bs}_{-}(K_X+B-S+\bM X./Z)$.
Since $w$ is a general point in $\phi^{-1}(z)\cap W_1$, we also get that
$\phi^{-1}(z)\cap W_1 \subseteq {\rm Bs}_{-}(K_X+B-S+\bM X./Z)$.
Since $z$ is general, we conclude that
$W_1 \subset {\rm Bs}_{-}(K_X+B-S+\bM X./Z)$.
For $\lambda_1>0$ small enough, we have that
\[
W_1 \subset {\rm Bs}(K_X+B-S+\lambda_1A+\bM X./Z).
\]
Since $\lambda_\infty=0$, we conclude that for some $i$ the birational map $X\dashrightarrow X_i$ is a minimal model for
$(X,B-S+\lambda_1A,\bM./Z)$.
In particular, $W_1$ must be contained in the exceptional locus of $X\dashrightarrow X_i$. Hence, after finitely many steps of this MMP, condition (iii) is satisfied.

Let $X'$ be a model where condition (iii) holds. 
By construction, the generalized pair
$(X',B',\bM.)$ is obtained by a partial run $X\dashrightarrow X'$ of the MMP for $(X,B-S,\bM.)$.
In particular, $(X',B'-S',\bM.)$ is generalized dlt and $\qq$-factorial.  
Hence, $(X',B'-\epsilon \lfloor B'\rfloor)$ is klt, where $0 < \epsilon \ll 1$.
By~\cite[Example 2.6]{FG14}, the pair obtained by adjunction of $(X',B'-\epsilon \lfloor B'\rfloor+\epsilon S')$ to $S'$ is semi-log canonical.
In turn, by letting $\epsilon \to 0$, it follows that the pair obtained by adjunction of $(X',B')$ to $S'$ is semi-log canonical.
Hence, the generalized pair obtained by adjunction of $(X',B',\bM.)$ to $S'$ is generalized semi-log canonical.
\end{proof}

\begin{lemma}
\label{lem:lcy-horizontal-coeff}
Let $c$ and $p$ be nonnegative integers and $\Lambda \subset \qq_{>0}$ be a closed set satisfying the DCC.
There exists a finite subset $\mathcal{R} \coloneqq \mathcal{R}(c,p,\Lambda)\subseteq \Lambda$ satisfying the following. 
Let $(X,B,\bM.)$ be a generalized log canonical pair over $Z$ and $X\rightarrow Z$ be a fibration for which the following conditions hold:
\begin{itemize}
    \item the generalized pair $(X,B,\bM.)$ is log Calabi--Yau over $Z$;
    \item the generalized pair $(X,B,\bM.)$ has coregularity $c$; 
    \item $p\bM.$ is b-Cartier; and
    \item the coefficients of $B$ belong to $\Lambda$.
\end{itemize}
Then, the coefficients of $B_{\rm hor}$ belong to $\mathcal{R}$.
\end{lemma}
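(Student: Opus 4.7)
The plan is to reduce to the absolute case by passing to a general fiber of $\pi \colon X \to Z$, and then invoke the global ACC for generalized log Calabi--Yau pairs of bounded coregularity. First I would take a $\qq$-factorial generalized dlt modification of $(X,B,\bM.)$ over $Z$; this preserves the coregularity $c$ and the condition that $p\bM.$ is b-Cartier, while enlarging the coefficient set at most by $\{1\}$ (coming from extracted generalized log canonical places). Let $F$ be a general fiber of the induced morphism to $Z$. Since vertical components of $B$ do not meet $F$, the restriction $(F, B|_F, \bM F.)$ is an absolute generalized log Calabi--Yau pair, and its boundary coefficients coincide with the coefficients of $B_{\rm hor}$ appearing on the original model.

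The technical heart of the argument is to bound ${\rm coreg}(F, B|_F, \bM F.)$ in terms of $c$. On the dlt model, the generalized log canonical centers of $(F, B|_F, \bM F.)$ are precisely the transverse slices of the horizontal generalized log canonical centers of $(X, B, \bM.)$; equivalently, $\mathcal{D}(F, B|_F, \bM F.)$ is naturally identified with the subcomplex of $\mathcal{D}(X, B, \bM.)$ spanned by the vertices corresponding to horizontal components of $\lfloor B \rfloor$. When every generalized log canonical center of $(X,B,\bM.)$ dominates $Z$, this subcomplex equals the full dual complex, and a direct dimension count gives
\[
{\rm coreg}(F, B|_F, \bM F.) = c - \dim Z \leq c.
\]
To reduce to this favorable situation I would iteratively apply Lemma~\ref{lem:exchange-fiber} to each vertical divisorial generalized log canonical center $S \subseteq \lfloor B \rfloor$, combined with a relative MMP over $Z$ that progressively trades vertical lc centers for horizontal ones without increasing the coregularity, staying within a DCC coefficient set controlled by $\Lambda \cup \{1\}$. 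This reduction is the main obstacle: one must certify the termination of the combined procedure and maintain control of the boundary coefficients throughout, keeping track of the way the pair evolves under each step.

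With the estimate ${\rm coreg}(F, B|_F, \bM F.) \leq c$ in hand, I would invoke \cite[Theorem 2]{FMP22}, the global ACC for generalized log Calabi--Yau pairs of bounded coregularity with a b-Cartier modular part, to produce a finite subset $\mathcal{R}_0 \subseteq \Lambda \cup \{1\}$, depending only on $c$, $p$, and $\Lambda$, that contains all boundary coefficients of such pairs. Setting $\mathcal{R} \coloneqq \mathcal{R}_0 \cap \Lambda$ yields a finite subset of $\Lambda$; since the coefficients of $B_{\rm hor}$ agree with those of $B|_F$ coming from the original model, they lie in $\mathcal{R}$, as required.
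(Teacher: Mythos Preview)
Your endgame—restrict to a general fiber and invoke the global ACC from \cite[Theorem 2]{FMP22}—is the same as the paper's. The gap is in the reduction step.

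The phrase ``a relative MMP over $Z$ that progressively trades vertical lc centers for horizontal ones'' does not describe a valid operation. Lemma~\ref{lem:exchange-fiber} only arranges that a chosen vertical $S$ becomes the set-theoretic preimage of its image; it does not create horizontal log canonical centers, and no MMP on $X$ will do so either. Consider an elliptic surface $\pi\colon X\to\pp^1$ with an $I_n$ fiber $F_0=\sum C_i$ and take $B=F_0$: this is log Calabi--Yau over $\pp^1$ with coregularity $0$ (the nodes of $F_0$ are $0$-dimensional lc centers), yet every lc center is vertical and the general fiber is a smooth elliptic curve with coregularity $1>c$. No birational modification over $\pp^1$ will manufacture a horizontal lc center here. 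Your intermediate claim ${\rm coreg}(F,B|_F,\bM F.)\leq c$ is therefore false in general, and the proposed reduction cannot repair it.

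The paper's reduction is genuinely different: it does not stay on $X$. Arguing by contradiction with a sequence of horizontal coefficients $d_i$ strictly increasing, whenever there is a vertical component $S_i\subset\lfloor B_i\rfloor$ containing a $c$-dimensional lc center, one first applies Lemma~\ref{lem:exchange-fiber} and then performs \emph{adjunction} to the normalization $W_i$ of $S_i$, obtaining a generalized pair $(W_i,B_{W_i},\bN i.)$ over the Stein factorization $Z_{W_i}$ of $S_i\to\pi(S_i)$. By Lemma~\ref{lem:coreg-adjunction} the coregularity is still $c$; and since the horizontal prime $P_i$ meets $S_i$ in something dominating $\pi(S_i)$, adjunction produces a horizontal component of $B_{W_i}$ with coefficient in $D_{d_i}(\Lambda)$. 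Because $\dim W_i<\dim X_i$, iterating this drops dimension and terminates in the all-horizontal situation, where \cite[Theorem 2]{FMP22} on the general fiber gives the contradiction. Note that the coefficients land in $D(\Lambda)$ rather than $\Lambda\cup\{1\}$, and tracking the specific subset $D_{d_i}(\Lambda)$ is what keeps the strictly increasing sequence alive through the induction—your formulation ``staying within a DCC coefficient set controlled by $\Lambda\cup\{1\}$'' loses this information.
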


\begin{proof}
Let $(X_i,B_i,\bM i.)$ be a sequence of generalized pairs as in the statement
and $\phi_i\colon X_i\rightarrow Z_i$ be the corresponding contractions.
Assume there exist prime divisors $P_i\subset X_i$ for which 
$d_i \coloneqq {\rm coeff}_{P_i}(B_i)$ is strictly increasing
and $P_i$ dominates $Z_i$.
Assume that some generalized log canonical center of 
$(X_i,B_i,\bM i.)$ is vertical over $Z_i$.
We may replace $(X_i,B_i,\bM i.)$ with a generalized dlt modification
and assume there is a component $S_i\subseteq \lfloor B_i\rfloor$ that is vertical over $Z_i$.
Furthermore, up to choosing a different vertical component possibly dominating a different subset of $Z_i$, we may assume that there is a generalized log canonical center of $(X_i,B_i,\bM i.)$ dimension $c$ contained in $S_i$.
By Lemma~\ref{lem:exchange-fiber}, up to losing the dlt property for $(X_i,B_i,\bM i.)$,
we may assume that $S_i$ is the set-theoretic pre-image of $\phi$.
Let $W_i$ be the normalization of $S_i$ 
and let $W_i\rightarrow Z_{W_i}$ be the fibration obtained by the Stein factorization of 
$W_i\rightarrow \phi_i(S_i)$.
Let $(W_i,B_i,\bN i.)$ be the generalized pair obtained by generalized adjunction
of $(X_i,B_i,\bM i.)$ to $W_i$.
Note that $P_i\cap S_i$ dominates $\phi(S_i)$.
Hence, there is a component of $B_{W_i}$ with coefficient in $D_{d_i}(\Lambda)$ which is horizontal over $Z_{W_i}$ (see Lemma~\ref{lemma-coeff-adj}).
Observe that the following conditions hold:
\begin{itemize}
    \item the generalized pair
    $(W_i,B_i,\bN i.)$ is log Calabi--Yau over $Z_{W_i}$;
    \item the generalized pair $(W_i,B_i,\bN i.)$ has coregularity $c$;
    \item $p\bN i.$ is b-Cartier;
    \item the coefficients of $B_{W_i}$ belong to $D(\Lambda)$; and 
    \item there is a component $Q_i$ of $B_{W_i}$ horizontal over $Z_i$ whose coefficient belong to $D_{d_i}(\Lambda)$.
\end{itemize}
We replace $(X_i,B_i,\bM i.)$ 
with $(W_i,B_{W_i},\bN i.)$
and $P_i$ with $Q_i$.
After finitely many replacements, 
we may assume that for every $i$
the generalized log canonical centers of $(X_i,B_i,\bM i.)$
are horizontal over $Z_i$.~\cite[Theorem 2]{FMP22} applied
to the general fiber of $X_i\rightarrow Z_i$
implies that the coefficients of $B_{\rm hor}$ belong to an ACC set.
Thus, we conclude that the coefficients
of $B_{\rm hor}$ belong to a finite set $\mathcal{R}$
which only depends on $c$, $p$ and $\Lambda$.
\end{proof}

The proof of the following corollary is verbatim from the previous proof by replacing~\cite[Theorem 2]{FMP22}
with~\cite[Corollary 3]{FMM22}.

\begin{corollary}\label{cor:pseff-treshold-coreg-0}
Let $(X,B,\bM.)$ be a generalized log canonical pair over $Z$ and $X\rightarrow Z$ be a fibration for which the following conditions hold:
\begin{itemize}
    \item the generalized pair $(X,B,\bM.)$ is log Calabi--Yau over $Z$;
    \item the generalized pair $(X,B,\bM.)$ has coregularity $0$; 
    \item $2\bM.$ is b-Cartier; and
    \item the coefficients of $B$ belong to $\mathcal{S}$.
\end{itemize}
Then, the coefficients of $B_{\rm hor}$ belong to $\{1,\frac{1}{2}\}$.
\end{corollary}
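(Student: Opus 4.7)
The plan is to mimic the argument of Lemma~\ref{lem:lcy-horizontal-coeff} step by step, with the sole substitution that the appeal to~\cite[Theorem 2]{FMP22} at the end is replaced by an appeal to~\cite[Corollary 3]{FMM22}, which in the coregularity $0$, standard coefficient setting with $2\bM.$ b-Cartier pins down the horizontal coefficients to $\{1,\tfrac{1}{2}\}$ exactly. Since the set $\mathcal{S}$ of standard coefficients is closed under $D(\cdot)$, the only real bookkeeping concerns how coefficients propagate under iterated generalized adjunction to vertical divisorial log canonical centers.

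First, argue by contradiction: assume there is a sequence $(X_i,B_i,\bM i.)\to Z_i$ of generalized pairs satisfying the hypotheses, and prime divisors $P_i\subset X_i$ horizontal over $Z_i$ with $d_i \coloneqq \coeff_{P_i}(B_i)\in \mathcal{S}\setminus\{1,\tfrac{1}{2}\}$. If some generalized log canonical center of $(X_i,B_i,\bM i.)$ is vertical over $Z_i$, pass to a $\qq$-factorial generalized dlt modification. Since the coregularity is $0$, by choosing an appropriate vertical divisorial component $S_i\subseteq\lfloor B_i\rfloor$ we may further arrange that $S_i$ contains a $0$-dimensional generalized log canonical center. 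Apply Lemma~\ref{lem:exchange-fiber} to replace $(X_i,B_i,\bM i.)$ by a model on which $S_i$ is the set-theoretic preimage of $\phi_i(S_i)$ and adjunction of $(X_i,B_i,\bM i.)$ to $S_i$ is generalized semi-log canonical. Normalize $S_i$, perform generalized adjunction, and take the Stein factorization of $S_i^\nu\to \phi_i(S_i)$ to obtain a fibration $W_i\to Z_{W_i}$ together with a generalized pair $(W_i,B_{W_i},\bN i.)$.

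At each such step, Lemma~\ref{lem:coreg-adjunction} guarantees that $(W_i,B_{W_i},\bN i.)$ has coregularity $0$, Lemma~\ref{lemma-coeff-adj} together with the fact that $D(\mathcal{S})=\mathcal{S}$ (Lemma~\ref{lem:derived-closure}) keeps the boundary coefficients in $\mathcal{S}$, and $2\bN i.$ remains b-Cartier because $2\bM i.$ was. Moreover, since $P_i\cap S_i$ dominates $\phi_i(S_i)$, adjunction produces a horizontal component $Q_i\subset W_i$ whose coefficient lies in $D_{d_i}(\mathcal{S})\setminus\{1,\tfrac{1}{2}\}$: indeed, the contribution $\tfrac{m-1+d_i+\ldots}{m}$ from Lemma~\ref{lemma-coeff-adj} with $m=1$ recovers $d_i$ itself, and the relevant $D_{d_i}(\mathcal{S})$-coefficients inherit the non-$\{1,\tfrac{1}{2}\}$ obstruction. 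Replace $(X_i,B_i,\bM i.)$ with $(W_i,B_{W_i},\bN i.)$ and $P_i$ with $Q_i$, and iterate; since the dimension strictly drops at every iteration, after finitely many replacements we reach the situation in which every generalized log canonical center of $(X_i,B_i,\bM i.)$ is horizontal over $Z_i$.

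At that point, restricting to a general fiber $F$ of $X_i\to Z_i$ yields a projective generalized log Calabi--Yau pair $(F,B_i|_F,\bM i{}_{,F})$ of coregularity $0$, whose boundary coefficients still lie in $\mathcal{S}$, with $2\bM i{}_{,F}$ b-Cartier, and in which the restriction of $P_i$ retains coefficient $d_i\in \mathcal{S}\setminus\{1,\tfrac{1}{2}\}$. This contradicts \cite[Corollary 3]{FMM22}, which forces the coefficients of such a pair to lie in $\{1,\tfrac{1}{2}\}$. The main obstacle is purely bookkeeping: controlling that the derived coefficient set along the iterated adjunction never accidentally absorbs $d_i$ into $\{1,\tfrac{1}{2}\}$; this is handled exactly as in the proof of Lemma~\ref{lem:lcy-horizontal-coeff} using the closure property $D(\mathcal{S})=\mathcal{S}$ and the explicit form of elements of $D_{d_i}(\mathcal{S})$.
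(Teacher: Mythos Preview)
Your proposal is correct and takes essentially the same approach as the paper, which simply states that the proof is verbatim from Lemma~\ref{lem:lcy-horizontal-coeff} with the appeal to \cite[Theorem 2]{FMP22} replaced by \cite[Corollary 3]{FMM22}. You have spelled out the details, including the coefficient bookkeeping under iterated adjunction (namely that a coefficient $1-\tfrac{1}{k}$ with $k\geq 3$ becomes $1-\tfrac{1}{mk}$, still outside $\{1,\tfrac{1}{2}\}$); note that the sequence indexing by $i$ is unnecessary here since a single counterexample suffices, but this does not affect correctness.
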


\begin{notation}
\label{not:approxiatiom}
{\em 
Let $\Lambda \subset \mathbb{Q}_{>0}$ be a closed set of rational numbers satisfying the DCC.
Given a natural number $m\in \zz_{>0}$, we consider the partition
\[
\mathcal{P}_m \coloneqq 
\left\{ 
\left(0,\frac{1}{m}\right],
\left(\frac{1}{m},\frac{2}{m}\right], 
\dots,
\left(\frac{m-1}{m},1\right]  
\right\} 
\] 
of the interval $[0,1]$.
Denote by $I(b,m)$ the interval of $\mathcal{P}_m$
containing $b\in \Lambda$.
Define the number
\[
b_m \coloneqq \sup\{x\mid x\in I(b,m)\cap \Lambda\} \in \Lambda. 
\] 
For every positive integer $m$ and every $b \in \Lambda$, 
we have that $b\leq b_m$
as $b\in \Lambda \cap I(b,m)$.
If $b\in \Lambda$ is fixed and $m$ divisible enough, we have that $b=b_m$.
The set 
$\mathcal{C}_m \coloneqq \{b_m \mid b\in \Lambda\}$ is finite and we have that the set
\[
\Lambda = \bigcup_{m\in \zz_{>0}} \mathcal{C}_m
\] 
satisfies the DCC. 
Given a boundary divisor $B\geq 0$ on a quasi-projective variety $X$, we can write
$B=\sum_j b^{(j)}B^{(j)}$ in a unique way such that
the $B^{(j)}$'s are pairwise different prime divisors on $X$.
If the coefficients of $B$ belong to $\Lambda$,
we define
\[
B_m \coloneqq \sum_j b_m^{(j)}B^{(j)}.
\]
It follows that $B\leq B_m$.
} 
\end{notation} 

\begin{theorem}\label{thm:reduction-of-coefficients}
Let $c$ and $p$ be nonnegative integers and $\Lambda \subset \qq_{>0}$ be a set satisfying the DCC with rational accumulation points.
There exists a finite subset $\mathcal{R} \coloneqq \mathcal{R}(c,p,\Lambda) \subset \bar{\Lambda} \subset \qq_{>0}$ satisfying the following.
Let $(X,B,\bM.)$ be a generalized log canonical pair over $Z$, $X\rightarrow Z$ be a contraction,
and $z\in Z$ be a point. 
Assume the following conditions are satisfied:
\begin{itemize}
    \item the variety $X$ is of Fano type over $Z$;
    \item the divisor $B$ has coefficients in $\Lambda$;
    \item $p\bM.$ is b-Cartier;
    \item the generalized pair $(X,B,\bM.)$ has coregularity $c$ around $z$; and 
    \item the divisor $-(K_X+B+\bM X.)$ is nef over $Z$.
\end{itemize}
There exists a birational transformation
$X\dashrightarrow X'$ over $Z$ and a generalized pair
$(X',\Gamma',\bM.)$ satisfying the following:
\begin{itemize}
    \item the coefficients of $\Gamma'$ belong to $\mathcal{R}$;
    \item the pair $(X',\Gamma',\bM.)$ has coregularity $c$ over $z$; 
    \item the divisor $-(K_{X'}+\Gamma'+\bM X'.)$ is nef over a neighborhood of $z\in Z$; and 
    \item if $(X',\Gamma',\bM.)$ is $N$-complemented over $z\in Z$, then
    $(X,B,\bM.)$ is $N$-complemented over $z\in Z$.
\end{itemize}
\end{theorem}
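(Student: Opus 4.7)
The plan is to approximate the DCC coefficients of $B$ from above by coefficients in a finite subset of $\bar\Lambda$, following Notation~\ref{not:approxiatiom}, and then run an MMP to restore the nefness of $-(K_X+B+\bM X.)$, all while preserving the absolute coregularity at $z$.

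First, after shrinking $Z$ around $z$, I would use that $X$ is of Fano type over $Z$ and that $-(K_X+B+\bM X.)$ is nef to produce an effective $\qq$-divisor $G$ such that $(X,B+G,\bM.)$ is a generalized log Calabi--Yau pair of coregularity $c$ over a neighborhood of $z$. Taking $G$ to be sufficiently general in a suitable linear system (using base-point-freeness, possible by the Fano type assumption) ensures that $G$ has a uniform strictly positive coefficient on every prime component of $B$. This log Calabi--Yau witness will give access to the ACC for generalized log Calabi--Yau pairs of bounded coregularity in~\cite[Theorem 2]{FMP22}.

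For each $m\in\zz_{>0}$, let $B_m\geq B$ be the divisor obtained by replacing every coefficient of $B$ with its approximation in $\mathcal{C}_m$, as in Notation~\ref{not:approxiatiom}. I would show the existence of $m_0=m_0(c,p,\Lambda)$ such that, for every $m\geq m_0$ and every admissible input $(X,B,\bM.)$, the difference $B_m-B$ can be absorbed by $G$: namely, $G':=G-(B_m-B)$ remains effective and the identity $(X,B_m+G',\bM.)=(X,B+G,\bM.)$ exhibits $(X,B_m,\bM.)$ as being generalized log canonical around $z$ with absolute coregularity $c$. If such a uniform $m_0$ failed to exist, a sequence of counterexamples with $m_i\to\infty$ would yield log Calabi--Yau witnesses whose coefficient sets strictly ascend along the components of $B_i$, contradicting~\cite[Theorem 2]{FMP22}.

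For $m\geq m_0$, I would then run a $(K_X+B_m+\bM X.)$-MMP over $Z$, which terminates at some model $X'$ by the Fano type assumption; let $\Gamma'$ be the pushforward of $B_m$. Then $-(K_{X'}+\Gamma'+\bM X'.)$ is nef over a neighborhood of $z$, and by Lemma~\ref{lem:coreg-nonnegative-contraction} the absolute coregularity at $z$ is preserved. Since $B_m\geq B$, the birational map $X\dashrightarrow X'$ is $(K_X+B+\bM X.)$-non-negative, so by Lemma~\ref{lem:complements-and-K-positive} any $N$-complement of $(X',\Gamma',\bM.)$ over $z$ pulls back to an $N$-complement of $(X,B_m,\bM.)$, which is \emph{a fortiori} an $N$-complement of $(X,B,\bM.)$ over $z$. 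Setting $\mathcal{R}\coloneqq\mathcal{C}_{m_0}\subset\bar\Lambda$ completes the construction.

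The principal obstacle is the uniform choice of $m_0$ in the second step: the divisor $G$ depends on the input and its coefficients need not lie in $\Lambda$, so one must carefully perturb $G$ so that its coefficients fall into a controlled finite set fixed \emph{a priori}, in order for~\cite[Theorem 2]{FMP22} to apply with constants depending only on $c,p,\Lambda$. In addition, components of $B$ whose coefficients accumulate to $1$ in $\Lambda$ need separate treatment, most naturally by raising those coefficients to $1$ and invoking the ACC for generalized log canonical thresholds to maintain log canonicity.
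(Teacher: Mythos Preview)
Your proposal has a genuine gap in the absorption step. You claim that by choosing $G$ sufficiently general in a base-point-free linear system, $G$ will have a uniform strictly positive coefficient on every prime component of $B$. This is backwards: a general member of a base-point-free system \emph{avoids} prescribed divisors rather than containing them. Consequently there is no reason for $G'\coloneqq G-(B_m-B)$ to be effective, no matter how large $m$ is, and the identity $(X,B_m+G',\bM.)=(X,B+G,\bM.)$ is unavailable. Your proposed contradiction via~\cite[Theorem~2]{FMP22} also does not apply: the global ACC requires the coefficients of the log Calabi--Yau witness to lie in a fixed DCC set, but the coefficients of $G$ are uncontrolled. Finally, you run the MMP in the wrong direction: to make $-(K_{X'}+\Gamma'+\bM X'.)$ nef you must run a $-(K_X+B_m+\bM X.)$-MMP, not a $(K_X+B_m+\bM X.)$-MMP; with the correct sign your appeal to Lemma~\ref{lem:complements-and-K-positive} does go through.

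The paper takes a more delicate route that handles precisely the obstruction you identify at the end. Rather than increasing all coefficients at once, it raises the coefficient of a single component $P_i$ and tracks which of three things fails first as $t$ grows: (i) generalized log canonicity, (ii) pseudo-effectivity of $-(K_X+B_{i,t}+\bM X.)$ over $Z$, or (iii) only nefness. In case~(iii) one runs a $(K_X+B_{i,t_{i,0}}+\bM X.)$-trivial birational contraction to restore nefness and then continues increasing $t$ on the new model; since $X$ is of Fano type only finitely many such contractions are possible. The resulting thresholds $t_i$ form a strictly increasing sequence, and one obtains a contradiction from the ACC for generalized log canonical thresholds with bounded coregularity~\cite[Theorem~1]{FMP22} in case~(i), and from Lemma~\ref{lem:lcy-horizontal-coeff} in case~(ii). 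The essential point your proposal misses is that the birational model must be allowed to change \emph{during} the coefficient increase, not only afterwards.
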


\begin{proof}
Let $(X,B,\bM.)$ be a generalized pair as in the conditions of the theorem. 
By passing to a $\qq$-factorial generalized dlt modification,
we may assume the considered generalized pairs are gdlt and $\qq$-factorial.
We denote by $m(X,B,\bM.)$ the minimal $m$ for which $\mathcal{R}=\mathcal{C}_{m}$
satisfies the statement of the theorem for $(X,B,\bM.)$.
Since $B_m=B$ for $m$ large enough, 
then $m(X,b,\bM.)$ is finite. 
It suffices to show that 
$m(X,B,\bM.)$ is bounded above by a constant that only depends on $c$, $p$ and $\Lambda$. 
Assume that this is not the case.
Then, we may find a sequence of generalized pairs
$(X_i,B_i, \bM i.)$, 
contractions $X_i\rightarrow Z_i$,
and closed points $z_i\in Z_i$, 
satisfying the conditions of the theorem,
for which $m_i \coloneqq m(X_i,B_i,\bM i.)-1$ is strictly increasing. 
In particular, we have that 
$B_{i,m_i}-B_i$ is a non-trivial effective divisor.
Let $P_i$ be a prime component of $B_{i,m_i}-B_i$ that intersects the fiber over $z$.
We study how the singularities of $(X_i,B_i,\bM i.)$ over $z_i\in Z_i$ and the nefness of $-(K_{X_i}+B_i+\bM i.)$ over $Z_i$ change 
as we increase the coefficient at $P_i$.
\\

\textit{Step 1:} For the generalized pair
$(X_i,B_i,\bM i.)$, we will produce a positive real number $t_i$ which either computes a log canonical threshold
or a pseudo-effective threshold.\\

For each generalized pair 
$(X_i,B_i,\bM i.)$, we will define a real number $t_i$ as follows.
We consider the generalized pairs
\begin{equation}\label{eq:pair-parameter}
(X_i,B_{i,t},\bM i.) \coloneqq 
(X_i,B_i + t({\rm coeff}_{P_i}(B_{i,m_i}-B_i))P_i,\bM i.).
\end{equation} 
Let $t_{i,0}$ be the largest real number for which the generalized pair~\eqref{eq:pair-parameter}
is generalized log canonical over $z_i\in Z_i$ and 
\[
-(K_{X_i}+ B_i + t({\rm coeff}_{P_i}(B_{i,m_i}-B_i))P_i+\bM i,X_i.)
\] 
is nef over a neighborhood of $z_i\in Z_i$.
Assume that $t_{i,0}<1$.
Then, for $t>t_{i,0}$ close enough to $t_{i,0}$ one of the following conditions hold:
\begin{itemize}
    \item[(i)] the generalized pair
    $(X_i,B_{i,t},\bM i.)$ is not generalized log canonical over $z_i\in Z_i$; or
    \item[(ii)] the divisor $-(K_{X_i}+B_{i,t}+\bM i,X_i.)$ is not pseudo-effective over a neighborhood of $z_i\in Z_i$ and $(X_i,B_{i,t},\bM i.)$ is generalized log canonical over a neighborhood of $z_i \in Z_i$; or
    \item[(iii)] the divisor $-(K_{X_i}+B_{i,t}+\bM i,X_i.)$ is pseudo-effective but it is not nef over every neighborhood of $z_i\in Z_i$ and $(X_i,B_{i,t},\bM i.)$ is generalized log canonical over a neighborhood of $z_i \in Z_i$.
\end{itemize}

Assume that case (i) holds.
Then, we set 
\[
t_i \coloneqq t_{i,0}{\rm coeff}_{P_i}(B_{i,m_i}-B_i)+{\rm coeff}_{P_i}(B_i) 
\in 
[{\rm coeff}_{P_i}(B_i),{\rm coeff}_{P_i}(B_{i,m_i})).
\]
We show that $t_i$ computes a generalized
log canonical threshold of coregularity at most $c$.
Indeed, we have that 
\[
t_i = {\rm glct}((X_i,B_{i}-{\rm coeff}_{P_i}(B_i)P_i,\bM i.);P_i),
\]
so $t_i$ is a generalized log canonical threshold over $z_i\in Z_i$.
By construction, the support of $P_i$ contains a generalized log canonical center of the generalized pair $(X_i,B_i+t_iP_i,\bM i.)$.
Set $\bN i. \coloneqq -(K_{X_i}+B_i+t_iP_i+\bM i.)$ as a nef b-divisor over $Z_i$, i.e., we set $\bN i.$ to be the b-Cartier closure of $-(K_{X_i}+B_i+t_iP_i+\bM i,X_i.)$.
Then, we have that 
\begin{equation}\label{eq:new-pair-nef-part}
(X_i,B_i+t_iP_i,\bM i. + \bN i.)
\end{equation} 
is a generalized log Calabi--Yau pair over $Z_i$.
Furthermore, the generalized log canonical centers of~\eqref{eq:new-pair-nef-part}
are the same as the generalized log canonical centers of
$(X_i,B_i+t_iP_i,\bM i.)$.
By~\cite[Theorem 1.4]{FS23},
up to replacing $(X_i,B_i+t_iP_i,\bM i.)$ with a generalized dlt modification,
the support of $P_i$ contains a generalized log canonical center of $(X_i,B_i+t_iP_i,\bM i.)$ of dimension at most $c$.
Hence, $t_i$ is a generalized log canonical threshold of coregularity at most $c$.

Assume that case (ii) holds. 
Then, we set 
\[
t_i \coloneqq t_{i,0}{\rm coeff}(B_{i,m_i}-B_i)+{\rm coeff}_{P_i}(B_i)\in 
[{\rm coeff}_{P_i}(B_i),{\rm coeff}_{P_i}(B_{i,m_i})).
\] 
In this case, we can find a Mori fiber space structure $X'_i\rightarrow W_i$ over $Z_i$ such that the following conditions are satisfied:
\begin{itemize}
    \item the generalized pair
    $(X'_i,B'_i-{\rm coeff}_{P'_i}(B'_i)P'_i+t_iP'_i,\bM i.)$ is generalized log Calabi--Yau over $W_i$; and 
    \item the prime divisor $P'_i$ is ample over $W_i$.
\end{itemize}
Note that $(X'_i,B'_i-{\rm coeff}_{P'_i}(B'_i)P'_i+t_iB'_i,\bM i.)$ has coregularity at most $c$.
We have that $t_i$ is the coefficient 
of a component of $B'_i-{\rm coeff}_{P'_i}(B'_i)+t_iP'_i$ which is horizontal over $W_i$.

From now on, we assume that (i) and (ii) do not happen. 
Assume that (iii) holds. 
Then, there exists a birational contraction 
$X_i\dashrightarrow X'_i$ which is 
$(K_{X_i}+B_{i,t_{i,0}}+\bM i,X_i.)$-trivial.
Indeed, this contraction is defined by the partial 
$-(K_{X_i}+B_{i,t'}+\bM i,X_i.)$-MMP with scaling of $P_i$, for $t'$ close enough to $t_{i,0}$ as in (iii).
By construction, the first scaling factor is $t'-t_{i,0}$, and since $-(K_{X_i}+B_{i,t'}+\bM i,X_i.)$ is pseudo-effective over $Z_i$ and $X_i \rar Z_i$ is of Fano type, this MMP terminates with a good minimal model.
In particular, at the last step of this MMP, the scaling factor is 0.
Then, $X_i'$ is the outcome of the last step where the scaling factor is $t'-t_{i,0}$.
In particular, we have that
$-(K_{X'_i}+B'_{i,t}+\bM i, X_i'.)$
is nef over $Z_i$ for $t>t_{i,0}$ close enough to $t_{i,0}$.
Note that an $N$-complement of $(X_i',B'_{i,t_{i,0}},\bM X_i'.)$ 
induces an $N$-complement of $(X_i,B_{i,t_{i,0}},\bM i.)$ by pulling back, 
and so an $N$-complement of $(X_i,B_i,\bM i.)$.
Henceforth, we may replace $X_i$ with $X'_i$ and keep increasing $t$. 
Since $X_i$ is of Fano type over $Z_i$,
there are only finitely many birational contractions $X_i\dashrightarrow X'_i$.
Therefore, we can replace $X_i$ with $X_i'$ only finitely many times.
Thus, after finitely many birational contractions, we either have that $t_{i,0}=1$,
that $t_i<1$ is a log canonical threshold, or that
$t_i<1$ is a pseudo-effective threshold.

We assume that $t_{i,0}=1$.
Then there exists a birational contraction
$X_i \dashrightarrow X'_i$ 
and a generalized log canonical pair
\begin{equation}\label{eq:gen-pair-increased-coeff} 
(X'_i,B'_i-({\rm coeff}_{P_i}B'_i +{\rm coeff}_{P_i}B'_{i,m_i})P_i, \bM i.)
\end{equation}  
for which the divisor 
\[ 
-(K_{X'_i}+B'_i-({\rm coeff}_{P_i}B'_i +{\rm coeff}_{P_i}B'_{i,m_i})P_i+\bM i,X'_i.)
\] 
is nef over $Z_i$. 
Note that the coefficients of the boundary of the generalized pair~\eqref{eq:gen-pair-increased-coeff} 
belong to $\Lambda$ and the variety $X'_i$ is of Fano type over $Z_i$.
The b-nef divisor $p\bM X'_i.$ is b-Cartier.
By construction, if the generalized pair~\eqref{eq:gen-pair-increased-coeff} admits an $N$-complement, then so does $(X_i,B_i,\bM i.)$. 
We can replace $(X_i,B_i,\bM i.)$ with the generalized pair~\eqref{eq:gen-pair-increased-coeff}. 
By doing so, we decrease the number of components
of $B_{i,m_i}-B_i$. 
By the choice of $m_i$, the divisor $B_{i,m_i}-B_i$ cannot be
zero after this replacement. 
Thus, we may pick a new component and start increasing its coefficient (to this end, notice that $X_i'$ is $\qq$-factorial by construction).
Note that this process must terminate either with $t_i<1$ a log canonical threshold
or $t_i<1$ a pseudo-effective threshold.
Otherwise, we contradict the definition of $m_i$.\\

\textit{Step 2:} We show that a subsequence of the $t_i$'s is strictly increasing.\\

Up to passing to a subsequence, we may assume that $t_i$ is either strictly increasing, strictly decreasing, or it stabilizes.
The condition $t_i\in [{\rm coeff}_P(B_i),{\rm coeff}_P(B_{i,m_i}))$ implies that $t_i$ must be strictly increasing.\\

\textit{Step 3:} We finish the proof of the statement.\\

If case (i) happens infinitely many times, then we get a contradiction to the ACC for generalized log canonical thresholds with bounded coregularity~\cite[Theorem 1]{FMP22}. If case (ii) happens infinitely many times, 
then we get a contradiction to Lemma~\ref{lem:lcy-horizontal-coeff}. 
In any case, we get a contradiction.
Hence, the sequence $m_i$ has an upper bound.
\end{proof}

\section{Semi-log canonical pairs}
\label{sec:slc}

In this section, we discuss
the index of semi-log canonical pairs.
We show that to control
the index of a semi-log canonical log Calabi--Yau pair of coregularity $c$
it suffices to control
the index of dlt log Calabi--Yau pairs of coregularity $c$.
To prove the main
statement of this section, we will need to use the language of 
admissible
and pre-admissible sections. 
The preliminary results for this section are taken from~\cite{FG14,Fuj00,Xu20}.

The following definition is due to Fujino~\cite[Definition 4.1]{Fuj00}.

\begin{definition}
{\em 
Let $(X,B)$ be a possibly disconnected projective semi-dlt pair
of dimension $n$
and assume that 
$N(K_X+B)$ is Cartier.
Let $(X',B')$ be its normalization
and $D^n\subset X'$ be the normalization
of 
%the double locus $D\subset X$.
$\lfloor B' \rfloor$.
As usual, we denote by
$(D^n,B_{D^n})$ the dlt pair obtained by adjunction of
$(X',B')$ to $D^n$.
We define the concept of 
{\em pre-admissible}
and 
{\em admissible}
sections in $H^0(X,\O X. (I(K_X+B)))$ by induction
on the dimension using the two following rules:
\begin{enumerate}
    \item we say that a section
    \[
    s\in H^0(X,\mathcal{O}_X(I(K_X+B)))
    \] 
    is {\em pre-admissible} if 
    $s|_{D^n} \in H^0(D^n,\mathcal{O}_{D^n}(I(K_{D^n}+B_{D^n})))$ is admissible.
    This set is denoted by
    $PA(X,I(K_X+B))$; and
    \item we say that 
    \[
    s\in H^0(X,\mathcal{O}_X(I(K_X+B))
    \]
    is {\em admissible} if 
    $s$ is pre-admissible
    and 
    $g^*(s|_{X'_i}) = s|_{X'_j}$ holds for every crepant birational map
    $g\colon (X'_i,B_{X'_i})\dashrightarrow 
    (X'_j,B_{X'_j})$,
    where $X'=\sqcup X'_i$
    are the irreducible components of $X'$.
    The set of admissible sections
    is denoted by
    $A(X,I(K_X+B))$.
\end{enumerate}
}
\end{definition}

The following lemma is due to Gongyo~\cite[Remark 5.2]{Gon13}.

\begin{lemma}\label{lem:pull-back-admissible}
Let $(X,B)$ be a projective semi-dlt pair
for which $I(K_X+B)\sim 0$. 
Let $\pi\colon (X',B')\rightarrow (X,B)$ be its normalization.
Then, a section
$s\in H^0(X,\O X.(I(K_X+B)))$
is pre-admissible
(resp. admissible)
if and only if 
$\pi^*s \in 
H^0(X',\O X.(I(K_{X'}+B')))$ is 
pre-admissible
(resp. admissible). 
\end{lemma}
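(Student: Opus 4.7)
The plan is to unfold the two definitions and observe that admissibility and pre-admissibility of a section on $(X,B)$ are phrased \emph{intrinsically} in terms of the normalization $\pi\colon (X',B') \to (X,B)$, so that the claimed equivalence becomes essentially tautological once matched up carefully. I will proceed by induction on $\dim X$, the case $\dim X = 0$ being immediate since $\pi$ is an isomorphism on each connected component.

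For the inductive step, first consider pre-admissibility. By definition, $s \in H^0(X,\mathcal{O}_X(I(K_X+B)))$ is pre-admissible if and only if $s|_{D^n}$ is admissible, where $D^n$ is the normalization of $\lfloor B'\rfloor \subset X'$. Now, to evaluate pre-admissibility of $\pi^* s$ on $(X',B')$, the definition instructs us to first normalize $X'$; but $X'$ is already a disjoint union of normal varieties, so the normalization is the identity. Consequently, the divisor $\lfloor B'\rfloor$ and its normalization $D^n$ that appear in the definition for $\pi^*s$ on $X'$ are literally the same scheme as those appearing in the definition for $s$ on $X$. Using the canonical isomorphism $\pi^*\mathcal{O}_X(I(K_X+B)) = \mathcal{O}_{X'}(I(K_{X'}+B'))$, the restrictions $s|_{D^n}$ and $(\pi^*s)|_{D^n}$ are identified. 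Hence the inductive hypothesis on $D^n$ yields the desired equivalence for pre-admissibility.

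For admissibility, the extra condition is that $g^*(s|_{X'_i}) = s|_{X'_j}$ for every crepant birational self-map $g\colon (X'_i,B_{X'_i}) \dashrightarrow (X'_j,B'_{X'_j})$, where $X' = \sqcup X'_i$ is the decomposition of the normalization into irreducible components. Applied to $\pi^*s$ on $X'$, the same definition uses the irreducible components of the normalization of $X'$, which is $X'$ itself, with the same components $X'_i$ and the same collection of crepant birational maps $g$. Since $(\pi^*s)|_{X'_i}$ coincides with $s|_{X'_i}$ under the canonical identification above, the admissibility condition on $X$ and on $X'$ are literally the same collection of equalities of sections.

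The main obstacle — really the only content of the argument — is verifying that the pair structures, the divisor $\lfloor B'\rfloor$, the scheme $D^n$, and the line bundles involved all match up canonically under $\pi$, so that restrictions commute with pulling back via $\pi$. This is a consequence of $\pi$ being a finite birational morphism with $\pi^*(K_X+B) = K_{X'}+B'$, together with the fact that $X'$ is already normal, so that the definition on $X'$ does not introduce any further normalization step. Once these identifications are in place, both equivalences are immediate from the definitions.
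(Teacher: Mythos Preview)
Your argument is correct. The paper does not supply its own proof of this lemma; it simply attributes the statement to Gongyo \cite[Remark 5.2]{Gon13}. Your approach---unfolding the definitions and observing that (pre-)admissibility of $s$ on $(X,B)$ is already phrased in terms of the normalization $(X',B')$, so that the conditions for $s$ and for $\pi^*s$ coincide verbatim---is exactly the content of the result.

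One minor remark: the induction on $\dim X$ is not actually needed. Once you have identified $s|_{D^n}$ with $(\pi^*s)|_{D^n}$ as sections of the same line bundle on the same space $D^n$, the statements ``$s|_{D^n}$ is admissible'' and ``$(\pi^*s)|_{D^n}$ is admissible'' are literally the same statement, not merely equivalent by induction. Likewise for the crepant-birational invariance condition on the components $X_i'$. So the equivalence is immediate from the definitions, with no inductive step required. Your write-up is correct as stated, but you can streamline it by dropping the inductive framing.
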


The following lemma allows us 
to descend linear equivalence
from normal varieties
to semi-log canonical varieties
(see, e.g.,~\cite[Lemma 4.2]{Fuj00}).

\begin{lemma}\label{lem:descending-nonormal}
Let $(X,B)$ be a projective semi-log canonical pair
for which $I(K_X+B)$ is an integral divisor.
Let $(X',B')\rightarrow (X,B)$ be its normalization
and $(Y,B_Y)$ a $\qq$-factorial dlt modification of $(X',B')$.
Assume that $I(K_Y+B_Y)$ is Cartier.
Then, a section
$s\in PA(I(K_Y+B_Y))$ descends
to $H^0(X,\O X. (I(K_X+B)))$.
In particular, if we have that
$I(K_Y+B_Y)\sim 0$, and there exists a nowhere vanishing section $0\neq s\in PA(I(K_Y+B_Y))$, then we have that
$I(K_X+B)\sim 0$.
\end{lemma}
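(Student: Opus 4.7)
The plan is to perform the descent in two stages: first from the dlt modification $Y$ down to the normalization $(X',B')$, and then from $(X',B')$ down to the semi-log canonical pair $(X,B)$. The second stage is essentially a restatement of the fact that the slc gluing is encoded by a crepant involution on the normalization of $\lfloor B'\rfloor$, which is precisely what admissibility is designed to control.

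\textbf{Stage 1: from $Y$ to $X'$.} Let $f\colon Y\to X'$ denote the dlt modification morphism. Since $f$ is crepant and every $f$-exceptional divisor appears in $B_Y$ with coefficient $1$, we have the natural identification
\[
f_*\O Y.(I(K_Y+B_Y)) = \O X'.(I(K_{X'}+B')),
\]
so $s$ pushes forward to a section $s'$ on $X'$. To see that $s'$ is pre-admissible on $(X',B')$ we must verify that its restriction to the normalization $D^n$ of $\lfloor B'\rfloor$ is admissible. By adjunction along the strata, the normalization of $\lfloor B_Y\rfloor$ is itself a dlt modification of $D^n$, and the restriction $s'|_{D^n}$ is obtained as the crepant pushforward of $s|_{\lfloor B_Y\rfloor^n}$. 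Because admissibility is defined through invariance under crepant birational maps, an induction on dimension shows that crepant pushforward preserves admissibility, and hence $s'$ is pre-admissible.

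\textbf{Stage 2: from $X'$ to $X$.} By Koll\'ar's description of slc pairs (see, e.g., \cite{K13}), the pair $(X,B)$ is reconstructed from its normalization $(X',B')$ by identifying points according to a $B$-birational involution $\tau$ on $(D^n,\mathrm{Diff})$. A section $\sigma\in H^0(X',\O X'.(I(K_{X'}+B')))$ descends to $H^0(X,\O X.(I(K_X+B)))$ if and only if $\sigma|_{D^n}$ is $\tau$-invariant. Because $\tau$ is a $B$-birational self-map and $s'|_{D^n}$ is admissible, the required $\tau$-invariance holds automatically; hence $s'$ descends to a section of $\O X.(I(K_X+B))$. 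The ``in particular'' statement is then immediate: both descents are nowhere-vanishing-preserving, so a nowhere vanishing $s$ produces a nowhere vanishing section of $\O X.(I(K_X+B))$, which forces $I(K_X+B)\sim 0$.

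\textbf{Main obstacle.} The delicate point is matching the slc gluing involution $\tau$ with the class of crepant birational maps encoded in the definition of admissibility, and tracking how dlt modifications of $X'$ interact with adjunction onto each irreducible component of $D^n$ at every inductive step. This bookkeeping is exactly what the formalism of admissible and pre-admissible sections was introduced for in \cite{Fuj00}, and the argument essentially reproduces \cite[Lemma 4.2]{Fuj00} (see also \cite[Remark 5.2]{Gon13} and \cite{FG14}).
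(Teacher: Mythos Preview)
Your proposal is correct and follows essentially the same approach as the paper: the paper simply cites \cite[Lemma 4.2]{Fuj00} for the descent and then observes that a nowhere vanishing section stays nowhere vanishing, whereas you unwind exactly that argument (and explicitly acknowledge doing so at the end). There is no substantive difference in strategy.
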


\begin{proof}
The first part of the statement is \cite[Lemma 4.2]{Fuj00}.
Now, $s \in PA(I(K_Y+B_Y))$ is nowhere vanishing, it descends to a nowhere vanishing section of $\O X.(I(K_X+B))$, thus showing that $I(K_X+B)\sim 0$. 
\end{proof}

In the context of connected dlt pairs,
the set of admissible sections
is the same as the set of pre-admissible sections (see, e.g.,~\cite[Proposition 4.7]{Fuj00}).

\begin{lemma}\label{lem:PA=A}
Let $(X,B)$ be a connected projective dlt pair 
with $\lfloor B \rfloor \neq 0$.
Assume that $I(K_X+B)\sim 0$
and $I$ is even.
Then, we have that 
\[
PA(I(K_X+B)) = A(I(K_X+B)).
\] 
\end{lemma}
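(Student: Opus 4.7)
The containment $A(I(K_X+B)) \subseteq PA(I(K_X+B))$ is immediate from the definitions, so the plan is to prove the reverse containment. Fix $s \in PA(I(K_X+B))$ and a $B$-birational self-map $g \in \mathrm{Bir}(X,B)$; the goal is to show $g^*s = s$. Since $X$ is connected and projective and $I(K_X+B) \sim 0$, the line bundle $\mathcal{O}_X(I(K_X+B))$ is isomorphic to $\mathcal{O}_X$, so $H^0(X, \mathcal{O}_X(I(K_X+B))) \cong \kk$ is one-dimensional. Consequently there exists $\lambda \in \kk$ with $g^*s = \lambda s$, and the task reduces to showing $\lambda = 1$.

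I would split the argument into two cases according to whether $s|_{D^n}$ vanishes. If $s|_{D^n} = 0$, then $s$ lifts to a section of $\mathcal{O}_X(I(K_X+B) - \lfloor B \rfloor) \cong \mathcal{O}_X(-\lfloor B \rfloor)$ via the ideal-sheaf exact sequence of $\lfloor B \rfloor$ in $X$; since $\lfloor B \rfloor$ is a non-empty effective divisor on the connected projective variety $X$, the sheaf $\mathcal{O}_X(-\lfloor B \rfloor)$ has no non-zero global sections, so $s=0$ and is trivially admissible. If instead $s|_{D^n} \neq 0$, then because $g$ preserves the pair $(X,B)$ crepantly it preserves $\lfloor B \rfloor$ setwise, hence descends to a $B$-birational self-map $\tilde g$ of $(D^n, B_{D^n})$ which may permute its irreducible components. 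Pre-admissibility says exactly that $s|_{D^n}$ is admissible on $D^n$, giving $\tilde g^{*}(s|_{D^n}) = s|_{D^n}$. Since restriction commutes with pull-back,
\[
\lambda \, s|_{D^n} = (g^*s)|_{D^n} = \tilde g^{*}(s|_{D^n}) = s|_{D^n},
\]
and $s|_{D^n} \neq 0$ forces $\lambda=1$. As $g \in \mathrm{Bir}(X,B)$ was arbitrary, $s$ is admissible.

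The step I expect to require the greatest care is confirming the well-definedness and functoriality of the restriction map
\[
H^0(X, \mathcal{O}_X(I(K_X+B))) \longrightarrow H^0(D^n, \mathcal{O}_{D^n}(I(K_{D^n}+B_{D^n})))
\]
and of the induced homomorphism $\mathrm{Bir}(X,B) \to \mathrm{Bir}(D^n, B_{D^n})$, $g \mapsto \tilde g$. The hypothesis that $I$ is even enters precisely here: along codimension-two self-intersections of $\lfloor B \rfloor$ the different contributes coefficients of the form $\tfrac{1}{2}$, which become integral after multiplying by the even integer $I$, so that $I(K_X+B)|_{D^n} = I(K_{D^n}+B_{D^n})$ as honest Cartier divisors on the normalization and the restriction of sections makes sense. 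The functoriality $g \mapsto \tilde g$ is then routine from the fact that $g$ permutes the divisorial log canonical places of $(X,B)$ supported on $\lfloor B \rfloor$.
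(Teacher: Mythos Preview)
Your argument is correct and follows the same approach as the cited reference \cite[Proposition 4.7]{Fuj00}: use one-dimensionality of $H^0$ to write $g^*s=\lambda s$, then pin down $\lambda$ by restricting to $D^n$ and invoking admissibility of $s|_{D^n}$.

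The one point to refine is your explanation of why $I$ must be even. The issue is not the integrality of the different on $D^n$; rather, it is a sign in the Poincar\'e residue. When $g \in \mathrm{Bir}(X,B)$ carries a component $D_i$ of $\lfloor B\rfloor$ birationally to $D_j$, the residue isomorphisms $\omega_X(B)|_{D_i}\cong \omega_{D_i}(B_{D_i})$ and $\omega_X(B)|_{D_j}\cong \omega_{D_j}(B_{D_j})$ are each canonical only up to sign, so the square comparing $g^*$ on $X$ with $\tilde g^*$ on $D^n$ commutes only up to $(\pm 1)$. Passing to $I$-th tensor powers replaces this ambiguity by $(\pm 1)^I$, which is $+1$ precisely when $I$ is even. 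With this correction your identity $(g^*s)|_{D^n}=\tilde g^*(s|_{D^n})$ holds on the nose and the rest of the argument goes through.
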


On the other hand, in the dlt setting, we can lift
admissible sections from the boundary
to pre-admissible sections
on the whole pair (see, e.g.,~\cite[Lemma 3.2.14]{Xu20}).

\begin{lemma}\label{lem:lifting-A-to-PA}
Assume that $(X,B)$ is a possibly disconnected
projective dlt pair.
Assume that $I(K_X+B)\sim 0$
and $I$ is even.
Assume that
\[
0\neq s \in A(\lfloor B\rfloor, I(K_X+B)|_{\lfloor B\rfloor}).
\] 
Then, there exists 
\[
0\neq t \in PA(X,I(K_X+B)) 
\] 
for which $t|_{\lfloor B\rfloor}=s$.
\end{lemma}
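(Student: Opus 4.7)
The plan is to trivialise $L \coloneqq \mathcal{O}_X(I(K_X+B))$ using the linear equivalence $I(K_X+B)\sim 0$, turning the lifting of $s$ into the extension of a locally constant function from $\lfloor B\rfloor$ to $X$; admissibility of $s$ should then supply the compatibility needed to produce such an extension. Concretely, I would fix a nowhere-vanishing section $u \in H^0(X,L)$. Its restrictions trivialise $L|_{\lfloor B\rfloor}$ and $L|_{D^n}$, and $s$ is identified with a function $\phi \in H^0(\lfloor B\rfloor,\mathcal{O}_{\lfloor B\rfloor})$, determined by a constant $\phi_Y \in \kk$ for each connected component $Y$ of $\lfloor B\rfloor$. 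Showing that admissibility of $s$ forces $\phi_Y=\phi_{Y'}$ whenever $Y$ and $Y'$ lie in the same connected component $X_r$ of $X$ produces the unique locally constant function $\Phi$ on $X$ with $\Phi|_{\lfloor B \rfloor}=\phi$, and then $t \coloneqq \Phi \cdot u \in H^0(X,L)$ is the sought lift: $t|_{\lfloor B\rfloor}=s$ by construction, hence $t|_{D^n}=s|_{D^n}$ is admissible because $s$ is admissible on $\lfloor B\rfloor$, so $t$ is pre-admissible, and $t\neq 0$ because $s\neq 0$.

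The cross-component matching $\phi_Y=\phi_{Y'}$ is established by mimicking the strategy of Lemma~\ref{lem:PA=A}. Pick irreducible components $S_i\subseteq Y$ and $S_j\subseteq Y'$ of the normalisation $D^n$, and construct a chain of $B$-birational identifications linking $(S_i,B_{S_i})$ and $(S_j,B_{S_j})$ via iterated adjunction along the strata of the dual complex $\mathcal{D}(X_r,B|_{X_r})$ combined with the $\mathbb{P}^1$-linking of minimal log canonical centres of $(X,B)$ provided by~\cite[Theorem 1.4]{FS23}. The admissibility of $s$ on $\lfloor B\rfloor$ then forces invariance under every such identification, and hence transports $\phi_Y$ to $\phi_{Y'}$. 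The hypothesis that $I$ is even enters precisely as in Lemma~\ref{lem:PA=A}, to absorb the $\zz/2\zz$-sign produced by orientation-reversing involutions that appear when two linked components of $D^n$ correspond to branches of a common divisor of $\lfloor B\rfloor$.

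The main obstacle is the construction of this linking chain in the cross-component case: verifying, after passing to a sufficiently nice dlt model of $(X,B)$ and possibly invoking a Koll\'ar--Xu model as in Theorem~\ref{thm:fully-support-big}, that the dual complex of $(X_r,B|_{X_r})$ provides enough $B$-birational identifications to connect any two connected components of $\lfloor B\rfloor$ inside $X_r$. This is the step where the connectedness properties of the dual complex, the $\mathbb{P}^1$-linking of adjacent strata, and the parity of $I$ must all be combined; without the even-$I$ hypothesis, already in archetypal examples such as $(\pr 1.,\{0\}+\{\infty\})$ with the involution $z\mapsto 1/z$ one cannot transport $\phi$-values without introducing a sign that would obstruct the descent to a global section on $X$.
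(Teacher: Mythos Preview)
The paper does not give a proof of this lemma; it simply records a citation to \cite[Lemma 3.2.14]{Xu20}. Your outline---trivialise $L=\mathcal{O}_X(I(K_X+B))$ by a nowhere-vanishing $u$, write $s=\phi\cdot u|_{\lfloor B\rfloor}$ with $\phi$ locally constant, and argue that admissibility forces $\phi$ to be constant on each connected component of $X$---is precisely the standard argument in the literature (going back to \cite{Fuj00} and reproduced in \cite{Xu20}), so in spirit your proposal coincides with the cited source.

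The step you flag as the main obstacle is cleaner than your description suggests, and does not require building a long chain through the dual complex or passing to a Koll\'ar--Xu model. On a connected component $X_r$, either $\lfloor B\rfloor\cap X_r$ is already connected, or, by the connectedness principle for log Calabi--Yau dlt pairs (\cite[Proposition~4.37]{K13}, or \cite[Theorem~1.4]{FS23} in the generalized setting), it has \emph{exactly two} connected components $D_1,D_2$ and $(X_r,B|_{X_r})$ is crepant birational to a standard $\mathbb{P}^1$-link over a base $Z$ with $D_1,D_2$ corresponding to the two horizontal sections. This produces in one stroke a $B$-birational map $g_0\colon (D_1,B_{D_1})\dashrightarrow (D_2,B_{D_2})$, and the Poincar\'e-residue computation along the two sections of the link gives $g_0^*(u|_{D_2})=(-1)^I\,u|_{D_1}$---this is exactly the sign you identified in the $(\mathbb{P}^1,\{0\}+\{\infty\})$ prototype. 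For $I$ even the sign disappears, and admissibility of $s$ with respect to $g_0$ then yields $\phi|_{D_1}=\phi|_{D_2}$ directly. Your suggestion to chain through lower-dimensional strata would not work as stated when $D_1\cap D_2=\varnothing$, since there are then no common strata; the $\mathbb{P}^1$-link is what bridges the two components.
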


The following lemma states
that the boundedness of indices
for klt Calabi--Yau pairs
together with the 
boundedness of B-representations
imply the existence of admissible sections (see, e.g.,~\cite[Proposition 3.2.7]{Xu20}).

\begin{lemma}\label{lem:admissible-klt-case}
Let $c$ be a nonnegative integer
and $\Lambda$ be a set of rational numbers satisfying the descending chain condition.
Assume Conjecture~\ref{conj:index} and Conjecture~\ref{conj:b-rep} in dimension $c$.
There is a constant $I(\Lambda,c)$, only depending on $\Lambda$ and $c$,
satisfying the following.
Let $(X,B)$ be a projective klt log Calabi--Yau pair
with coefficients in $\Lambda$
and dimension $c$.
Then, there is a section
\[
0\neq s \in A(X,I(\Lambda,c)(K_X+B)).
\]
\end{lemma}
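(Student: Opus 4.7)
The plan is to produce the required admissible section by combining the two assumed conjectures: use Conjecture~\ref{conj:index} to obtain a trivializing section of some uniformly bounded multiple of $K_X+B$, and then use Conjecture~\ref{conj:b-rep} to turn it into a $\mathrm{Bir}(X,B)$-invariant section by passing to a further uniformly bounded power.

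First, by Conjecture~\ref{conj:index} in dimension $c$ there is a positive integer $I_0 \coloneqq I_0(\Lambda,c)$ such that $I_0(K_X+B)\sim 0$ for every $(X,B)$ satisfying the hypotheses. Fix any trivializing section $s \in H^0(X,\mathcal{O}_X(I_0(K_X+B)))$; by projectivity and connectedness of $X$ this space is one-dimensional over $\kk$. Since $(X,B)$ is klt we have $\lfloor B\rfloor = 0$, so the normalization of $\lfloor B'\rfloor$ that appears in the definition of pre-admissibility is empty, and pre-admissibility imposes no condition on $s$. Moreover, $X$ is normal (being klt) and connected, hence irreducible, so admissibility of a pre-admissible section reduces to the single condition $g^*s = s$ for every $g\in \mathrm{Bir}(X,B)$.

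To arrange this invariance, apply Conjecture~\ref{conj:b-rep} in dimension $c$ with index $I_0$: there is a constant $b \coloneqq b(c,I_0)$, and therefore depending only on $c$ and $\Lambda$, such that the natural character
\[
\chi \colon \mathrm{Bir}(X,B) \longrightarrow GL(H^0(X,\mathcal{O}_X(I_0(K_X+B)))) \simeq \kk^*, \qquad g^*s = \chi(g)\,s,
\]
has finite image of order at most $b$. Hence $\chi(g)^b = 1$ for every $g \in \mathrm{Bir}(X,B)$, so the section $s^{\otimes b} \in H^0(X,\mathcal{O}_X(bI_0(K_X+B)))$ satisfies $g^*(s^{\otimes b}) = \chi(g)^b\, s^{\otimes b} = s^{\otimes b}$. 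In particular $s^{\otimes b}$ is a nowhere-vanishing admissible section. Setting $I(\Lambda,c) \coloneqq b \cdot I_0$ yields the constant required by the statement.

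There is no genuine obstacle once both conjectures are assumed; the argument is essentially an averaging-by-power argument on top of the definitions. The only points that require care are the observation that in the klt case $\lfloor B \rfloor = 0$ renders pre-admissibility vacuous and reduces admissibility to $\mathrm{Bir}$-invariance, and the verification that the image of $\chi$ in $\kk^*$ having order dividing $b$ is killed uniformly by raising $s$ to the $b$-th power.
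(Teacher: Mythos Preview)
Your proof is correct and follows the standard argument that the paper defers to (it cites \cite[Proposition 3.2.7]{Xu20} rather than giving its own proof): use Conjecture~\ref{conj:index} to trivialize $I_0(K_X+B)$, observe that for a klt pair pre-admissibility is vacuous and admissibility reduces to $\mathrm{Bir}(X,B)$-invariance, then kill the finite character supplied by Conjecture~\ref{conj:b-rep} by passing to the $b$-th tensor power.
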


Note that for a klt log Calabi--Yau pair
an admissible section is nothing else than a section
which is invariant under the pull-back via crepant birational
transformations.

Finally, we prove the following lemma
that allows us to produce
admissible sections
on possibly disconnected dlt pairs, 
once we know the existence
of admissible sections on connected dlt pairs.
The proof is similar to that of~\cite[Proposition 3.2.8]{Xu20}.

\begin{lemma}\label{lem:from-conn-to-disc}
Let $d$ be a positive integer.
Let $(X,B)$ be a possibly disconnected 
projective dlt log Calabi--Yau pair.
Assume that for every component $(X_i,B_i)$ of $(X,B)$, 
we have a non-trivial section on 
$A(X_i,I(K_{X_i}+B_i))$. 
Then, we have that
$A(X,I(K_X+B))$ admits a nowhere vanishing section.
\end{lemma}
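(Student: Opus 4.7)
Write $X=\sqcup_{i\in \mathcal{I}} X_i$ for the decomposition into connected components and let $s_i \in A(X_i, I(K_{X_i}+B_i))$ be the given non-trivial admissible sections. Since $(X_i,B_i)$ is a connected projective dlt log Calabi--Yau pair, $K_{X_i}+B_i\sim_\qq 0$, so the existence of a non-trivial section of $\oo_{X_i}(I(K_{X_i}+B_i))$ forces $I(K_{X_i}+B_i)\sim 0$ on each $X_i$. In particular, under this trivialization each $s_i$ corresponds to a non-zero constant, hence is nowhere vanishing on $X_i$. So the only thing to arrange is admissibility of the global section across components that are $B$-birationally related.

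Partition $\mathcal{I}$ into equivalence classes under the relation ``$X_i$ is $B$-birational to $X_j$'' (in the sense of Definition~\ref{def_b-bir}). On each class $\mathcal{C}$, choose a distinguished representative $X_{i_0}$ and keep its given section $s_{i_0}$. For every other $X_j$ in $\mathcal{C}$, fix a $B$-birational map $\phi_j\colon X_j\dashrightarrow X_{i_0}$ and redefine
\[
t_j \coloneqq \phi_j^* s_{i_0} \in H^0(X_j, \oo_{X_j}(I(K_{X_j}+B_j))).
\]
Finally, set $t \coloneqq \sqcup_{j\in \mathcal{I}} t_j$ on $X$. By construction, for any crepant birational map $g\colon (X_j,B_j)\dashrightarrow (X_k,B_k)$ with $X_j,X_k$ in the same class $\mathcal{C}$, one has $\phi_k\circ g\circ \phi_j^{-1} \in \mathrm{Bir}(X_{i_0},B_{i_0})$, and since $s_{i_0}$ is admissible on $X_{i_0}$ this composition fixes $s_{i_0}$; pulling back through $\phi_k$ gives $g^* t_k = t_j$. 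Thus $t \in A(X, I(K_X+B))$, and it is nowhere vanishing because each $t_j$ is the pull-back of a nowhere vanishing section via a $B$-birational (in particular dominant) map.

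The only point that requires care is well-definedness of $t_j$ as a pull-back: a priori $t_j$ depends on the choice of $\phi_j$, but if $\psi_j\colon X_j\dashrightarrow X_{i_0}$ is another $B$-birational map, then $\phi_j\circ \psi_j^{-1}\in \mathrm{Bir}(X_{i_0},B_{i_0})$, and the admissibility of $s_{i_0}$ gives $(\phi_j\circ \psi_j^{-1})^* s_{i_0}=s_{i_0}$, hence $\psi_j^* s_{i_0}=\phi_j^* s_{i_0}$. The main conceptual point of the argument is therefore that admissibility on each component is exactly the right hypothesis to let us transport a single section consistently across a $B$-birational equivalence class; once this is in place, verifying admissibility of $t$ on all of $X$ reduces to a mechanical check that any $B$-birational map between two components factors through the chosen representatives of its class.
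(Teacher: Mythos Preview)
Your approach differs from the paper's: you transport a chosen admissible section across each $B$-birational equivalence class of components, whereas the paper averages $s=(\lambda_i s_i)$ over the finite image $\rho_I(\mathrm{Bir}(X,B))$, using Burnside's theorem to establish finiteness. Your route is more elementary in that it sidesteps the pluricanonical representation entirely.

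There is, however, a gap. Admissibility has two clauses: pre-admissibility (the restriction $t|_{D^n}$ must be admissible on the normalized boundary $D^n$) \emph{and} equivariance under $B$-birational maps between the $X_j$. You verify only the second. Even granting that each $t_j = \phi_j^* s_{i_0}$ is itself admissible on $X_j$ (which already needs the non-trivial fact, due to Fujino, that $B$-birational pullback preserves admissibility), this says only that $t_j|_{D^n_j}$ is admissible on each $D^n_j$ separately. Pre-admissibility of $t$ on $X$ requires admissibility of $t|_{D^n}$ on the whole disconnected boundary $D^n$, and hence equivariance under every $B$-birational map $h \colon D_{j,\alpha}\dashrightarrow D_{k,\beta}$ between boundary strata of \emph{different} $X_j$'s. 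Such an $h$ need not be induced by any $B$-birational map $X_j\dashrightarrow X_k$: for instance, two copies of $(\pp^1,\{0\}+\frac{1}{2}\{1\}+\frac{1}{2}\{a\})$ with generic distinct values of $a$ are not $B$-birational to each other, yet their unique coefficient-$1$ boundary points certainly are. In that case your construction sets $t_j=s_j$ and $t_k=s_k$ independently, and nothing forces $h^*(s_k|_{D_{k,\beta}}) = s_j|_{D_{j,\alpha}}$. Repairing this requires rescaling the representatives $s_{i_0}$ across equivalence classes and then checking consistency inductively down all strata, which is substantial extra work not present in your argument.
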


\begin{proof}
Let $(X,B)$ be a possibly disconnected
projective dlt log Calabi--Yau pair.
We write
$(X_i,B_i)$ for its components
for $i\in \{1,\dots,k\}$.
By assumption, for each $i$, we have 
\[
0\neq s_i \in A(X_i,I(K_{X_i}+B_i)).
\] 
For $\lambda_i\in \cc$, we define 
\[
s\coloneqq(\lambda_1s_1,\dots,\lambda_ks_k)\in 
H^0(X,N(K_X+B)).
\] 
Let $G={\rm Bir}(X,B)$.
We claim that the image
of $G$ in
$GL(H^0(X,\O X. (I(K_X+B))))$ is finite.
We denote by
\[
\rho_I \colon {\rm Bir}(X,B)\rightarrow
GL(H^0(X,\O X.(I(K_X+B))))
\]
the usual map
induced by pulling back sections.
Thus, we want to show that
$\rho_I(G)$ is finite. By~\cite{FG14}, the finiteness of $\rho_I(G)$ is known if $X$ is connected.
Thus, we need to reduce the disconnected case to the connected one.
Note that for every $g\in G$, we have that
$\rho_I(g)^{k!}$ has finite order
by~\cite[Theorem 3.15]{FG14} and the fact that the order of any permutation in $S_k$ divides $k!$.
Hence, we conclude that $\rho_I(G)$
is a finitely generated subgroup
of finite exponent
of a general linear group, where the bound is determined by $k!$ and the least common multiple of the orders of the pluricanonical representations of each irreducible component.
Indeed, notice that $\rho_I(G)$ is finitely generated, as it is the extension of two finite groups: the image via $\rho_I$ of the subgroup fixing the irreducible components of $X$, which is isomorphic to the product of the pluricanonical representations of each irreducible component (hence, a finite group by \cite{FG14}), and a subgroup of $S_k$.
By a theorem due to Burnside, known as the bounded Burnside problem for linear groups \cite[Theorem 6.13]{CD21}, we conclude that $\rho_I(G)$ is finite.

Consider the section
\[
t\coloneqq\sum_{\sigma \in G}\sigma(s).
\]
By construction, we have that
$t\in A(X,I(K_X+B))$.
Indeed, $t$ is invariant under the action of any birational transformation of $(X,B)$. 
Thus, the restriction of $t$ to every log canonical center is also invariant. 
It suffices to show that 
$t$ is non-trivial on each component of $X$.
By considering orbits of the action,
we may assume that ${\rm Bir}(X,B)$
acts transitively
on the components of $X$.
Consider
the basis
$((0,\dots,s_i,\dots,0))_{1 \leq i \leq k}$
of $H^0(X,\O X. (I(K_X+B)))$.
Since the sections $s_i$ are admissible,
in this basis, 
the action
of $\rho_I(g)$ is represented
by a matrix
whose diagonal entries are either 0 (if $g(X_i)\neq X_i$) or 1 (if $g(X_i)=X_i$).
Hence, by observing that the matrix associated to the identity element of $G$ is the identity matrix, it follows that the 
action of $\sum_{\sigma \in G}\sigma$
in this basis
is given by a non-trivial matrix whose diagonal entries are all integers greater than or equal to 1.
By the transitivity of the action
and the fact that the action of $\sum_{\sigma \in G}\sigma$ is
given by a matrix whose diagonal are positive integers, 
we deduce that we can find $\lambda_i\in \cc$ for which $t$ is non-zero on all components.
\end{proof}

Now, we are ready to prove the main theorem of this section.

\begin{theorem}\label{dlt-admissible}
Let $c$ be a nonnegative integer
and $\Lambda$ be a set of rational numbers
satisfying the descending chain condition.
Assume Conjecture~\ref{conj:index}
and Conjecture~\ref{conj:b-rep}
in dimension $c$.
There is a constant $I(\Lambda,c)$, only 
depending on $\Lambda$ and $c$, satisfying the following.
Let $(X,B)$ be a projective dlt pair
with coefficients in $\Lambda$
and coregularity $c$.
Assume that $I(\Lambda,c)(K_X+B)\sim 0$.
Then, there is a nowhere vanishing admissible section
\[
0\neq s \in A(X,I(\Lambda,c)(K_X+B)).
\] 
\end{theorem}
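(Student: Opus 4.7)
The proof proceeds by induction on $d:=\dim X$, using the admissibility calculus developed in this section. Since $D(D(\Lambda))=D(\Lambda)$ by Lemma~\ref{lem:derived-closure}, I replace $\Lambda$ with its derived closure at the outset; by Lemma~\ref{lemma-coeff-adj}, this ensures that the coefficient set of any pair obtained by divisorial adjunction from $(X,B)$ remains inside $\Lambda$. Accordingly, I define $I(\Lambda,c)$ to be a fixed even multiple of the constant produced by Lemma~\ref{lem:admissible-klt-case} for the set $\Lambda$ in dimension $c$. Because each inductive step strictly drops the dimension while preserving both the coregularity $c$ and the coefficient set $\Lambda$, a single choice of $I$ suffices at every level of the induction.

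\textbf{Base case.} If $d=c$, then $\mathrm{coreg}(X,B)=c=\dim X$ forces $\mathcal{D}(X,B)=\varnothing$, hence $\lfloor B\rfloor=0$ and $(X,B)$ is klt. Lemma~\ref{lem:admissible-klt-case} directly supplies the desired nonzero admissible section, which is automatically nowhere vanishing: the linear equivalence $I(K_X+B)\sim 0$ identifies it with a nonzero regular function on the connected projective variety $X$, hence with a nonzero constant.

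\textbf{Inductive step.} Assume $d>c$, so that $D:=\lfloor B\rfloor\neq 0$. Let $\nu\colon D^n \to D$ be the normalization and write $D^n=\sqcup_i D_i$ as the disjoint union of its connected components. Divisorial adjunction endows each $D_i$ with a dlt pair structure $(D_i,B_{D_i})$ of dimension $d-1$; by Lemma~\ref{lemma-coeff-adj} its coefficients lie in $D(\Lambda)=\Lambda$, and by Lemma~\ref{lem:coreg-adjunction}, applicable since $I(K_X+B)\sim 0$ makes $(X,B)$ log Calabi--Yau, each $(D_i,B_{D_i})$ has coregularity $c$. The inductive hypothesis applied to each component produces a nonzero admissible section in $A(D_i, I(K_{D_i}+B_{D_i}))$. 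Lemma~\ref{lem:from-conn-to-disc} combines these into a nowhere vanishing admissible section on $(D^n,B_{D^n})$, and Lemma~\ref{lem:pull-back-admissible} transfers this section to a nowhere vanishing admissible section $s$ on the semi-dlt pair $(D,B_D)$.

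\textbf{Lifting and main obstacle.} Since $I$ is even, Lemma~\ref{lem:lifting-A-to-PA} lifts $s$ to a pre-admissible section $t\in PA(X,I(K_X+B))$ with $t|_D=s$. As $I(K_X+B)\sim 0$, the section $t$ is a regular function on $X$ that does not vanish along the nonempty divisor $D$; since $X$ is connected and projective, $t$ is a nonzero constant and therefore nowhere vanishing on all of $X$. Lemma~\ref{lem:PA=A} finally upgrades $t$ from pre-admissible to admissible, closing the induction. The principal conceptual obstacle is the jump of the coefficient set from $\Lambda$ to $D(\Lambda)$ under divisorial adjunction; this is defused only because $D$ stabilizes after a single application, by Lemma~\ref{lem:derived-closure}. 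The principal technical obstacle is that the restriction to $\lfloor B\rfloor$ is only semi-dlt and may be disconnected: this is precisely why the entire admissibility formalism of Lemmas~\ref{lem:pull-back-admissible}, \ref{lem:from-conn-to-disc}, \ref{lem:lifting-A-to-PA} and \ref{lem:PA=A} is needed, and why the parity constraint on $I$ must be propagated through every step.
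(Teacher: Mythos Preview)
Your argument follows the same inductive architecture as the paper's proof (the paper inducts on the dimension of log canonical centers inside a fixed $(X,B)$, you induct on $\dim X$; these are equivalent reformulations), and the use of Lemmas~\ref{lem:admissible-klt-case}, \ref{lem:from-conn-to-disc}, \ref{lem:lifting-A-to-PA}, and \ref{lem:PA=A} is correct.

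There is one genuine slip in the inductive step. You write that ``Lemma~\ref{lem:pull-back-admissible} transfers this section to a nowhere vanishing admissible section $s$ on the semi-dlt pair $(D,B_D)$.'' That lemma does not do this: it only asserts that a section \emph{already living on} the semi-dlt pair is (pre-)admissible if and only if its pullback to the normalization is. It gives you no mechanism to descend a section from $D^n$ to $D$. The descent is precisely the content of Lemma~\ref{lem:descending-nonormal}: since your admissible section on $D^n$ is in particular pre-admissible and $(D^n,B_{D^n})$ is itself a dlt modification of the normalization of $(D,B_D)$, that lemma produces a section in $H^0(D,\mathcal{O}_D(I(K_D+B_D)))$. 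Only then does Lemma~\ref{lem:pull-back-admissible} apply, confirming that the descended section is admissible on $(D,B_D)$, which is the input required by Lemma~\ref{lem:lifting-A-to-PA}. The paper's proof invokes exactly this pair of lemmas in this order; once you insert Lemma~\ref{lem:descending-nonormal} at this point, your proof is complete and matches the paper's.
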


\begin{proof}
Let $I(\Lambda,c)$ be the positive integer
given by Lemma~\ref{lem:admissible-klt-case}.
Without loss of generality, we may assume that $I(\Lambda,c)$ is even.

By induction on $i$, 
we prove that every $i$-dimensional 
log canonical center
$V$ of $(X,B)$ satisfies that
\begin{equation}\label{eq:admissible-section}
0\neq s_V \in A(V,I(\Lambda,c)(K_V+B_V)),
\end{equation}
where $(V,B_V)$ is the pair
obtained by dlt adjunction
of $(X,B)$ to $V$.
If $i=c$, then the pair is klt
and the statement follows from Lemma~\ref{lem:admissible-klt-case}.

Now, assume that the statement
holds for every irreducible $i$-dimensional
dlt center of $(X,B)$.
Let $W$ be a log canonical center
of $(X,B)$ of dimension $i+1$. 
The pair
$(W,B_W)$ obtained from adjunction
is dlt of dimension $i+1$
and it holds that
$I(\Lambda,c)(K_W+B_W)\sim 0$.
Let $W_0$ be the union of all the log canonical centers of $(W,B_W)$.
Let $(W_0,B_{W_0})$ be the pair
obtained by performing adjunction
of $(W,B_W)$ to $(W_0,B_{W_0})$.
Hence, $(W_0,B_{W_0})$ is an $i$-dimensional
semi-dlt pair
with $I(\Lambda,c)(K_{W_0}+B_{W_0})\sim 0$.
Observe that $W_0$ may have multiple irreducible components.
Let $n_U\colon U\rightarrow W_0$ be the normalization
of $W_0$.
Let $(U,B_{U})$ be the pair
obtained by log pull-back
of $(W_0,B_{W_0})$ to $U$.
Then, we have that
$(U,B_{U})$ is a possibly
disconnected
projective dlt pair of dimension $i$
and coregularity $c$.
By~\cite[Theorem 1.4]{FS23}, we know that every component has coregularity $c$.
Furthermore, we have that
$I(\Lambda,c)(K_{U}+B_{U})\sim 0$.
By~\eqref{eq:admissible-section} in dimension $i$, we have that
each irreducible component 
$U_j$ of $U$ satisfies that
\[
0\neq s_{U,j} \in A(U_j,I(\Lambda,c)(K_{U_j}+B_{U_j})).
\]
By Lemma~\ref{lem:from-conn-to-disc}, we conclude that there exists a nowhere vanishing section
\[
0\neq s_{U} \in A(U,I(\Lambda,c)(K_U+B_U)).
\] 
By Lemma~\ref{lem:descending-nonormal}, we conclude that this section descends to
$s_{W_0} \in H^0(W_{0},\O W_0.(I(\Lambda,c)(K_{W_0}+B_{W_0})))$.
Note that we have 
$n_U^* s_{W_0}=s_U$.
By Lemma~\ref{lem:pull-back-admissible},
we conclude that
$s_{W_0} \in A(W_0,I(\Lambda,c)(K_{W_0}+B_{W_0}))$.
By Lemma~\ref{lem:lifting-A-to-PA}, we conclude that there exists 
\[
0\neq t_{W} \in PA(W,I(\Lambda,c)(K_W+B_W)).
\] 
Finally, since $W$ is connected, we conclude by Lemma~\ref{lem:PA=A}, that 
there is a section 
\[
0\neq t_W \in A(W,I(\Lambda,c)(K_W+B_W)). 
\]
This finishes the inductive step.

We conclude, that for every $i \in \{c,\dots,\dim X\}$, 
every $i$-dimensional log canonical center
of $(X,B)$ admits an admissible section. 
In particular, we get a nowhere vanishing section
$0\neq s_X \in A(X,I(\Lambda,c)(K_X+B))$ as claimed.
\end{proof}

The previous theorem allows controlling
the index of semi-log canonical pairs
once we can control the index 
of their normalization. 

\begin{theorem}\label{thm:slc-case}
Let $c$ be a nonnegative integer
and $\Lambda$ be a set of rational numbers
satisfying the descending chain condition.
Assume Conjecture~\ref{conj:index}
and Conjecture~\ref{conj:b-rep}
in dimension $c$.
There is a constant $I(\Lambda,c)$, only
depending on $\Lambda$ and $c$, satisfying the following.
Let $(X,B)$ be a projective semi-log canonical pair with 
coefficients in $\Lambda$ and coregularity $c$.
Let $(Y,B_Y)$ be a $\qq$-factorial dlt modification of a normalization of $(X,B)$. Assume that $I(\Lambda,c)(K_Y+B_Y)\sim 0$.
Then, we have that
$I(\Lambda,c)(K_X+B)\sim 0$.
\end{theorem}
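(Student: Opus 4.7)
The plan is to build a nowhere vanishing pre-admissible section of $\O Y.(I(\Lambda,c)(K_Y+B_Y))$ and then descend it to $(X,B)$ via Lemma~\ref{lem:descending-nonormal}. Write $I \coloneqq I(\Lambda,c)$ for brevity, and let $(X',B') \to (X,B)$ be the normalization. Since $Y$ is crepant to $(X',B')$ and $X'$ has as many components as the irreducible components of $X$, in general $Y$ is disconnected, so we write $Y = \sqcup_j Y_j$ with induced pairs $(Y_j,B_{Y_j})$. Each $(Y_j,B_{Y_j})$ is a connected projective dlt log Calabi--Yau pair with coefficients in $\Lambda$; moreover, because coregularity is a crepant birational invariant (see Lemma~\ref{lem:coreg-nonnegative-contraction}) and the non-normal locus of $(X,B)$ contributes log canonical centers when passing to a dlt modification of the normalization, each component inherits coregularity at most $c$. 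From $I(K_Y+B_Y)\sim 0$ we obtain $I(K_{Y_j}+B_{Y_j})\sim 0$ for every $j$.

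Next, I would apply Theorem~\ref{dlt-admissible} to each $(Y_j,B_{Y_j})$ to produce a nowhere vanishing admissible section
\[
0 \neq s_j \in A(Y_j,I(K_{Y_j}+B_{Y_j})).
\]
Feeding these into Lemma~\ref{lem:from-conn-to-disc} yields a nowhere vanishing admissible section
\[
0 \neq s \in A(Y,I(K_Y+B_Y)).
\]
Since admissibility implies pre-admissibility by definition, $s \in PA(Y,I(K_Y+B_Y))$.

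Finally, I would invoke Lemma~\ref{lem:descending-nonormal} on the tower $(Y,B_Y) \to (X',B') \to (X,B)$. Since $I(K_Y+B_Y)$ is Cartier (as it is linearly trivial on a $\qq$-factorial variety) and $s$ is a nowhere vanishing pre-admissible section, the lemma provides a corresponding nowhere vanishing section of $\O X.(I(K_X+B))$, which gives $I(K_X+B) \sim 0$.

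The main technical point to verify carefully is that the coregularity of each connected component $(Y_j,B_{Y_j})$ is indeed at most $c$, so that Theorem~\ref{dlt-admissible} applies with the same constant $I(\Lambda,c)$. This boils down to the fact that the double locus of $(X,B)$ gets absorbed into $\lfloor B' \rfloor$ upon normalization, so that the dlt modification $(Y,B_Y)$ of $(X',B')$ has dual complex of dimension equal to $\dim X - 1 - c$ (in the slc sense), from which the bound on the coregularity of each $Y_j$ follows. Everything else is a direct concatenation of the machinery already built: Theorem~\ref{dlt-admissible}, Lemma~\ref{lem:from-conn-to-disc}, and Lemma~\ref{lem:descending-nonormal}.
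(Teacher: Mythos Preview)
Your proposal is correct and follows essentially the same route as the paper: apply Theorem~\ref{dlt-admissible} componentwise to the (possibly disconnected) dlt modification $(Y,B_Y)$, assemble the resulting admissible sections into a global nowhere vanishing one via Lemma~\ref{lem:from-conn-to-disc}, and then descend using Lemma~\ref{lem:descending-nonormal}. Your additional paragraph flagging the coregularity of each component $Y_j$ is a point the paper leaves implicit; it is indeed handled by the fact that the double locus is absorbed into $\lfloor B'\rfloor$ together with the connectedness-type results already invoked inside Theorem~\ref{dlt-admissible}.
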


\begin{proof}
We can consider $I(\Lambda,c)$ as in Theorem~\ref{dlt-admissible}. 
By Theorem~\ref{dlt-admissible} and Lemma~\ref{lem:from-conn-to-disc}, we know that there exists a nowhere vanishing pre-admissible section
\[
0\neq s_Y \in PA(Y,I(\Lambda,c)(K_Y+B_Y)).
\] 
By Lemma~\ref{lem:descending-nonormal}, we conclude that the linear equivalence
$I(\Lambda,c)(K_X+B)\sim 0$ holds.
\end{proof}

In the case of dimension 0,  Conjecture~\ref{conj:index}
and Conjecture~\ref{conj:b-rep}
are trivial.
Indeed, the only variety of interest is $\mathrm{Spec}(\mathbb K)$, no boundary is allowed for dimensional reasons, and $\mathrm{Bir}(\mathrm{Spec}(\mathbb K))$ is trivial.
Thus, we get the following statement.

\begin{theorem}
Let $(X,B)$ be a projective semi-log canonical 
Calabi--Yau pair 
of coregularity 0
and $\lambda$ be its Weil index.
Let $(Y,B_Y)$ be a $\qq$-factorial
dlt modification
of a normalization of $(X,B)$.
If $2\lambda(K_Y+B_Y)\sim 0$, 
then $2\lambda(K_X+B)\sim 0$.
\end{theorem}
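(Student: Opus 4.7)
The plan is to specialize the proof of Theorem~\ref{thm:slc-case} to coregularity $0$, tracking the constant explicitly as $2\lambda$. The key observation is that in dimension $0$ both Conjecture~\ref{conj:index} and Conjecture~\ref{conj:b-rep} hold trivially: the only projective klt log Calabi--Yau pair of dimension $0$ is $\Spec(\kk)$, which carries no boundary and has trivial $B$-birational automorphism group, and every nonzero scalar in $H^0(\Spec(\kk),\mathcal{O})$ is vacuously admissible. Consequently, in coregularity $0$ the base case of the induction used in Theorem~\ref{dlt-admissible} imposes no constraint on the constant $I$ beyond evenness (required by Lemma~\ref{lem:PA=A} and Lemma~\ref{lem:lifting-A-to-PA}) and the linear equivalence $I(K_Y+B_Y)\sim 0$. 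Both conditions are satisfied by $I=2\lambda$: the second by hypothesis, and the first because $2\lambda$ is visibly even.

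With this choice of $I$, I would rerun the induction of Theorem~\ref{dlt-admissible} verbatim on $(Y,B_Y)$. For each log canonical center $V$, ordered by increasing dimension and starting from the $0$-dimensional ones, I construct a nowhere-vanishing admissible section of $2\lambda(K_V+B_V)$. At each step the admissible sections on the irreducible components of the normalization of the union of smaller centers are assembled using Lemma~\ref{lem:from-conn-to-disc}, descended through the normalization via Lemma~\ref{lem:descending-nonormal} combined with Lemma~\ref{lem:pull-back-admissible}, and lifted to the ambient center by Lemma~\ref{lem:lifting-A-to-PA} together with Lemma~\ref{lem:PA=A}. Since $(Y,B_Y)$ is crepant birational to a normalization of $(X,B)$, it has coregularity $0$, so the induction indeed reaches each irreducible component of $Y$ and yields a nowhere-vanishing section $s_Y\in A(Y,2\lambda(K_Y+B_Y))\subseteq PA(Y,2\lambda(K_Y+B_Y))$.

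To conclude, since $\lambda$ is the Weil index of $K_X+B$, the divisor $2\lambda(K_X+B)$ is integral, so Lemma~\ref{lem:descending-nonormal} applies and the pre-admissible section $s_Y$ descends to a nowhere-vanishing section of $\mathcal{O}_X(2\lambda(K_X+B))$, yielding $2\lambda(K_X+B)\sim 0$ as desired. The only genuinely delicate point is confirming that the explicit constant $2\lambda$ may replace the abstract $I(\Lambda,0)$ produced by Theorem~\ref{dlt-admissible}; this is precisely why one isolates the trivial nature of both conjectures in dimension $0$ at the outset, so that the induction on log canonical centers is not forced to absorb an additional factor coming from Lemma~\ref{lem:admissible-klt-case}.
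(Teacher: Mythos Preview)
Your proposal is correct and follows exactly the approach the paper takes: the theorem is obtained from Theorem~\ref{thm:slc-case} by observing that Conjecture~\ref{conj:index} and Conjecture~\ref{conj:b-rep} are vacuous in dimension $0$, so the constant produced by Lemma~\ref{lem:admissible-klt-case} imposes no constraint, and the only requirements left on $I$ are evenness (for Lemma~\ref{lem:PA=A} and Lemma~\ref{lem:lifting-A-to-PA}) and $I(K_Y+B_Y)\sim 0$, both of which $I=2\lambda$ satisfies. Your write-up is in fact more explicit than the paper's, which simply remarks that the conjectures are trivial in dimension $0$ and states the theorem; the only small omission is that when $Y$ is disconnected you should invoke Lemma~\ref{lem:from-conn-to-disc} once more at the top level to assemble the admissible sections on the components of $Y$ into one on $Y$, exactly as the proof of Theorem~\ref{thm:slc-case} does.
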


Finally, Conjecture~\ref{conj:index}
and Conjecture~\ref{conj:b-rep}
are known in the case of klt pairs of dimension 1 or 2 (see, e.g.,~\cite{Xu20}). 
We get the following statement. 

\begin{theorem}\label{thm:slc-case-coreg=2}
Let $\Lambda$ be a set of rational numbers
satisfying the descending chain condition.
There exists a constant $I(\Lambda)$,
only depending on $\Lambda$, satisfying the following.
Let $(X,B)$ be a projective semi-log canonical 
Calabi--Yau pair
of coregularity 1 (resp. 2)
such that $B$ has coefficients in $\Lambda$.
Let $(Y,B_Y)$ be a $\qq$-factorial dlt modification of a normalization
of $(X,B)$.
If $I(\Lambda)(K_Y+B_Y)\sim 0$, then 
$I(\Lambda)(K_X+B)\sim 0$.
\end{theorem}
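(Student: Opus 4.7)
The proof will be a direct application of Theorem~\ref{thm:slc-case} with $c=1$ (respectively $c=2$). The only thing that needs to be checked is that Conjecture~\ref{conj:index} and Conjecture~\ref{conj:b-rep} hold unconditionally in dimensions $1$ and $2$ for klt log Calabi--Yau pairs with coefficients in a DCC set $\Lambda$. Once these two conjectures are available in the relevant dimension, Theorem~\ref{thm:slc-case} furnishes the constant $I(\Lambda,1)$ (resp.~$I(\Lambda,2)$), which we take as the constant $I(\Lambda)$ in the statement.

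For the verification in low dimensions, the plan is to invoke the known results collected in the literature. In dimension $1$, a klt log Calabi--Yau pair is either a smooth elliptic curve with trivial boundary or $(\pp^1,B)$ with $\deg(K_{\pp^1}+B)=0$; in either case the DCC hypothesis on coefficients forces finiteness of the possible boundaries, so the existence of a uniform index $I$ follows, and the action of $\mathrm{Bir}$ on $H^0(I(K+B))$ factors through a finite cyclic group whose order is controlled by the cross-ratio data of the branch points. In dimension $2$, both conjectures are well-known for klt log Calabi--Yau surface pairs with DCC coefficients, as recorded in the references cited around~\cite{Xu20}; see also the references in the discussion immediately preceding the statement.

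With these ingredients in place, the argument is then purely formal: the pair $(X,B)$ of coregularity $c \in \{1,2\}$ has minimal log canonical centers of dimension $c$, so Theorem~\ref{thm:slc-case} applies with this value of $c$, and the linear equivalence $I(\Lambda,c)(K_Y+B_Y)\sim 0$ for a $\qq$-factorial dlt modification of a normalization of $(X,B)$ descends to $I(\Lambda,c)(K_X+B)\sim 0$ on $X$. The only place where care is needed is the choice of $I(\Lambda,c)$: by the proof of Theorem~\ref{thm:slc-case}, it must be taken to be even so that Lemma~\ref{lem:PA=A} and Lemma~\ref{lem:lifting-A-to-PA} apply, but this can be arranged by passing to an even multiple of the constant produced by Lemma~\ref{lem:admissible-klt-case}. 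There is no substantive obstacle beyond bookkeeping.
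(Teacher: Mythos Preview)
Your proposal is correct and takes exactly the approach the paper uses: the paper simply notes (in the sentence preceding the theorem) that Conjecture~\ref{conj:index} and Conjecture~\ref{conj:b-rep} are known for klt pairs in dimensions $1$ and $2$ (citing~\cite{Xu20}), so Theorem~\ref{thm:slc-case} applies with $c=1$ or $c=2$. Your additional remarks on the low-dimensional verification and the evenness of $I(\Lambda,c)$ are accurate elaborations of points the paper leaves implicit.
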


Let us note that Conjecture~\ref{conj:index} 
is known for klt $3$-folds (see, e.g.,~\cite{Xu20}).
However, Conjecture~\ref{conj:b-rep}
is still unknown 
in the case of klt Calabi--Yau $3$-folds.

\subsection{Lifting complements
from non-normal divisors in fibrations}
In this subsection, we prove a statement about lifting complements from non-normal divisors in fibrations.

\begin{theorem}\label{thm:lifting-fibration-pairs}
Let $\lambda$, $d$, and $c$ be nonnegative integers.
Assume that Conjecture~\ref{conj:index}$(c)$
and
Conjecture~\ref{conj:b-rep}$(c)$
hold.
Let $I \coloneqq I(D_\lambda,d-1,c,0)$
be the integer provided by Theorem~\ref{introthm:index-higher-coreg}$(d-1,c)$.
Up to replacing $I$ with a bounded multiple, further assume that $I$ is divisible by the integer provided by Theorem~\ref{thm:slc-case}$(D_\lambda,c)$.
Let $(X,B)$ be a projective $d$-dimensional log Calabi--Yau pair.
Assume that the following conditions hold:
\begin{itemize}
    \item $X$ is $\qq$-factorial and klt;
    \item there is a fibration $X\rightarrow W$, which is a $(K_X +  B - S)$-Mori fiber space;
    \item a component $S\subset \lfloor B\rfloor$ which is ample over the base
    and $(X,B-S)$ is dlt;
    \item the morphism $S \rar W$ has connected fibers;
    \item the coefficients of $B$ belong to $D_\lambda$; and 
    \item the pair $(X,B)$ has coregularity $c$.
\end{itemize}
Then, we have that $I(K_X+B)\sim 0$.    
\end{theorem}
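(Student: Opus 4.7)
The strategy is to adapt the lifting argument from Theorem~\ref{thm:lifting-fibration-gen-pairs} to the setting where $S$ is a component of $\lfloor B\rfloor$ with $(X,B-S)$ dlt but $(X,B)$ only log canonical, so that $S$ may fail to be normal. The key new ingredient is the slc index theorem~\ref{thm:slc-case}, which is the reason Conjecture~\ref{conj:index}$(c)$ and Conjecture~\ref{conj:b-rep}$(c)$ are assumed here; the main obstacle is precisely this slc descent step, and it is why the statement does not allow a moduli part (in contrast to Theorem~\ref{thm:lifting-fibration-gen-pairs}).

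First, I would perform divisorial adjunction of $(X,B)$ along $S$, producing a projective semi-log canonical pair $(S,B_S)$ with $(K_X+B)|_S\sim K_S+B_S$. By Lemma~\ref{lemma-coeff-adj} and Lemma~\ref{lem:derived-closure}, the coefficients of $B_S$ lie in $D(D_\lambda)=D_\lambda$. Let $\pi\colon S^\nu\to S$ be the normalization and $(Y,B_Y)\to (S^\nu,B_{S^\nu})$ be a $\qq$-factorial dlt modification. By Lemma~\ref{lem:coreg-adjunction}, applied after a dlt modification of $(X,B)$ which places a minimal log canonical center through a component of $\lfloor B\rfloor$, each connected component of $(Y,B_Y)$ is a $(d-1)$-dimensional dlt log Calabi--Yau pair of coregularity at most $c$ with coefficients in $D_\lambda$.

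By the inductive hypothesis Theorem~\ref{introthm:index-higher-coreg}$(d-1,c)$ applied componentwise to $(Y,B_Y)$ with $\bM.=0$ (so that the rational-connectedness alternative is vacuously satisfied), we obtain $I(K_Y+B_Y)\sim 0$ for $I=I(D_\lambda,d-1,c,0)$. Since by hypothesis $I$ is divisible by the constant provided by Theorem~\ref{thm:slc-case}$(D_\lambda,c)$, that theorem then yields $I(K_S+B_S)\sim 0$.

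Finally, I would lift the resulting nowhere-vanishing section of $\O S.(I(K_S+B_S))$ to one of $\O X.(I(K_X+B))$, following the strategy in the proof of Theorem~\ref{thm:lifting-fibration-gen-pairs}. The same codimension-$2$ toric local analysis shows that $I(K_X+B)$ is Cartier at the generic point of every divisor of $S$, hence we have the short exact sequence
\[
0 \to \O X.(I(K_X+B)-S) \to \O X.(I(K_X+B)) \to \O S.(I(K_S+B_S)) \to 0.
\]
Pushing forward by $f\colon X\to W$ and applying relative Kawamata--Viehweg vanishing to $I(K_X+B)-S\sim_{\qq,f}-S$ (using that $X$ is klt and $S$ is $f$-ample, with the degenerate case where $B-S$ is $f$-trivial handled via Koll\'ar-type vanishing for $R^if_*\omega_X$), both $f_*$ and $R^1f_*$ of the leftmost sheaf vanish. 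Thus $f_*\O X.(I(K_X+B))\cong f_*\O S.(I(K_S+B_S))$; taking global sections and using $H^0(S,\O S.(I(K_S+B_S)))=H^0(S,\O S.)\neq 0$ (by connectedness and reducedness of $S$) yields a nonzero section of $\O X.(I(K_X+B))$. By \cite[Lemma 3.1]{FMM22}, we conclude $I(K_X+B)\sim 0$.
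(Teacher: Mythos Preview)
Your proposal is correct and follows essentially the same approach as the paper: adjoin to the semi-log canonical pair $(S,B_S)$, control its index via the inductive hypothesis on the normalization together with Theorem~\ref{thm:slc-case}, then lift the section through the short exact sequence using relative Kawamata--Viehweg vanishing (with Koll\'ar's torsion-free theorem handling the case $B_{\rm hor}=S$). The paper is slightly more explicit in a couple of places---it cites \cite[Example~2.6]{FG14} to justify that $(S,B_S)$ is slc, and it spells out the case split for the vanishing as $B_{\rm hor}\neq S$ versus $B_{\rm hor}=S$ (noting that in the latter case $\dim X-\dim W\geq 2$, so one can check rank zero on a general fiber)---but your outline captures all the essential steps.
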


\begin{proof}
The proof is formally identical to the proof of Theorem \ref{thm:lifting-fibration-gen-pairs}, with the only difference that we need to appeal to the results in \S~\ref{sec:slc}, since $S$ may not be normal.
For completeness, we include a full proof of the statement.

Let $(X,B)$, $S$, $f \colon X \rar W$, and $I \coloneqq I(D_\lambda,d-1,c,\lambda)$ be as in the statement.
First, we show that we can apply the inductive hypothesis to $S$.

By~\cite[Example 2.6]{FG14}, the pair obtained by adjunction of $(X,B-\epsilon \lfloor B\rfloor+\epsilon S)$ to $S$ is semi-log canonical.
In particular, $S$ is $S_2$.
In turn, by letting $\epsilon \to 0$, it follows that the pair obtained by adjunction of $(X,B)$ to $S$ is semi-log canonical.
In particular, let $(S,B_S)$ denote the pair obtained by adjunction from $(X,B)$, and let $(S^\nu, B \subs S^\nu.)$ denote its normalization.
By \cite[Lemma 2.28]{FMP22}, $(S^\nu,B \subs S^\nu.)$ has coregularity $c$.
Then, by Lemma \ref{lemma-coeff-adj}, $(S^\nu,B \subs S^\nu.)$ satisfies the assumptions of Theorem~\ref{introthm:index-higher-coreg}$(d-1,c)$ with constant $I$.
Then, by Theorem \ref{thm:slc-case}, we have
\begin{equation} \label{eq_dim_one_less_slc}
I (\K S. + B_S) \sim 0.
\end{equation}
By \cite[Proposition 4.32]{K13}, $S$ is semi-normal.
Then, by \cite[Lemma 2.3]{FI24} and the fact that $S \rar W$ has connected fibers, we have $f_* \O S. = \O W.$.
Lastly, we observe that, if $\dim X - \dim W =1$, since $S \rar W$ has connected fibers, it follows that $(B-S)_{\rm hor} \neq 0$.

Now, consider the short exact sequence
\begin{equation}\label{ses_fiber_space2_slc}
0 \rar \O X.(I(K_{X}+B)-S) \rar \O X.(I(K_{X}+B)) \rar \O S.(I(K_{S}+B_{S})) \rar 0.
\end{equation}
The exactness of \eqref{ses_fiber_space2_slc} follows verbatim as the exactness of \eqref{ses_fiber_space2}.
Since $I(K_{X}+B)-S\sim_{\qq,f} -S$, the divisor $-S$ is $f$-ample, and $\dim W < \dim X$,
we have
\[
f_*\O X.(I(K_{X}+B)-S)=0.
\]
Similarly, we write 
\[
I(K_{X}+B)-S\sim_{\qq,f} -S \sim_{\qq,f} K_X +(B-S).
\]
First, assume that $B_{\rm hor} \neq S$.
Note that $X$ is klt and $B-S$ is $f$-ample, since $f$ is a Mori fiber space and the assumption that $B_{\rm hor} \neq S$.
Thus, by the relative version of Kawamata--Viehweg vanishing, we have
\[
R^1f_*\O X.(I(K_{X}+B)-S)=0.
\]
Now, assume that $B_{\rm hor}=S$.
By the equality $B_{\rm hor}=S$ and the fact that $f$ is a Mori fiber space, we have
\[
I(K_{X}+B)-S\sim_{\qq,f} -S \sim_{\qq,f} K_X +(B-S) \sim_{\qq,f} K_X + B_{\rm ver} \sim_{\qq,f} K_X.
\]
Thus, we obtain
$$
I(K_{X}+B)-S - K_X \sim_{\qq,f} 0.
$$
Since $X$ is a klt variety, by \cite[Theorem 1-2-7]{KMM87}, we have that $R^1f_* \O X.(I(K_X+B)-S)$ is torsion free.
To conclude that it vanishes, it suffices to show that it has rank 0.
As observed at the end of the previous paragraph, we have that $\dim X \geq \dim Z +2$ under the additional assumption $B_{\rm hor}=S$.
Then, by applying Kawamata--Viehweg vanishing to a general fiber \cite[Theorem 2.70]{KM98}, we conclude that the rank of $R^1f_*\O X.(I(K_{X}+B)-S)$ is 0, thus implying that $R^1f_*\O X.(I(K_{X}+B)-S)=0$.

Therefore, by pushing forward \eqref{ses_fiber_space2_slc} via $f$, we obtain 
\[
f_* \O X.(I(K_{X}+B)) \simeq f_* \O S.(I(K_{S}+B_{S})).
\]
Now, taking global sections, we have
\begin{equation} \label{eq:sections_slc}
H^0(X, \O X.(I(K_{X}+B))) = H^0(S, \O S.(I(K_{S}+B_{S})))=H^0(S, \O S.) \neq 0.
\end{equation}
By \cite[Lemma 3.1]{FMM22}, \eqref{eq:sections_slc} implies that $I(K_{X}+B) \sim 0$.
\end{proof}

\section{Relative complements}
\label{sec:rel}

In this section, we prove an inductive statement regarding the existence of complements for Fano type morphisms with bounded  coregularity.

\subsection{Lifting sections from a divisor}
\label{subsec:lifting}

In this subsection, we introduce some tools to lift complements from a divisor of a log Fano pair.
Let $X$ be a Fano type variety
and $(X,B,\bM.)$ be a generalized log canonical pair for which $-(K_X+B+\bM X.)$ is nef. 
The main theorem of this subsection implies that we can lift complements for $(X,B,\bM.)$ from a component $S$ of $\lfloor B\rfloor$
under some suitable conditions explained in the following theorem.

\begin{theorem}\label{thm:lifting-from-divisor}
Let $d$, $c$ and $p$ be nonnegative integers 
and $\mathcal{R} \subset \qq_{>0}$ be a finite set. Let $N \coloneqq N(D(\mathcal{R}),d-1,c,p)$ be the integer provided by Theorem~\ref{introthm:Fano-coreg-c}$(d-1,c)$. Assume that $N$ is divisible by $p$ and by  $I_\mathcal{R}$.
Let $\pi \colon  X \rightarrow Z$ be a Fano type morphism, where $X$ is a $d$-dimensional variety.
Let $(X,B,\bM.)$ be a generalized log canonical pair over $Z$ and $z \in Z$ a point satisfying the following conditions:
\begin{itemize}
    \item the generalized pair $(X,B, \bM.)$ has coregularity at most $c$ over $z$;
    \item the divisor $B$ has coefficients in $\mathcal{R}$;
    \item $p\bM.$ is b-Cartier; and 
    \item the divisor $-(K_X+B+\bM X.)$ is nef over $Z$.
\end{itemize}
Assume that there exists $B_1\leq B$ and $\alpha \in (0,1]$ for which:
\begin{itemize}
    \item the generalized pair 
    $(X,B_1,\alpha \bM.)$ is generalized log canonical but it is not generalized klt over $z$; and
    \item the divisor
    $-(K_X+B_1+\alpha \bM X.)$ is big and nef over $Z$.
\end{itemize}
Then, $(X,B,\bM .)$ admits an $N$-complement over $z$.
\end{theorem}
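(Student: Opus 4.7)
The plan is to lift the sought complement from a divisorial generalized lc place $S$ obtained from the hypothesis that $(X,B_1,\alpha\bM.)$ is generalized lc but not generalized klt over $z$. This follows the X-method of Kawamata adapted to the generalized setting in~\cite{Bir19}, refined to track coregularity via Lemma~\ref{lem:fano-coreg-adjunction} and Lemma~\ref{lemma-coeff-adj}.

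First, I would use tie-breaking to arrange for a unique divisorial generalized lc place over $z$. Since $-(K_X+B_1+\alpha\bM X.)$ is big and nef, I can write it as $A+E$ for an ample $\qq$-divisor $A$ and effective $E$, and perturb $B_1$ and $\alpha$ in the direction of $E+tA$ to produce a generalized pair $(X,B_1',\alpha'\bM.)$ with a unique generalized lc place $F$ over $z$. A generalized $\qq$-factorial dlt modification $f \colon Y \to X$ extracting exactly $F$ yields a divisor $S\subset Y$ appearing with coefficient one in the boundary $B_Y$ (the log pullback of $(X,B,\bM.)$). By Lemma~\ref{lem:complements-and-dlt-mod}, $Y$ is still of Fano type over $Z$, $-(K_Y+B_Y+\bM Y.)$ remains nef over $Z$, and producing an $N$-complement on $Y$ suffices.

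Next, generalized adjunction from $(Y,B_Y,\bM.)$ to $S$ gives a pair $(S,B_S,\bN.)$. By Lemma~\ref{lemma-coeff-adj} its coefficients lie in $D(\mathcal{R})$, $p\bN.$ is b-Cartier, and by Lemma~\ref{lem:fano-coreg-adjunction} it has coregularity at most $c$; moreover $S$ is of Fano type over its image in $Z$. Thus Theorem~\ref{introthm:Fano-coreg-c}$(d-1,c)$ furnishes an $N$-complement $B_S^+\geq B_S$ of $(S,B_S,\bN.)$ over the image of $z$, giving a nowhere vanishing section of $\O S.(N(K_S+B_S^++\bN S.))$ near $z$.

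Finally, I would lift this section via the standard exact sequence
\[
0 \to \O Y.(N(K_Y+B_Y^++\bM Y.)-S) \to \O Y.(N(K_Y+B_Y^++\bM Y.)) \to \O S.(N(K_S+B_S^++\bN S.)) \to 0,
\]
where $B_Y^+$ is an extension of $B_S^+$ chosen using the divisibility $I_\mathcal{R}\mid N$ and $p\mid N$ so that the restriction to $S$ matches $B_S^+$. The key computation is
\[
N(K_Y+B_Y^++\bM Y.)-S-K_Y \sim_\qq (B_Y^+-S)+\bM Y.,
\]
which, after absorbing the big and nef divisor $-(K_Y+B_{1,Y}+\alpha\bM Y.)$ obtained from Step~1, can be expressed as an ample $\qq$-divisor plus a klt fractional boundary away from $z$. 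Relative Kawamata–Viehweg vanishing then gives $R^1\pi_*$ of the left-hand sheaf vanishes in a neighborhood of $z$, so the section lifts and produces an $N$-complement on $Y$, which descends to $X$ via Lemma~\ref{lem:complements-and-K-positive}.

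The main obstacle will be Step~3: constructing $B_Y^+$ so that its restriction to $S$ agrees exactly with $B_S^+$ on the nose (not merely $\qq$-linearly) at the level of the sheaf $\O S.(N(K_S+B_S^++\bN S.))$, and simultaneously guaranteeing that $N(K_Y+B_Y^++\bM Y.)-S$ fits into the Kawamata–Viehweg framework. The bigness-plus-nefness input of Step~1 is precisely what provides the positivity needed for vanishing, while the assumption that $N$ is divisible by $p$ and $I_\mathcal{R}$ is what makes the necessary Cartier/integral statements valid when pulled back and restricted to $S$.
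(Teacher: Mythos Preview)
Your overall strategy matches the paper's: isolate a divisorial generalized lc place via tie-breaking, restrict, produce an $N$-complement on the divisor by induction, and lift via Kawamata--Viehweg. The paper organizes this as a reduction to an auxiliary Proposition~\ref{prop:lifting-from-divisor} (where the hypothesis is upgraded to a \emph{plt} pair $(X,\Gamma,\alpha\bM.)$ with $-(K_X+\Gamma+\alpha\bM X.)$ \emph{ample}, not merely big and nef), and the tie-breaking you sketch in one line occupies three steps there, including an MMP over the ample model of $-(K_X+B_1+\alpha\bM X.)$ and a case split on whether $\Supp E$ contains lc centers and whether the perturbed boundary $\Theta$ has nonzero round-down. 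This is more than cosmetic: the plt/ample upgrade is what makes $S\to\pi(S)$ a contraction (Step~2 of Proposition~\ref{prop:lifting-from-divisor}) and what supplies the positivity in the vanishing argument.

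The genuine gap is your lifting step. You cannot write the exact sequence with $B_Y^+$ already chosen: $B_Y^+$ is the \emph{output} of the lift, not an input. The paper instead works on a log resolution $X'\to X$ and introduces the integral divisor $T_{X'}=N\Omega_{X'}-\lfloor(N+1)\Omega_{X'}\rfloor-N(K_{X'}+B_{X'}+\bM X'.)$ and an exceptional correction $P_{X'}$; one then writes $T_{X'}+P_{X'}-S'=K_{X'}+(J_{X'}-S')+\bigl(-(K_{X'}+\Gamma_{X'}+\alpha\bM X'.)-N(K_{X'}+B_{X'}+\bM X'.)+\alpha\bM X'.\bigr)$ where $(X',J_{X'}-S')$ is klt and the bracketed term is big and nef, so vanishing gives surjectivity $H^0(T_{X'}+P_{X'})\twoheadrightarrow H^0((T_{X'}+P_{X'})|_{S'})$. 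An effective member $G_{S'}$ of the restricted system is built from $B_S^+$, lifted to $G_{X'}$, and only then is $B^+\coloneqq B+\tfrac{1}{N}f_*G_{X'}$ defined. Your displayed computation $N(K_Y+B_Y^++\bM Y.)-S-K_Y\sim_\qq(B_Y^+-S)+\bM Y.$ does not produce a klt-plus-nef-big decomposition; the plt pair $\Gamma$ and the rounding correction $\lfloor(N+1)\Omega\rfloor$ are essential and missing from your setup.
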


In order to prove the main theorem of this section, we take inspiration from \cite[\S~6.6]{Bir19}.
In particular, we will first prove a weaker statement. 

\begin{proposition}\label{prop:lifting-from-divisor}
Let $d$, $c$ and $p$ be nonnegative integers
and $\mathcal{R} \subset \qq_{>0}$ be a finite set.
Let $N \coloneqq N(D(\mathcal{R}),d-1,c,p)$ be the integer provided by Theorem~\ref{introthm:Fano-coreg-c}$(d-1,c)$. Assume that $N$ is divisible by $p$ and by  $I_\mathcal{R}$.
Let $\pi \colon  X \rightarrow Z$ be a Fano type morphism, where $X$ is a $d$-dimensional variety.
Let $(X,B,\bM.)$ be a $\qq$-factorial generalized log canonical pair over $Z$ and $z \in Z$ a point satisfying the following conditions:
\begin{itemize}
    \item the generalized pair $(X,B, \bM.)$ has coregularity at most $c$ over $z$;
    \item the divisor $B$ has coefficients in $\mathcal{R}$;
    \item $p\bM.$ is b-Cartier; and
    \item the divisor $-(K_X+B+\bM X.)$ is nef over $Z$.
\end{itemize}
Assume there exists a boundary $\Gamma$ on $X$ and $\alpha \in (0,1)$ for which:
\begin{itemize}
    \item the generalized pair
    $(X,\Gamma,\alpha \bM .)$ is generalized plt over $z$;
    \item we have that $S=\lfloor \Gamma \rfloor \subset \lfloor B \rfloor$ intersects the fiber over $z$; and 
    \item the divisor 
    $-(K_X+\Gamma+\alpha \bM X.)$ is ample over $Z$.
\end{itemize}
Then, $(X,B,\bM.)$ admits an $N$-complement over $z$.
\end{proposition}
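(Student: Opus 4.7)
The plan is to reduce the construction of an $N$-complement for $(X,B,\bM.)$ to the construction of one on the prime divisor $S=\lfloor\Gamma\rfloor$, and then lift it via relative Kawamata--Viehweg vanishing. First I would perform generalized divisorial adjunction of $(X,B,\bM.)$ to $S$, obtaining a generalized pair $(S,B_S,\bN.)$ with $(K_X+B+\bM X.)|_S \sim K_S+B_S+\bN S.$. By Lemma~\ref{lemma-coeff-adj} the coefficients of $B_S$ lie in $D(\mathcal{R})$ and $p\bN.$ is b-Cartier, and by Lemma~\ref{lem:fano-coreg-adjunction} the generalized pair $(S,B_S,\bN.)$ has coregularity at most $c$ above $\pi(z)$. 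Moreover, adjoining the gplt pair $(X,\Gamma,\alpha\bM.)$ to $S$ yields a generalized klt pair on $S$ whose anticanonical divisor is ample over $\pi(S)$, so the induced morphism $S\to\pi(S)$ is of Fano type.

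Next, I would apply Theorem~\ref{introthm:Fano-coreg-c}$(d-1,c)$ to $(S,B_S,\bN.)\to\pi(S)$ over $\pi(z)$, producing a boundary $B_S^+\geq B_S$ such that $(S,B_S^+,\bN.)$ is generalized log canonical and $N(K_S+B_S^++\bN S.)\sim 0$ in a neighborhood of $\pi(z)$. The divisibility of $N$ by $p$ and $I_{\mathcal R}$ ensures that $D\coloneqq -N(K_X+B+\bM X.)$ is an integral Weil divisor, and we have the restriction sequence
\[
0 \to \mathcal{O}_X(D-S) \to \mathcal{O}_X(D) \to \mathcal{O}_S(D|_S) \to 0.
\]
Since $S=\lfloor\Gamma\rfloor$, we have $\Gamma-S=\{\Gamma\}$, and the identity
\[
D-S-K_X \;=\; -(K_X+\Gamma+\alpha\bM X.) + \{\Gamma\} + \alpha\bM X. - N(K_X+B+\bM X.)
\]
exhibits $D-S-K_X$ as a sum of an ample, an effective, and two nef divisors over $Z$, hence ample over $Z$. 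As $X$ is klt (being of Fano type over $Z$), relative Kawamata--Viehweg vanishing gives $R^1\pi_*\mathcal{O}_X(D-S)=0$ near $z$. Thus, the effective section of $\mathcal{O}_S(D|_S)$ corresponding to $N(B_S^+-B_S)$ lifts to an effective divisor $M\sim D$ on $X$ with $M|_S=N(B_S^+-B_S)$.

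Setting $B^+\coloneqq B+M/N$ produces $N(K_X+B^++\bM X.)\sim 0$ over $z$ with $B^+\geq B$, and by construction generalized adjunction of $(X,B^+,\bM.)$ to $S$ recovers $(S,B_S^+,\bN.)$. Generalized inversion of adjunction then transports the generalized log canonical property from $S$ to a neighborhood of $S$ in $X$. The main obstacle is the final step: ruling out spurious generalized non-klt centers of $(X,B^+,\bM.)$ supported in $M-M|_S$ over a full neighborhood of the fiber $\pi^{-1}(z)$, rather than merely near $S$. This will be addressed by combining inversion of adjunction with the connectedness principle applied to the image of the generalized non-klt locus under $\pi$, together with the nefness of $-(K_X+B+\bM X.)$ over $Z$, in the spirit of the arguments of~\cite{Bir19}.
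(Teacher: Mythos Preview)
Your overall strategy---adjoin to $S$, construct an $N$-complement there by induction, then lift via Kawamata--Viehweg vanishing and conclude with inversion of adjunction and connectedness---is exactly the paper's strategy.  However, the execution of the vanishing step on $X$ itself has a genuine gap.

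You claim that
\[
D-S-K_X \;=\; -(K_X+\Gamma+\alpha\bM X.) \;+\; \{\Gamma\} \;+\; \alpha\bM X. \;-\; N(K_X+B+\bM X.)
\]
is ample over $Z$ because it is ``ample $+$ effective $+$ nef $+$ nef''.  Two things fail here.  First, ample plus effective is in general only big, not ample; so even granting the nefness claims this does not give ampleness.  Second, and more seriously, $\alpha\bM X.$ is \emph{not} nef on $X$: the b-divisor $\bM.$ is nef only on a model $X'\to X$ where it descends, and its trace $\bM X.$ is merely the push-forward of a nef divisor, hence only pseudo-effective.  Consequently you cannot apply Kawamata--Viehweg vanishing in the form ``$X$ klt, $L-K_X$ ample'' to $L=D-S$, nor can you absorb $\alpha\bM X.$ into a klt boundary, since it need not be effective.

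The paper resolves this by passing to a log resolution $f\colon X'\to X$ on which $\bM.$ descends and $\bM X'.$ is genuinely nef.  There one introduces auxiliary divisors $\Omega_{X'}=B_{X'}-\lfloor B_{X'}^{\ge0}\rfloor$, $T_{X'}$, and an exceptional correction $P_{X'}$ so that $T_{X'}+P_{X'}-S'=K_{X'}+(J_{X'}-S')+[\text{nef and big over }Z]$ with $(X',J_{X'}-S')$ honestly klt; now vanishing applies.  Working on the smooth $X'$ also makes the restriction sequence automatically exact and allows one to track precisely that the lifted divisor pushes forward to an effective $G$ on $X$ whose restriction to $S$ equals $N(B_S^+-B_S)$ after adjunction---a compatibility you asserted but which is not automatic on singular $X$.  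Your final paragraph (inversion of adjunction plus connectedness to rule out non-klt centers away from $S$) is correct and matches the paper's Step~9.
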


\begin{proof}
We will proceed by induction on the dimension, keeping the coregularity constant.
Over several steps, we will lift a complement from a divisor.
Since the statement is local over $z \in Z$, in the course of the proof we are free to shrink $Z$ around $z$.
In particular, all linear equivalences that are relative to $Z$ can be assumed to hold globally.
We add the fractions with denominator $p$ to the set $\mathcal{R}$.
This does not change the value of $N$, hence proving the statement for this new finite set is the same as proving it for the original $\mathcal{R}$.\\

\noindent\textit{Step 1.}
In this step, we reduce to the case where $(X,B,\bM.)$ is $\qq$-factorial generalized dlt.\\

Let $(X',B', \bM.)$ be a $\qq$-factorial dlt modification of $(X,B,\bM.)$.
Pick $E$ exceptional such that $-E$ is ample over $X$.
Notice that the existence of $E$ is guaranteed by the hypothesis that $X$ is $\mathbb Q$-factorial.
Also, let $(X',\Gamma',\alpha \bM.)$ be the trace of $(X,\Gamma,\bM.)$ on $X'$.
We observe that $\Gamma'$ may no longer be effective.

Since $(X,\Gamma,\alpha \bM.)$ is generalized plt with $\lfloor \Gamma \rfloor \subset \lfloor B \rfloor$ and $X' \rar X$ only extracts divisor that appear with coeffcient 1 in $B'$, for $0 < \lambda \ll 1$, the datum of $(X',(1-\lambda)B'+\lambda \Gamma',(1-\lambda + \lambda \alpha)\bM.)$ is actually a generalized pair (i.e., its boundary is effective) and it is generalized plt with $1-\lambda + \lambda \alpha \in (0,1)$.
Furthermore,
$$
-(\K X'. + (1-\lambda)B'+\lambda \Gamma' + (1-\lambda + \lambda \alpha)\bM X'.)
$$
is the pull-back of a divisor on $X$ that is relatively ample over $Z$.
Thus, for $\varepsilon >0$ small enough, we have that
$$
-(\K X'. + (1-\lambda)B'+\lambda \Gamma' + \varepsilon E + (1-\lambda + \lambda \alpha)\bM X'.)
$$
is ample over $Z$.
Hence, up to replacing $(X,B,\bM.)$ with $(X',B',\bM.)$ and $(X,\Gamma,\alpha \bM.)$ with $(X',(1-\lambda)B'+\lambda \Gamma' + \varepsilon E,(1-\lambda + \lambda \alpha)\bM.)$
we can assume that $X$ is $\qq$-factorial and $(X,B, \bM .)$ is generalized dlt.\\

\noindent\textit{Step 2.}
In this step, we prove that $S \rightarrow \pi(S)$ is a contraction.\\

As $\alpha \bM X.$ is the push-forward of a divisor that is nef over $Z$, its diminished base locus does not contain any divisor.
Let $\pi: X' \rightarrow X$ be a model where $\bM .$ descends. Let $K_{X'}+\Gamma'+\alpha\bM X'.$ be the crepant pullback of $K_X +\Gamma + \alpha\bM X.$ (this $\Gamma'$ is different from the one defined in step 1). For any $0<\delta< 1 $, we can write
\[(1-\delta)(K_{X'}+\Gamma' +\alpha \bM X'.)=K_{X'}+\Gamma' +(\alpha \bM X'.-\delta (K_{X'}+\Gamma' +\alpha \bM X'.)).\] 

As $\alpha \bM X'.$ is nef and $-(K_{X'}+\Gamma' +\alpha \bM X'.)$ is big and nef over $Z$, by \cite[Example 2.2.19]{Laz04a} there exists an effective divisor $E'$ such that $\alpha \bM X'.-\delta (K_{X'}+\Gamma' +\alpha \bM X'.) \sim_{\qq} A_{k}'+\frac{1}{k}E'$, for all positive integers $k$, where each $A_k'$ is ample over $Z$.

So, we can write $(1-\delta )(K_{X'}+\Gamma' +\alpha \bM X'.)\sim_\qq K_{X'}+\Gamma'+A_k'+\frac{1}{k}E' $.
If we choose $k$ large enough and $A_k'$ generically, then the sub-pair $(X',\Gamma'+A_k'+\frac{1}{k}E')$ is sub-plt. With those choices fixed, we define $A=\pi_\ast A_k'$, $E=\frac{1}{k}\pi_\ast E'$. Therefore,
\[(1-\delta)(K_{X}+\Gamma +\alpha \bM X.)\sim _\qq K_X +\Gamma +A+E,\]
with $(X,A+E+\Gamma)$ being plt.
Call $G \coloneqq A+E+\Gamma$. For $\delta$ small enough, we have that $-(K_X+G)$ is ample over $Z$ and $\lfloor G \rfloor = S$. From the exact sequence
$$
0 \rightarrow \mathcal{O}_X (-S) \rightarrow \mathcal{O}_X\rightarrow \mathcal{O}_S \rightarrow 0,
$$
we get the exact sequence
$$
\pi_* \mathcal{O}_X\rightarrow \pi_* \mathcal{O}_S \rightarrow R^1 \pi_* \mathcal{O}_X(-S).
$$

Since $-S= K_X+G -S -(K_X +G) $, with $(X,G-S)$ being klt and $-(K_X+G)$ being ample over $Z$, we have that $R^1 \pi_* \mathcal{O}_X(-S)=0$ by the relative Kawamata--Viehweg vanishing theorem. Therefore $\pi_* \mathcal{O}_X \rightarrow \pi_*\mathcal{O}_S$ is surjective.

Let $g \circ \pi' \colon S \rightarrow Z' \rightarrow Z$ be the Stein factorization of $\pi \colon S \rightarrow Z$. Then 
$\mathcal{O}_Z=\pi_* (\mathcal{O}_X)\rightarrow \pi_* \mathcal{O}_S=g_*\mathcal{O}_{Z'}$ is surjective.
As $\mathcal{O}_Z \rightarrow g_*\mathcal{O}_{Z'}$ factors as $\mathcal{O}_Z \rightarrow \mathcal{O}_{\pi(S)}\rightarrow g_*\mathcal{O}_{Z'}$, the morphism $\mathcal{O}_{\pi(S)} \rightarrow g_*(\mathcal{O}_{Z'})$ is surjective.
Then, it is an isomorphism, as the map $Z' \rightarrow Z$ is finite.
Hence $Z' \rightarrow \pi(S)$ is an isomorphism and $S \rightarrow \pi(S)$ is a contraction.
Restricting $K_X+G$ to $S$ shows that $S$ is of Fano type over $\pi(S)$.\\

\noindent \textit{Step 3.}
In this step, we use adjunction and consider a complement on $S$.\\

Consider a log resolution $f \colon X' \rightarrow X$ of $(X,B, \bM.)$ such that $\bM.$ descends on $X'$ and write $K_{X'}+B_{X'} \coloneqq f ^ *(K_X+B)$.
Let $S'$ be the strict transform of  $S$ and $g \colon S' \rightarrow S$ be the induced morphism. Let $(S, B_S, \bN .)$ be the generalized pair obtained by adjunction of $(X,B,\bM.)$ to $S$. By Lemma~\ref{lemma-coeff-adj},
the coefficients of $B_S$ are in $D(\mathcal{R})$ and the b-divisor $p\bN.$ is b-Cartier.
By Lemma~\ref{lem:fano-coreg-adjunction},
the coregularity of $(S,B_S,\bN.)$ is at most $c$. By Theorem~\ref{introthm:Fano-coreg-c}$(d-1,c)$ if $\dim \pi(S) =0$ or by the inductive hypothesis if $\dim \pi(S) >0$, the divisor $K_S+B_S+\bM S.$ has an $N$ complement $B_{S}^{+}$ over $z$ with coregularity at most $c$.
In the following steps, we will lift  $B_{S}^{+}$ to an $N$-complement $B_X^{+}$ of $K_X+B+\bM X.$ over $z$ with coregularity at most $c$.\\

\noindent \textit{Step 4.}
In this step, we introduce some divisors and prove some properties of these divisors.\\

Define $\Omega_{X'} \coloneqq  B_{X'} - \lfloor B ^{\geq 0}_{X'}\rfloor$ and $T_{X'} \coloneqq N \Omega_{X'}- \lfloor (N+1)\Omega_{X'} \rfloor -N(K_{X'}+B_{X'}+ \bM X'.)$.
We write $K_{X'} +\Gamma_{X'} \coloneqq  f^*(K_X+\Gamma)$. Now, we define a divisor $P_{X'}$ in the following way. For any prime divisor $D_{X'} \neq S'$, we set
$\coeff_{D_{X'}}(P_{X'})=-\coeff_{D_{X'}}\lfloor \Gamma_{X'}+N\Omega_{X'}-\lfloor (N+1)\Omega_{X'} \rfloor\rfloor$
and $\coeff_{S'}(P_{X'})=0$.
Hence, $P_{X'}$ is an integral divisor such that $J_{X'} \coloneqq \Gamma_{X'}+N\Omega_{X'}-\lfloor(N+1)\Omega_{X'} \rfloor+P_{X'}$ is a boundary, $(X',J_{X'}, \alpha \bM X'.)$ is generalized plt, and $\lfloor J_{X'} \rfloor=S'$.
For $D_{X'}\neq S'$ not exceptional over $X$, as $NB$ is integral,
we have that $\coeff_{D_{X'}}(N\Omega_{X'})$ is an integer. Thus, $\coeff_{D_{X'}}\lfloor (N+1)\Omega_{X'}\rfloor= \coeff_{D_{X'}} (N\Omega_{X'})$. So, $\coeff_{D_{X'}}(P_{X'})=-\coeff_{D_{X'}}(\Gamma_{X'})=0.$ We conclude that $P_{X'}$ is exceptional over $X$.\\

\noindent\textit{Step 5.}
In this step, we lift sections from $S'$ to $X'$ using Kawamata--Viehweg vanishing.\\

Observe that:
\begin{align*}
    T_{X'}+ P_{X'} &= N \Omega_{X'} - \lfloor (N+1)\Omega_{X'} \rfloor -N (K_{X'}+B_{X'}+\bM X'.) +P_{X'}\\
    &=  K_{X'} +\Gamma_{X'}-(K_{X'} +\Gamma_{X'}) +N \Omega_{X'} - \lfloor (N+1)\Omega_{X'} \rfloor -N (K_{X'}+B_{X'}+ \bM X'.) +P_{X'}\\
    &= K_{X'} +J_{X'} -(K_{X'}+\Gamma_{X'})-N(K_{X'}+B_{X'}+ \bM X'.). 
\end{align*}
Then, we have that $-(K_{X'}+\Gamma_{X'}+\alpha \bM X'.)-N(K_{X'}+B_{X'}+\bM X'.)+\alpha \bM X'.$ is big and nef over $Z$ and $(X',J_{X'}-S')$ is klt. Therefore, up to shrinking $Z$ around $z$, the relative Kawamata--Viehweg vanishing theorem implies that $h^1(X',\O X'.(T_{X'}+P_{X'}-S'))=0$.
So, we obtain 
$$ H^0(X',\O X'.(T_{X'}+P_{X'})) \rightarrow H^0(X',\O X'.((T_{X'}+P_{X'} )\mid_{S'})) \rightarrow H^1(X',\O X'.(T_{X'}+P_{X'}-S')) =0. $$
This means that we can lift sections of $(T_{X'}+P_{X'})\mid _{S'}$ from $S'$ to $X'$.\\

\noindent \textit{Step 6.}
In this step, we introduce a divisor $G_{S'}$ 
which is linearly equivalent
to $(T_{X'}+P_{X'})|_{S'}$.\\

We have 
$-N(K_S +B_S+\bN S.) = -N(K_S+B^+_S+B_S-B^+_S +\bN S.)\sim -N(B_S-B^+_S)=N(B^+_S-B_S) \geq 0$.
Define 
$K_{S'}+B_{S'}+\bN S'. \coloneqq  g^*(K_S+B_S+\bN S.)$.
Then, we have that 
$-N(K_{S'}+B_{S'}+\bN S'.)\sim Ng^*( B^+_S-B_S) \geq 0$.
Then, it follows that $-N(K_{X'}+B_{X'}+\bM X'.)\mid_{S'}= -N(K_{S'}
+B_{S'}+\bN S'.)\sim N g^*(B^+_S-B_S)$.
We define $G_{S'} \coloneqq   N g^*(B^+_S-B_S)+N \Omega_{X'}\mid_{S'} - \lfloor (N+1)\Omega_{X'}\mid_{S'} \rfloor +P_{X'}\mid_{S'}$.
By definition, we have 
$G_{S'} \sim (T_{X'}+P_{X'})\mid_{S'}$.\\

\noindent\textit{Step 7.} 
In this step, we prove that $G_{S'}$ is effective and that it lifts to an
effective divisor $G_{X'}$ on $X'$.\\

Assume $G_{S'}$ is not effective, then there exists some  prime divisor $C_{S'}$ with $\coeff_{C_{S'}}(G_{S'})<0$. As $N g^*(B^+_S-B_S)$ and $P_{X'}$ are effective, we must have that $\coeff _{C_{S'}}(N \Omega _{X'}\mid_{S'} - \lfloor (N+1)\Omega_{X'}\mid_{S'} \rfloor)$ is negative.
Since $X'$ is a log resolution, we have that the restriction to $S'$ commutes with taking the integral (resp. fractional) part of a divisor whose support is involved in the log resolution.
In particular, observe that 
\[
\coeff _{C_{S'}}(N \Omega_{X'}\mid_{S'} - \lfloor (N+1)\Omega_{X'}\mid_{S'} \rfloor)=\coeff _{C_{S'}}(-\Omega _{X'}\mid_{S'} + \{ (N+1)\Omega_{X'}\mid_{S'} \}) \geq -\coeff_{C_{S'}}(\Omega_{X'} \mid_{S'}) >-1.
\]
As $G_{S'}$ is integral, by the previous inequality its coefficients cannot be negative.
Therefore, by Step 4, we can lift $G_{S'}$ to an effective divisor $G_{X'} \sim T_{X'}+P_{X'}$ with support not containing $S'$ and such that $G_{X'}\mid_{S'} =G_{S'}$.\\

\noindent\textit{Step 8.}
In this step, 
we introduce a divisor $B^+\geq B$
for which 
$NB^+ \sim -N(K_X+\bM X.)$.\\

Since $NB$ is integral, $\lfloor (N+1)\Omega \rfloor= N\Omega$, where $\Omega$ is the push-forward of $\Omega_{X'}$.
Similarly, we call $T$, $P$, and $G$ the push-forwards of $T_{X'}$, $P_{X'}$, and $G_{X'}$, respectively.
We have that $P=0$ as $P_{X'}$ is exceptional and therefore $T=T+P \sim G$.
Hence, we have that $-N(K_X+B+\bM X.)=T=T+P\sim G \geq 0$.
Therefore $N(K_X+B^+ + \bM X.)\sim 0$, where we define $B^+ \coloneqq  B+\frac{1}{N}G$.
\\

\noindent\textit{Step 9.} In this step, we prove that $(X,B^+, \bM.)$ is generalized log canonical over some neighbourhood of $z$, thus proving that $B^+$ is an $N$-complement for $(X,B,\bM.)$ over $z$. \\

We first prove that $\frac{1}{N}G \mid _S =B^+_S-B_S$.
Note that we have the following chain of $\qq$-linear equivalences:
\[
    R_{X'} \coloneqq G_{X'}-P_{X'} + \lfloor(N+1)\Omega_{X'} \rfloor - N\Omega_{X'} \sim T_{X'}+ \lfloor(N+1)\Omega_{X'} \rfloor - N\Omega_{X'}= -N f^*(K_X+B+\bM X.) \sim_{\qq} 0 / X.
\]
Since $N\Omega$ is integral, we have that $\lfloor(N+1)\Omega \rfloor=N\Omega$. Therefore, as $P_{X'}$ is $f$-exceptional, $f_{*}(R_{X'})=G$ and $R_{X'}$ is the pull-back of $G$.
Observe that
\[
Ng^*(B^+_S-B_S)=G_{S'}-P_{S'} + \lfloor(N+1)\Omega_{X'}\mid_{S'} \rfloor - N\Omega_{X'}\mid_{S'}= (G_{X'}-P_{X'}+\lfloor(N+1)\Omega_{X'} \rfloor - N\Omega_{X'})\mid_{S'}= R_{X'} \mid_{S'}.
\]
Therefore $g^*(B^+_S-B_S)=\frac{1}{N} R_{X'}\mid_{S'}=g^*(\frac{1}{N}G\mid_{S})$, implying that $B^+_S-B_S=\frac{1}{N}G\mid_{S}.$

We now have that $K_S +B_S^++\bN S.= K_S+B_S+B_S^+-B_S+\bN S.=(K_X+B+\frac{1}{N}R+\bM X.)\mid_S=(K_X+B^++\bM X.)\mid_S$.
By inversion of adjunction, $(X,B^+, \bM .)$ is generalized log canonical near $S$.
Moreover, it has coregularity $c$ by~\cite[Lemma 2.30]{FMP22}.

If $(X,B^+, \bM.)$ is not generalized log canonical near the fiber over $z$, then $(X,aB^++(1-a)\Gamma),\bM. )$ is also not generalized log canonical near the fiber over $z$ for $a<1$ close enough to 1. The generalized pair $(X,B^+,\bM.)$ is generalized log canonical near $S$, therefore a component of the generalized non-klt locus of $(X,aB^++(1-a)\Gamma),\bM. )$ is not near $S$. But $S$ is also a component of the generalized log canonical locus. Hence the generalized non-klt locus of $(X,aB^++(1-a)\Gamma),\bM. )$ is disconnected near the fiber over $z$. This is a contradiction as $-(K_X+aB^++(1-a)\Gamma+\bM.)=-a(K_X+B^++\bM.)-(1-a)(K_X+\Gamma+\bM.)$, is big and nef over $Z$, so the connectedness principle can be applied (see, e.g.,~\cite[Lemma 2.14]{Bir19}). Therefore $(X,B^+, \bM.)$ is generalized log canonical near the fiber over $z$. 
\end{proof}

\begin{proof}[Proof of Theorem~\ref{thm:lifting-from-divisor}]

We will proceed in several steps to reduce to Proposition~\ref{prop:lifting-from-divisor}.
Without loss of generality, we may replace $X$ with a small $\qq$-factorial modification.
\\

\noindent \textit{Step 1.} In this step, we define a boundary divisor $B_2 \leq B_1$ and reduce to the case where $-(K_X+B_2+\alpha b \bM X.)$ is big and nef for some $b \in (0,1)$.
\\

For any $0 < a < 1$, we have that 
\[a( K_X+B+\bM X.)+(1-a)(K_X+B_1+\alpha \bM X.)=K_X+(aB+(1-a)B_1)+(a+(1-a)\alpha) \bM X.\]
is anti-big and anti-nef, hence we can replace $B_1$ by $aB+(1-a)B_1$ and $\alpha$ by $a+(1-a)\alpha$, to obtain $B_1$ with coefficients as close as needed to the coefficients of $B$.

Let $B_2 \coloneqq  b B_1$ for some $b <1$.
Since $X$ is of Fano type over $Z$, $-(K_X+B_1+\alpha \bM X. )$ defines a contraction $X \rar V$ over $Z$.
We run an MMP on $-(K_X+B_2+\alpha b \bM X.)$ over $V$. In the resulting model $-(K_{X'}+B_2'+\alpha b \bM X'.)$ is big and nef over $V$.
By the definition of $V$,
$-(K_{X'}+B_2'+\alpha b \bM X'.)(1-t)-(K_{X'}+B_1'+\alpha \bM X'.)t$ is nef over $Z$ for $t$ close enough to 1, which is equivalent to saying that $-(K_{X'}+B_2'+ \alpha b \bM X'.)$ is nef over $Z$ for $b$ close enough to 1.

By taking $a$ close enough to 1, we have that $K_X+B+\bM X.$ is nonnegative over $V$, hence the MMP we ran is $(K_X+B+\bM X.)$-nonnegative. So, by Lemma~\ref{lem:coreg-nonnegative-contraction} the coregularity of $(X,B, \bM X.)$ remains unchanged.

Then, we can replace $(X,B,\bM.)$ with $(X',B', \bM.)$, $B_1$ with $B_1'$ and $B_2$ with $B_2'$ to have also that $-(K_{X'}+B_2'+\alpha b \bM X'.)$ is big and nef over $Z$.
As $-(K_X+B_1+\alpha \bM X.)$ is big and nef over $Z$, we have that there is $A$ ample and $E$ effective, such that 
$-(K_X+B_1+\alpha\bM X.) \sim_{\qq,Z} A+E$.
\\

\noindent \textit{Step 2.} In this step, we separate into cases depending on whether the generalized log canonical centers of $(X,B_1,\alpha\bM.)$ are contained in the support of $E$.\\

We can take a generalized dlt modification of $(X,B_1, \alpha \bM X.)$, so we can assume that $(X,B_1, \alpha \bM X.)$ is generalized dlt.
If $\Supp{E}$ contains no generalized log canonical center of $(X,B_1,\alpha \bM.)$, then $(X,B_1+\varepsilon E,\alpha \bM.)$ is generalized dlt for $\varepsilon>0$ small enough.

We have that $-(K_X+B_1+\varepsilon E+\alpha\bM X.) \sim_{\qq.Z}(1-\varepsilon)( \frac{\varepsilon}{1-\varepsilon}A+-(K_X+B_1+\alpha \bM X.))$ is ample over $Z$. 
Hence by altering the coefficients of $B_1+\varepsilon E$, we can produce a divisor $\Gamma$ that lets us conclude by Proposition~\ref{prop:lifting-from-divisor}.\\

If $\Supp{E}$ does contain some generalized log canonical center of $(X,B_1,\alpha \bM.)$, 
then for $0<r<1$ we define $B_r \coloneqq rB_1+(1-r)B_2$ and $\alpha_r \coloneqq (r+b(1-r))\alpha$.
Then, we define $t_r$ to be the generalized log canonical threshold of $E+B_1-B_r$ with respect to $(X,B_r,\alpha_r \bM .)$ over $z$.
Since $X$ is of Fano type, it is klt.
Furthermore, since $(X,B,\bM.)$ is generalized log canonical and $0 <b,r < 1$, it follows that $(X,B_r,\alpha_r \bM.)$ is generalized klt.
In particular, we have $t_r >0$.
We have 
\begin{align*}
    -(K_X+B_r+t_r(E+B_1-B_r)+ \alpha_r \bM X.)&=-(K_X+B_1+\alpha\bM X.)+B_1-B_r-t(E+B_1-B_r)+ (\alpha-\alpha_r) \bM X .\\
    &\sim_{\rr,Z} A+E +(1-t_r)(B_1-B_r)-tE+(\alpha-\alpha_r) \bM X.\\
    &=t_rA+(1-t_r)(A+E+(B_1-B_r))+ (\alpha-\alpha_r) \bM X.\\
    &\sim_{\rr,Z} t_rA -(1-t_r)(K_X+B_r+\alpha \bM X.)+ (\alpha-\alpha_r)\bM X.\\
    &\sim_{\rr,Z} t_rA -(1-t_r)(K_X+B_r+\alpha_r \bM X.)+ t_r(\alpha-\alpha_r)\bM X.\\
    &=t_r(A+(\alpha-\alpha_r) \bM X.)-(1-t_r)(K_X+B_r+\alpha_r \bM X.).
\end{align*}
If we pick $r$ close enough to 1, then $\alpha-\alpha_r$ tends to $0$, hence $A-(\alpha-\alpha_r) \bM X.$ is ample over $Z$ for $r$ close enough to 1. 
Fixing such an $r$, it follows that $-(K_X+B_r+t_r(E+B_1-B_r)+ \alpha_r \bM X.)$ is ample over $Z$.
\\ 

\noindent\textit{Step 3.} In this step, we separate into cases according to the round-down of the divisor 
$\Theta \coloneqq  B_r+t_r(E+B_1-B_r)$.\\

If we have $\lfloor \Theta \rfloor =0$, then we let $(X',\Theta',\alpha_r \bM .)$ be a dlt modification of $(X,\Theta,\alpha_r \bM .)$. We can assume that every component of $\lfloor \Theta' \rfloor$ intersects the fiber over $z$, after shrinking $Z$.
Furthermore, since $(X,B_r,\alpha_r \bM.)$ is generalized klt, $\lfloor \Theta' \rfloor $ is the exceptional divisor of $X' \rightarrow X$.
An MMP on $K_{X'}+\lfloor \Theta' \rfloor+\alpha b \bM X'.$ over $X$ ends with $X$, as $\lfloor \Theta' \rfloor $ is the exceptional divisor of $X' \rightarrow X$ and $X$ is klt and $\qq$-factorial.
The last step of this MMP would be a divisorial contraction $X''\rightarrow X$ contracting one prime divisor $S''$ with $(X'',S'', \alpha_r\bM X''.)$ generalized plt and $-(K_{X''}+S''+ \alpha_r\bM X''.)$ ample over $X$. Furthermore $S''$ is a component of both $\Theta''$ and $B_1''$, where $K_{X''}+\Theta''$ and $K_{X''}+B_1''$ are the pull-backs of $K_X+\Theta$ and $K_X+B_1$ respectively.

As $-(K_X+\Theta+\alpha_r \bM X.)$ is ample over $Z$ and $-(K_{X''}+S''+\alpha_r \bM X''.)$ is ample over $X$, a linear combination of $S''$ and $\Theta''$ yields $\Gamma''$ such that  $-(K_{X''}+\Gamma''+\alpha_r \bM X''.)$ is ample over $Z$ and $(X'',\Gamma'',\alpha_r \bM X''.)$ is plt with $\lfloor \Gamma'' \rfloor =S''$. We can apply Proposition~\ref{prop:lifting-from-divisor} here.
As an $N$-complement on $X''$ would induce an $N$-complement on $X$, we have reduced to the case where $\lfloor \Theta \rfloor \neq 0$.

It only remains to deal with the case where $\lfloor \Theta \rfloor \neq 0$.
In this case, there is a component $S$ of $\lfloor \Theta \rfloor \leq \lfloor B_1 \rfloor \leq \lfloor B \rfloor$.
On a dlt modification of $(X,S, \alpha_r \bM X.)$, we can perturb the coefficients of $\Theta'$ to get that $\lfloor \Theta'' \rfloor$ is irreducible and $-(K_{X'} +\Gamma+ \alpha_r \bM X.)$ is ample.
Therefore we can obtain the $N$-complement by Proposition~\ref{prop:lifting-from-divisor}, as desired. As an $N$-complement on $X''$ induces an $N$-complement on $X$, we are done.
\end{proof}

\subsection{Relative complements}

In this subsection, we study the existence of complements
in the relative setting. 
The main theorem of this subsection states that 
we can lift complements 
from lower coregularity pairs
when all the generalized log canonical centers
are horizontal over the base.

\begin{theorem}\label{thm:horizontal-relative-comp}
Let $d$, $c$ and $p$ be nonnegative integers and $\Lambda \subset \qq$ be a closed set satisfying the DCC. Assume Theorem~\ref{introthm:Fano-coreg-c}$(c-1)$ holds. There exists a constant $N \coloneqq N(\Lambda,c,p)$ satisfying the following.

Let $\pi \colon  X \rightarrow Z$ be a Fano type morphism, where $X$ is a $d$-dimensional variety, and $\dim Z >0$. Let $(X,B, \bM .)$ be a generalized log canonical pair over $Z$ and $z \in Z$ a point satisfying the following conditions:
\begin{itemize}
    \item the generalized pair $(X,B, \bM.)$ has coregularity at most $c$ over $z$;
    \item the coefficients of $B$ belong to $\Lambda$;
    \item every generalized log canonical center of $(X,B,\bM.)$ dominates $Z$;
    \item $p \bM.$ is b-Cartier; and
    \item the divisor $-(K_X+B+\bM X.)$ is nef over $Z$.
\end{itemize}

Then, there exists an $N$-complement for $(X,B,\bM .)$ over $z$. 

\end{theorem}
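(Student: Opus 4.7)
The plan is to construct the $N$-complement by forcing a vertical generalized log canonical center through $z$, applying the inductive hypothesis in coregularity $c-1$ to this new divisorial center after adjunction, and then lifting the resulting complement back to $X$. As a preliminary reduction, invoke Theorem~\ref{thm:reduction-of-coefficients} to replace $\Lambda$ by a finite subset $\mathcal{R}\subset \overline{\Lambda}$; this step is what allows the subsequent lifting machinery, which requires controlled integrality of coefficients, to apply.

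Since $\dim Z > 0$, pick an effective Cartier divisor $E$ on $Z$ through $z$ and let $t > 0$ be the generalized log canonical threshold of $\pi^*E$ with respect to $(X,B,\bM.)$ over $z$. Then $(X, B+t\pi^*E, \bM.)$ is generalized log canonical over a neighborhood of $z$ and carries a new generalized log canonical place which is vertical over $Z$. Since every generalized log canonical center of the original $(X,B,\bM.)$ dominates $Z$ but the new one does not, the connectedness principle forces this new vertical center to intersect a minimal old horizontal center inside the fiber over $z$, strictly enlarging the maximal-dimension simplex of the dual complex. Hence $\mathrm{coreg}(X, B+t\pi^*E, \bM.) \leq c-1$. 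Passing to a $\qq$-factorial generalized dlt modification and slightly perturbing coefficients back into a controlled finite set enlarging $\mathcal{R}$, one obtains a crepant modification in which $\lfloor B \rfloor$ contains a vertical divisorial component $S$ with $z \in \pi(S)$.

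Using Lemma~\ref{lem:exchange-fiber} together with an MMP for $K_X+B-\epsilon S$ over $Z$, arrange further that $S \to \pi(S) \ni z$ is a Fano type contraction. Generalized divisorial adjunction then yields a generalized pair $(S, B_S, \bN.)$ of dimension at most $d-1$ and, by Lemma~\ref{lem:fano-coreg-adjunction}, of coregularity at most $c-1$, with coefficients in the derived set $D(\mathcal{R})$ and $p\bN.$ b-Cartier. If $\pi(S) = \{z\}$, then Theorem~\ref{introthm:Fano-coreg-c}$(c-1)$ directly furnishes an $N(c-1)$-complement for $(S, B_S, \bN.)$; otherwise, when $\dim \pi(S) \geq 1$, the same conclusion follows by induction on $\dim Z$ applied to the Fano type morphism $S \to \pi(S)$, using that every generalized log canonical center of $(S,B_S,\bN.)$ dominates $\pi(S)$ by construction.

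Finally, one must lift this $N(c-1)$-complement on $S$ to an $N(c-1)$-complement of $(X,B,\bM.)$ over $z$. This lifting is precisely the content of Theorem~\ref{thm:lifting-from-divisor}, whose hypothesis requires a sub-boundary $B_1 \leq B$ and $\alpha \in (0,1]$ with $(X,B_1,\alpha\bM.)$ generalized log canonical but not generalized klt over $z$ and $-(K_X+B_1+\alpha\bM X.)$ big and nef over $Z$. Such a $B_1$ is produced by a standard perturbation combining the nef divisor $-(K_X+B+\bM X.)$ with a relatively ample divisor supplied by the Fano type property of $X \to Z$, while retaining $S$ as a coefficient-one component. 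I expect the main technical obstacle to lie here: one must simultaneously preserve the generalized log canonical center along $S$ and achieve bigness of $-(K_X+B_1+\alpha\bM X.)$ over $Z$, all while keeping $B_1 \leq B$. Once this is in place, Theorem~\ref{thm:lifting-from-divisor} delivers the sought complement with $N = N(D(\mathcal{R}), d-1, c-1, p)$; since we are assuming Theorem~\ref{introthm:Fano-coreg-c}$(c-1)$ in all dimensions, this bound is uniform in $d$ and depends only on $\Lambda$, $c$, and $p$, as required.
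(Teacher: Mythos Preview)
Your overall arc is correct and matches the paper's strategy: reduce to finite coefficients via Theorem~\ref{thm:reduction-of-coefficients}, create a vertical generalized log canonical center by adding $t\pi^*E$ (so the coregularity drops to $c-1$), then invoke Theorem~\ref{thm:lifting-from-divisor}. However, the middle of your proposal diverges from the actual proof in a way that introduces both redundancy and a gap.

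The redundancy: you propose to use Lemma~\ref{lem:exchange-fiber} to make $S\to\pi(S)$ a Fano type contraction, then construct an $N(c-1)$-complement on $S$ directly (via Theorem~\ref{introthm:Fano-coreg-c}$(c-1)$ or induction), and only then invoke Theorem~\ref{thm:lifting-from-divisor}. But Theorem~\ref{thm:lifting-from-divisor} does not take a complement on $S$ as input; it constructs the complement on $S$ internally (Step~3 of Proposition~\ref{prop:lifting-from-divisor}) and lifts it. The paper's proof never appeals to Lemma~\ref{lem:exchange-fiber} here; after the dlt modification it simply runs a $-(K_{X'}+\Omega'+\bM X'.)$-MMP to make this divisor nef and then passes straight to Theorem~\ref{thm:lifting-from-divisor}.

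The gap: your inductive step ``when $\dim\pi(S)\geq 1$, induct on $\dim Z$ applied to $S\to\pi(S)$, using that every generalized log canonical center of $(S,B_S,\bN.)$ dominates $\pi(S)$'' is not justified. The horizontal log canonical centers of $(X,B,\bM.)$, when restricted to $S$, need not dominate $\pi(S)$, so the hypotheses of the theorem may fail for $S\to\pi(S)$. The paper avoids this entirely by using the induction on $\dim X$ built into Proposition~\ref{prop:lifting-from-divisor}, which has no horizontality hypothesis.

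Finally, you correctly flag the construction of $B_1$ as the key point but do not carry it out. The paper's solution is direct: after replacing $(X,B,\bM.)$ so that $\lfloor B\rfloor$ has a vertical component through $z$ and $-(K_X+B+\bM X.)$ is nef over $Z$, define $B_1$ by keeping the vertical part of $B$ intact and scaling the horizontal part by $a<1$; since $-K_X$ is big over $Z$ (Fano type), $-(K_X+B_1+a\bM X.)$ is big over $Z$, and one further MMP over the ample model of $-(K_X+B+\bM X.)$ makes it nef while preserving the vertical log canonical center. This furnishes exactly the $(B_1,\alpha)$ required by Theorem~\ref{thm:lifting-from-divisor}.
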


\begin{proof}
We will proceed in several steps to be able to lift complements using Theorem~\ref{thm:lifting-from-divisor}.\\

\noindent \textit{Step 1.} In this step, we reduce to the case in which the boundary coefficients belong to a finite set, $(X,B, \bM.)$ has coregularity $c-1$ over, and $\lfloor B \rfloor$ has a vertical component intersecting the fiber over $z.$\\

By Theorem~\ref{thm:reduction-of-coefficients}, there exists a finite set $\mathcal{R}$ and a relative pair $(X',B',\bM .)$ with $\coeff(B')\subset \mathcal{R}$, such that if $(X',B',\bM .)$ is $N$-complemented, then so is $(X,B,\bM.)$. So, we can replace our generalized pair $(X,B,\bM.)$ with $(X',B',\bM .)$. Hence, we may assume the coefficients of $B$ belong to the finite set $\mathcal{R}$.
Up to taking a $\qq$-factorial dlt modification, we may further assume that $X$ is $\qq$-factorial.

We pick an effective Cartier divisor $N$ on $Z$ passing through $z$. 
We let $t$ be the generalized log canonical threshold of $q^*N$ with respect to $(X,B, \bM.)$ over $z$.
By the connectedness principle~\cite[Theorem 1.7]{FS23} and the assumption that all the generalized log canonical centers of $(X,B,\bM.)$ dominate $Z$, 
we know that the coregularity of $(X,B+tq^*N,\bM.)$ is at most $c-1$.  

Let  $(X',T', \bM.)$ be a generalized dlt modification of 
$(X,B+tq^*N,\bM.)$ over $z$. Let $B'$ be the strict transform of $B$ on $X'$. Let $\Omega'$ be a boundary such that $B' \leq \Omega' \leq T'$, $\coeff(\Omega')\subset \mathcal{R} $, and some component $S$ of $\lfloor \Omega' \rfloor $ is vertical over $Z$ intersecting the fiber $\pi^{-1}(z)$. Let $\Omega = \pi_\ast \Omega'$.

We run an MMP over $Z$ on $-(K_{X'}+\Omega'+\bM X'.)$. As $-(K_{X'}+\Omega'+\bM X'.)=-(K_{X'}+T'+\bM X'.)+(T'-\Omega'),$ with $-(K_{X'}+T'+\bM'.)$ nef over $Z$ and $(T'-\Omega')$ effective, the MMP ends with a minimal model, which we denote by $X''$.

If $(X'',\Omega'', \bM X''.)$ has an $N$-complement over $z$, then $(X',\Omega', \bM X'.)$ has an $N$-complement over $z$ by  Lemma~\ref{lem:complements-and-K-positive}.
As $B \leq \Omega$, we have that also $(X,B, \bM X .)$ has an $N$-complement by Lemma~\ref{lem:complements-and-dlt-mod}. So, we can replace $(X,B, \bM X .)$ with $(X'',\Omega'', \bM X''.)$, to obtain $\lfloor B \rfloor$  having a component intersecting the fiber over $z$, with $-(K_X+B+\bM X.)$ nef over $Z$.
Notice that, after this reduction, the coregularity has decreased, and it is no longer the case that all generalized log canonical centers dominate $Z$.\\

\noindent \textit{Step 2.}
In this step, we define a divisor $B_1$ which satisfies the hypothesis of Theorem~\ref{thm:lifting-from-divisor}.\\

For any prime divisor $D$ that vertical is over $Z$, we set $\coeff_D(B_1)\coloneqq\coeff_D(B)$. For any prime divisor $D$ horizontal over $Z$, we set $\coeff_D(B_1)\coloneqq\coeff_D(aB)$ for $a<1$, close enough to 1.

As $X$ is of Fano type over $Z$, we have that $-K_X$ is big over $Z$. Therefore
$-(K_X+aB+a \bM X.)=-a(K_X+B+\bM X.)-(1-a)K_X$ and $-(K_X+ B_1+a\bM X.)$ are big over $Z$. The generalized pair $(X,B_1, a\bM.)$ is generalized log canonical as $B_1 \leq B$ and $\lfloor B_1 \rfloor$ contains the same vertical component as $\lfloor B \rfloor$ intersecting the fiber of $z$.\\

\noindent\textit{Step 3.}
In this step, we reduce to the case in which $-(K_X+B_1+\bM X.)$ is big and nef over $Z$.\\

Since $\pi$ is a Fano type morphism and $-(K_X+B+\bM X.)$ is nef over $Z$, $-(K_X+B+\bM X.)$ is semi-ample over $Z$. Let $X \rightarrow V$ over $Z$ be the contraction defined by $-(K_X+B+\bM X.)$. We run an MMP on $-(K_X+B_1+a\bM X.)$ over $V$. In the resulting model  $-(K_X'+B_1'+a\bM X' .)$ is big and nef over $V$. By the definition  of $V$, $-(K_{X'}+B_1'+a\bM X' .)(1-t)-(K_{X'}+B'+\bM X'.)t$ is nef over $Z$ for t close enough to 1, which is equivalent to picking $a$ close enough to 1. We have that $K_X+B+\bM X.$ is trivial  over $V$, hence the MMP is $(K_X+B+\bM X.)$-nonnegative.
Applying Lemma~\ref{lem:coreg-nonnegative-contraction}, the coregularity of $(X,B,\bM .)$ remains unchanged after this MMP.
Thus, we can replace $(X,B,\bM .)$, with $(X',B', \bM.)$ and $B_1$ with $B_1'$ with $a$ close enough to 1.\\

\noindent\textit{Step 4.} In this step, we conclude by applying Theorem~\ref{thm:lifting-from-divisor}.\\

Let $N(\mathcal{R},c-1,p)$ be the positive integer provided by Theorem~\ref{introthm:Fano-coreg-c}$(c-1)$.
Taking $N$ to be the least common multiple of $N(\mathcal{R},c-1,p)$, $I_\mathcal{R}$ and $p$. Hence $N$ depends only on $\Lambda$, $c-1$ and $p$.
Indeed, $\mathcal{R}$ only depends on $\Lambda,c$ and $p$.
By Theorem~\ref{thm:lifting-from-divisor} the generalized pair $(X,B, \bM.)$ admits an $N$-complement,
where $N$ only depends on
$\Lambda$, $c$ and $p$.
\end{proof}

\section{Canonical bundle formula}
\label{sec:cbf}

In this section, we prove a special version of the canonical bundle formula.
We obtain an effective canonical bundle formula that is independent of the dimension of the domain. 
It only depends on the coregularity of the fibers.

\begin{theorem}\label{thm:cbf-and-coreg-induction}
Let $d$, $c$ be nonnegative integers
and $\Lambda\subset \qq$ be a closed set satisfying the descending chain condition.
Assume Theorem~\ref{introthm:Fano-coreg-c}$(c-1)$ holds. 
There exists a set 
$\Omega \coloneqq \Omega(\Lambda,c)\subset \qq$ satisfying the descending chain condition
and a positive integer $q \coloneqq q(\Lambda,c)$ satisfying the following.
Let $\pi \colon X\rightarrow Z$ be a fibration from a $d$-dimensional projective variety $X$ to a projective base $Z$ with $\dim Z > 0$.
Let $(X,B)$ be a log canonical pair satisfying the following conditions:
\begin{itemize}
    \item the fibration $\pi$ is of Fano type over a nonempty open set $U$ of $Z$;
    \item every log canonical center of $(X,B)$ is horizontal over $Z$;
    \item the pair $(X,B)$ is log Calabi--Yau over $Z$;
    \item the coefficients of $B$ are in $\Lambda$; and 
    \item the coregularity of $(X,B)$ is at most $c$.
\end{itemize}
Then, we can write 
\[
q(K_X+B)\sim q\pi^*(K_Z+B_Z+\bN Z.),
\] 
where $(Z,B_Z,\bN.)$ is a generalized log canonical pair such that 
\begin{itemize}
    \item $B_Z$ is the discriminant part of the adjunction for $(X,B)$ over $Z$;
    \item the coefficients of $B_Z$ belong to $\Omega$; and
    \item the divisor $q\bN.$ is b-nef and b-Cartier.
\end{itemize}
\end{theorem}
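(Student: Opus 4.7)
The plan is to combine the existence of bounded relative complements (Theorem~\ref{thm:horizontal-relative-comp}) with an induction on $\dim Z$ via the hyperplane section lemma (Lemma~\ref{lem:hyperplane-section-inductive-step}) to reduce the coefficient analysis to the case of a curve base. After standard reductions---replacing $(X,B)$ by a $\qq$-factorial dlt modification and invoking \cite[Theorem 2]{FMP22} together with Theorem~\ref{thm:reduction-of-coefficients}---we may assume that $\Lambda$ is a finite set of rationals. Applying Theorem~\ref{thm:horizontal-relative-comp} with trivial moduli part produces an integer $N = N(\Lambda, c, 1)$, depending only on $\Lambda$ and $c$, such that $(X,B)$ admits an $N$-complement over every closed point of $Z$. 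We set $q \coloneqq N$.

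Next, I would establish the linear equivalence $q(K_X+B) \sim q\pi^*(K_Z + B_Z + \bN Z.)$. The pointwise relative $q$-complements yield $q(K_X + B) \sim_Z 0$ at every closed point of $Z$; together with the fact that the generic fiber of $\pi$ is log Calabi--Yau (so $\pi_* \oo_X(q(K_X+B))$ is a line bundle on $Z$), this promotes to $q(K_X+B) \sim \pi^* L$ for some Cartier divisor $L$. The canonical bundle formula in the form of \cite[Theorem 1.4]{Fil20} identifies $L$ with $q(K_Z + B_Z + \bN Z.)$, where $B_Z$ is the discriminant divisor and $\bN.$ is the moduli b-divisor. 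The generalized pair $(Z, B_Z, \bN.)$ is automatically generalized log canonical, $\bN.$ is b-nef, and the global linear equivalence forces $q\bN.$ to be b-Cartier on a sufficiently high birational model of $Z$.

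The main technical content is to place the coefficients of $B_Z$ in a fixed DCC set $\Omega$ with rational accumulation points. I would apply Lemma~\ref{lem:hyperplane-section-inductive-step} iteratively: general hyperplane sections $H \subset Z$ preserve all the hypotheses of the theorem for the induced fibration $G \to H$, and the final assertion of the lemma shows that the coefficients of the discriminant divisor agree. Iterating reduces to the case $\dim Z = 1$, say $Z = C$. For a closed point $P \in C$, one has $\coeff_P(B_Z) = 1 - t_P$, where $t_P = \mathrm{lct}(\pi^*P; X, B)$ at the generic point of $P$. A relative $q$-complement $(X,B^+)$ of $(X, B + t_P \pi^*P)$ around $P$, provided by Theorem~\ref{thm:horizontal-relative-comp}, witnesses $t_P$ through a prime divisor $E$ with $\coeff_E(B) = b \in \Lambda$, $\coeff_E(B^+) = b^+$ satisfying $qb^+ \in \zz$ and $b^+ \leq 1$, and ramification multiplicity $m$ of $\pi^*P$ along $E$. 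Bookkeeping of the adjunction formula expresses $1 - t_P$ in terms of $b$, $b^+$, and $m$ in such a way that $\coeff_P(B_Z)$ lies in a set built from $\Lambda$ and the set $\Sigma_q$ of Lemma~\ref{lem:coeff-dcc-and-rational-acc-points}; that lemma yields the DCC property and rational accumulation points for the resulting $\Omega$.

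Finally, to confirm that $q\bN.$ is b-Cartier on every birational model $Z' \to Z$ where $\bN.$ descends, I would repeat the relative complement argument at each codimension-one point of $Z'$; the integrality of the coefficient of $q\bN Z'.$ at such a point is equivalent to the existence of a relative $q$-complement at its image, which is supplied by Theorem~\ref{thm:horizontal-relative-comp} on $Z'$ pulled back from $Z$. The main obstacle will be orchestrating these reductions so that all constants depend only on $\Lambda$ and $c$, and not on $\dim X$; the key ingredient making this dimension-independence possible is the bound $N = N(\Lambda, c, 1)$ coming from Theorem~\ref{introthm:Fano-coreg-c}$(c-1)$, which is by hypothesis independent of the ambient dimension.
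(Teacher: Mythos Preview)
Your overall strategy matches the paper's: choose $q$ via Theorem~\ref{thm:horizontal-relative-comp}, reduce the coefficient analysis to a curve base via Lemma~\ref{lem:hyperplane-section-inductive-step}, and read off the discriminant coefficients from relative $q$-complements using Lemma~\ref{lem:coeff-dcc-and-rational-acc-points}. Two points deserve attention.

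First, the reduction to finite $\Lambda$ via Theorem~\ref{thm:reduction-of-coefficients} is misplaced: that theorem alters the pair birationally to one whose $N$-complements pull back, which is the wrong operation here---you must keep the specific fibration $(X,B)\to Z$ in order to speak of its discriminant and moduli part. Fortunately the argument does not need finite $\Lambda$; Lemma~\ref{lem:coeff-dcc-and-rational-acc-points} works for any DCC set with rational accumulation points, and the paper never makes this reduction.

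Second, and more seriously, your sketch for showing $q\bN.$ is b-Cartier on a resolution $Z'\to Z$ has a genuine gap. You propose to ``repeat the relative complement argument at each codimension-one point of $Z'$'', but Theorem~\ref{thm:horizontal-relative-comp} applies to an honest log canonical pair with coefficients in $\Lambda$ and relative Fano type structure, whereas over $Z'$ you only have the \emph{sub}-pair $(X',B_{X'})$ obtained by log pull-back: its coefficients need not lie in $\Lambda$ (they may be negative on exceptional divisors), and the Fano type hypothesis may fail over the $g$-exceptional locus. The integrality of $q\bN Z'.$ at an exceptional prime $P'\subset Z'$ amounts to $q\cdot\mathrm{lct}((X',B_{X'});\pi'^*P')\in\zz$, and there is no complement statement you can directly invoke to force this.

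The paper handles this (its Step~5) by constructing, over a birational base $Z''\to Z'$, an honest log canonical pair $(X'',\Delta'')\to Z''$ with $\Delta''$ built from the strict transform of $B$ plus reduced exceptional divisors (so coefficients in $\Lambda\cup\{1\}$), then running two MMPs---first over the main component of $Z'\times_Z X$, then over $Z'$---to arrange that $(X'',\Delta'')$ is log Calabi--Yau over $Z''$, of Fano type over an open set, with all log canonical centers horizontal and coregularity $\leq c$. One then checks that the moduli b-divisor $\bN.$ for the original fibration agrees with the moduli part of $(X'',\Delta'')\to Z''$, and applies the curve-base integrality argument to this new fibration. This construction is the main technical work in the proof and is absent from your proposal.
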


\begin{proof}
The proof is given in several steps. In Step 1, we make a choice of $q$ and $\bN.$.
In Steps 2-4, we find $\Omega$ and show that the coefficients of $B_Z$ belong to $\Omega$. We also prove that $q\bN Z.$ is integral. In Step 5, we show that $q\bN .$ is b-Cartier.
We observe that, by the assumptions on the log canonical centers of $(X,B)$ and on the coregularity of $(X,B)$, it follows that $\dim Z \leq c$.\\

\noindent\textit{Step 1.} In this step, we find $q$ and make a choice of $\bN.$.\\

Let $q$ be the integer $N$ in the statement of Theorem~\ref{thm:horizontal-relative-comp}$(d,c-1)$. Here, we assumed Theorem~\ref{introthm:Fano-coreg-c}$(c-1)$. Fix a general closed point $z\in U$. Let $H$ be a general hyperplane section of $U$ passing through $z$. Then $(X,B+\pi^*H)$ is log Calabi--Yau and satisfies
\[ \text{coreg}(X,B+\pi^*H) \leq c-1.
\]
This implies that the absolute coregularity of $(X,B)$ over $z$ is at most $c-1$.
By Theorem~\ref{thm:horizontal-relative-comp}$(d,c-1)$, there is a $q$-complement $K_X+B^+$ of $K_X+B$ over $z$ with $B^+\geq B$.
Note that $q$ only depends on $\Lambda$ and $c$.
Since $K_X+B$ is $\qq$-trivial over $Z$, $B^+-B\sim_{\qq}0$ over $z$ and hence $B^+=B$ near the generic fiber of $\pi$. Therefore, $q(K_X+B)\sim 0$ over the generic point of $Z$. Thus, we can find a rational function $s$ on $X$ such that $qL \coloneqq  q(K_X+B) + \text{Div}(s)$ is zero over the generic point of $Z$. Note that $L\sim_{\qq} 0$ over $Z$, so we can write $L = \pi^* L_Z$ for some $\qq$-Cartier $\qq$-divisor $L_Z$ on $Z$.  Define
\[
\bN Z.  \coloneqq  L_Z-(K_Z+B_Z),
\]
where $B_Z$ is the discriminant part of adjunction for $(X,B)$ over $Z$. Similarly, for any birational morphism $g \colon  Z'\to Z$, we can define ${\bN.}_{Z'}$ as follows. Let $f \colon X'\to X$ be a higher birational model of $X$ such that the rational map $X'\dashrightarrow Z'$ is a morphism. Write $K_{X'} + B_{X'} $ for the pull-back of $K_X+B$ and $B_{Z'}$ be the discriminant part of adjunction for $(X', B_{X'})$ over $Z'$. We define 
\[
{\bN.}_{Z'}  \coloneqq  g^*L_Z - (K_{Z'} + B_{Z'}).
\]
The data of $\bN Z'.$, for all birational models $Z'\to Z$, determines
a b-divisor $\bN .$ on $Z$.\\ 

\noindent\textit{Step 2.} In this step, we reduce to the case when the base $Z$ is a curve.\\ 

Assume $\dim Z \geq 2$. Let $H$ be a general hyperplane section of $Z$ and $G$ be the pull-back of $H$ to $X$. By adjunction, we can write
\[
(K_X+B+G)|_G = K_G + B_G 
\]
for some divisor $B_G$ on $G$.
By Lemma~\ref{lemma-coeff-adj}, there exists a set $\Lambda'$ satisfying the DCC, having rational accumulation points, and depending only on $\Lambda$, such that the coefficients of $B_G$ belong to $\Lambda'$.
We may replace $\Lambda$ with $\Lambda'$ to assume that the coefficients of $B_G$ belong to $\Lambda$.
Since $\dim Z \leq c$, this replacement can only happen at most $c-1$ times and is hence allowed.
By Lemma~\ref{lem:hyperplane-section-inductive-step}, the pair $(G,B_G)$ over $H$ satisfies the same conditions as $(X,B)$ over $Z$ in the statement of this theorem. Furthermore, let $B_H$ denote the discriminant part of the adjunction for $(G,B_G)$ over $H$. Let $D$ be any prime divisor on $Z$ and $C$ a component of $D\cap H$. Then
\[
\coeff_D(B_Z) = \coeff_C(B_H).
\]

Pick a general $H'\sim H$ and let $K_H = (K_Z+H')|_H$, which is properly defined as a Weil divisor. Define
\[
\bN H.' = (L_Z+H')|_H - (K_H+B_H).
\]
Similarly to the way we defined the b-divisor $\bN.$ on $Z$, we can define $\bN.'$ as a b-divisor on $H$. Then, we have
\[
q(K_G+B_G) \sim q(L+G)|_G \sim q\psi^*(L_Z+H')|_H \sim q\psi^*(K_H+B_H+\bN H.').
\]
Thus, $\bN H.'$ is the moduli part of $(G,B_G)$ over $H$. Moreover, we have $B_H+{\bN H.'} = (B_Z+\bN Z.)|_H$. This implies that
$
\coeff_C(B_H+{\bN H.'} ) = \coeff_D(B_Z + \bN Z.)
$
and hence
$
\coeff_C({\bN H.'} ) = \coeff_D(\bN Z.).
$
In particular, $q\bN H.'$ is integral if and only if $q\bN H.$ is integral.

As a result, to prove that $\coeff(B_Z)$ belongs to a fixed set $\Omega$ and that $q\bN Z.$ is integral, we may replace $(X,B) \to Z$ with $(G,B_G)\to H$. By repeating this process until the base of the fibration is a curve, we may assume that $\dim Z = 1$.
\\

\noindent\textit{Step 3.} In this step, we show the existence of $\Omega$.\\

By Step 2, we can assume $\dim Z = 1$. By~\cite[Lemma 2.11]{Bir19}, the variety $X$ is of Fano type over $Z$. Pick any closed point $z\in Z$. Let $t$ be the log canonical threshold of $\pi^*z$ with respect to $(X,B)$ around $z$. Set $\Gamma = B+t\pi^*z$ and let $(X',\Gamma')$ be a $\qq$-factorial dlt modification of $(X,\Gamma)$.
Then $K_{X'}+\Gamma' \sim_{\qq} 0$ over $Z$ and $\Gamma'$ has a component with coefficient 1 mapping to $z$. Pick a boundary $B'$ on $X'$ satisfying the following conditions:
\begin{itemize}
    \item we have  $\Tilde{B}\leq B'\leq \Gamma'$, where $\Tilde{B}$ is the strict transform of $B$ on $X'$;
    \item the coefficients of $B'$ are in $\Lambda$; and
    \item the divisors $B'$ and $\Gamma'$ have the same round-down, i.e.,
    $\lfloor B'\rfloor = \lfloor \Gamma'\rfloor$.
\end{itemize}
By construction, $\supp(\Gamma'-B')$ is contained in the strict transform of $\supp(\pi^*z)$.   If $t<1$, $\lfloor B'\rfloor$ has a component $T$ mapping to $z$ which is exceptional over $X$.
Note that $X'$ is of Fano type over $Z$. We run a $-(K_{X'}+B')$-MMP with scaling over $Z$.
Since $-(K_{X'}+B')\sim_{\qq} \Gamma'-B'$ is pseudo-effective, this MMP terminates with a model $(X'',B'')$ such that $-(K_{X''}+B'')$ is semi-ample over $Z$, by \cite[Theorem 1.1]{Bir12}.
The divisor $T$ is not contracted by this MMP because $T\not\subseteq\supp(\Gamma'-B')$. Furthermore, by Lemma \ref{lem:coreg-nonnegative-contraction}, we have
\[
{\rm coreg}(X'',B'') = {\rm coreg}(X',B')\leq {\rm coreg}(X',\Gamma') \leq c.
\]
As a result, up to losing the dlt condition, we may replace $(X',B')$ by $(X'',B'')$ to assume that the following properties hold:
\begin{itemize}
    \item $X'$ is Fano type over $Z$;
    \item $(X',B')$ is a log canonical pair over $Z$;
    \item the coefficients of $B'$ are in $\Lambda$;
    \item ${\rm coreg}(X',B')\leq c$;
    \item $-(K_{X'}+B')$ is semi-ample over $Z$; and
    \item $\lfloor B'\rfloor$ has a component mapping to $z$.
\end{itemize}
If $t = 1$, we may simply take $B' = \Gamma'$ so the above conditions hold as well for $(X',B')$.

By Theorem~\ref{thm:lifting-from-divisor} and our choice of $q$,
the pair $(X',B')$ has a $q$-complement $(X',B'^+)$ over $z$ with $B'^+\geq B'$. Pushing forward $B'^+$ to $X$ gives a $q$-complement $(X,B^+)$ of $(X,B)$ over $z$ with $B^+\geq B$. Furthermore, $K_X+B^+$ has a non-klt center mapping to $z$, since its pull-back $K_{X'}+B'^+$ does. Note that $B^+-B\sim_{\qq} 0$ over $z$, so $B^+-B$ must be a multiple of $\pi^*z$ over $z$. This implies that $B^+ = B+t\pi^*z$ over $z$ since $K_X+B^+$ has a non-klt center mapping to $z$.

Pick a component $S$ of $\pi^*z$. Let $b$, $b^+$, and $m$ be the coefficients of $S$ in $B$, $B^+$, and $\pi^*z$ respectively. Then $b^+ = b+tm$ and $t = \frac{b^+-b}{m}$. By construction, $qb^+$ and $m$ are integers, $b^+\leq 1$, and $b\in \Lambda$. By Lemma~\ref{lem:coeff-dcc-and-rational-acc-points}, $t$ belongs to a fixed set $\Sigma$ (depending only on $I,\Lambda$) which satisfies ACC and has rational accumulation points. Since the coefficient of $z$ in $B_Z$ is $1-t$, it belongs to a set $\Omega$ depending only on $I$ and $\Lambda$, such that $\Omega$ satisfies DCC and has rational accumulation points.\\

\textit{Step 4.}
In this step, we show that $q\bN Z.$ is integral.\\ 

By Step 2, we can assume that $Z$ is a curve. By Step 1, the equivalence $q(K_X+B)\sim 0$ holds over some non-empty open subset $V\subseteq Z$ such that $\Supp B_Z \subseteq Z\setminus V$. Let
\[
\Theta = B + \sum_{z\in Z\setminus V} t_z \pi^*z,
\]
where $t_z$ is the log canonical threshold of $\pi^*z$ with respect to $(X,B)$ over $z$. Note that $K_X+\Theta\sim_{\qq} 0$ over $Z$. Let $\Theta_Z$ be the discriminant part of adjunction for $(X,\Theta)$ over $Z$. Then
\[
\Theta_Z = B_Z + \sum_{z\in Z\setminus V} t_z z = \sum_{z\in Z\setminus V} z
\]
is an integral divisor. Moreover, by Step 3, the pair $(X,\Theta)$ is a $q$-complement of $(X,B)$ over each $z\in Z\setminus V$. Over $V$, we have $q(K_X+\Theta) \sim q(K_X+B)\sim 0$. Thus, $q(K_X+\Theta) \sim 0$ over $Z$ by \cite[Lemma 2.4]{Bir19}. Furthermore, from the equalities
\[
    q(K_X+\Theta) = q(K_X+B) + q(\Theta-B) 
    \sim q\pi^*(K_Z+B_Z+\bN Z.) + q\pi^*(\Theta_Z-B_Z) 
    \sim q\pi^*(K_Z+\Theta_Z+\bN Z.),
\]
we obtain that $q(K_Z+\Theta_Z+\bN Z.)$ is an integral divisor and hence $q\bN Z.$ is integral.\\

\noindent\textit{Step 5.} In this step, we show that $q\bN Z'.$ is nef Cartier on some resolution $Z'\to Z$. \\

The nefness follows from \cite[Theorem 3.6]{Bir19}, so we just need to show that $q\bN Z'.$ is integral. Denote the birational morphism $Z'\to Z$ by $g$. As in Step 1, let $f \colon X'\to (X,B)$ be a log resolution such that the rational map $\pi' \colon X'\dashrightarrow Z'$ is a morphism. Let $U_0\subseteq U$ be a non-empty open set such that $U_0' \coloneqq  g^{-1}(U_0) \to U_0$ is an isomorphism. Let $\Delta'$ be the sum of the birational transform of $B$ and reduced exceptional divisors of $f$, but with all components mapping outside of $U_0$ removed. Then, the generic point of every log canonical center of $(X',\Delta')$ lies inside $U_0$.

Let $T$ be the normalization of the main component of $Z'\times_Z X$. Run an MMP on $K_{X'}+\Delta'$ over $T$ with scaling of some ample divisor. This MMP terminates with a $\qq$-factorial dlt pair $(X'',\Delta'')$ such that $K_{X''}+\Delta''$ is nef over $T$. Let $X_0 = \pi^{-1}(U_0)$. Over $U_0\cong U_0'$, we have $Z'\times_Z X\cong X_0$, and hence $K_{X''}+\Delta''$ is nef over $X_0\subseteq X$. By the negativity lemma, over $U_0'$,
the divisor $K_{X''}+\Delta''$ is equal to the log pull-back of $K_X+B$. This shows that
$(X'',\Delta'')$ is a dlt modification of $(X,B)$ over $U_0$. In particular, $X''$ is Fano type over $U_0'$. Furthermore, every log canonical center of $(X'',\Delta'')$ dominates $U_0'$. 
Indeed, $(X,B)$ satisfies the same property and the generic point of every log canonical center of $(X'',\Delta'')$ lies inside $U_0'$. Thus, we have
\[
\text{coreg}(X'', \Delta'' ) = \text{coreg} (X,B) \leq c.
\]
By \cite[Theorem 1.4]{Bir12}, we can run an MMP on $K_{X''}+\Delta'$ over $Z'$ which terminates with a good minimal model over $Z'$. Since this MMP is trivial over $U_0'$, the dual complex and hence the coregularity of $(X'',\Delta'')$ does not change under this MMP. Abusing the notation, we again denote the good minimal model by $X''$. Let $\pi'' \colon  X''\to Z''$ over $Z'$ be the morphism induced by the relatively semi-ample divisor $K_{X''}+\Delta''$. Since $K_{X''}+\Delta''\sim_{\qq} 0$ over $U_0'$, $Z''\to Z'$ is birational and $U_0''$, the preimage of $U_0'$ in $Z''$, is isomorphic to $U_0'$. Thus, every log canonical center of $(X'',\Delta'')$ also dominates $Z''$.

Let $W$ be a common resolution of $X$ and $X''$ and set $\alpha \colon  W\to X$ and $\beta \colon W\to X''$. By construction, over the preimage of $U_0$ in $W$,
we have 
$\alpha^*(K_X+B) = \beta^*(K_{X''}+\Delta'')$.
Write $K_{X''}+B'' = \beta_*\alpha^*(K_X+B)$
and $L'' = \beta_*\alpha^*L$,
where $qL = q(K_X+B) + \text{Div}(s)$ as in Step 1. Let $P'' = \Delta''-B''$, which is $\qq$-trivial over $Z''$ and supported outside of $U_0''$. Hence, $P'' = \pi''^*P_{Z''}$ for some $\qq$-divisor $P_{Z''}$ on $Z''$. Let $\Delta_{Z''}$ be the discriminant part of adjunction for $(X'',\Delta'')$ on $Z''$ . Let $B_{Z''} = \Delta_{Z''} - P_{Z''}$, then $B_{Z''}$ is the discriminant part of adjunction for $(X'',B'')$ on $Z''$. By the definition of $\bN.$ in Step 1, we have
\[
    q(K_{X''}+\Delta'') = q(K_{X''}+B''+P'') \sim 
    q\pi''^*(K_{Z''}+ B_{Z''} + \bN Z''. + P_{Z''}) \sim 
    q\pi''^*(K_{Z''}+\Delta_{Z''}+\bN Z''.).
\]
Here, $L_{Z''}$ is the pull-back of $L_Z$ in Step 1. This shows that $\bN Z''.$ is the moduli part of $(X'',\Delta'')$ over $Z''$. Since the coefficients of $\Delta''$ are in $\Lambda$ and the coregularity of $(X'',\Delta'')$ is at most $c$, we may apply Steps 2-4 to show that $q \bN Z''.$ is an integral divisor. Thus, $q\bN Z'.$ is also an integral divisor and hence Cartier as $Z'$ is smooth.
\end{proof}

We show that Theorem \ref{thm:cbf-and-coreg-induction} also holds for generalized pairs $(X,B,\bM.)$ in the special case that $\bM X.\sim_{\qq} 0$ over the base $Z$.

\begin{theorem}\label{thm:cbf-inductive}
Let $d$, $c$ and $p$ be nonnegative integers
and $\Lambda\subset \qq$ be a closed set satisfying the descending chain condition.
Assume Theorem~\ref{introthm:Fano-coreg-c}$(c-1)$ holds. 
There exists a set 
$\Omega \coloneqq \Omega(\Lambda,c,p)\subset \qq$ satisfying the descending chain condition
and a positive integer $q \coloneqq q(\Lambda,c,p)$, both $\Omega$ and $q$ only depending on $\Lambda$ and $c$, and satisfying the following.
Let $\pi \colon X\rightarrow Z$ be fibration from a $d$-dimensional projective variety $X$ to a projective base $Z$ with $\dim Z > 0$.
Let $(X,B,\bM.)$ be a generalized pair for which
\begin{itemize}
    \item the generalized pair $(X,B,\bM.)$ is generalized log canonical;
    \item the fibration $\pi$ is of Fano type over a nonempty open set $U$ of $Z$;
    \item every generalized log canonical center of $(X,B,\bM.)$ is horizontal over $Z$; 
    \item the divisors $K_X+B+\bM X.$ and $\bM X.$ are $\qq$-trivial over $Z$;
    \item the coefficients of $B$ are in $\Lambda$,
    \item $p\bM.$ is b-Cartier; and 
    \item the coregularity of $(X,B)$ is at most $c$.
\end{itemize}
Then, we can write 
\[
q(K_X+B+\bM X. )\sim q\pi^*(K_Z+B_Z+\bN Z.),
\] 
where $(Z,B_Z,\bN.)$ is a generalized log canonical pair such that 
\begin{itemize}
    \item $B_Z$ is the discriminant part of the adjunction for $(X,B,\bM.)$ over $Z$;
    \item the coefficients of $B_Z$ belong to $\Omega$; and
    \item the divisor $q\bN.$ is b-nef and b-Cartier.
\end{itemize}
\end{theorem}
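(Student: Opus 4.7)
The proof is obtained by adapting the argument of Theorem~\ref{thm:cbf-and-coreg-induction} to the generalized setting: I would replace $K_X+B$ by $K_X+B+\bM X.$ throughout and invoke the generalized-pair versions of the complement results (Theorem~\ref{thm:horizontal-relative-comp} and Theorem~\ref{thm:lifting-from-divisor}) in place of their pair counterparts. The additional hypothesis $\bM X.\sim_\qq 0/Z$ will ensure that $\bM.$ contributes only to the moduli part $\bN.$ on the base and not to the discriminant $B_Z$, so the discriminant coefficients behave exactly as in the pair case.

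\textbf{Step 1 (Setup of $q$ and $\bN.$).} I would first take $q$ to be the constant $N(\Lambda,c,p)$ furnished by Theorem~\ref{thm:horizontal-relative-comp}; this depends only on $\Lambda$, $c$, and $p$ and provides a $q$-complement of $(X,B,\bM.)$ over every closed point of $Z$. Since $K_X+B+\bM X.\sim_\qq 0/Z$, there is a rational function $s$ on $X$ such that $qL\coloneqq q(K_X+B+\bM X.)+\mathrm{Div}(s)$ vanishes over the generic point of $Z$; hence $L=\pi^* L_Z$ for some $\qq$-Cartier $L_Z$ on $Z$. I would then define $\bN Z.\coloneqq L_Z-(K_Z+B_Z)$, where $B_Z$ is the discriminant part of $(X,B,\bM.)$ over $Z$; as in Step~1 of the proof of Theorem~\ref{thm:cbf-and-coreg-induction}, this extends to a b-divisor $\bN.$ on $Z$.

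\textbf{Step 2 (Reduction to a curve base).} Next, I would take a general hyperplane section $H\subset Z$ and set $G\coloneqq\pi^*H$. Generalized adjunction produces a generalized pair on $G$ over $H$ which satisfies all the hypotheses of the theorem; here the assumption $\bM X.\sim_\qq 0/Z$ is crucial, since it ensures that the restricted moduli b-divisor remains $\qq$-trivial over the new base $H$. The generalized-pair analogue of Lemma~\ref{lem:hyperplane-section-inductive-step}, whose proof is formally identical, relying on generalized adjunction together with inversion of adjunction, then preserves log Calabi--Yau triviality over $H$, horizontality of all generalized log canonical centers, the coregularity bound, and the discriminant-coefficient identity $\coeff_D(B_Z)=\coeff_C(B_H)$ for $C$ a component of $D\cap H$. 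Iterating at most $\dim Z-1\leq c-1$ times reduces to the case $\dim Z=1$.

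\textbf{Step 3 (Curve base: $\Omega$, integrality, and b-Cartier property).} For $\dim Z=1$ and each closed point $z\in Z$, let $t_z$ be the generalized log canonical threshold of $\pi^*z$ with respect to $(X,B,\bM.)$ around $z$. An MMP applied to a $\qq$-factorial generalized dlt modification of $(X,B+t_z\pi^*z,\bM.)$, mimicking Step~3 of the proof of Theorem~\ref{thm:cbf-and-coreg-induction}, produces a generalized pair satisfying the hypotheses of Theorem~\ref{thm:lifting-from-divisor}; our choice of $q$ then yields a $q$-complement $(X,B^+,\bM.)$ of $(X,B,\bM.)$ over $z$ with $B^+\geq B$. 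The identity $t_z=(b^+-b)/m$, where $b,b^+,m$ are the coefficients of a prime component of $\pi^*z$ in $B$, $B^+$, $\pi^*z$ respectively, together with Lemma~\ref{lem:coeff-dcc-and-rational-acc-points}, places $\coeff_z(B_Z)=1-t_z$ in a DCC set $\Omega=\Omega(\Lambda,c,p)$. Integrality of $q\bN Z.$ then follows by setting $\Theta\coloneqq B+\sum_{z\notin V}t_z\pi^*z$, with $V$ the open locus where $q(K_X+B+\bM X.)\sim 0$, noting that $(X,\Theta,\bM.)$ is a global $q$-complement of $(X,B,\bM.)$ by \cite[Lemma 2.4]{Bir19}, and using that $\Theta_Z=\sum_{z\notin V}z$ is an integral divisor. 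The b-Cartier property of $q\bN Z'.$ on a resolution $Z'\to Z$ is obtained via the dlt modification and MMP argument of Step~5 of the proof of Theorem~\ref{thm:cbf-and-coreg-induction}, applied to a generalized dlt model on which $\bM.$ descends. \emph{The principal technical challenge} is verifying the generalized-pair analogue of Lemma~\ref{lem:hyperplane-section-inductive-step} in Step~2, where the interplay of the moduli part $\bM.$ with generalized adjunction to a hyperplane pullback must be checked carefully; the hypothesis $\bM X.\sim_\qq 0/Z$ is precisely what makes this inductive reduction go through.
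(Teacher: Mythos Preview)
Your approach is plausible but takes a genuinely different route from the paper's. Rather than rerunning the entire argument of Theorem~\ref{thm:cbf-and-coreg-induction} in the generalized setting, the paper proceeds more modularly: in Step~1 it observes, via Lemma~\ref{lem:glc-implies-lc}, that the underlying pair $(X,B)$ is log canonical and inherits the horizontal-centers and coregularity hypotheses, so Theorem~\ref{thm:cbf-and-coreg-induction} applies directly to $(X,B)$ and yields the discriminant $B_Z$ together with a moduli part $\bP.$ with $q(K_X+B)\sim q\pi^*(K_Z+B_Z+\bP Z.)$. The remaining work (Steps~2--5) is then to show that $\bM.$ itself descends to a b-nef b-divisor $\bT.$ on $Z$ with $q\bM X.\sim q\pi^*\bT Z.$ --- using the rational connectedness of the general fibers and \cite[Lemma~2.44]{Bir19} --- and to verify that $(Z,B_Z,\bP.+\bT.)$ is generalized log canonical; setting $\bN.=\bP.+\bT.$ concludes. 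Your route should work, since the complement theorems you invoke are already stated for generalized pairs, but it obliges you to establish a generalized-pair version of Lemma~\ref{lem:hyperplane-section-inductive-step} (which you correctly flag as the main technical point) and to repeat the birational construction of Step~5 of Theorem~\ref{thm:cbf-and-coreg-induction} while tracking $\bM.$ throughout. The paper's decomposition sidesteps both issues by separating the boundary and moduli contributions at the outset, reducing the generalized case cleanly to the pair case plus a descent statement for $\bM.$.
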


\begin{proof}
We proceed in several steps.
In the first step, we apply the canonical bundle formula for pairs.
In the rest of the proof, we show that $\bM.$ is the pull-back of a b-nef divisor on the base 
and control the Cartier index of this b-nef divisor where it descends.\\

\noindent\textit{Step 1.} We show that the pair $(X,B)$ satisfies the conditions in the statement of Theorem \ref{thm:cbf-and-coreg-induction}. \\ 

By Lemma~\ref{lem:glc-implies-lc}, $(X,B)$ is log canonical. Furthermore, every log canonical center of $(X,B)$ is also a log canonical center of $(X,B,\bM.)$, and hence it dominates $U$. 

Thus, by Theorem \ref{thm:cbf-and-coreg-induction}, there exists $\Omega$ and $I$, depending only on $\Lambda$ and $c$, such that 
\[
q(K_X+B) \sim q \pi^*(K_Z+B_Z+\bP Z.),
\]
where $B_Z$ and $\bP.$ are the discriminant and moduli part of adjunction for the fibration $(X,B) \to Z$. Since $(X,B)$ is log canonical, $(Z,B_Z,\bP.)$ is generalized log canonical.  Furthermore, the coefficients of $B_Z$ belong to $\Omega$ and $q\bP.$ is nef Cartier on any high resolution of $Z$. By replacing $q$ with $pq$, we can assume that $q$ is a multiple of $p$.\\

\noindent\textit{Step 2.} In this step, we express $\bM.$ as a pull-back of some b-divisor from $Z$.\\

Let $g \colon  Z'\to Z$ be a log resolution of $(Z,B_Z+\bP.)$ such that $\bP.$ descends on $Z'$ and $\bP Z'.$ is nef. Let $f \colon  X'\to X$ be a  resolution such that the rational map $\pi' \colon  X'\dashrightarrow Z'$ is a morphism and $\bM X'.$ is nef. By the negativity lemma, we can write
\[
f^* \bM X. = \bM X' . +E_{X'}
\]
for some effective $f$-exceptional $\qq$-divisor $E_{X'}$. Since $\bM X. \sim_{\qq} 0$ over $Z$, $E_{X'}$ is vertical over $Z$, so there is a non-empty subset of $U$ over which $E_{X'}=0$ and $\bM X'.\sim_{\qq} 0$. Since $\pi$ is of Fano type over $U$, the general fibers of $\pi$ are of Fano type and hence rationally connected.
Thus, the general fibers of $\pi'$ are also rationally connected. By \cite[Lemma 2.44]{Bir19}, after replacing $X'$ and $Z'$ with possibly higher birational models, we can write
\[
p\bM X'. \sim p \pi'^* T_{Z'}
\]
for some $\qq$-divisor $T_{Z'}$ on $Z'$ such that $pT_{Z'}$ is nef Cartier. Let $\bT.$ be the b-divisor on $Z$ with the data $g \colon  Z'\to Z$ and $T_{Z'}$ (i.e., $\bT.$ descends on $Z'$ as $T_{Z'}$).\\

\noindent\textit{Step 3.} In this step, we show that $q\bM X.   \sim q\pi^* \bT Z.$. \\

As in Step 2, write
\[
f^*\bM X. = \bM X'. + E_{X'}.
\]
Then $E_{X'}$ is vertical and $\qq$-linearly trivial over $Z'$ (since $\bM X'. \sim_{\qq} 0$ over $Z'$), so we can write $E_{X'} = \pi'^* E_{Z'}$ for some effective $\qq$-divisor $E_{Z'}$. If $E_{Z'}$ has a component $D_{Z'}$ which maps onto a divisor $D$ in $Z$, then $E_{X'} = \pi'^*E_{Z'}$ has a component mapping onto $D$, contradicting the fact that $E_{X'}$ is $f$-exceptional. Thus, $E_{Z'}$ is $g$-exceptional. Note that $\pi'^*(\bT Z'. + E_{Z'}) = \bM X'.+E_{X'} \sim_{\qq} 0$ over $Z$, so $\bT Z'. +E_{Z'} \sim_{\qq}0$ over $Z$. This implies that $g^*\bT Z. = \bT Z'. + E_{Z'}$. Now, we have
\[
qf^*\bM X. = q(\bM X'. + E_{X'}) \sim q \pi'^*(\bT Z'.+ E_{Z'}) = q \pi'^* g^*\bT Z. = q f^*\pi^*\bT Z.
\]
and hence $q\bM X.\sim q\pi^*\bT Z.$. In particular, $q\bT.$ is a b-Cartier divisor.
From now on, we consider $(Z, B_Z,\bP. + \bT.)$ as a generalized pair, with moduli part $\bP.+\bT.$.\\

\noindent\textit{Step 4.} In this step, we show that the generalized pair $(Z,B_Z,\bP.+\bT.)$ is generalized log canonical. \\

Write $K_{X'}+B_{X'} = f^*(K_X+B)$. Let $B_{Z'}$ be the discriminant part of the adjunction for $(X',B_{X'})$ over $Z'$. We can assume that $(Z', \Supp B_{Z'}+\Supp E_{Z'}+\Supp \bP Z'.+\Supp\bT Z'.)$ is log smooth. By construction, we have
\[
K_{Z'} + B_{Z'} + E_{Z'} + \bP Z'. + \bT Z' .= g^*(K_Z+B_Z+\bP Z.+\bT Z.).
\]
Thus, it suffices to show that every coefficient of $B_{Z'} + E_{Z'}$ is at most one. Let $D$ be a prime divisor on $Z'$. Let $t_D$ be the log canonical threshold of $(X',B_{X'})$ with respect to $\pi'^*D$ over the generic point of $D$. Since $(X,B,\bM.)$ is generalized log canonical, $(X',B_{X'}+E_{X'})$ is sub log canonical. Furthermore, $E_{X'} = \pi'^*E_{Z'}$, so the coefficient of $D$ in $E_{Z'}$ is at most $t_D$ (otherwise $E_{X'}\geq (t_D+\epsilon)\pi'^*D$ for some $\epsilon > 0$ and this violates the sub-log canonical condition). By definition, $\coeff_D(B_{Z'}) = 1-t_D$. Thus, $\coeff_D(B_{Z'}+E_{Z'}) \leq 1-t_D + t_D = 1$, as desired. \\

\noindent\textit{Step 5.}
In this step, we conclude that the generalized pair $(Z, B_Z, \bP.+\bT.)$ satisfies the desired properties. \\

By Step 1, the coefficients of $B_Z$ belong to $\Omega$ and $q\bP.$ is b-nef and b-Cartier.
By Steps 2 and 3, $q\bT.$ is b-Cartier. By Step 4, $(Z,B_Z, \bP.+\bT.)$ is generalized log canonical.
Finally, by Steps 1 and 3, we have
\[
q(K_X+B+\bM X.)\sim q\pi^*(K_Z+B_Z+\bP Z. + \bT Z.).
\]
\end{proof}

\begin{proposition}\label{prop:complements-imply-cbf}
Assume that Theorem~\ref{introthm:Fano-coreg-c}$(c-1)$ holds.
Then, Theorem~\ref{introthm:cbf-and-coreg}$(c)$ holds. 
\end{proposition}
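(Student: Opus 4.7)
The plan is to reduce Theorem~\ref{introthm:cbf-and-coreg}$(c)$ to Theorem~\ref{thm:cbf-inductive}$(c)$, which has essentially the same conclusion but requires the stronger hypothesis $\bM X. \sim_{\qq} 0/Z$. The key is to use the Fano type assumption on $\pi$ to show that, after a birational modification, $\bM.$ is $\qq$-linearly equivalent to a pull-back from the base, effectively upgrading ``triviality on the general fiber'' to ``triviality over the base''.

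First, I would apply \cite[Lemma 2.44]{Bir19}. Since $\pi$ is of Fano type over an open set $U \subseteq Z$, a general fiber of $\pi$ is a Fano type variety and hence rationally connected. Combined with the assumption that $\bM.$ is $\qq$-trivial on the general fiber and that $p\bM.$ is b-Cartier, the lemma yields birational models $X' \to X$ (on which $\bM.$ descends as a nef divisor) and $g \colon Z' \to Z$, with induced morphism $\pi' \colon X' \to Z'$, together with a $\qq$-divisor $T_{Z'}$ on $Z'$ with $p T_{Z'}$ nef Cartier, satisfying $p\bM X'. \sim p\pi'^{*} T_{Z'}$. Let $\bT.$ denote the b-nef b-Cartier divisor on $Z$ with trace $T_{Z'}$ on $Z'$.

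Next, I would rerun the argument of Theorem~\ref{thm:cbf-inductive}$(c)$, replacing the use of the hypothesis $\bM X. \sim_{\qq} 0/Z$ with the output of the previous step. In that proof, this assumption is invoked (i) in Step~1, to deduce that $(X,B)$ is log Calabi--Yau over $Z$ so that Theorem~\ref{thm:cbf-and-coreg-induction} can be applied to the pair $(X,B)$, and (ii) in Step~3, to conclude that the correction term $E_{X'} := f^{*}\bM X. - \bM X'.$ is vertical over $Z'$. For (i), the desired log Calabi--Yau condition for $(X,B)$ over $Z$ follows from $K_X + B + \bM X. \sim_\qq 0/Z$ combined with the fact that $\bM.$ is $\qq$-linearly equivalent to $\pi^{*}\bT.$ as a b-$\qq$-Cartier divisor: modulo $\qq$-linear equivalence, $K_X + B \sim -\pi^{*}\bT X.$, which is $\qq$-trivial over $Z$ since it is pulled back from $Z$. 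For (ii), verticality of $E_{X'}$ follows from the rational connectedness of the general fiber $F'$ of $\pi'$ (birational to a general fiber of $\pi$): since $E_{X'}|_{F'}$ is effective and $\qq$-linearly trivial on the rationally connected variety $F'$, it must vanish. With these two modifications in place, the rest of the proof of Theorem~\ref{thm:cbf-inductive} carries over verbatim, producing constants $q$ and $\Omega$ depending only on $\Lambda$, $c$, and $p$, together with the canonical bundle formula asserted in Theorem~\ref{introthm:cbf-and-coreg}$(c)$.

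The main obstacle will be making the identification $\bM. \sim_{\qq} \pi^{*}\bT.$ precise at the level of b-divisors on $X$, so that the substitution needed in (i) is valid on $X$ itself and not only on the model $X'$. This requires careful tracking of $g$-exceptional components on $Z'$ and possibly passing to higher birational models of $X$ and $Z$ where both $\bM.$ and $\bT.$ descend cleanly. Once this technical point is resolved, the verticality argument in (ii) ensures that the discriminant divisor $B_Z$ has coefficients in the DCC set $\Omega$ supplied by the pair-level canonical bundle formula, and the moduli part on $Z$ is recovered as the sum of the moduli part produced by Theorem~\ref{thm:cbf-and-coreg-induction} and the contribution of $\bT.$, both of which are b-nef and b-Cartier after clearing denominators by $q$.
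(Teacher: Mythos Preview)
Your approach aims in the right direction---reducing to Theorem~\ref{thm:cbf-inductive}$(c)$---but the obstacle you flag is a genuine gap, not just bookkeeping. To rerun Step~1 of the proof of Theorem~\ref{thm:cbf-inductive} you need $K_X+B\sim_\qq 0/Z$, equivalently $\bM X.\sim_\qq 0/Z$, on $X$ itself. Lemma~2.44 of \cite{Bir19} only gives $\bM X'.\sim_\qq \pi'^*T_{Z'}$ on a higher model $X'$, and pushing this identity down to $X$ does not work in general: for $g$-exceptional $D_{Z'}$, the components of $\pi'^* D_{Z'}$ need not be $f$-exceptional, so $f_*\pi'^*T_{Z'}$ is not obviously a pull-back from $Z$. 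Passing to higher models of $X$ does not help either, because Theorem~\ref{thm:cbf-and-coreg-induction} (invoked as a black box in Step~1) requires an honest log Calabi--Yau \emph{pair}, whereas on $X'$ the log pull-back of $(X,B)$ is only a sub-pair.

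The paper avoids this entirely by a short reduction that exploits a hypothesis you are not fully using: in Theorem~\ref{introthm:cbf-and-coreg}, $\pi$ is of Fano type over \emph{all} of $Z$, not just over an open set. After a $\qq$-factorial generalized dlt modification, one runs an MMP on $\bM X.$ over $Z$; since $X$ is a relative Mori dream space, this terminates with $\bM X.$ semi-ample over $Z$ (and the MMP is $(K_X+B+\bM X.)$-trivial, so log canonicity and coregularity are preserved). The induced contraction $X\to Z'$ over $Z$ has $Z'\to Z$ birational because $\bM.$ is trivial on the general fiber, and after replacing $Z$ by $Z'$ one has $\bM X.\sim_\qq 0/Z$ on the nose. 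Then Theorem~\ref{thm:cbf-inductive}$(c)$ applies directly as a black box---no need to reopen its proof or to worry about b-divisor descent.
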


\begin{proof}
Theorem~\ref{introthm:Fano-coreg-c}$(c-1)$ implies that Theorem~\ref{thm:cbf-inductive}$(c)$ holds. 

Let $(X,B,\bM.)$ be a generalized pair in the statement of Theorem~\ref{introthm:cbf-and-coreg}$(c)$. We may replace $(X,B,\bM.)$ by a $\qq$-factorial generalized dlt modification and assume that $(X,B,\bM.)$ is $\qq$-factorial generalized dlt. Since $X$ is of Fano type over $Z$, we can run an MMP on $\bM X.$ over $Z$ to get a model $(X',B',\bM.)$ such that $\bM X'.$ is semi-ample over $Z$. After replacing $(X,B,\bM.)$ with $(X',B',\bM.)$, up to losing the generalized dlt property for $(X,B,\bM.)$, we may assume that $\bM X.$ is semi-ample over $Z$. Let $X\to Z'$ be the morphism induced by $\bM.$. Since $\bM.$ is trivial on a general fiber of $\pi$, the morphism $Z'\to Z$ is birational. After replacing $Z$ with $Z'$, we may assume that $\bM.\sim_{\qq}0$ over $Z$. Now, the result follows from Theorem~\ref{thm:cbf-inductive}$(c)$.
\end{proof}

\section{Proof of the theorems}

In this section, we prove the main theorems of this article.
In this section, we use the notation from Section: ``Strategy of the Proof",
we write Theorem $X(d,c)$ for Theorem $X$ in dimension $d$ and coregularity at most $c$.
The following is the boundedness of complements
for Fano type pairs of coregularity $0$. Note that the following theorem is an unconditional version of Theorem~\ref{introthm:Fano-coreg-c}$(0)$.

\begin{theorem}\label{thm:coreg-0-comp}
Let $p$ be a positive integer. 
Let $\Lambda\subset \qq$ be a closed set satisfying the descending chain condition.  
There exists a constant $N \coloneqq N(\Lambda,p)$ satisfying the following.
Let $X$ be a Fano type variety
and 
$(X,B,\bM.)$ be a generalized pair of absolute coregularity $0$.
Assume that the following conditions hold:
\begin{itemize}
    \item the coefficients of $B$ belong to $\Lambda$; and 
    \item $p\bM.$ is b-Cartier.
\end{itemize}
Then, there exists a boundary $B^+\geq B$ such that
\begin{itemize}
    \item the generalized pair $(X,B^+,\bM.)$ is generalized log canonical;
    \item we have that $N(K_X+B^+ +\bM.)\sim 0$; and 
    \item the equality ${\rm coreg}(X,B^+,\bM.)=0$ holds.
\end{itemize}
\end{theorem}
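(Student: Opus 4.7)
The plan is to prove Theorem~\ref{thm:coreg-0-comp} by induction on the dimension $d$ of $X$, with coregularity fixed at $0$, following the blueprint described in (i) and (iv) of the strategy. The base case $d=0$ is trivial. For $d \geq 1$, I would first use Theorem~\ref{thm:reduction-of-coefficients} to reduce to the case where the coefficients of $B$ lie in a finite set $\mathcal{R} \subset \Lambda$ depending only on $\Lambda$ and $p$. Since $(X,B,\bM.)$ has absolute coregularity $0$, I can choose a boundary $\Gamma \geq 0$ with $(X,B+\Gamma,\bM.)$ generalized log canonical, $K_X+B+\Gamma+\bM X. \sim_\qq 0$, and ${\rm coreg}(X,B+\Gamma,\bM.) = 0$. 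If $(X,B+\Gamma,\bM.)$ happens to be generalized klt, then coregularity $0$ together with the existence of a $0$-dimensional generalized log canonical center forces $d=0$ (contradiction) unless $(X,B,\bM.)$ is exceptional, in which case the result follows from~\cite[Theorem 1.7]{Bir19}. So I may assume a $\qq$-factorial generalized dlt modification $(Y,B_Y+\Gamma_Y+E,\bM.)$ has $E \neq 0$.

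Next, I would run a $-(K_Y+B_Y+E+\bM Y.)$-MMP, which terminates with a good minimal model by \cite{BCHM10} and the Fano type assumption, and replace $(X,B,\bM.)$ by the push-forward $(Z,B_Z+E_Z,\bM Z.)$. By Lemma~\ref{lem:coreg-nonnegative-contraction} this preserves absolute coregularity $0$, and now $-(K_X+B+\bM X.)$ is semi-ample and $\lfloor B \rfloor \neq 0$. Let $W$ be the ample model of $-(K_X+B+\bM X.)$. I would then split the argument based on $\dim W$:
\begin{itemize}
\item If $\dim W = 0$, then $K_X+B+\bM X. \sim_\qq 0$ and $(X,B,\bM.)$ is a generalized log Calabi--Yau pair of coregularity $0$, so Theorem~\ref{introthm:index-coreg-0} (proved in~\cite{FMM22}) yields a bound on the index depending only on $\mathcal{R}$ and $p$.
\item If $\dim W = d$, then $-(K_X+B+\bM X.)$ is big and nef. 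I would pass to a suitable birational model where a component $S \subset \lfloor B\rfloor$ is of Fano type over its image, and apply Theorem~\ref{thm:lifting-from-divisor} with the constant $N(D(\mathcal{R}),d-1,0,p)$ provided by the inductive hypothesis. Lemma~\ref{lemma-coeff-adj} and Lemma~\ref{lem:fano-coreg-adjunction} ensure that the pair induced by adjunction on $S$ has coefficients in $D(\mathcal{R})$ and coregularity $0$, matching the hypotheses of the lifting result.
\item If $0 < \dim W < d$ and $\{B\}+\bM.$ is big over $W$, I perturb the coefficients of $B$ to reduce to the previous case. If $\{B\}+\bM.$ is trivial on the general fiber, then either all generalized log canonical centers dominate $W$, in which case Theorem~\ref{introthm:cbf-and-coreg}$(0)$ (trivially true) forces the base of the fibration to have dimension at most $c = 0$, contradicting $\dim W > 0$; or else there is a vertical component $S \subseteq \lfloor B \rfloor$, forcing $B_{\rm hor}$ to be big over $W$, so again a coefficient perturbation reduces matters to the big and nef case.
\end{itemize}

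Combining the three cases, we obtain a bound
\[
N(\Lambda, 0, p) \leq \max\{ \, I_0(\mathcal{R},p), \, N(D(\mathcal{R}),d-1,0,p)\, \},
\]
where $I_0$ is the index bound from Theorem~\ref{introthm:index-coreg-0}. Since $D(D(\mathcal{R})) = D(\mathcal{R})$ by Lemma~\ref{lem:derived-closure} and both $D(\mathcal{R})$ and $I_0$ depend only on $\mathcal{R}$ (hence on $\Lambda$) and $p$, the induction closes to yield the universal bound $N(\Lambda,p)$.

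The main obstacle I expect is the intermediate-dimensional case, where I need to argue that the canonical bundle formula forces the base to be zero-dimensional for coregularity $0$ fibrations, and that otherwise a vertical component of $\lfloor B \rfloor$ must appear so that coefficient perturbation succeeds. Executing the perturbation cleanly in the generalized pair setting (and verifying that it preserves the generalized log canonical property together with coregularity $0$ while allowing Theorem~\ref{thm:lifting-from-divisor} to apply) is the most delicate point; everything else is a direct application of the tools already built in \S\S\ref{sec:reduct-finite-coeff}--\ref{sec:cbf} combined with the index bound of~\cite{FMM22}.
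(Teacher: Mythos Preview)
Your approach is correct and follows the general template used in Proposition~\ref{prop:index-implies-complements} for arbitrary coregularity, but the paper's proof is considerably more streamlined for coregularity $0$. The key simplification you miss is this: once $-(K_X+B+\bM X.)$ is semi-ample with ample model $W$ of positive dimension, coregularity $0$ \emph{automatically} forces a vertical generalized log canonical center, since a $0$-dimensional center cannot dominate $W$. Hence the paper does not split into $\dim W = d$ versus $0<\dim W<d$, nor into the $\{B\}+\bM.$ big/trivial subcases; instead it passes directly to a generalized dlt modification with a vertical component $S\subset\lfloor B\rfloor$, sets $\Xi=B-S$, takes the ample model $X'$ of $\Xi+\bM X.$ over $W$, and checks that on a $\qq$-factorial dlt modification $Y$ of $(X',B',\bM.)$ the pair $(Y,B_Y-\epsilon\Xi_Y,(1-\epsilon)\bM.)$ is generalized dlt, not generalized klt, and anti-big-and-nef, so Theorem~\ref{thm:lifting-from-divisor} applies uniformly. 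This yields $N(\Lambda,d,0,p)\leq N(\Lambda,d-1,0,p)$ in a single stroke.

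Two small issues in your write-up: first, your dichotomy ``$\{B\}+\bM.$ big over $W$'' versus ``trivial on the general fiber'' is not exhaustive without first running a $(\{B\}+\bM X.)$-MMP over $W$ and replacing $W$ by the resulting ample model (this is the step in the strategy outline you elided). Second, your remark about the possibility that $(X,B+\Gamma,\bM.)$ is generalized klt is confused: a generalized klt log Calabi--Yau pair has coregularity equal to its dimension, so for absolute coregularity $0$ this case simply cannot occur and $E\neq 0$ holds automatically. Neither issue is fatal, but both disappear entirely under the paper's direct argument.
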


\begin{proof}
First, we replace $\Lambda$ with its derived closure (see Lemma~\ref{lem:derived-closure}).
We let $\mathcal{R} \coloneqq \mathcal{R}(\Lambda,0,p) \subset \Lambda$ 
be the finite subset provided by Theorem~\ref{thm:reduction-of-coefficients}.
This finite subset only depends on $\Lambda$ and $p$.

By~\cite[Theorem 1.2]{FM20}, there is a constant $N(\Lambda,d,0,p)$ such that every generalized pair
$(X,B,\bM.)$ as in the statement and of dimension at most $d$ admits an $N(\Lambda,d,0,p)$-complement.
We will proceed by induction on $d$.
We may assume that 
$N(\Lambda,d,0,p)$ is divisible
by $I(\Lambda)$ and $p$
for every $d$.
Throughout the proof, we assume that $N(\Lambda,d,0,p)$ is minimal with such properties.

By Theorem~\ref{thm:reduction-of-coefficients}, 
we may assume that the coefficients of $B$ belong to $\mathcal{R}$.
Let $B+\Gamma$ be a $\qq$-complement
of $(X,B,\bM.)$ of coregularity $0$. 
Let $(Y,B_Y+\Gamma_Y,\bM.)$ be a generalized dlt modification
of $(X,B+\Gamma,\bM.)$.
Here, $\Gamma_Y$ is the strict transform 
of the fractional part of $\Gamma$
and $B_Y$ is the reduced exceptional plus the strict transform of $B+\lfloor \Gamma \rfloor$. By Lemma~\ref{lem:complements-and-dlt-mod}, $Y$ is a Fano type variety. Thus, 
we may run a $-(K_Y+B_Y+\bM Y.)$-MMP which terminates with a good minimal model $Z$ since $-(K_Y+B_Y+\bM Y.)\sim_{\qq}\Gamma_Y$ is effective.
Let $B_Z$ be the strict transform of $B_Y$ on $Z$
and $\bM Z.$ be the trace of $\bM.$ on $Z$.
Note that $(Z,B_Z,\bM.)$ is a generalized pair of coregularity $0$
and $-(K_Z+B_Z+\bM Z.)$ is a semi-ample divisor.
In order to produce an $N$-complement for $(X,B,\bM.)$,
it suffices to produce an $N$-complement for $(Z,B_Z,\bM.)$ (see Lemma~\ref{lem:complements-and-K-positive} and Lemma~\ref{lem:complements-and-dlt-mod}).
Hence, we may replace $(X,B,\bM.)$ with $(Z,B_Z,\bM.)$
and assume that $-(K_X+B+\bM X.)$ is semi-ample and that $(X,B,\bM.)$ has coregularity 0.
By~\cite[Theorem 1.2]{FM20}, then this reduction shows that Theorem~\ref{introthm:Fano-coreg-c}$(d,0)$ holds for any $d$; this will be used to then appeal to Theorem~\ref{thm:lifting-from-divisor}.

If $K_X+B+\bM X. \sim_\qq 0$, then the statement follows from~\cite[Theorem 1]{FMM22}.
Hence, we may assume that 
the ample model $W$ of $-(K_X+B+\bM X.)$ is positive dimensional.
Since $(X,B,\bM.)$ has coregularity $0$ and $\dim W >0$, then
some generalized log canonical center of $(X,B,\bM.)$ is vertical over $W$.
We may replace $(X,B,\bM.)$ with a generalized dlt modification
and assume that some $S\subset \Supp \lfloor B\rfloor$ is vertical over $W$.
Write $\Xi=B-S$.
Let $X'$ be the ample model of $\Xi+\bM X.$ over $W$.
Notice that, since $X$ is of Fano type and $S$ is vertical over $W$, $X \drar X'$ is a birational contraction.
Let $\Xi', B'$ be the push-forward of $\Xi, B$ on $X'$.
Note that $S$ is not contained in ${\rm Bs}(\Xi+\bM X./W)$.
Hence, $S$ is a generalized log canonical place
of $(X',B'-\epsilon\Xi',(1-\epsilon)\bM.)$ for
every $\epsilon>0$ small enough.
Let $(Y,B_Y,\bM.)$ be a $\qq$-factorial generalized dlt modification of
$(X',B',\bM.)$. Let $\pi \colon Y\rightarrow X'$ be the associated projective morphism.
We write $\Xi_Y+\bM Y.=\pi^*(\Xi+\bM X.)$.

By construction, the following conditions hold:
\begin{itemize}
    \item the generalized pair 
    $(Y,B_Y,\bM.)$ is generalized dlt and
    $-(K_Y+B_Y+\bM Y.)$ is nef; and 
    \item the generalized pair
    $(Y,B_Y-\epsilon \Xi_Y,(1-\epsilon)\bM .)$ is generalized dlt, it is not generalized klt, and the divisor
    $-(K_Y+B_Y-\epsilon \Xi_Y+(1-\epsilon) \bM Y.)$ is big and nef.
\end{itemize}
By Lemma~\ref{lem:complements-and-K-positive} and Lemma~\ref{lem:complements-and-dlt-mod}, an $N$-complement of $(Y,B_Y,\bM.)$
induces an $N$-complement of $(X,B,\bM.)$.
By Theorem~\ref{thm:lifting-from-divisor}
we conclude that $(Y,B_Y,\bM.)$ admits 
an $N(D(\mathcal{R}),d-1,0,p)$-complement.
Notice that we can rely on Theorem~\ref{thm:lifting-from-divisor} in lower dimension, since we are proceeding by induction on $d$, and the conjectures to which Theorem~\ref{thm:lifting-from-divisor} is conditional are known in coregularity 0 (by the remarks under the statement of Theorem~\ref{introthm:Fano-coreg-c}).
Hence, $(X,B,\bM.)$ admits a 
$N(D(\mathcal{R}),d-1,0,p)$-complement.
Note that $D(\mathcal{R})\subset \Lambda$.
So this is also a 
$N(\Lambda,d-1,0,p)$-complement.
Thus, by the minimality of $N(\Lambda,d,0,p)$, we have that
\[
N(\Lambda,d,0,p) \leq N(\Lambda,d-1,0,p).
\] 
This implies that $N(\Lambda,d,0,p)$ is bounded above by $N(\Lambda,1,0,p)$.
This finishes the proof.
\end{proof}

\begin{proof}[Proof of Theorem~\ref{introthm:Fano-coreg-0}]
We follow the notation of the proof of
Theorem~\ref{thm:coreg-0-comp}.
By~\cite[Corollary 3]{FMM22}, a log Calabi--Yau pair
with standard coefficients and coregularity $0$
has coefficients in $\left\{\frac{1}{2},1\right\}$.
In particular, a generalized log canonical threshold with standard coefficients and coregularity $0$ belongs to $\left\{\frac{1}{2},1\right\}$ (see
Definition~\ref{def:coreg-lct} and~\cite[Theorem 4.3]{FMP22}).
On the other hand, 
by Corollary~\ref{cor:pseff-treshold-coreg-0}, a generalized pseudo-effective threshold with standard coefficients 
and coregularity 0 is either $\frac{1}{2}$ or $1$.
In the proof of Theorem~\ref{thm:reduction-of-coefficients}, 
the set $\mathcal{R}(\mathcal{S},0,2)$
only consists of log canonical thresholds
of coregularity 0
and pseudo-effective thresholds of coregularity 0.
Hence, by the proof of Theorem~\ref{thm:reduction-of-coefficients},
we may assume that the coefficients of $B$ belong to $\{\frac{1}{2},1\}$, i.e.,
we have that
\[
\mathcal{R}_0 \coloneqq \mathcal{R}(\mathcal{S},0,2)=
\left\{\frac{1}{2},1 \right\}.
\]
Note that $I_{\mathcal{S}}=1$ (see Definition~\ref{def:index-of-a-set})
and $p=2$ in this case. 
By the proof of Theorem~\ref{thm:coreg-0-comp}, we conclude that 
\[
N(\mathcal{S},d,0,2) =
N(\mathcal{R}_0,d,0,2) 
\leq 
N(\mathcal{R}_0,d-1,0,2),
\] 
for every $d\geq 2$.
Then, the proof follows as
$N(\mathcal{S},1,0,2)=2$.
\end{proof}

\begin{proposition}\label{prop:cbf-implies-index}
Assume that Theorem~\ref{introthm:cbf-and-coreg}$(c)$ holds. 
Then, Theorem~\ref{introthm:index-higher-coreg}$(c)$ holds. 
\end{proposition}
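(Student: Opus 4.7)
The plan is to follow the strategy outlined in the introduction and argue by induction on $d=\dim X$, with Theorem~\ref{introthm:cbf-and-coreg}$(c)$ providing the key step that cuts the dimension of the ambient space. First, by~\cite[Theorem 2]{FMP22} I may assume $\Lambda$ is finite, and by Theorem~\ref{thm:fully-support-big} I may replace $(X,B,\bM.)$ with a Kollár--Xu model without altering the index, the coregularity, or the other hypotheses. This supplies a Fano type contraction $\pi\colon X\to Z$ with $\dim Z \leq c$ such that every generalized log canonical center of $(X,B,\bM.)$ dominates $Z$. The base of the induction is the generalized klt case, which is settled by Lemma~\ref{lem:index-gen-klt}.

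For the inductive step I would split into three cases. If $\bM.=0$, I pick a component $S\subseteq \lfloor B\rfloor$ and run a $(K_X+B-\epsilon S)$-MMP, which terminates with a Mori fiber space on which the transform of $S$ is relatively ample; here $S$ may fail to be normal, so the pair obtained by adjunction is only semi-log canonical. To control its index I invoke the admissible/pre-admissible machinery of \S\ref{sec:slc}, in particular Theorem~\ref{thm:slc-case}, which reduces the semi-log canonical index to the index of a dlt modification. Applying the inductive hypothesis on $(S,B_S)$ and then lifting via Theorem~\ref{thm:lifting-fibration-pairs} (or Theorem~\ref{thm:lifting-fibration-pairs-2} if $S\to W$ has disconnected fibers) bounds the index of $(X,B)$ by $I(\Lambda,d-1,c,0)$.

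Now assume $\bM.\neq 0$, so that $X$ is rationally connected. If $\{B\}+\bM X.$ is trivial on the general fiber of $\pi$, I apply Theorem~\ref{introthm:cbf-and-coreg}$(c)$ to obtain
\[
q(K_X+B+\bM X.) \sim q\pi^*(K_Z+B_Z+\bN Z.),
\]
with $q$ and the coefficient set of $B_Z$ depending only on $\Lambda,c,p$ and $q\bN.$ b-Cartier. Since $\dim Z\leq c$ and $Z$ is rationally connected (being dominated by $X$),~\cite[Corollary 3.9]{FMM22} ensures that any torsion component of $\bN Z'.$ on the model where $\bN.$ descends is killed by $q$; combined with Lemma~\ref{lem:index-gen-klt} applied to $(Z,B_Z,\bN.)$, this bounds the index of $K_Z+B_Z+\bN Z.$ in terms of $\Lambda,c,p$ only, and pulling back along $\pi$ yields the desired linear equivalence on $X$. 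Otherwise $\{B\}+\bM X.$ is not trivial on the general fiber, and I run a $(K_X+\lfloor B\rfloor)$-MMP over $Z$ to reach a Mori fiber space $X'\to W$ over $Z$; since $\lfloor B\rfloor$ is relatively big, some component $S$ of $\lfloor B'\rfloor$ dominates $W$, and rational connectedness of $X$ forces both $W$ and $S$ to be rationally connected. The inductive hypothesis applied to the generalized pair induced on $S$ by adjunction (which has dimension $d-1$, coregularity $c$, and moduli part with the same Cartier index), combined with Theorem~\ref{thm:lifting-fibration-gen-pairs}, then gives $I(\Lambda,d-1,c,p)(K_{X'}+B'+\bM X'.)\sim 0$.

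Putting the three cases together yields
\[
I(\Lambda,d,c,p) \leq \max\{I_0(\Lambda,c,p),\,I(\Lambda,d-1,c,p),\,I(\Lambda,d-1,c,0)\},
\]
in which only $I_0$ is independent of $d$, so iterating from $d=c$ closes the induction. The main obstacle I anticipate is Case~1: the non-normality of the Mori fiber $S$ forces me to rely on Conjecture~\ref{conj:b-rep}$(c)$ and on a delicate manipulation of admissible sections to descend a trivialization from a dlt modification of the normalization to $S$ itself. A secondary subtlety is in Case~2, where one genuinely needs rational connectedness of $Z$ to control torsion of $\bN.$; this is also the reason the rational connectedness hypothesis on $X$ cannot be dropped from the statement.
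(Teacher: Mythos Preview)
Your proposal is correct and follows essentially the same route as the paper's proof: reduce to finite coefficients via the global ACC, pass to a Koll\'ar--Xu model, and then split according to whether $\bM.$ is present and whether $\{B\}+\bM X.$ is trivial on the general fiber, invoking Theorem~\ref{introthm:cbf-and-coreg}$(c)$ in the trivial case and lifting from a horizontal $S$ via Theorems~\ref{thm:lifting-fibration-gen-pairs}, \ref{thm:lifting-fibration-pairs}, or \ref{thm:lifting-fibration-pairs-2} in the remaining cases.

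There is one small discrepancy in the case organization that creates a genuine (if easily repaired) gap. You split into $\bM.=0$ versus $\bM.\neq 0$, whereas the paper splits into ``$\bM.$ numerically non-trivial'' versus ``$\bM.$ numerically trivial or zero''. The difference matters when $\bM.\neq 0$ is numerically trivial (torsion) and $X$ is rationally connected. In your Case~2 you would then try to invoke Theorem~\ref{introthm:cbf-and-coreg}$(c)$, but if in addition $\dim Z=0$ (which is allowed in a Koll\'ar--Xu model) there is no fibration to which the canonical bundle formula applies; the proof chain behind Theorem~\ref{introthm:cbf-and-coreg} (Theorems~\ref{thm:cbf-and-coreg-induction} and~\ref{thm:cbf-inductive}) explicitly assumes $\dim Z>0$. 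The paper circumvents this by treating the numerically trivial case alongside $\bM.=0$: since $X$ is rationally connected, \cite[Lemma~3.9]{FMM22} gives $p\bM X'.\sim 0$ on a model where $\bM.$ descends, so one may absorb $\bM.$ and reduce to the honest pair case. Once you add this reduction at the top of your ``$\bM.\neq 0$'' branch, your argument and the paper's are identical.
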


\begin{proof}
By Lemma~\ref{lem:derived-closure}, we may assume that
$\Lambda$ is derived.
By~\cite[Theorem 2]{FMM22}, we may assume that
the coefficients of $B$ belong to a finite subset $\Lambda_0\subset \Lambda$.
Let $\lambda_0$ be the smallest positive integer such that $\lambda_0\Lambda_0 \subset \zz$
and $p$ divides $\lambda_0$.
Let $I(\Lambda_0,c,d,p)$ be the smallest positive integer that is divisible by the index of all the generalized pairs
as in the statement of dimension at most $d$.
A priori, $I(\Lambda_0,c,d,p)$
may not exist.
However, due to Conjecture~\ref{conj:index}
and Lemma~\ref{lem:index-gen-klt}, 
we know that
$I(\Lambda_0,c,c,p)$
is finite.
We proceed by induction on the dimension $d$.
Assume that $I(\Lambda_0,d-1,c,p)$ is finite.
We may assume that $I(\Lambda_0,d-1,c,p)$ is divisible by $\lambda_0$.
Let $(X,B,\bM.)$ be a $d$-dimensional generalized pair as in the statement.
We write $I(X,B,\bM.)$ for the index of this generalized log Calabi--Yau pair.

By Theorem~\ref{thm:fully-support-big}, we may replace $(X,B,\bM.)$ with a Koll\'ar--Xu model and assume the following conditions hold:
\begin{itemize}
    \item the generalized pair $(X,B,\bM.)$ is generalized dlt;
    \item there is a contraction
    $\pi\colon X\rightarrow Z$ for which $\lfloor B\rfloor$ fully supports a $\pi$-semi-ample
    and $\pi$-big divisor; and
    \item every generalized log canonical center of $(X,B,\bM.)$ dominates $Z$.
\end{itemize}
In particular, we know that $Z$ has dimension at most $c$.

We will proceed in two different cases, depending
on the coefficients of $B+\bM X.$ and the rational
connectedness of $X$.\\

\noindent\textit{Case 1:} 
We assume that $\bM.$ is numerically non-trivial and $X$ is rationally connected.\\

We will proceed in two different sub-cases, 
depending on the coefficients of $\{B\}+\bM X.$.\\

\noindent\textit{Case 1.1:} We assume that $\{B\}+\bM X.$ is $\qq$-trivial on the general fiber of $\pi$.\\

We observe that, given the running assumption that $\bM.$ is numerically non-trivial, in this subcase we have $\dim Z > 0$.
By Theorem~\ref{introthm:cbf-and-coreg}$(c)$, we can write
\[
q(K_X+B+\bM X.) \sim q\pi^*(K_Z+B_Z+\bN Z.),
\]
where $(Z,B_Z,\bN.)$ is a generalized klt log Calabi--Yau pair and
the positive integer $q$ only depends on $\Lambda_0$, $c$ and $p$.
The coefficients of $B_Z$ belong to a set $\Omega$,
which satisfies the DCC,
and only depends on $\Lambda_0$, $c$ and $p$.
Finally, the divisor $q\bN .$ is b-Cartier.
We conclude that
\[
I(X,B,\bM .) \leq {\rm lcm}(q,I(\Omega,c,c,q)).
\] 
Note that the value on the right-hand side 
only depends on $\Lambda_0,c$ and $p$.\\ 

\noindent\textit{Case 1.2:} We assume that $\{B\}+\bM X.$ is non-trivial on the general fiber of $\pi$.\\

We run a $(K_X+\lfloor B\rfloor)$-MMP over $Z$.
Since $K_X+\lfloor B\rfloor$ is not pseudo-effective over $Z$, this minimal model program
terminates with a Mori fiber space
$\pi'\colon X' \rightarrow W$ over $Z$.
We denote the push-forward of $B$ to $X'$ by $B'$.
We may replace $(X,B,\bM.)$ with
$(X',B',\bM.)$ 
and assume that $K_X+\lfloor B\rfloor$ is anti-ample over $W$.
In this reduction, we may give up the generalized dlt property for $(X,B,\bM.)$, while $(X,\lfloor B \rfloor,\bM.)$ remains generalized dlt.
By the reduction to the Koll\'ar--Xu model,
the divisor $\lfloor B\rfloor$ contains a prime component $S$ which dominates $W$.
By construction, the general fiber of $S\rightarrow W$ is rationally connected.
Since $W$ is rationally connected, we conclude that $S$ is rationally connected.
Since $(X,\lfloor B \rfloor,\bM.)$ is generalized dlt, $S$ is normal.
Let $(S,B_S,\bN.)$ be the generalized pair obtained by adjunction.
Then, by Lemma~\ref{lemma-coeff-adj} and~\cite[Theorem 2]{FMP22}, we know that the coefficients of $B_S$ belong to $\Lambda_0$ and
$p\bN.$ is b-Cartier.
We conclude that 
\[
I(\Lambda_0,d-1,c,p)(K_S+B_S+\bN S.)\sim 0.
\]
By Theorem~\ref{thm:lifting-fibration-gen-pairs}, 
we conclude that 
\[
I(\Lambda_0,d-1,c,p)(K_X+B+\bM X.)\sim 0.
\] 

Putting Case 1.1 and Case 1.2 together,
we conclude that if $\bM.$ is numerically non-trivial
and $X$ is rationally connected, then we have that
\[
I(X,B,\bM.) \leq \max\{I(\Lambda_0,d-1,c,p),
{\rm lcm}(q,I(\Omega,c,c,q))\}.
\]

\noindent\textit{Case 2:} We assume that either $\bM.$
is the trivial b-divisor
or $\bM.$ is numerically trivial and $X$ is rationally connected.\\ 

In the latter case, by~\cite[Lemma 3.9]{FMM22} we know that $p\bM X'.\sim 0$ where $X'\rightarrow X$ is a resolution on which $\bM.$ descends.
Replacing $\bM.$ with $p\bM.$, 
we may assume that the first case holds.

Let $(X,B)$ be a log Calabi--Yau pair as in the statement.
We assume that the coefficients of $B$ belong to $\Lambda_0$. We may assume that $d>c$.
We replace $(X,B)$ with a $\qq$-factorial dlt modification.
Let $S\subset \lfloor B\rfloor$ be a prime component.
We run a $(K_X+B-S)$-MMP.
This terminates with a Mori fiber space
$\pi'\colon X'\rightarrow W$.
We denote by $B'$ the push-forward of $B$ to $X'$.
We replace $(X,B)$ with $(X',B')$.
Note that $S$ is ample over the base $W$ and $(X,B-S)$ is dlt.
Let $(S,B_S)$ be the pair obtained by adjunction.
Then, $(S,B_S)$ is a semi-log canonical log Calabi--Yau pair 
by~\cite[Example 2.6]{FG14}.
Furthermore, by Lemma~\ref{lemma-coeff-adj} and~\cite[Theorem 2]{FMP22}, the coefficients of $B_S$ belong to $\Lambda_0$. 
By Theorem~\ref{thm:slc-case}, up to replacing
$I(\Lambda_0,d-1,c)$
with
${\rm lcm}(I(\Lambda_0,d-1,c),I_a(\Lambda_0,c))$, we may assume that
\[
I(\Lambda_0,d-1,c,0)(K_S+B_S)\sim 0.
\] 
Here, $I_a(\Lambda_0,c)$ is the constant from Theorem~\ref{thm:slc-case}.
By construction, 
we have that $(X,B-S)$ is dlt,
$X$ is $\qq$-factorial and klt, 
and 
$S\subset \lfloor B\rfloor$ is ample over $W$.
If the fibers of $S\rightarrow W$ are connected, then we can apply
Theorem~\ref{thm:lifting-fibration-pairs}, to conclude that
\[
I(\Lambda_0,d-1,c,0)(K_X+B)\sim 0.
\] 
Otherwise, we can apply  Theorem~\ref{thm:lifting-fibration-pairs-2}, to conclude that
\[
I(\Lambda_0,d-1,c,0)(K_X+B)\sim 0.
\] 

Putting case Case 1 and Case 2 together,
we conclude that every generalized pair
$(X,B,\bM.)$ of dimension $d$ as in the statement
satisfies that
\[
I(X,B,\bM.) \leq \max\{ 
I(\Lambda_0,d-1,c,p),
{\rm lcm}(q,I(\Omega,c,c,q))
\}. 
\] 
Hence, we have that
\[
I(\Lambda_0,d,c,p) 
\leq \max\{ 
I(\Lambda_0,d-1,c,p),
{\rm lcm}(q,I(\Omega,c,c,q))
\} 
\]
Proceeding inductively, we conclude that
\[
I(\Lambda_0,d,c,p) \leq 
\max\{ 
I(\Lambda_0,c,c,p),qI(\Omega,c,c,c)
\}. 
\] 
The right-hand side does not depend on $d$.
This finishes the proof of the proposition.
\end{proof}

\begin{lemma}\label{lem:admissible:pp1}
Let $\lambda$ be a positive integer.
Let $(\pp^1,B_{\pp^1},\bM \pp^1.)$
be a generalized log Calabi--Yau pair.
Assume that the coefficients of $B_{\pp^1}$ belong to $D_\lambda$
and $2\lambda \bM \pp^1.$ is Weil.
Then, $I(K_\pp^1+B_{\pp^1}+\bM \pp^1.)\sim 0$ for some $I=m\lambda$,
where $m \leq 120\lambda$ is an even positive integer.
Furthermore, if $\bM \pp^1.=0$, then
$H^0(\pp^1,\mathcal{O}_{\pp^1}(I(K_\pp^1+B_{\pp^1})))$
admits an admissible section.
\end{lemma}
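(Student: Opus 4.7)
The plan is to work directly on $\pp^1$, exploiting the fact that $\deg(K_{\pp^1}+B_{\pp^1}+\bM \pp^1.)=0$ by the log Calabi--Yau condition. Since on $\pp^1$ every Weil divisor of degree zero is linearly trivial, it is enough to find $I=m\lambda$ with $m\leq 120\lambda$ even such that both $IB_{\pp^1}$ and $I\bM \pp^1.$ are Weil divisors. The even parity of $m$ makes $I$ a multiple of $2\lambda$, and by hypothesis this forces $I\bM \pp^1.$ to be Weil, so everything reduces to bounding the denominators of the coefficients $c_i$ of $B_{\pp^1}$ and showing the required lcm divides some $m\lambda$ with $m\leq 120\lambda$ even.

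Each coefficient $c\in D_\lambda\cap[0,1]$ can be written as $c=(m-1+a/\lambda)/m$ with $m\in\zz_{\geq 1}$ and $a\in\zz$, $0\leq a\leq\lambda$; its denominator divides $m\lambda$. So it suffices to bound $\mathrm{lcm}(m_i)$, where $m_i$ is the integer associated to the coefficient $c_i$. I will split cases according to $\deg\lfloor B_{\pp^1}\rfloor\in\{0,1,2\}$. When $\deg\lfloor B_{\pp^1}\rfloor=2$ we have $B_{\pp^1}=p+q$, $\deg\bM \pp^1.=0$, and $m=2$ works immediately. When $\deg\lfloor B_{\pp^1}\rfloor\leq 1$, the inequality $\sum(1-c_i)\geq k-2+\deg\bM \pp^1.$ combined with $c_i\geq 1-1/m_i$ yields $\sum 1/m_i\geq k-2$, placing the signature $(m_1,\dots,m_k)$ of fractional coefficients in one of the classical lists: $(2,2,n)$, $(2,3,3)$, $(2,3,4)$, $(2,3,5)$ and their degenerations. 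The unbounded case $(2,2,n)$ is then constrained by the requirement $\deg\bM \pp^1.\in\frac{1}{2\lambda}\zz_{\geq 0}$: the degree equation forces a congruence of the form $n\mid 2\lambda$ (up to contributions from the $a_i/\lambda$ perturbations), which bounds $n\leq 2\lambda$. In every case one concludes $\mathrm{lcm}(m_i)\leq 60\lambda$, so $m=2\cdot\mathrm{lcm}(m_i)\leq 120\lambda$ is even and suffices.

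For the admissible section statement when $\bM \pp^1.=0$: once $I(K_{\pp^1}+B_{\pp^1})\sim 0$ the line bundle $\mathcal{O}_{\pp^1}(I(K_{\pp^1}+B_{\pp^1}))$ is trivial, so $H^0$ is one-dimensional and any nonzero section is nowhere vanishing. The group $\mathrm{Bir}(\pp^1,B_{\pp^1})=\mathrm{Aut}(\pp^1,B_{\pp^1})$ is finite (a subgroup of one of the classical triangle or dihedral groups just identified), and it acts on this one-dimensional space through a character whose order divides the order of the group, itself bounded by the same classical analysis. Up to multiplying $I$ by this bounded factor, the character becomes trivial and the resulting section is admissible.

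The main technical obstacle is the case-by-case bookkeeping: one must track simultaneously the three sources of denominators—the integer $m_i$ coming from the shape of $c_i\in D_\lambda$, the perturbation $a_i/\lambda$, and the contribution of $\deg\bM \pp^1.\in\frac{1}{2\lambda}\zz$—and confirm that across every classical signature they combine to give $\mathrm{lcm}\leq 60\lambda$. The delicate subcase is the dihedral $(2,2,n)$ family, where the apparent unboundedness of $n$ is cut off precisely by the Weil hypothesis on $2\lambda\bM \pp^1.$.
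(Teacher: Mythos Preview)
There are two genuine gaps.

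For the index bound, your signature list is incomplete. The signatures $(2,2,n),(2,3,3),(2,3,4),(2,3,5)$ only cover the case where every $m_i\geq 2$; but $D_\lambda$ contains coefficients $a/\lambda$ with $0<a<\lambda$, which correspond to $m_i=1$, and once one such coefficient is present your inequality $\sum 1/m_i\geq k-2$ puts no constraint on the remaining $m_j$. Concretely, for $\lambda=60$ and $\bM.=0$, set $B_{\pp^1}=(1-\tfrac{1}{3660})p_1+(1-\tfrac{1}{61})p_2+\tfrac{1}{60}\,p_3$: all three coefficients lie in $D_{60}$ and sum to $2$, yet $\lcm(m_i)=3660>60\lambda$, so your proposed $m=2\cdot 3660=7320$ exceeds $120\lambda=7200$. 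The paper's proof bypasses this by not attempting to bound every $m_i$: after ordering $m_1\geq m_2\geq\cdots$ and observing $m_3=\cdots=m_k=1$ when $m_1>5$, it bounds only the second entry (showing $m_2\leq 2\lambda$ via $c_1+c_2+c_3>(1-\tfrac{1}{2\lambda})+(1-\tfrac{1}{2\lambda})+\tfrac{1}{\lambda}=2$ if $m_2>2\lambda$) and then uses the degree relation $\sum c_i+\deg\bM\pp^1.=2$ to conclude that $m_2\lambda\cdot c_1$ is an integer automatically once the remaining $m_2\lambda\cdot c_i$ and $m_2\lambda\cdot\deg\bM\pp^1.$ are. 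No bound on $m_1$ is ever needed; in the example this yields $I=2\cdot 61\cdot 60$, well inside the stated range.

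For the admissible section, the claim that $\Aut(\pp^1,B_{\pp^1})$ is finite fails exactly in your first case $B_{\pp^1}=p+q$, where $\Aut\simeq\mathbb{G}_m\rtimes\zz_2$. (You also appear to identify $\Aut(\pp^1,B_{\pp^1})$---a subgroup of $\mathrm{PGL}_2$ permuting the marked points---with the triangle or dihedral group attached to the orbifold signature; these are different objects.) The paper instead quotients $(\pp^1,B_{\pp^1})$ by $G=\Aut$ when $G$ is finite (support at $\geq 3$ points) and carries out the index computation on the quotient; any section there pulls back $G$-invariantly, so admissibility holds for the very same $I$ with no extra factor. In the remaining case $B_{\pp^1}=p+q$, the torus $\mathbb{G}_m$ acts trivially on the one-dimensional space of sections, so only the residual $\zz_2$ matters, and $m$ even suffices.
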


\begin{proof}
We prove the second statement.
Let $G={\rm Aut}(\pp^1,B_{\pp^1})$.
The group $G$ is a finite extension of a torus.
First, we assume that $G$ is finite, which turns to imply 
that $B_{\pp^1}$ is supported in at least three points.
Let $(\pp^1,B_{\pp^1})\rightarrow (\pp^1,B'_{\pp^1})$ be the quotient by $G$ given by the Hurwitz formula. 
By pulling back to $\pp^1$, it suffices to show that 
\[
h^0(\pp^1,\mathcal{O}_{\pp^1}(I(K_{\pp^1}+B'_{\pp^1}))) \neq 0
\] 
for some $I=m\lambda$, where $m\leq 120\lambda$.
We write
\[
B'_{\pp^1}=\sum_{i=1}^k\left(
1-\frac{1}{m_i}+\frac{\sum_{j=1}^{l_k}\frac{p_{j,k}}{\lambda}}{m_i}
\right)\{p_i\},
\] 
where the $p_{j,k}$'s are positive integers.
We may assume that $m_1\geq \dots\geq m_k$.
If $m_1\leq 5$, then for $I=120\lambda$, we have that $IB'_{\pp^1}$ is Weil.
If $m_1>5$, then $m_3=\dots=m_k=1$.
If $m_2>2\lambda$,
then $m_1>2\lambda$
and 
\[
{\rm coeff}_{p_1}(B'_{\pp^1})
+
{\rm coeff}_{p_2}(B'_{\pp^1})
+
{\rm coeff}_{p_3}(B'_{\pp^1})
>
\left(1-\frac{1}{2\lambda}\right)
+
\left(1-\frac{1}{2\lambda}\right)
+\frac{1}{\lambda} = 2.
\] 
This leads to a contradiction.
Hence, we may assume that $m_2\leq 2\lambda$.
In this case, we have that 
$m_2\lambda B'_{\pp^1}$ is Weil.
Thus, it suffices to take $I=m_2\lambda$ with $m_2\leq 2\lambda$.

Now, we assume that $G$ is a finite extension of a torus.
This implies that $B_{\pp^1}$ is supported in two points, so we may assume that 
$B_{\pp^1}=\{0\}+\{\infty\}$
and that 
$G\simeq \mathbb{G}_m\rtimes \zz_2$.
Note that $\mathbb{G}_m$ acts trivially on 
B-representations as it is connected.
Hence, in this case, $2(K_{\pp^1}+B_{\pp^1})$ admits an admissible section.

In the first statement, we need to control the index of the generalized pair.
The same argument 
we used in the previous paragraph
to control the index of $(\pp^1,B'_{\pp^1})$ applies
to $(\pp^1,B_{\pp^1},\bM \pp^1.)$.
The only difference is that due to the presence of $\bM.$, it could be that $B_{\pr 1.}$ is supported at only one point or it could even be empty.
In this case, $2\lambda(K_{\pr 1.}+ B_{\pr 1.} + \bM \mathbb{P}^1.)$ is integral, and the claims follow.
\end{proof}

\begin{proof}[Proof of Theorem~\ref{introthm:index-coreg-1}]
Let $(X,B,\bM.)$ be a $d$-dimensional generalized log Calabi--Yau pair as in the statement.
We follow the proof of Proposition~\ref{prop:cbf-implies-index}.
In Case 1.1, we can write
\[
q(\lambda)(K_X+B+\bM X.) \sim q(\lambda)
\pi^*(K_{\pp^1}+B_{\pp^1}+\bM \pp^1.),
\] 
where $(\pp^1,B_{\pp^1},\bM \pp^1.)$
is a generalized log Calabi--Yau pair
for which the coefficients 
of $B_{\pp^1}$ belong to a DCC set $\Omega$
and $2\lambda \bM \pp^1.$ is Weil.
Here, we can take $q(\lambda)=2\lambda$.
Indeed, the constant $q(\lambda)$ in the canonical bundle formula depends on the existence of bounded complements with standard coefficients and relative absolute coregularity $0$.
By Theorem~\ref{thm:horizontal-relative-comp} and 
Theorem~\ref{introthm:Fano-coreg-0} such relative complement can be chosen to be a $2\lambda$-complement.
On the other hand, we can take
$\Omega=D_\lambda$.
Indeed, in this case, $(X,B,\bM.)$ admits a log canonical center which has a finite dominant map to ${\pp^1}$.
Thus, the coefficients of $B_{\pp^1}$ can be computed by the adjunction formula and Riemann--Hurwitz.
By Lemma~\ref{lem:admissible:pp1}, we conclude that $I(K_{\pp^1}+B_{\pp^1}+\bM \pp^1.)\sim 0$ for some integer $I=m\lambda$
where $m\leq 120\lambda$.
We conclude that $I(K_X+B+\bM X.)\sim 0$ for the same choice of $I$.

In Case 1.2 and Case 2, the index of $(X,B,\bM.)$ divides the index of a possibly non-normal log Calabi--Yau pair of coregularity 1 and dimension $d-1$.
Inductively, we reduce to the 1-dimensional case which follows by
Theorem~\ref{thm:slc-case} and
Lemma~\ref{lem:admissible:pp1}.
\end{proof} 

\begin{proof}[Proof of Theorem~\ref{introthm:index-coreg-1-standard}]
In a similar fashion as the proof of Theorem~\ref{introthm:index-coreg-1},
this follows from the proof of Proposition~\ref{prop:cbf-implies-index} and the classification
of log Calabi--Yau pair structures on $\pp^1$ with standard coefficients.
\end{proof}

\begin{proposition}\label{prop:index-implies-complements}
Assume that  Theorem~\ref{introthm:index-higher-coreg}$(c)$ holds and
Theorem~\ref{introthm:cbf-and-coreg}$(c)$ holds. 
Then, Theorem~\ref{introthm:Fano-coreg-c}$(c)$ holds.
\end{proposition}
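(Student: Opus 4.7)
The plan is to follow the inductive strategy sketched in \S~Strategy of the proof (iv). We proceed by induction on the dimension $d$ of $X$, fixing the coregularity $c$, and show that $N(\Lambda,d,c,p)$ is bounded above by a constant depending only on $\Lambda$, $c$, and $p$. The base of the induction is the exceptional case $d=c$, handled by~\cite[Theorem 1.7]{Bir19} (applied in dimension $c$). For the inductive step, let $(X,B,\bM.)$ be a generalized Fano type pair as in Theorem~\ref{introthm:Fano-coreg-c}$(d,c)$. By Theorem~\ref{thm:reduction-of-coefficients}, we first reduce to the case where the coefficients of $B$ belong to a finite set $\mathcal{R}\subset \Lambda$ depending only on $\Lambda$, $c$, and $p$. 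Since the absolute coregularity is $c$, we may choose a generalized log Calabi--Yau structure $(X,B+\Gamma,\bM.)$ of coregularity $c$; if $(X,B+\Gamma,\bM.)$ is generalized klt, then we are in the exceptional case and conclude as above. Otherwise, passing to a $\qq$-factorial dlt modification and running a suitable $-(K_Y+B_Y+E+\bM Y.)$-MMP with scaling as in the strategy, we may assume $-(K_X+B+\bM X.)$ is semi-ample, $\lfloor B \rfloor \neq 0$ (the log canonical part introduced cannot be contracted), and $\mathrm{coreg}(X,B,\bM.)=c$; Lemma~\ref{lem:complements-and-K-positive} and Lemma~\ref{lem:complements-and-dlt-mod} ensure that producing an $N$-complement after these replacements suffices.

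We now split into three cases according to the dimension of the ample model $W$ of $-(K_X+B+\bM X.)$. If $\dim W = 0$, then $K_X+B+\bM X.\sim_\qq 0$, so producing an $N$-complement is exactly controlling the index, and the result follows from Theorem~\ref{introthm:index-higher-coreg}$(c)$. If $\dim W = d$, then $-(K_X+B+\bM X.)$ is big and nef, and the existence of a component $S\subset \lfloor B \rfloor$ allows us to invoke Theorem~\ref{thm:lifting-from-divisor}: we lift an $N(D(\mathcal{R}),d-1,c,p)$-complement from $S$ to $X$, where $N(D(\mathcal{R}),d-1,c,p)$ is provided by Theorem~\ref{introthm:Fano-coreg-c}$(d-1,c)$ via the inductive hypothesis.

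The remaining case $0<\dim W<d$ is where the canonical bundle formula enters. Let $\pi\colon X\rightarrow W$ be the induced fibration. If $\{B\}+\bM.$ is $\pi$-big, then by perturbing the coefficients of $B$ using a general $\pi$-ample member we introduce a new component of $\lfloor B \rfloor$ that is $\pi$-horizontal and reduce to Case 2. Otherwise, after replacing $W$ by the relative ample model of $\{B\}+\bM.$ over $W$, we may assume $\{B\}+\bM.$ is $\pi$-trivial. If there is a vertical component of $\lfloor B\rfloor$ over $W$, then $B_{\mathrm{hor}}$ is big over $W$ and, again after perturbing the coefficients, we reduce to Case 2. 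The remaining situation is that every generalized log canonical center of $(X,B,\bM.)$ dominates $W$, which places us in the setting of Theorem~\ref{introthm:cbf-and-coreg}$(c)$. Applying that theorem, we obtain a generalized log canonical pair $(W,B_W,\bN.)$ on $W$ of coregularity $c$ with coefficients in a DCC set $\Omega$ depending only on $\Lambda,c,p$, and with $q\bN.$ b-Cartier, such that $q(K_X+B+\bM X.)\sim q\pi^*(K_W+B_W+\bN W.)$. Since $W$ has dimension at most $c$ and $(W,B_W,\bN.)$ is exceptional (every generalized log canonical center of $(X,B,\bM.)$ dominated $W$), we apply~\cite[Theorem 1.7]{Bir19} in dimension $\leq c$ to produce an $N(\Omega,c)$-complement on $W$, which we pull back to an $\mathrm{lcm}(q,N(\Omega,c))$-complement on $X$.

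Combining the three cases and taking the maximum, we obtain
\[
N(\Lambda,d,c,p) \leq \max\{\,N(\Omega,c),\ N(\Lambda,d-1,c,p),\ I(\Lambda,c,p)\,\},
\]
where the three constants on the right depend only on $\Lambda$, $c$, and $p$ (the last one through Theorem~\ref{introthm:index-higher-coreg}$(c)$), completing the induction. The main obstacle in implementing this plan is the careful MMP reduction at the start of the argument: one must choose the dlt modification and run the MMP in such a way that $\lfloor B \rfloor\neq 0$ after the reduction, that the coregularity and coefficient conditions are preserved (so that Theorem~\ref{thm:lifting-from-divisor} applies with a controlled constant), and that the resulting birational modifications are $K$-non-negative so that complements descend through Lemma~\ref{lem:complements-and-K-positive}. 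The sub-case analysis in the log Calabi--Yau fibration case, particularly tracking when the perturbation of coefficients legitimately reduces to Case 2 without inflating the coefficient set beyond $\mathcal{R}$, is the most technically delicate part.
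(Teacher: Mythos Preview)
Your proposal is correct and follows essentially the same approach as the paper: induction on $d$, reduction to finite coefficients via Theorem~\ref{thm:reduction-of-coefficients}, the dlt-modification-plus-MMP reduction to $-(K_X+B+\bM X.)$ semi-ample with $\lfloor B\rfloor\neq 0$, and the three-way split on $\dim W$ invoking Theorem~\ref{introthm:index-higher-coreg}$(c)$, Theorem~\ref{thm:lifting-from-divisor}, and Theorem~\ref{introthm:cbf-and-coreg}$(c)$ respectively.

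One inaccuracy worth correcting: in Case~3, your description of the perturbation is not the right mechanism. You write that when $\{B\}+\bM.$ is $\pi$-big you ``introduce a new component of $\lfloor B\rfloor$ using a general $\pi$-ample member''; this does not work, since a small general divisor will not have coefficient~1. The paper instead first runs a $(\{B\}+\bM X.)$-MMP over $W$ (needed so that the relative ample model $W'$ exists as a morphism), and then \emph{decreases} the fractional and moduli parts: the pair $(X',B'-\epsilon\{B'\},(1-\epsilon)\bM.)$ retains $\lfloor B'\rfloor\neq 0$ as its non-klt locus, while its anti-log-canonical divisor picks up the big class $\epsilon(\{B'\}+\bM X'.)$ over $W$ and becomes big and nef globally. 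The same trick, with an additional $B'_{\rm hor}$-MMP over $W'$, handles the vertical-center subcase. No new coefficients are introduced, so the set $\mathcal{R}$ is preserved automatically---which also resolves the concern you flag in your final paragraph.
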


\begin{proof}
By Lemma~\ref{lem:derived-closure}, we may assume that $\Lambda$ is derived.
Let $N(\Lambda,d,c,p)$ be the smallest positive integer for which
every generalized pair $(X,B,\bM.)$ of
dimension $d$ as in the statement
admits an $N$-complement.
By Theorem~\ref{thm:reduction-of-coefficients}, there exists a finite subset $\mathcal{R}\subset\Lambda$
for which 
\[
N(\mathcal{R},d,c,p)=N(\Lambda,d,c,p)
\] 
for every $d$.
We proceed by induction on $d$.
We may assume that every complement throughout the proof is divisible
by $p$ and $I_{\mathcal{R}}$.
We write $N(X,B,\bM.)$ for the smallest positive integer for which $(X,B,\bM.)$ admits an $N$-complement of coregularity $c$.
Let $B+\Gamma$ be a $\qq$-complement
of $(X,B,\bM.)$ that computes the absolute coregularity.
Let $(Y,B_Y+\Gamma_Y+E,\bM.)$ be a generalized $\qq$-factorial dlt modification of
$(X,B+\Gamma_Y,\bM.)$, where $B_Y$ (resp. $\Gamma_Y$) is the strict transform of the fractional part of $B$ (resp. $\Gamma$).
Then, the generalized pair
$(Y,B_Y+E,\bM.)$ has coregularity $c$.
We run a $-(K_Y+B_Y+E)$-MMP with scaling,
which terminates with a good minimal model $Z$.
By Lemma~\ref{lem:coreg-nonnegative-contraction}, the generalized pair $(Z,B_Z+E_Z)$ has coregularity $c$.
By Lemma~\ref{lem:complements-and-dlt-mod}
and Lemma~\ref{lem:complements-and-K-positive}, 
we have that
$N(X,B,\bM.)\leq N(Z,B_Z+E_Z,\bM.)$.
In order to give an upper bound for $N(X,B,\bM.)$, we may replace
$(X,B,\bM.)$ with $(Z,B_Z+E_Z,\bM.)$.
Thus, we may assume that 
${\rm coreg}(X,B,\bM.)=c$
and $-(K_X+B+\bM X.)$ is semi-ample.
Let $W$ be the ample model of $-(K_X+B+\bM X.)$.
We proceed in three different cases, depending on the dimension of $W$.\\

\noindent\textit{Case 1:} In this case, we can assume that $\dim W=0$.\\

Then, we have that $N(K_X+B+\bM X.)\sim 0$
for some $N$ that only depends on $\Lambda$, $c$ and $p$ by Theorem~\ref{introthm:index-higher-coreg}$(c)$.\\

\noindent\textit{Case 2:} In this case, we
assume that $\dim W=\dim X$.\\ 

In this case, we have that $-(K_X+B+\bM X.)$ is a nef and big divisor.
We may assume that $(X,B,\bM.)$ is $\qq$-factorial and generalized dlt.
By Theorem~\ref{thm:lifting-from-divisor}, we conclude that $N(X',B',\bM.)\leq N(\Lambda,d-1,c,p)=N(\mathcal{R},d-1,c,p)$.
In this case, we conclude that 
\[
N(X,B,\bM.)\leq N(\Lambda,d-1,c,p)=N(\mathcal{R},d-1,c,p).
\]

\noindent\textit{Case 3:} In this case, we assume that $0<\dim W < \dim X$.\\

We run a $(\{B\}+\bM X.)$-MMP over $W$ which terminates with a good minimal model $X\dashrightarrow X'\rightarrow W$ over $W$.
By Lemma~\ref{lem:coreg-nonnegative-contraction}, the coregularity of $(X,B,\bM.)$ is unaffected by this MMP.
Let $W'\rightarrow W$ be the ample model of $\{B\}+\bM X.$ over the base.
First, assume that $\dim W'=\dim X$.
In this case, $\{B'\}+\bM X'.$ is big over $W$.
Hence, for $\epsilon>0$ small enough, 
we have that the generalized pair
$(X',B'-\epsilon \{B'\},(1-\epsilon)\bM.)$ is generalized log canonical but not generalized klt and the divisor
\[
-(K_{X'}+B'-\epsilon \{B'\}+(1-\epsilon)\bM X'.)
\] 
is big and nef.
Note that $N(X,B,\bM.)=N(X',B',\bM.)$.
By Theorem~\ref{thm:lifting-from-divisor}, we conclude that
\[
N(X,B,\bM.)\leq N(\Lambda,d-1,c,p)=N(\mathcal{R},d-1,c,p).
\] 
From now on, we assume that
$0<\dim W' < \dim X$.
We separate in two cases, depending on the log canonical centers of $(X',B',\bM.)$.\\

\noindent\textit{Case 3.1:}
In this case, we assume that there is a generalized log canonical center 
of $(X',B',\bM.)$ that is vertical over $W'$.\\

We may assume that $(X',B',\bM.)$ is generalized dlt.
Let $S'\subset \lfloor B'\rfloor$ be a prime component that is vertical over $W'$. In this case, $B'_{\rm hor}$ is big over the base.
We run a $B'_{\rm hor}$-MMP over $W'$ which terminates with a good minimal model $X'_0$
and we consider its ample model $X''$ over $W'$.
We have the following commutative diagram:
\begin{align*}
\xymatrix{
(X,B,\bM.)\ar@{-->}[r]\ar[d]& (X',B',\bM.)\ar@{-->}[r]\ar[d]^-{\pi'} & (X'_0,B'_0,\bM.) \ar[r]^-{\psi}\ar[ld] & (X'',B'',\bM.)\ar[lld] \\
W &  W'.\ar[l] & &
}
\end{align*} 
Note that all the previous models are crepant.
Hence, we have that
\begin{equation}
\label{eq:seq-N}
N(X,B,\bM.)=
N(X',B',\bM.)=
N(X'',B'',\bM.).
\end{equation}
By construction, the following conditions are satisfied:
\begin{itemize}
    \item the variety $W$ is an ample model for $-(K_X+B+\bM X.)$;
    \item the variety $W'$ is an ample model for $\{B'\}+M_{X'}$ over $W$;
    \item the variety $X'_0$ is a good minimal model for
    $B'_{0,{\rm hor}}$ over $W'$; and
    \item the variety $X''$ is an ample model for $B''_{\rm hor}$ over $W'$.
\end{itemize}
We conclude that the divisor
\[
-(K_{X''}+B''-\epsilon\{B''\}-\delta B''_{\rm hor}+ (1-\epsilon)\bM X''.) 
\]  
is ample for $\epsilon\gg \delta >0$ small enough.
We claim that the generalized pair
\[
(X'',B''-\epsilon\{B''\}-\delta B''_{\rm hor}, (1-\epsilon)\bM .)
\] 
is generalized log canonical
but not generalized klt.
Note that
$(X'_0, B'_0-\epsilon \{B'_0\}-\delta B'_{0,{\rm hor}},(1-\epsilon)\bM.)$ is generalized log canonical and not generalized klt
as $S'_0$ is a component
of $\lfloor B'_0-\epsilon \{B'_0\}-\delta B'_{0,{\rm hor}}\rfloor$. 
The morphism $\psi$ is
$(K_{X_0'}+B_0'-\epsilon\{B_0'\}-\delta B'_{0,{\rm hor}}+(1-\epsilon)\bM X'_0.)$-trivial,
so the claim follows.
By Theorem~\ref{thm:lifting-from-divisor} and the sequence of equalities~\eqref{eq:seq-N}, we conclude that
\[
N(X,B,\bM.)\leq
N(\Lambda,d-1,c,p)=N(\mathcal{R},d-1,cp).
\] 
Thus, in this case, we have that
$N(X,B,\bM.)\leq N(\mathcal{R},d-1,c)$.\\ 

\noindent\textit{Case 3.2:}
In this case, we assume that all the generalized log canonical centers of $(X',B',\bM.)$ are horizontal over $W'$.\\

Let $\pi'\colon X'\rightarrow W'$ be the projective contraction.
We may apply Theorem~\ref{introthm:cbf-and-coreg}$(c)$ to obtain a linear equivalence:
\[
q(K_{X'}+B'+\bM X'.)\sim 
q\pi'^\ast(K_{W'}+B_{W'}+\bN W'.),
\]
where the following conditions are satisfied:
\begin{itemize}
    \item the generalized pair
    $(W',B_{W'},\bN .)$ is of Fano type, has dimension $d_{W'}\leq c$, and is exceptional (i.e., its absolute coregularity is equal to its dimension $d_{W'}$);
    \item the positive integer $q$
    only depends on $\Lambda$, $c$ and $p$;
    \item the coefficients of $B_{W'}$ 
    belong to a DCC set $\Omega$
    which only depends on $\Lambda$, $c$ and $p$; and
    \item the b-nef divisor $q\bN.$ is b-Cartier.
\end{itemize}
Indeed, if $(W',B_{W'},\bN.)$ is not exceptional, by pulling back a non-klt complement of it, we obtain a complement for $(X',B',\bM.)$ of coregularity strictly less than $c$. 
This leads to a contradiction.
By~\cite[Theorem 1.2]{FM20}, any generalized pair $(W',B_{W'},\bN.)$ as above admits an $N(\Omega,d_{W'},d_{W'},q)$-complement.
By pulling back, we obtain an $N$-complement for $(X',B',\bM.)$
for some 
\[
N\leq {\rm lcm}(q,N(\Omega,d_{W'},d_{W'},q)).
\] 
Putting Case $1$ through Case $3$ together, we conclude that 
\[
N(X,B,\bM.) \leq \max\{ N(\mathcal{R},d-1,c,p),
{\rm lcm}(q,N(\Omega,1,1,q)),
\ldots,
{\rm lcm}(q,N(\Omega,c,c,q)) 
\}.
\] 
for every $d$-dimensional generalized pair $(X,B,\bM.)$ as in the statement.
Proceeding inductively, we conclude that every generalized pair $(X,B,\bM.)$ as in the statement of the theorem satisfies that
\[
N(X,B,\bM.) 
\leq 
\max\{
N(\mathcal{R},c,c,p),
{\rm lcm}(q,N(\Omega,1,1,q)), 
\ldots, 
{\rm lcm}(q,N(\Omega,c,c,q))
\}.
\] 
Observe that the number on the right-hand side only depends on $\Lambda,c$ and $p$.
This finishes the proof of the implication.
\end{proof}

\begin{lemma}\label{lem:coreg=1-increase-coeff}
Let $(\pp^1,B_{\pp^1},\bM \pp^1.)$ be a generalized log Calabi--Yau pair for which
${\rm coeff}(B_{\pp^1})\in D_t(\zz_{>0})$
and $2\bM \pp^1.$ is Cartier.
If $t>\frac{5}{6}$, then $t=1$.
\end{lemma}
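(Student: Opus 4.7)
The plan is to perform a direct numerical analysis on $\pp^1$ using the structure that the hypothesis $t > 5/6$ forces on the coefficients.

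First, I would pin down the precise form of every coefficient. By definition, an element of $D_t(\zz_{>0})\cap [0,1]$ can be written as $(m-1+\sum_{i}\lambda_i)/m$, where $m\in \zz_{>0}$, each $\lambda_i$ lies in $\zz_{\geq 0}\cup\{t\}$, at least one $\lambda_i$ equals $t$, and $\sum\lambda_i\leq 1$. Since $2t>5/3>1$, two terms equal to $t$ are incompatible with $\sum\lambda_i\leq 1$; since $1+t>1$, a term equal to $t$ together with any positive integer is also impossible. Hence exactly one $\lambda_i$ equals $t$ and all others are $0$, so every coefficient of $B_{\pp^1}$ has the form
$$c_m=\frac{m-1+t}{m}=1-\frac{1-t}{m}, \qquad m\in \zz_{>0}.$$

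Next, I would set up the Calabi--Yau equation. Since $\bM.$ is b-nef on the smooth curve $\pp^1$ and $2\bM \pp^1.$ is Cartier, we have $\deg \bM \pp^1. = M/2$ for some $M\in \zz_{\geq 0}$. Writing $B_{\pp^1}=\sum_{i=1}^k c_{m_i}\{p_i\}$, the condition $\deg(K_{\pp^1}+B_{\pp^1}+\bM \pp^1.)=0$ becomes
$$(1-t)\sum_{i=1}^k\frac{1}{m_i}=k-2+\frac{M}{2}.$$
Using $1-t<1/6$ and $\sum 1/m_i\leq k$ gives $5k+3M<12$. The left-hand side of the displayed equation is non-negative, so $2k+M\geq 4$. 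The only integral pairs satisfying both inequalities are $(k,M)=(2,0)$ and $(k,M)=(1,2)$.

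In either case, the sum of the coefficients of $B_{\pp^1}$ equals $2-M/2\in\{2,1\}$. When $(k,M)=(2,0)$, the two coefficients each lie in $[0,1]$ and sum to $2$, forcing both to equal $1$; when $(k,M)=(1,2)$, the unique coefficient equals $1$. In both situations, at least one $c_m=1-(1-t)/m$ must equal $1$, which can only occur if $t=1$.

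The one substantive step is the classification of coefficients in $D_t(\zz_{>0})$ when $t$ lies close to $1$; once that is pinned down, the remainder is routine orbifold Euler-characteristic bookkeeping. No serious obstacle arises, because $t>5/6$ is large enough to preclude all ``composite'' coefficient expressions and small enough that the resulting linear constraint admits only two integral solutions.
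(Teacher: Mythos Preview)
Your proof is correct and follows the same strategy as the paper: a direct degree count on $\pp^1$ exploiting that every coefficient involving $t$ exceeds $5/6$. Your execution is in fact tighter than the paper's---you observe at once that under the hypothesis every coefficient is forced to equal $(m-1+t)/m$, and then cleanly enumerate the admissible pairs $(k,M)$, whereas the paper hedges by allowing purely standard coefficients alongside the $t$-coefficients and argues more tersely via a bound on the standard part.
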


\begin{proof}
Each coefficient of $B_{\pp^1}$ is either standard 
or of the form
\begin{equation}\label{eq:coeff-t} 
1-\frac{1}{m}+\frac{t}{m}> 1-\frac{1}{6m}>5/6.
\end{equation} 
The only sets of standard coefficients whose sum is less than $\frac{7}{6}$ are $\{1\}$ and 
$\{\frac{1}{2},\frac{1}{2}\}$.
Thus, $B_{\pp^1}$ is supported in at most $3$ points and $t=1$.
\end{proof}

\begin{proof}[Proof of Theorem~\ref{introthm:Fano-coreg-1}]
Let $(X,B,\bM.)$ be a $d$-dimensional generalized pair as in the statement.
Following step 1 of the proof of Theorem~\ref{thm:reduction-of-coefficients}, let $t$ be
a log canonical threshold of coregularity $1$
(or a pseudo-effective threshold)
of a prime divisor with respect
to $(X,B,\bM.)$.
By~\cite[Lemma 3.2]{FMP22} (or proof of Lemma~\ref{lem:lcy-horizontal-coeff} in the case where $t$ is a pseudo-effective threshold), we can construct a generalized log Calabi--Yau pair
\[
(\pp^1,B_{\pp^1},\bM \pp^1.)
\]
for which ${\rm coeff}(B_{\pp^1})\in D_{t}(\{1\})$ and $2\bM \pp^1.$ is Cartier.
By Lemma~\ref{lem:coreg=1-increase-coeff}, we conclude that $t=1$ provided that $t>5/6$.
Hence, by the proof of Theorem~\ref{thm:reduction-of-coefficients}, we may assume that the coefficients of $B$ belong to
\[
\mathcal{R}_1 \coloneqq 
\left\{
\frac{1}{2},\frac{2}{3},
\frac{3}{4},\frac{4}{5},
\frac{5}{6}
\right\}.
\]
Proceeding as in the proof of
Proposition~\ref{prop:index-implies-complements}, we conclude that
\[
N(X,B,\bM.) 
\leq 
N(\mathcal{R}_1,d,1,2) 
\leq 
{\rm lcm}(2,
N(\mathcal{R}_1,1,1,2)
). 
\] 
Note that we can take $q=2$ due to Theorem~\ref{introthm:index-coreg-0}.
We conclude that $N\in \{2,4,6\}$.
\end{proof}

We prove the three main theorems of the article. The theorems are proved together inductively.

\begin{proof}[Proof of Theorems~\ref{introthm:index-higher-coreg}, \ref{introthm:Fano-coreg-c}, and~\ref{introthm:cbf-and-coreg}]
Note that Theorem~\ref{introthm:cbf-and-coreg}$(0)$ is trivial.
By~\cite[Theorem 1]{FMM22}, we conclude that
Theorem~\ref{introthm:index-higher-coreg}$(0)$ holds.
By Theorem~\ref{thm:coreg-0-comp}, we know that
Theorem~\ref{introthm:Fano-coreg-c}$(0)$ holds.
Assume that 
Theorem~\ref{introthm:index-higher-coreg}$(c-1)$, 
Theorem~\ref{introthm:Fano-coreg-c}$(c-1)$ and
Theorem~\ref{introthm:cbf-and-coreg}$(c-1)$ hold.
By Proposition~\ref{prop:complements-imply-cbf}, we conclude that Theorem~\ref{introthm:cbf-and-coreg}$(c)$ holds.
By Proposition~\ref{prop:cbf-implies-index}, we conclude that
Theorem~\ref{introthm:index-higher-coreg}$(c)$ holds.
By Proposition~\ref{prop:index-implies-complements}, we conclude that
Theorem~\ref{introthm:Fano-coreg-c}$(c)$ holds.
This finishes the proof of the theorems.
\end{proof}

Finally, we prove the application to klt singularities.

\begin{proof}[Proof of Theorem~\ref{introthm:klt-0}]
Let $(X;x)$ be a klt singularity of absolute coregularity $0$.
Let $(X,\Gamma_0;x)$ be a strictly log canonical pair of coregularity $0$ at $x$.
By~\cite[Lemma 1]{Xu14}, there exists a plt blow-up $\pi\colon Y\rightarrow X$ that extracts a unique exceptional divisor $E$ that is a log canonical place of $(X,\Gamma_0;x)$.\footnote{The fact that the divisor computing the plt blow-up of $(X;x)$ can be chosen to be a log canonical place of $(X,\Gamma_0;x)$ is implicit in the proof of~\cite[Lemma 1]{Xu14}.}
In particular, the pair $(E,\mathrm{Diff}_E(0))$ is a Fano pair of absolute coregularity 0 and with standard coefficients (see, e.g.,~\cite[Proposition 3.9]{Sho92}).
By Theorem~\ref{introthm:Fano-coreg-0}, $(E,\mathrm{Diff}_E(0))$ admits a 1 or 2-complement.
By steps 4-9 of the proof of Proposition~\ref{prop:lifting-from-divisor}, we can lift this complement to a 1 or 2-complement of $(X;x)$ at $x$.
\end{proof} 

\begin{proof}[Proof of Theorem~\ref{introthm:klt-1}]
The proof is analogous to the one of Theorem~\ref{introthm:klt-0}
by replacing 
Theorem~\ref{introthm:Fano-coreg-0}
with Theorem~\ref{introthm:Fano-coreg-1}.
\end{proof}

\bibliographystyle{habbvr}
\bibliography{bib}

\end{document}